\numberwithin{equation}{section}
\theoremstyle{plain}
\newtheorem{thm}{Theorem}[section]
\newtheorem{lem}[thm]{Lemma}
\newtheorem{prop}[thm]{Proposition}
\newtheorem{cor}[thm]{Corollary}
\theoremstyle{definition}
\newtheorem{defn}[thm]{Definition}
\theoremstyle{remark}
\newtheorem{rem}[thm]{Remark}
\newcommand{\LieTr}[2]{e^{-#1} #2 e^{#1}}
\DeclareMathOperator{\diag}{diag}
\DeclareMathOperator{\meas}{meas}
\DeclareMathOperator{\spec}{spec}
\DeclareMathOperator{\linspan}{span}
\newcommand{\Op}{{\rm Op}}
\newcommand{\Ops}{{\rm OP}S}
\newcommand{\lip}{{{\rm Lip}}}
\newcommand{\wlip}[1]{{{\rm Lip}(#1)}}
\newcommand{\bsigma}{\boldsymbol{\sigma}}
\renewcommand{\bar}{\overline}
\newcommand{\size}{{\rm size}}
\newcommand{\re}{{\rm Re}}
\newcommand{\me}{{\rm m}}
\newcommand{\pa}{\partial}
\newcommand{\normL}[2]{\| #1 \|^{\wlip{#2}} }
\providecommand{\vect}[2]{{\bigl(\begin{smallmatrix}#1\\#2\end{smallmatrix}\bigr)}}
\title{\bf Reducibility for a linear wave equation with Sobolev smooth fast driven potential}
\author{Luca Franzoi \footnote{NYUAD Research Institute, New York University Abu Dhabi, NYUAD Saadiyat Campus, 129188, Abu Dhabi, UAE .  {\it E-mail:} \texttt{lf2304@nyu.edu}}}
\date{}
\begin{document}

\maketitle

\noindent
{\bf Abstract.}
We prove a reducibility result for a linear wave  equation with a time quasi-periodic driving on the one dimensional torus. The driving is assumed to be fast oscillating, but not necessarily of small size. Provided
that the external frequency vector is sufficiently large and chosen from a Cantor set of large measure,
the original equation is conjugated to a time-independent, block-diagonal one. With the present paper we extend the previous work \cite{FM19} to more general assumptions: we replace the analytic regularity in time with Sobolev one; 
the potential in the Schr\"odinger operator is a non-trivial smooth function instead of the constant one.
 The key tool to achieve the result is a localization property of each eigenfunction of the Schr\"odinger operator close to a subspace of exponentials, with a polynomial decay away from the latter.

\smallskip 

\noindent
{\em Keywords:} Reducibility, KAM theory, Fast driving potential, Linear wave equation

\smallskip

\noindent
{\em MSC 2010:} 35L10, 37K55.

\tableofcontents

\section{Introduction}
We consider on the one-dimensional periodic torus  $x\in\T:=\R / 2\pi\Z $ the linear wave equation 
\begin{equation}
\label{eq:KG}
u_{tt}-u_{xx}+ q(x) u  + v(\omega t,x)u = 0 \,.
\end{equation}
We assume the following conditions:  for a fixed $\nu\geq 1$, the time quasi-periodic potential $v(\vf,x)|_{\vf=\omega t}$ satisfies
\begin{equation}\tag{\bf V}
	\label{assumption V}
	v(\vf,x) \in H^S(\T^\nu\times \T,\R)\,, \quad S > s_0 := [\tfrac{\nu+1}{2}]+2\,, \quad \int_{\T^\nu}v(\vf,x)\wrt\vf=0\,;
\end{equation}
the real-valued function $q(x)\in H^\infty(\T,\R)$ satisfies $\inf\spec(-\pa_{xx}+q(x))>0$ and the Schrödinger operator $L_q := - \partial_{xx} + q(x)$ has a $L^2$-complete orthornormal basis of eigenfunctions  $(\psi_{j})_{j\in\Z}$  with corresponding eigenvalues $(\lambda_j^2)_{j\in\Z}$ for which
\begin{equation}\label{assumption Q}
	\tag{\bf Q}
	(-\partial_{xx} + q(x))\psi_{j}(x) = \mu_j^2 \psi_{j}(x) \ , \quad \mu_j^2 = j^2 + \tq + d(j) >0 \,, \quad j\in\Z\,,
\end{equation}
where $\tq:=\braket{q}_x := \frac{1}{2\pi}\int_\T q(x)\wrt x$ and $(d(j))_{j\in\Z}\in\ell^2(\Z)$. The main feature  is that we are not imposing any assumption on the size of this potential $v(\vf,x)$, but we require  it to be  \emph{fast oscillating}, namely $\abs\omega\gg 1$.

The goal of this paper is to show, for any frequency $\omega$ belonging to a Cantor set of large measure,  the reducibility of the linear system \eqref{eq:KG}. That is, we  construct a change of coordinates which conjugates  equation  \eqref{eq:KG} into a block-diagonal, time independent   one.  In the previous paper \cite{FM19}, we proved the reducibility of the equation \eqref{eq:KG} under stronger assumptions: Dirichlet boundary conditions; constant potential $q(x)=\tm^2>0$; analytic regularity in time for the driving potential $v(\vf,x)$. The extensions of these assumptions to our milder ones, therefore more general and suitable for applications, turned out to be not trivial and new ideas are needed. 

The scheme of the reducibility follows the one developed in \cite{FM19}.  It combines a preliminary transformation,  suitable to fast oscillating systems,  with a KAM normal form reduction to a time independent, block diagonal operator.
We first perform a change of coordinates, following  Abanin et al. \cite{abanin1}, that conjugates \eqref{eq:KG} to an equation with driving of size  $|\omega|^{-1}$, and thus perturbative in size. 
The price to pay is that the new equation 
might not fit in the standard KAM schemes studied in \cite{kuk87}.
The problem is overcome in our model by exploiting the  pseudodifferential properties of the operators involved, showing that the new perturbation features regularizing properties. In particular, in this paper we have to realize the operator $L_q$ and all its real powers as pseudodifferential operators, which is a known, but not trivial fact, 
as we explain in Section \ref{sez.schema}.

The second key ingredient of the proof concerns appropriate {\em balanced} Melnikov conditions (see \eqref{eq:meln_intro}), which allow us to perform a convergent KAM reducibility iteration.  The new contribution that is needed at this step is to prove a localization property in Sobolev regularity for the eigenfunctions $(\psi_{j}(x))_{j\in\Z}$ in \eqref{assumption Q} with respect to the exponential basis $(e^{\im jx})_{j\in\Z}$. In analytic regularity, this fact was proved by Craig \& Wayne in \cite{CrWa}. With this property, operators with smoothing pseudodifferential symbols exhibit off-diagonal decay of the matrix elements also when the latters are computed with respect to the eigenfunction basis in \eqref{assumption Q}.

\medskip

Fast periodically driven systems attract a great interest in physics, both theoretically and experimentally, especially in the study of many-body systems \cite{eisert,jotzu, kitagawa}. 
Modifying a system by periodic driving is referred as ``Floquet engineering''. The interest is to understand the rich behaviour that the dynamics of such models exhibit and to possibly observe novel quantum states of matter.
We refer to the recent review paper \cite{weitenberg} for an extended presentation on the state of the art in this research field, with experimental implementations for  ultracold atoms,   graphenes and  crystals.

The mathematical interest of the present paper is to extend the classical finite dimensional Floquet theory to PDEs. 
The usual setup for dealing with this problem is to treat with small perturbations of a diagonal operator, i.e. of the form  $D + \epsilon V(\omega t)$, where $D$ is diagonal, $\epsilon$ small and $\omega$ avoiding resonances with the spectrum of $D$. In the following we only mention only the most recent developments in this research and we refer to the dissertation \cite{tesi} for a larger overview of the literature.

In the pertubative regime, most of the new results cover the case where $V(\omega t)$ is a time quasi-periodic unbounded operator. In a series of papers \cite{Bam16I,Bam17II,Bam18III}, the reducibility was proved for the 1D quantum harmonic and anharmonic oscillators under quasi-periodic unbounded pseudodifferential perturbation. The reducibility of trasport equations on $\T^d$ was proved in \cite{BLM19} and  \cite{FGMP}, as well for some classes of wave equations on $\T$ \cite{SunLiXie} and $\T^d$ \cite{mont17}. In \cite{FGN} the authors proved the reducibility of the Schr\"odinger equation with pseudodifferential perturbations of order less or equal than $1/2$ on Zoll manifolds.  The reducibility was proved also for the relativistic Schr\"odinger equation on $\T$ with time quasi-periodic unbounded perturbations of order $1/2$ \cite{SunLi}, for a wave equation with time quasi-periodic perturbations of order 1 \cite{Sun22} and for a linear Schr\"odinger equation with an almost periodic unbounded pertubation \cite{MoPro}.
In the context of nonlinear PDEs, recently the existence of KAM reducible tori was proved for quasilinear perturbations the Degasperis-Procesi equation \cite{FGP}, for water waves equations \cite{BBHM,BFM,BFM21,BM}, for semilinear perturbation of the defocusing NLS equation \cite{BKM16} and quasilinear perturbations of the KdV equation \cite{BKM}. We remark that the latter two results actually prove the existence of \emph{large} KAM reducible tori. We also mention the work in \cite{BM20}, where the reducibility was proved for the linearization at time quasi-periodically solutions of forced 3D Euler equations close to constant fields.

When the smallness of the time quasi-periodic perturbations is replaced by the assumption of being fast oscillating, we developed an adapted normal form  in \cite{FM19}, which we called {\em Magnus normal form}, 
following 
 \cite{ abanin2, abanin4, abanin1}  where the classical Magnus expansion \cite{magnus} was generalized.
Their normal form allows to extract a time independent Hamiltonian  that approximates well the dynamics 
quantum many-body systems (spin chains) with a fast periodic driving up to some finite but  very long times \cite{abanin1}.
An important difference between \cite{abanin1} and \cite{FM19} lies in the fact that, while in \cite{abanin1} all the involved operators are bounded, on the contrary our principal operator is an unbounded one. The strategy developed for the Klein-Gordon equation in \cite{FM19} has been applied in \cite{Sun20} to the Schr\"odinger equation, where the fast oscillating, quasi-periodic driving is a smoothing pseudodifferential operator of order strictly less than -1. The question whether the latter result holds with perturbations of order greater than -1, ideally with a general time quasi-periodic multiplicative potential, is still an open problem.

In case of systems of the form $H_0 + V(t)$, where the perturbation $V(t)$ is neither small in size nor fast oscillating, a general reducibility is not known. 
However, in same cases  it is possible to find some results of "almost reducibility"; that is, the original Hamiltonian is conjugated to one of the form $H_0 + Z(t) + R(t)$, where $Z(t)$ commutes with $H_0$, while $R(t)$ is an arbitrary smoothing operator, see e.g. \cite{BGMR2}.
This normal form ensures  upper bounds on the speed of transfer of energy from low to high frequencies; e.g. it implies that 
the  Sobolev norms of each  solution grows at most as $t^\epsilon$ when $t \to \infty$, for any arbitrary small $\epsilon >0$.
This procedure (or close variants of it), has been applied also in   \cite{MaPro,MaRo, Mon17} and recently by \cite{BLM22, mont.sublin}. 
There are also  examples in \cite{bou99, ma18} where the authors  
engineer periodic drivings  aimed to  transfer  energy from low to high frequencies and leading to  unbounded growth of Sobolev norms (see also Remark \ref{bou} below).

Finally, we want to mention also the papers 
\cite{BB08, CG17, GengXue}, where KAM techniques are applied to construct quasi-periodic solutions with
$|\omega| \gg 1$. 
In   \cite{BB08} this is shown for 
a nonlinear wave equation with Dirichlet boundary conditions, while in \cite{GengXue} the quasi-periodic solutions are constructed for the two dimensional NLS with large forced nonlinearity. However, in both cases  reducibility is not obtained. 
In 
\cite{CG17}, KAM techniques are applied to a many-body  system with fast driving:
the authors construct a periodic orbit with large frequency and prove its asymptotic stability.

\subsection{Main result}

To state precisely our main result, equation \eqref{eq:KG} has to be rewritten as a linear Hamiltonian system. 
We introduce the new variables  $\phi := B^{1/2}u  + \im B^{-1/2} \partial_t u $ and $\bar \phi := B^{1/2}u  - \im B^{-1/2} \partial_t u$,
where
\begin{equation}
\label{def:B}
B:=\sqrt{-\pa_{xx} +q(x)} \,.
\end{equation}
Note that   the operator $B$ and all its positive powers are invertible by standard functional calculus and the spectral assumption $\inf\spec (-\pa_{xx}+q(x))>0$. In the new variables,
equation \eqref{eq:KG} is equivalent to
\begin{equation}\label{eq:KG_c}
\im\phi_t  = B\phi+\frac{1}{2}\,B^{-1/2}V(\omega t)B^{-1/2}(\phi+\bar\phi) \  . 
\end{equation}
Taking \eqref{eq:KG_c} coupled with its complex conjugate, we obtain the following system
\begin{equation}\label{eq:KG_matrix}
\im \phi_t = \bH(t)\phi \ , \quad \bH(t):=\left(\begin{matrix}
B & 0 \\ 0 & -B
\end{matrix}\right) + \frac{1}{2}\,B^{-1/2}V(\omega t )B^{-1/2}\left(\begin{matrix}
1 & 1 \\ -1 & -1
\end{matrix}\right) \ , 
\end{equation}
where, abusing notation, we denoted $\phi(t,x)=\vect{\phi(t,x)}{\bar{\phi(t,x)}}$ the vector with components  $\phi, \bar \phi$.
The linear system \eqref{eq:KG_matrix} is defined on the scale of Sobolev spaces $(\cH^s)_{s\in\R}$, where $\cH^s:= H^s(\T^{\nu+1})\times H^s(\T^{\nu+1}) $ and the scalar Sobolev norm is given by
\begin{equation}\label{eq:sob}
\begin{footnotesize}
	\begin{aligned}
	 u(\vf,x)=\sum_{(\ell,j)\in\Z^{\nu+1} } \whu(\ell,j) e^{\im (\ell\cdot\vf +jx)} \ \mapsto \  \| u \|_{s}^2 :=\sum_{(\ell,j)\in \Z^{\nu+1}}\braket{\ell,j}^{2s}|\whu(\ell,j)|^2<\infty  \,.
	\end{aligned}
\end{footnotesize}
\end{equation}
We use the notation $\braket{\ell,j}:=\max\{1,|\ell|,|j|\}$, which will be kept throughout all the paper.	
We define the $\nu$-dimensional annulus of size $\tM>0$ by
$$R_{\tM}:=\bar{B_{2\tM}(0)}\backslash B_{\tM}(0)\subset \R^{\nu} \,,$$
where $B_M(0)$ denotes the ball of center zero and radius $M>0$ in the Euclidean topology of $\R^\nu$.
\begin{thm}
	\label{thm:main}
	Let $q(x)\in H^{\infty}(\T,\R)$ and $v(\vf,x)\in H^{S}(\T^{\nu+1},\R)$, assuming  \eqref{assumption V}, \eqref{assumption Q}.
	Fix an arbitrary $\gamma_*>0$ sufficiently small and $\alpha\in (0,1)$.
	Then there exist $\tM_* >1 $, $\sigma_*>0$, $C >0$ such that,
	for any $\tM\geq \tM_*$, there exists a subset $\Omega_{\infty}^{\alpha}=\Omega_{\infty}^{\alpha}(\tM,\gamma_*)$ of $R_\tM$ of large measure relatively to $R_\tM$, namely
	\begin{equation}\label{measure.state}
	\me_r (\Omega_{\infty}^{\alpha}):=\frac{\meas(R_{\tM}\backslash\Omega_\infty^\alpha)}{\meas(R_{\tM})} \leq  C \gamma_* , 
	\end{equation}
	such that the following holds true.	 
	For any frequency vector $\omega\in\Omega_\infty^\alpha$ and any $S\geq s_0 +\sigma_*$, there exists a time quasi-periodic  operator $\cT(\omega;\omega t)$, bounded in $\cL(\cH^r)$, with $r\in[0,s_0]$, such that the change of coordinates $\phi= \cT(\omega;\omega t)\psi$  conjugates \eqref{eq:KG_matrix} to the block-diagonal time-independent system
	\begin{equation}\label{eq:eff_sys}
			\begin{aligned}
				& \im \dot{\psi}(t)=\bH^{\infty,\alpha}\psi(t) \,, \\
				 & \bH^{\infty,\alpha}=\bH^{\infty,\alpha}(\omega,\alpha)=\diag\Big\{\!\left.H_0^{\infty} \right._{[n]}^{[n]}(\omega,\tM,\alpha) \,, \ n\in\N_0  \Big\} \begin{pmatrix}
					1 & 0 \\ 0 & -1
				\end{pmatrix} \,,
			\end{aligned}
	\end{equation}
	with $\spec \big( \!\left.H_0^{\infty} \right._{[n]}^{[n]}(\omega,\tM,\alpha) \big) = \big\{ \lambda_{n,+}^{\infty}(\omega, \tM,\alpha),\,\lambda_{n,-}^{\infty}(\omega, \tM,\alpha) \big\}$ for any $n\in\N_0 $ and the block matrix representation in \eqref{blocks.not}.
	\\[1mm]
	\noindent The transformation $\cT(\omega;\omega t)$ is close to the identity, in the sense that, for any $r\in[0,s_0]$, there exists $C_r>0$ independent of $\tM$ such that
		\begin{equation}\label{close.id.state}
		\norm{\cT(\omega)-{\rm Id}}_{\cL(\cH^r)} \leq \frac{C_r}{ \tM^{\frac{1-\alpha}{2}}} \,.
	\end{equation}
	The eigenvalues of the final blocks $(\lambda_{n,\pm}^{\infty}(\omega))_{n\in\N_0}$ are real, Lipschitz in $\omega$,  and admit the following asymptotics for $n\in\N_0$:
	\begin{equation}\label{eq:eff_eig}
	\lambda_{n,\pm}^{\infty}(\omega) = \lambda_{n,\pm}^{\infty}(\omega,\alpha) = \lambda_n + \varepsilon_{n,\pm}^{\infty}(\omega,\alpha) \ , \quad \ \varepsilon_{n,\pm}^{\infty}(\omega,\alpha)\sim O\left( \frac{1}{\tM j^{\alpha}} \right)  \ ,
	\end{equation}
	where $\lambda_n = \sqrt{n^2 + \tq+ d(n)}$ are the eigenvalues of the operator $B:=\sqrt{-\partial_{xx}+q(x)}$.
\end{thm}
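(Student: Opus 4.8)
The plan is to carry out the reducibility in two conceptually separate stages, as announced in the introduction. In the first stage I would conjugate \eqref{eq:KG_matrix} by a time quasi-periodic transformation adapted to fast oscillation — the \emph{Magnus normal form} of \cite{FM19} — which replaces the $O(1)$ driving $\tfrac12 B^{-1/2}V(\omega t)B^{-1/2}(\begin{smallmatrix}1&1\\-1&-1\end{smallmatrix})$ by a new driving of size $O(|\omega|^{-1})=O(\tM^{-1})$. Following \cite{abanin1}, the generator of this transformation is built from the antiderivative in $\vf$ of the potential (well-defined since $\int_{\T^\nu}v\,\wrt\vf=0$ by \eqref{assumption V}); because $B$ is unbounded, a naive Magnus step would not land in the same operator class, so here I would use the pseudodifferential structure: realize $B=\sqrt{-\pa_{xx}+q}$ and its real powers $B^{\pm1/2}$ as classical pseudodifferential operators of the appropriate order (the known but nontrivial fact flagged in Section \ref{sez.schema}), so that the conjugated perturbation is not merely small but \emph{smoothing}, i.e. gains derivatives. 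This first stage produces a system $\im\dot\phi=(\diag(B,-B)+P(\omega t))\phi$ with $P$ of size $O(\tM^{-(1-\alpha)/2})$ and with smoothing symbol, and the composite change of variables already satisfies a bound of the form \eqref{close.id.state}.

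The second stage is a KAM reducibility iteration à la \cite{kuk87} on the perturbed block-diagonal operator. Here the crucial input is the eigenfunction localization property: each $\psi_j$ from \eqref{assumption Q} is, in Sobolev regularity, close to the exponential $e^{\im jx}$ with polynomial off-diagonal decay (the Sobolev analogue of Craig–Wayne \cite{CrWa}). Using this, an operator with smoothing symbol continues to have matrix elements with off-diagonal decay when expanded in the basis $(\psi_j)$, so the homological equations at each KAM step can be solved and the smoothing is preserved along the iteration. At each step one eliminates the off-diagonal (in the block sense) part of the perturbation by solving $\im\omega\cdot\pa_\vf \Phi + [\diag(\lambda_j),\Phi] = \Pi_{\rm offdiag}P$, which requires the \emph{balanced} Melnikov conditions \eqref{eq:meln_intro}: denominators of the form $\omega\cdot\ell+\lambda_{j,\sigma}-\lambda_{j',\sigma'}$ must be bounded below by $\upsilon\langle\ell\rangle^{-\tau}$ times a weight that balances $|\omega|$ against the indices. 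One checks these hold for $\omega$ outside a set of relative measure $\lesssim\gamma_*$ in $R_\tM$ (standard measure estimate via genericity in $\omega$, summing the excluded resonant strips), which defines $\Omega_\infty^\alpha$ and yields \eqref{measure.state}. The geometric decay of the perturbation size, together with the preservation of the smoothing-symbol class, guarantees convergence of the infinite composition to the transformation $\cT(\omega;\omega t)$, bounded on $\cH^r$ for $r\in[0,s_0]$, and the limit operator is block-diagonal and time-independent, of the form \eqref{eq:eff_sys}; tracking the perturbation of each diagonal entry through the iteration gives the eigenvalue asymptotics \eqref{eq:eff_eig} with correction $O(\tM^{-1}j^{-\alpha})$.

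The main obstacle, and the genuinely new part relative to \cite{FM19}, is the interplay between the \emph{finite} Sobolev regularity in time and the eigenfunction localization. In the analytic setting of \cite{FM19} one has exponential off-diagonal decay to spare, so smoothing of any fixed order suffices and the change of basis $e^{\im jx}\leftrightarrow\psi_j$ is harmless; in the Sobolev setting the localization only gives \emph{polynomial} decay, so one must quantify precisely how many derivatives the Magnus step gains, how much decay the change of basis costs, and verify that the net budget still closes the KAM scheme with only the finite regularity $S\geq s_0+\sigma_*$ assumed — i.e. that the loss of derivatives at each homological step is summable. Establishing the Sobolev eigenfunction localization lemma itself (not in the excerpt but used here) and propagating the smoothing-symbol/off-diagonal-decay class through both the Magnus transformation and the KAM iteration is where the technical weight of the proof lies; the measure estimate and the convergence, by contrast, are routine once the classes are set up correctly.
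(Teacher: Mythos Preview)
Your proposal is correct and matches the paper's approach exactly: Magnus normal form (Section~\ref{sec:magnus}) to pass to a perturbative regime, the Sobolev Craig--Wayne localization (Theorem~\ref{thm:decay_eigen_smooth}) and embedding (Theorem~\ref{thm:embed_PSDO_decay}) to transfer smoothing $\Psi$DO estimates into off-diagonal decay in the $(\psi_j)$ basis, then the KAM reducibility with balanced Melnikov conditions (Section~\ref{sec:kam}) and measure estimates (Theorem~\ref{lem:meas_infty}). One minor quantitative slip: the perturbation after the Magnus step has size $O((\gamma_0\tM)^{-1})$, not $O(\tM^{-(1-\alpha)/2})$ --- the latter exponent arises only in the close-to-identity bound \eqref{close.id.state} for the \emph{full} transformation after combining the Magnus map with the KAM limit and the choices $\gamma_0=\gamma^{\alpha/4}$, $\gamma_*\sim\gamma^{1/2}$ made in \eqref{scelta.tau}.
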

\begin{rem}
	The parameter $\alpha$, which one chooses and fixes in the real interval $(0,1)$, influences the asymptotic expansion of the final  eigenvalues,  as one can read from \eqref{eq:eff_eig}. Also the construction of the set of the admissible frequency vectors heavily depends on this parameter.
\end{rem}
\begin{rem}
	The assumption on the zero average in $\vf$ for the potential $v(\vf,x)$ in \eqref{assumption V} has no loss of generality at all. Indeed, in case of $\braket{v}_\vf:= \frac{1}{(2\pi)^\nu}\int_{\T^\nu}v(\vf,x)\wrt\vf \neq0$, one simply replaces the function $q(x)$ with $q_1(x):=q(x)+\braket{v}_\vf(x)$, asking only that the new potential $q_1(x)$ satisfies the spectral assumption $\inf\spec(-\pa_{xx}+q_1(x))>0$.
\end{rem}
\begin{rem}
	In Theorem \ref{thm:main} the spectral condition $\inf\spec(-\pa_{xx}+q(x))>0$ may be replaced by asking that the spectrum is just bounded from below. For technical reasons in the construction of the pseudodifferential calculus in Appendix \ref{app:funct.calc}, we have assumed that this bound is strictly positive without any loss of generality.
\end{rem}

Theorem \ref{thm:main} will be proved at the end of the paper, in Section \ref{subsec:proof.main}.
Let  us denote by $\cU_{ \omega}( t,\tau)$ the propagator generated by \eqref{eq:KG_matrix}  such that $\cU_{ \omega}(\tau, \tau)={\rm Id}$ for any $\tau \in \R$.
An immediate consequence of Theorem \ref{thm:main} is that we have a Floquet   decomposition:
\begin{equation}\label{decom}
\cU_{\omega}( t,\tau) =  \cT(\omega; \omega t)^* \circ  {\rm e}^{-\im(t-\tau)\bH^{\infty, \alpha}} \circ  \cT(\omega; \omega \tau) \ .
\end{equation}

Another consequence of \eqref{decom} is  that, for any $r \in [0,s_0]$, the norm $\norm{\cU_{\omega}(t,0) \vf_0}_{\cH_x^r}$, with $\cH_x^s:= H^s(\T)\times H^s(\T) $, is uniformly bounded.
\begin{cor}
	\label{cor.1}
	Let $\tM \geq \tM_*$ and $\omega \in \Omega_\infty^\alpha$. 
	For any $r \in [0,s_0]$ one has
	\begin{equation}\label{unest}
	c_{r}\norm{\vf_0}_{\cH_x^r} \leq \sup_{t\in\R}\norm{ \cU_{\omega}( t,0)\vf_0}_{\cH_x^r}
	\leq C_{r}\norm{\vf_0}_{\cH_x^r} \,,\quad \forall \,
	\vf_0\in \cH_x^r\,, 
	\end{equation}
	for some  $ c_{r}>0, C_{r}>0$.
	In particular,  there exists a constant $c'_{r}>0$ such that, if the initial data $\vf_0 \in \cH_x^{r}$, then
	\begin{equation*}
	\left(1- \frac{c'_r}{ \tM^{\frac{1-\alpha}{2}}}\right)\norm{\vf_0}_{\cH_x^r}   \leq \sup_{t\in\R}\norm{\cU_{\omega}( t,0)\vf_0}_{\cH_x^r} \leq 
	\left(1+ \frac{c'_r}{ \tM^{\frac{1-\alpha}{2}}} \right)
	\norm{\vf_0}_{\cH_x^r}\,.
	\end{equation*}
\end{cor}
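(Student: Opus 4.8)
\medskip
\noindent\textbf{Proof proposal.}
The plan is to read both estimates off the Floquet decomposition \eqref{decom}. Taking $\tau=0$ there gives $\cU_\omega(t,0)=\cT(\omega;\omega t)^*\,{\rm e}^{-\im t\bH^{\infty,\alpha}}\,\cT(\omega;0)$, so it suffices to bound the three factors on $\cH_x^r$, $r\in[0,s_0]$, uniformly in $t$. For the middle factor I would use that $\bH^{\infty,\alpha}$ is time-independent and block-diagonal with real spectrum (blocks $\left.H_0^\infty\right._{[n]}^{[n]}\,{\rm diag}(1,-1)$ with real spectrum $\{\lambda_{n,\pm}^\infty\}$, diagonalisable with uniformly-in-$n$ bounded transformations, as produced by the reducibility iteration), and that each block is supported on Fourier modes with a single value of $\braket{\cdot}$; hence ${\rm e}^{-\im t\bH^{\infty,\alpha}}$ generates no exponential growth and $\sup_t\norm{{\rm e}^{-\im t\bH^{\infty,\alpha}}}_{\cL(\cH_x^r)}<\infty$. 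For the outer factors I would pass from the bound of Theorem \ref{thm:main} for $\cT(\omega)$ on the space--time scale $\cH^r=H^r(\T^{\nu+1})^2$ to a uniform-in-$t$ bound on the spatial scale $\cH_x^r$: since $s_0>\tfrac{\nu+1}{2}$, the off-diagonal decay in $\ell\in\Z^\nu$ of the matrix elements of $\cT(\omega)$ underlying its $\cL(\cH^r)$-boundedness is summable, so the evaluated operators $\cT(\omega;\omega t)$, $\cT(\omega;\omega t)^{-1}$ and $\cT(\omega;\omega t)^*$ are bounded on $\cH_x^r$, uniformly in $t$, by a constant depending only on $r$. Composing the three uniformly bounded factors and taking $\sup_{t\in\R}$ gives the upper bound in \eqref{unest}.

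For the lower bound in \eqref{unest} I would invoke invertibility: $\cU_\omega(t,0)^{-1}=\cU_\omega(0,t)$, to which the previous paragraph applies with the same uniform constants, whence $\norm{\vf_0}_{\cH_x^r}=\norm{\cU_\omega(0,t)\,\cU_\omega(t,0)\vf_0}_{\cH_x^r}\le C_r\norm{\cU_\omega(t,0)\vf_0}_{\cH_x^r}$ for every $t$, so that $c_r:=C_r^{-1}$ works. For the refined two-sided estimate the lower bound is immediate since $\cU_\omega(0,0)={\rm Id}$: $\sup_t\norm{\cU_\omega(t,0)\vf_0}_{\cH_x^r}\ge\norm{\vf_0}_{\cH_x^r}\ge\bigl(1-c'_r\,\tM^{-(1-\alpha)/2}\bigr)\norm{\vf_0}_{\cH_x^r}$. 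For the upper bound I would quantify the first paragraph: by \eqref{close.id.state} (transported to $\cH_x^r$ as above) each of $\cT(\omega;\omega t)^*$ and $\cT(\omega;0)$ equals ${\rm Id}$ plus an operator of $\cL(\cH_x^r)$-norm $\le C_r'\,\tM^{-(1-\alpha)/2}$, uniformly in $t$; and by \eqref{eq:eff_eig} the generator $\bH^{\infty,\alpha}$ differs from the self-adjoint diagonal operator $\diag(B,-B)$ --- whose unitary flow is isometric on $\cH_x^r$ in the $B$-adapted Sobolev norm --- by a remainder of size $O(\tM^{-1})$, so that, block by block, ${\rm e}^{-\im t\bH^{\infty,\alpha}}$ is an isometry of $\cH_x^r$ up to a uniform remainder of size $O(\tM^{-1})=o(\tM^{-(1-\alpha)/2})$. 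A short Neumann-series/telescoping estimate on the triple product then yields $\norm{\cU_\omega(t,0)\vf_0}_{\cH_x^r}\le\bigl(1+c'_r\,\tM^{-(1-\alpha)/2}\bigr)\norm{\vf_0}_{\cH_x^r}$ with the error uniform in $t$ and $\vf_0$, and taking $\sup_t$ closes the argument.

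I expect the real work to be in the two structural inputs rather than in the bookkeeping. First, the passage from the space--time estimates of Theorem \ref{thm:main} to uniform-in-$t$ bounds on the purely spatial scale $\cH_x^r$ relies on the angular off-diagonal decay of $\cT(\omega)$ generated along the reducibility iteration, not merely on its operator norm on $\cH^r$; this is where one must be careful about which quantitative information about $\cT$ has actually been propagated. Second, the quantitative assertion that the reduced block-diagonal flow ${\rm e}^{-\im(t-\tau)\bH^{\infty,\alpha}}$ acts on $\cH_x^r$ as a near-isometry, uniformly in $t-\tau$, uses the reality of the spectrum, the single-frequency support of each block and the asymptotics \eqref{eq:eff_eig}, and is the step one should carry out with care (in particular, in the appropriate $B$-adapted norm). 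Once these are in place, \eqref{decom} turns them mechanically into \eqref{unest} and its refinement.
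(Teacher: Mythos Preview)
Your overall strategy---read everything off the Floquet decomposition \eqref{decom} and bound the three factors uniformly in $t$---is exactly what the paper has in mind (the corollary is stated without proof, as an ``immediate consequence'' of \eqref{decom}). Your treatment of the outer factors and of both lower bounds is correct, and you rightly single out the passage from $\cL(\cH^r)$ to uniform-in-$t$ bounds on $\cL(\cH_x^r)$ as the step needing the angular off-diagonal decay; this is precisely what the $s$-decay norms of Section~\ref{sec:mo} provide.

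The genuine gap is in your treatment of the middle factor for the refined estimate. You argue that $\bH^{\infty,\alpha}-\diag(B,-B)=O(\tM^{-1})$ and conclude that ${\rm e}^{-\im t\bH^{\infty,\alpha}}$ is an isometry ``up to a uniform remainder of size $O(\tM^{-1})$''. This does not follow: an $O(\varepsilon)$ perturbation of the \emph{generator} produces, via Duhamel, an $O(|t|\varepsilon)$ perturbation of the \emph{flow}, which is not uniform in $t$. The correct observation is simpler and makes the asymptotics \eqref{eq:eff_eig} irrelevant here: each block $\left.H_0^{\infty}\right._{[n]}^{[n]}$ is self-adjoint (Corollary~\ref{cor:final_blocks}), so ${\rm e}^{-\im t\left.H_0^{\infty}\right._{[n]}^{[n]}}$ is \emph{unitary} on $\fE_n$ for every $t$, and since the Sobolev weight is constant on each $\fE_n$, the full flow ${\rm e}^{-\im t\bH^{\infty,\alpha}}$ is an \emph{exact} isometry, for all $t$, in the $\psi$-graded (equivalently $B$-adapted) norm $\sum_n\braket{n}^{2r}\|u^{[n]}\|_{L^2}^2$. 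This gives \eqref{unest} at once, and yields the refined bound with constant $1+O(\tM^{-(1-\alpha)/2})$ cleanly in that norm from \eqref{close.id.state}. If you insist on the exponential-based $\cH_x^r$ norm, be aware that the fixed norm-equivalence constants between the two norms do not tend to $1$ as $\tM\to\infty$, so the refined estimate is most honestly stated (and is what the paper effectively means) in the $B$-adapted norm.
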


\begin{rem}
	\label{bou}
	Corollary \ref{cor.1} shows that, if the frequency $\omega$ is chosen in the Cantor set $\Omega^\alpha_\infty$,  no phenomenon of  growth of Sobolev norms can happen. 
	On the contrary, if $\omega$ is chosen  resonant, one can construct drivings which provoke  norm explosion with exponential rate, see  \cite{bou99} (see also \cite{ma18} for other examples).
\end{rem}

\subsection{Scheme of the proof}\label{sez.schema}

The proof of Theorem \ref{thm:main} is built on the same scheme used in \cite{FM19}. We now recall the main steps of the proof and comment on the new contributions.
\\[1mm]
\noindent{ \bf The Magnus normal form.} In Section \ref{sec:magnus} we perform what we refer to Magnus normal form, namely a preliminary transformation, adapted to fast oscillating systems, that moves the non-perturbative equation \eqref{eq:KG_matrix} into a pertubative one where the size of the transformed quasi-periodic potential is as small as large is the module of the frequency vector. Sketchily, we perform a change of coordinates  which conjugates 
\begin{equation}
\label{sss}
\left\{ \begin{matrix}
\bH(t)=\bH_0+\bW(\omega t) \\
"\size(\bW) \sim 1"
\end{matrix} \right. \quad  \rightsquigarrow \quad  \left\{ \begin{matrix}
\wt\bH(t)=\bH_0+\bV(\omega;\omega t) \\
"\size(\bV) \sim |\omega|^{-1}"
\end{matrix} \right. \ .
\end{equation}
Note that  $\bH_0$ is the same on both sides of \eqref{sss} provided $\int_{\T^\nu}\bW(\vf) \di \vf = 0$, which is fulfilled in our case thanks to \eqref{assumption V}.
In principle, the new perturbation  may not be  sufficiently regularizing to 
fit in a standard KAM scheme. 
As in \cite{FM19}, we employ  pseudodifferential calculus, thanks to which we control the order (as a pseudodifferential operator) of the new perturbation, and prove that it is actually enough regular for the KAM iteration. 
This is true because the principal term of the new perturbation is a commutator with $\bH_0$   (see equation \eqref{eq:magnus_4}), and 
one can exploit the smoothing properties of the commutator of  pseudodifferential operators.

What is new in the present case is that it is not immediately clear whether the operator $B:=\sqrt{-\pa_{xx}+q(x)}$ and all its (real) powers belong to classes of pseudodifferential operators or not. For instance, one may guess that a pseudodifferentlal symbol for the operator $B$ is simply $b(x,\xi):=\sqrt{\xi^2 +q(x)}$, but a direct computation shows that the composition  $\Op(b)\circ \Op(b) = \Op(b \# b)$ is not equal to $\Op(b^2)=L_q$, see \eqref{compo_symb}. Luckily for us, the answer to the previous question is positive. We state this property in Theorem \ref{cor:pseudo_sqrt} and the proof is briefly presented in Appendix \ref{app:funct.calc}, recalling the construction due of Seeley \cite{seeley} and Shubin \cite{shubin}. The idea is first to realize any real power of a given self-adjoint operators in terms of the functional calculus in such a way that standard group properties are preserved (see Theorem \ref{thm:funct_seeley}). Then, using the parametrix symbol of the resolvent operator, it follows that such powers actually fit in the pseudodifferential calculus (see Theorem \ref{thm:structure_seeley}).
\\[1mm]
\noindent {\bf Craig-Wayne Lemma in Sobolev regularity.} 
After the Magnus normal form, the next is the KAM reducibility scheme in order to remove the time dependence on the coefficients of the equation. Here a new problem arises: indeed, the most natural way to proceed is to work on the (infinite dimensional) matrix representations of the operator with respect to the eigenfunction basis \eqref{assumption Q} of the self-adjoint operator $L_q:=-\pa_{xx}+q(x)$, whereas the quantization of the pseudodifferential symbols is constructed on the exponential basis. In \cite{FM19} this problem is not present because we had $q(x):=\tm^2$ and the eigenfunctions are the trigonometric sine functions. To solve this issue, in Section \ref{sec:embe} we prove the fact that each eigenfunctions $\psi_{j}(x)$ in \eqref{assumption Q} is essentially localized in the subspace spanned by the exponentials $\{e^{\im jx},e^{-\im jx}\}$. Assuming $q(x)$ real analytic, Craig \& Wayne \cite{CrWa} proved the following exponential decay
\begin{equation*}
	| (\psi_{j},e^{\im j'x})_{L^2(\T)} | \lesssim e^{-\sigma | j - |j'| |} \,, \quad j,j'\in \Z\,,
\end{equation*}
where $\sigma>0$ here denotes the radius of analyticity. In our case we have $q(x)$ only infinitely smooth and therefore their result cannot apply. What we can show, instead, is the polynomial decay
\begin{equation*}
	| (\psi_{j},e^{\im j'x})_{L^2(\T)} | \lesssim \braket{j-|j'|}^{-s} \,, \quad j,j'\in \Z\,,
\end{equation*}
for some Sobolev regularity $s>0$. This will be proved in Theorem \ref{thm:decay_eigen_smooth}.  The proof is based on a Lyapunov-Schmidt reduction with respect to the subspace ${\rm span} \{e^{-\im jx}, e^{\im jx}\}$, following \cite{Posch11}.  We use these bounds to convert the bounded and smoothing pseudodifferential perturbations provided by the Magnus normal form into classes of matrix representations with off-diagonal decay, suitable for the KAM reducibility scheme, see Theorem \ref{thm:embed_PSDO_decay}. 
The price to pay is a loss of regularity coming from the change of the basis, which, however, will affect the KAM reducibility scheme only for the estimates for its initial step.
\\[1mm]
\noindent {\bf The KAM reducibility and the balanced Melnikov conditions.} 
We are finally ready to perform the KAM reducibility scheme. This step is nowadays quite standard and it is presented in Section \ref{sec:kam}. The difference with \cite{FM19} is that we previously considered Dirichlet boundary condition on the compact interval $[0,\pi]$, whereas here we work with periodic boundary conditions. The consequence is that we cannot achieve a full diagonalization of the Hamiltonian due to the multiplicity of the eigenvalues $\lambda_j=\lambda_{-j}$ in \eqref{assumption Q} for $j\neq 0$, but only the reducibility to the block diagonal operator in \eqref{eq:eff_sys}. This reduction has been achieved for several different equations: in this paper we decided to follow in some parts \cite{Mont.Kirch17}.

Also in this case, second order Melnikov conditions  on the unperturbed eigenvalues $\lambda_j = \sqrt{j^2 + \tq+d(j)}$ are needed, namely lower bounds on the small denominators $| \omega\cdot \ell + \lambda_j \pm \lambda_{j'}|$ when they do not identically vanish. 
The same issue that has been encountered in \cite{FM19} of the interplay between the size of new perturbation  $\sim |\omega|^{-1}$, the size of the small denominators $\sim |\omega|$ and the smoothing properties of the perturbation.
To overcome the problem, we  impose  {\em balanced} Melnikov conditions, in which   we   balance the loss in size (in the denominator) and gain in regularity (in the numerator).
More precisely, we show that  for any $\alpha \in [0,1]$ one can impose 
\begin{equation}\label{eq:meln_intro}
	\begin{aligned}
		| \omega\cdot \ell + \lambda_j \pm \lambda_{j'}| \geq \frac{\gamma}{\braket{\ell}^\tau}\frac{\braket{|j|\pm |j'|}^\alpha}{|\omega|^\alpha} \,,
	\end{aligned}
\end{equation}
for any  $(\ell, j, j') \in  \Z^\nu\times \Z \times \Z $,  $(k,|j|,|j'|) \neq (0,|j|, |j|)$, 
for a set of parameters $\omega$ in $R_\tM$ of large relative measure. 
Note that the choice of $	\alpha$ will influence the regularizing effect given by $\la j\pm l \ra^\alpha$ in the right-hand side of \eqref{eq:meln_intro}; ultimately, this  modifies the asymptotic expansion of the final eigenvalues, as one can see in 
\eqref{eq:eff_eig}. The non-resonance condition \eqref{eq:meln_intro} will be extended in Section \ref{sec:melnikov} to the eigenvalues of the final blocks in the normal form \eqref{eq:eff_sys}, which will be proved to hold on a set of relative large measure with respect to $R_{\tM}$ in Theorem \ref{lem:meas_infty}.

\medskip

\noindent {\bf Acknowledgments.} The author would like to thank Massimiliano Berti and Alberto Maspero for the fruitful discussion when this work started. The work of the author is supported by Tamkeen under the NYU Abu Dhabi Research Institute grant CG002.

\section{Functional settings}\label{sec:fun}
Given  a set  $\Omega \subset \R^\nu$ and a Fréchet space $\cF$, the latter endowed with a system of seminorms $\{\|\,\cdot\,\|_n \,:\, \ n\in\N_0\}$, we define for a function $f: \Omega \ni \omega \mapsto f(\omega) \in \cF$ 
the quantities 
\begin{equation}
\label{lip}
\abs{f}_{n, \Omega}^\infty:=
\sup_{\omega \in \Omega} \norm{f(\omega)}_n
\ , \qquad
\abs{f}_{n,\Omega}^{\lip} :=
\sup_{\omega_1, \omega_2 \in \Omega \atop \omega_1 \neq \omega_2 }
\frac{\norm{f(\omega_1)- f(\omega_2)}_n}{\abs{\omega_1 - \omega_2}} .
\end{equation}
Given 
$\tw \in \R_+$,  we 
denote by $\lip_\tw(\Omega, \cF)$ the space of functions from $\Omega$ into $\cF$  such that 
\begin{equation}\label{eq:wlip_def}
\norm f_{n}^{\wlip{\tw}} = \norm f_{n,\Omega}^{\wlip{\tw}} := \abs f_{n,\Omega}^\infty + \tw \abs f _{n-1,\Omega}^{\lip} < \infty \ .
\end{equation}

\subsection{Pseudodifferential calculus}\label{sub:pseudo}
The Magnus transform in Section \ref{sec:magnus} is based on the calculus with pseudodifferential operators acting on the scale of the Sobolev spaces $H^s(\T^{\nu+1})$, $s\geq s_0$, as defined in \eqref{eq:sob}.
In this section we report fundamental notions of pseudodifferential calculus,
following \cite{BFM,BM}. 
\begin{defn}{\bf ($\Psi$DO)} \label{def:symbol}
	A  \emph{pseudodifferential} symbol $ a (x,j) $
	of order $m$ is 
	the restriction to $ \R \times \Z $ of a function $ a (x, \xi ) $ which is $ \cC^\infty $-smooth on $ \R \times \R $,
	$ 2 \pi $-periodic in $ x $, and satisfies 
	$
	\sup_{x\in\R}| \pa_x^\alpha \pa_\xi^\beta a (x,\xi ) | \leq C_{\alpha,\beta} \langle \xi \rangle^{m - \beta} $ for any $\xi\in\R$  and any $ \alpha, \beta \in \N_0
	$.
	We denote by $ S^m $ 
	the class of  symbols  of order $ m $ and 
	$ S^{-\infty} := \cap_{m \geq 0} S^m $. 
	To a  symbol $ a(x, \xi ) $ in $S^m$ we associate its quantization acting on a $ 2 \pi $-periodic function 
	$ u(x) = \sum_{j \in \Z} \whu(j)\, e^{\im j x} $
	as 
	$$
	[\Op(a)u](x) : = \sum_{j \in \Z}  a(x, j ) \whu(j) \, e^{\im j x} \, . 
	$$
	We denote by  $ \Ops^m $ 
	the  set of pseudodifferential  operators of order $ m $ and
	$ \Ops^{-\infty} := \bigcap_{m \in \R} \Ops^{m} $.
\end{defn}

When the symbol $ a (x) $ is independent of $ \xi $, the operator $ \Op (a) $ is 
the multiplication operator by the function $ a(x)$, i.e.\  $ \Op (a) : u (x) \mapsto a ( x) u(x )$. 
In such a case we also denote $ \Op (a)  = a (x) $.

Along the paper we consider families of pseudodifferential operators 
with a symbol $  a(\omega;\vf,x,\xi)  $
which is Lipschitz continuous with respect to a parameter $\omega\in\R^\nu$ in a open subset $\t\Omega\subset R_\tM$.

\begin{defn}
	{\bf (Weighted $\Psi DO$ norm)} \label{defn.pseudo.norm}
	Let $ A(\omega) := a(\omega; \vf, x, D) \in \Ops^m $ 
	be a family of pseudodifferential operators with symbol $ a(\omega; \vf, x, \xi) \in S^m $, $ m \in \R $, which are 
	Lipschitz continuous with respect to $ \omega\in\t\Omega\subset R_\tM $. 
	For $ \gamma \in (0,1) $, $ \alpha \in \N_0 $, $ s \geq 0 $, we define  
	$$
	\normL{A}{\tw}_{m, s, \alpha} := 
	\sup_{\omega\in\t\Omega}\norm{A(\omega)}_{m, s, \alpha} + \tw \sup_{\omega_1, \omega_2 \in \t\Omega \atop \omega_1 \neq \omega_2 } \frac{\| A(\omega_1)-A(\omega_2)\|_{m,s-1,\alpha} }{|\omega_1-\omega_2|} 
	$$
	where 
	$
	\norm{A(\omega)}_{m, s, \alpha} := 
	\max_{0 \leq \beta  \leq \alpha} \, \sup_{\xi \in \R} \|  \partial_\xi^\beta 
	a(\omega; \cdot, \cdot, \xi )  \|_{s} \  \langle \xi \rangle^{-m + \beta} $. 
\end{defn}

Given a function $a(\omega; \vf, x) \in \cC^\infty$ which is Lipschitz continuous with respect to $\omega$, the weighted norm of the corresponding multiplication operator is $	\normL{\Op(a)}{\tw}_{0,s,\alpha} = \normL{a}{\tw}_{s}  $ for any $\alpha \in \N_0$.

\paragraph{Composition of pseudodifferential operators.}
If $ {\rm Op}(a) $, ${\rm Op}(b) 
$ are pseudodifferential operators with symbols $a\in S^m$, $b\in S^{m'}$, 
$m,m'\in\R$,  
then the composition operator 
$ {\rm Op}(a) {\rm Op}(b) $ 
is a pseudodifferential operator  $ {\rm Op}(a\# b) $ with symbol $a\# b\in S^{m+m'}$. 
It 
admits the  asymptotic expansion: for any $N\geq 1$
\begin{align}\label{compo_symb}
	\begin{footnotesize}
		\begin{aligned}
			(a\# b)(\omega;\vf,x,\xi) & = \sum_{\beta= 0}^{N-1} \frac{1}{\im^\beta \beta!} \pa_\xi^\beta a(\omega;\vf,x,\xi) \pa_x^\beta b(\omega;\vf,x,\xi) + (r_N(a,b))(\omega;\vf,x,\xi) \,,
		\end{aligned}
	\end{footnotesize}
\end{align}
where  $ r_N(a,b) \in S^{m+m'-N} $. 
The following result is proved in Lemma 2.13 in \cite{BM}.

\begin{lem}{\bf (Composition)} \label{pseudo_compo}
	Let $ A = a(\omega; \vf, x, D) $, $ B = b(\omega; \vf, x, D) $ be pseudodifferential operators
	with symbols $ a (\omega;\vf, x, \xi) \in S^m $, $ b (\omega; \vf, x, \xi ) \in S^{m'} $, $ m , m' \in \R $. Then $ A \circ B \in \Ops^{m + m'} $
	satisfies,   for any $ \alpha \in \N_0 $, $ s \geq s_0 $, 
	\begin{equation}\label{eq:est_tame_comp}
		\begin{split}
			\normL{A B}{\tw}_{m + m', s, \alpha}&
			\lesssim_{m,  \alpha} C(s) \normL{ A }{\tw}_{m, s, \alpha}
			\normL{ B}{\tw}_{m', s_0 + |m|+\alpha, \alpha}  \\
			& \ \quad \quad + C(s_0) \normL{A }{\tw}_{m, s_0, \alpha}
			\normL{ B }{\tw}_{m', s + |m|+\alpha, \alpha} \, .
		\end{split}
	\end{equation}
	Moreover, for any integer $ N \geq 1  $,  
	the remainder $ R_N := {\rm Op}(r_N) $ in \eqref{compo_symb} satisfies
	\begin{equation}
		\begin{aligned}
			\normL{\Op(r_N(a,b))}{\tw}_{m+ m'- N, s, \alpha}
			&\lesssim_{m, N,  \alpha} 
			C(s) \normL{ A}{\tw}_{m, s, N + \alpha}
			\normL{ B }{\tw}_{m', s_0 + \abs{m} +  2N  + \alpha,N+\alpha }
			\\
			& \ \quad \quad  +  C(s_0)\normL{A}{\tw}_{m, s_0   , N + \alpha}
			\normL{ B}{\tw}_{m', s +|m| + 2 N  + \alpha, N+ \alpha }.
			\label{eq:rem_comp_tame} 
		\end{aligned}
	\end{equation}
	Both  \eqref{eq:est_tame_comp}-\eqref{eq:rem_comp_tame} hold  
	with the constant $ C(s_0) $ 
	interchanged with $ C(s) $. \\
\end{lem}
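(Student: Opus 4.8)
The plan is to argue entirely at the level of symbols: first produce the exact composition formula (which also yields the asymptotic expansion \eqref{compo_symb} with an explicit remainder), then obtain the two bounds by a Fourier-mode bookkeeping whose single analytic ingredient is the interpolation estimate on $H^s(\T^{\nu+1})$. First I would expand $b$ in its $x$-Fourier series, $b(\vf,x,\xi)=\sum_{k\in\Z}\widehat b(\vf,k,\xi)\,e^{\im kx}$, and apply $\Op(a)\Op(b)$ to $u(\vf,x)=\sum_{(\ell,j)}\whu(\ell,j)\,e^{\im(\ell\cdot\vf+jx)}$; a direct computation gives $\Op(a)\Op(b)=\Op(a\#b)$ with
\[
(a\#b)(\vf,x,\xi)=\sum_{k\in\Z}a(\vf,x,\xi+k)\,\widehat b(\vf,k,\xi)\,e^{\im kx}\,,\qquad\xi\in\R\,,
\]
the right-hand side being $\cC^\infty$. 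Taylor expanding $a(\vf,x,\xi+k)$ in its third slot around $\xi$ to order $N$ with integral remainder, and using $\sum_k k^\beta\,\widehat b(\vf,k,\xi)\,e^{\im kx}=\im^{-\beta}\pa_x^\beta b(\vf,x,\xi)$, one obtains exactly \eqref{compo_symb} with
\[
r_N(a,b)(\vf,x,\xi)=\frac{1}{(N-1)!}\int_0^1(1-\tau)^{N-1}\sum_{k\in\Z}k^N\,(\pa_\xi^N a)(\vf,x,\xi+\tau k)\,\widehat b(\vf,k,\xi)\,e^{\im kx}\,\di\tau\,.
\]

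To prove \eqref{eq:est_tame_comp} I would estimate $\|\pa_\xi^\beta(a\#b)(\cdot,\cdot,\xi)\|_s$ for $0\le\beta\le\alpha$ term by term in the series above. After Leibniz, each summand is a product of $(\pa_\xi^{\beta_1}a)(\cdot,\cdot,\xi+k)$ — of order $m-\beta_1$, hence carrying a factor $\langle\xi+k\rangle^{m-\beta_1}$ in $\|\cdot\|_s$ — with the $k$-th $x$-Fourier coefficient of $(\pa_\xi^{\beta_2}b)(\cdot,\cdot,\xi)$ (a function of $\vf$ only) and with $e^{\im kx}$, where $\beta_1+\beta_2=\beta$. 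Since $s_0>(\nu+1)/2$, $H^{s_0}(\T^{\nu+1})$ is a Banach algebra continuously embedded in $L^\infty$, so the Moser estimate $\|fg\|_s\lesssim_s\|f\|_{s_0}\|g\|_s+\|f\|_s\|g\|_{s_0}$ holds for $s\ge s_0$, and $\|g\,e^{\im k\cdot}\|_s\lesssim_s\langle k\rangle^s\|g\|_s$ for the modulation; this is all the analysis required. I would then split the $k$-sum according to whether $\langle k\rangle$ is small or large relative to the $(\vf,x)$-frequency of the product before modulation: for small $k$ the modulation is harmless and the Moser estimate is applied with the high norm on $a$, producing $C(s)\normL{A}{\tw}_{m,s,\alpha}\normL{B}{\tw}_{m',s_0+|m|+\alpha,\alpha}$; for large $k$ the weight $\langle k\rangle^s$ is absorbed by the rapid decay of the $x$-Fourier coefficients of $b$ (its $x$-smoothness), at the cost of $|m|+\alpha$ extra $x$-derivatives on $b$, producing $C(s_0)\normL{A}{\tw}_{m,s_0,\alpha}\normL{B}{\tw}_{m',s+|m|+\alpha,\alpha}$. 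The $\xi$-shift in $a$ is converted into a $\langle k\rangle$-weight via Peetre's inequality $\langle\xi+k\rangle^{m-\beta_1}\lesssim\langle\xi\rangle^{m-\beta_1}\langle k\rangle^{|m|+\beta_1}$ — the source of the $|m|$-loss — while the surviving $\langle\xi\rangle^{m-\beta_1}\langle\xi\rangle^{m'-\beta_2}=\langle\xi\rangle^{m+m'-\beta}$ gives the correct order. Summing the (absolutely convergent) double series in $(k,\beta_1,\beta_2)$ gives \eqref{eq:est_tame_comp}.

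For the remainder I would run the same computation on the integral formula for $r_N(a,b)$: now $\pa_\xi^N a$ has order $m-N$, so Peetre yields $\langle k\rangle^{|m|+N}$, and the factor $k^N$ contributes a further $\langle k\rangle^N$; together these cost $2N$ extra $x$-derivatives on $b$, while $N$ extra $\xi$-derivatives are spent on $a$, and the surviving $\langle\xi\rangle^{m-N}$ is the gain $\langle\xi\rangle^{-N}$ relative to the order $m+m'$ — this is \eqref{eq:rem_comp_tame}. The Lipschitz dependence in $\omega$ follows from $A(\omega_1)B(\omega_1)-A(\omega_2)B(\omega_2)=(A(\omega_1)-A(\omega_2))B(\omega_1)+A(\omega_2)(B(\omega_1)-B(\omega_2))$ together with the fact that the difference quotient of a symbol in $S^m$ is, by the mean value theorem, controlled in $S^m$ by the $\tw$-weighted part of $\normL{\cdot}{\tw}$, which carries one fewer $(\vf,x)$-derivative; applying the bounds just obtained to these difference quotients and adding the sup-parts gives the stated inequalities, and the interchange of $C(s)$ and $C(s_0)$ is immediate from $s\ge s_0$ and monotonicity of the seminorms in $s$.

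The analysis is elementary; the real work is the bookkeeping in the middle step. The point is to arrange the small-$k$/large-$k$ splitting so that one never pays a full $\langle k\rangle^s$ and a high Sobolev norm of $a$ simultaneously — the naive estimate loses an extra $\sim s$ derivatives on $b$ — while keeping the tame structure (the high norm on exactly one of the two factors) through the entire double series, and to track the orders carefully enough to land precisely on the advertised losses $s_0+|m|+\alpha$ and $s+|m|+\alpha$ (respectively with $2N$ in place of $\alpha$ for $r_N$). Once this is set up, the rest is routine.
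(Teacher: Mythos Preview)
The paper does not actually prove this lemma: it states explicitly that ``the following result is proved in Lemma 2.13 in \cite{BM}'' and gives no argument. Your proposal is the standard proof --- the exact composition formula via the $x$-Fourier series of $b$, the Taylor expansion in $\xi$ to isolate $r_N$, Peetre to handle the $\xi$-shift, and the tame product estimate with a frequency splitting --- and this is precisely the route taken in the cited reference, so your sketch is correct and aligned with the intended proof.
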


The commutator between two pseudodifferential operators $ \Op(a)\in \Ops^m$ and $\Op(b)\in\Ops^{m'}$ is a pseudodifferential operator
in $ \Ops^{m+m'-1}$ with symbol $a\star b\in S^{m+m'-1}$, 
namely $ \left[ \Op(a), \Op(b)\right] = \Op\left( a\star b \right)$, 
that admits, by \eqref{compo_symb},  the expansion
\begin{equation}\label{eq:moyal_exp}
	\begin{aligned}
		& a\star b= -\im\left\{ a,b \right\} + \wt{r_2}(a,b) \,, \quad \wt{r_2}(a,b):=r_2(a,b)-r_2(b,a)\in S^{m+m'-2} \,, \\
		& 
		{\rm where} \quad  \{ a,b \}:= \pa_\xi a \pa_x b - \pa_x a \pa_\xi b 
	\end{aligned}
\end{equation}
is the Poisson bracket between $a(x,\xi)$ and $b(x,\xi)$. 
As a corollary of  Lemma \ref{pseudo_compo} we have the following result,  which is proved in Lemma 2.15 in \cite{BM}. 
\begin{lem}{\bf (Commutator)} \label{pseudo_commu}
	Let $A = {\rm Op}(a) $ and $B = {\rm Op} (b) $ be pseudodifferential operators with symbols $a(\omega;\vf,x,\xi)\in S^{m}$, $b(\omega;\vf,x,\xi)\in S^{m'}$, $m,m'\in \R$. Then the commutator $[A,B]:=AB-BA\in \Ops^{m+m'-1}$ satisfies
	\begin{equation}\label{eq:comm_tame_AB}
		\begin{aligned}
			\normL{[A,B]}{\tw}_{m+m'-1,s,\alpha} & \lesssim_{m, m', \alpha} C(s)\normL{A }{\tw}_{m,s+|m'|+\alpha+2,\alpha+1}\normL{ B }{\tw}_{m',s_0+|m|+\alpha+2,\alpha+1} \\
			& \qquad \quad \ + C(s_0)\normL{A }{\tw}_{m,s_0+|m'|+\alpha+2,\alpha+1} \normL{ B }{\tw}_{m',s+|m|+\alpha+2,\alpha+1} \,.
		\end{aligned}
	\end{equation}
\end{lem}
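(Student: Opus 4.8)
The plan is to reduce the estimate entirely to the composition result Lemma~\ref{pseudo_compo}, used with truncation order $N=1$. Writing $[A,B]=AB-BA=\Op(a\#b)-\Op(b\#a)=\Op(a\#b-b\#a)$ and expanding by \eqref{compo_symb} with $N=1$, one gets $a\#b=a\,b+r_1(a,b)$ and $b\#a=b\,a+r_1(b,a)$, with $r_1(a,b),\,r_1(b,a)\in S^{m+m'-1}$. Since $a,b$ are scalar symbols their pointwise product is commutative, $a\,b=b\,a$, so the leading terms cancel and
\[
[A,B]=\Op\big(r_1(a,b)\big)-\Op\big(r_1(b,a)\big)\in\Ops^{m+m'-1}\,,
\]
in accordance with \eqref{eq:moyal_exp}. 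Thus it suffices to bound the weighted $\Psi$DO norm of each of the two remainders.

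First I would apply the remainder bound \eqref{eq:rem_comp_tame} with $N=1$ to $\Op(r_1(a,b))$, obtaining
\[
\begin{aligned}
\normL{\Op(r_1(a,b))}{\tw}_{m+m'-1,s,\alpha}&\lesssim_{m,\alpha}C(s)\,\normL{A}{\tw}_{m,s,\alpha+1}\normL{B}{\tw}_{m',s_0+|m|+\alpha+2,\alpha+1}\\
&\quad+C(s_0)\,\normL{A}{\tw}_{m,s_0,\alpha+1}\normL{B}{\tw}_{m',s+|m|+\alpha+2,\alpha+1}\,;
\end{aligned}
\]
here the $B$-factors already carry exactly the regularity and $\xi$-derivative orders appearing on the right of \eqref{eq:comm_tame_AB}, while the $A$-factors, at regularities $s$ and $s_0$ respectively, are dominated by $s+|m'|+\alpha+2$ and $s_0+|m'|+\alpha+2$ because the weighted norm $\normL{\,\cdot\,}{\tw}_{m,s,\alpha}$ is monotone non-decreasing in $s$ (and in $\alpha$). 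Applying \eqref{eq:rem_comp_tame} again to $\Op(r_1(b,a))$, i.e. with the roles of $A,B$ and of $m,m'$ exchanged, yields two further terms whose $A$-factors already sit at regularities $s_0+|m'|+\alpha+2$ and $s+|m'|+\alpha+2$, and whose $B$-factors, at regularity $s$ and $s_0$, are raised by the same monotonicity to $s+|m|+\alpha+2$ and $s_0+|m|+\alpha+2$.

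To finish, I would collect the four contributions and match the prefactors $C(s)$, $C(s_0)$ with the two summands of \eqref{eq:comm_tame_AB}: one of the two terms coming from $\Op(r_1(b,a))$ carries a $C(s)$ where the corresponding summand of \eqref{eq:comm_tame_AB} has $C(s_0)$, but this is harmless since \eqref{eq:rem_comp_tame} also holds with $C(s_0)$ replaced by $C(s)$ (equivalently, $C(\cdot)$ may be taken non-decreasing). Summing the four bounds gives \eqref{eq:comm_tame_AB}.

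I expect the main obstacle to be purely the bookkeeping of the indices and, in particular, the choice of truncation order: it is essential to use $N=1$ rather than the finer expansion $a\star b=-\im\{a,b\}+\wt r_2(a,b)$ from \eqref{eq:moyal_exp}. Indeed, estimating the Poisson-bracket term by a Leibniz/tame-product argument together with the order $m+m'-2$ remainder $\wt r_2$ via \eqref{eq:rem_comp_tame} with $N=2$ would cost $\alpha+2$ derivatives in $\xi$ instead of $\alpha+1$, hence would prove only a strictly weaker statement than \eqref{eq:comm_tame_AB}.
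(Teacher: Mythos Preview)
Your approach is correct and is precisely the one the paper intends: the lemma is stated as a corollary of Lemma~\ref{pseudo_compo} (and cited to Lemma~2.15 in \cite{BM}), and the derivation proceeds exactly by writing $[A,B]=\Op(r_1(a,b))-\Op(r_1(b,a))$ and applying the remainder bound \eqref{eq:rem_comp_tame} with $N=1$ to each piece, then using monotonicity in $s$. One small clarification: to align the $C(s)/C(s_0)$ prefactors with those of \eqref{eq:comm_tame_AB} you should invoke the last sentence of Lemma~\ref{pseudo_compo}, namely that \eqref{eq:rem_comp_tame} holds with $C(s_0)$ and $C(s)$ \emph{interchanged}; applying the interchanged version to $\Op(r_1(b,a))$ makes all four terms match exactly, without appealing to monotonicity of $C(\cdot)$.
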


%
%
%

The following result says that the operator $B=\sqrt{-\pa_{xx}+q(x)}$ is also a pseudodifferential operator.
\begin{thm}{\bf (Powers of $L_q$)}\label{cor:pseudo_sqrt}
	It holds that $B:=L_q^{1/2}\in\Ops^1$ and $B^\mu\in\Ops^\mu$ for any $\mu\in\R$.
\end{thm}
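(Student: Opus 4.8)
The plan is to construct $B^\mu$ via the functional calculus for the self-adjoint positive operator $L_q = -\partial_{xx} + q(x)$ and then show that the resulting operator is pseudodifferential by building a parametrix for the resolvent $(L_q - \lambda)^{-1}$ at the symbol level. First I would invoke Theorem \ref{thm:funct_seeley}: since $L_q$ is self-adjoint with $\inf\spec(L_q) > 0$, the complex powers $L_q^\mu$ are well-defined by the Dunford–Riesz integral
\begin{equation*}
	L_q^\mu = \frac{1}{2\pi\im}\oint_\Gamma \lambda^\mu (L_q - \lambda)^{-1}\,\di\lambda
\end{equation*}
(for $\re\mu$ in a suitable range; the general case follows from the group property $L_q^{\mu_1} L_q^{\mu_2} = L_q^{\mu_1+\mu_2}$, reducing to $\mu\in(0,1)$ composed with integer powers $L_q^k\in\Ops^{2k}$), along a contour $\Gamma$ encircling the (positive, discrete) spectrum. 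The point of this representation is that it converts the analytically awkward object $L_q^\mu$ into a contour integral of resolvents, each of which I can handle symbolically.

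The second and main step is to realize $(L_q-\lambda)^{-1}$ as a pseudodifferential operator with symbol depending on the parameter $\lambda$, following Seeley \cite{seeley} and Shubin \cite{shubin}. One builds, by the standard parametrix construction, a symbol $b_{-2}(x,\xi;\lambda) + b_{-3}(x,\xi;\lambda) + \cdots$, where $b_{-2} = (\xi^2 + q(x) - \lambda)^{-1}$ and the lower-order terms $b_{-2-k}$ are determined recursively so that $(\xi^2 + q(x) - \lambda)\#\big(\sum_{k<N} b_{-2-k}\big) = 1 + (\text{symbol of order} -N)$ — this is exactly the obstruction flagged in the excerpt, that $b\#b\neq b^2$, resolved order by order. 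One tracks the dependence on $\lambda$ carefully: each $b_{-2-k}$ is a rational function of $\lambda$ vanishing like $\lambda^{-1-\lceil k/2\rceil}$ or so, with the correct decay to make the contour integral converge and to land in the right symbol class. Inserting this expansion into the Dunford integral and integrating $\lambda^\mu b_{-2-k}(x,\xi;\lambda)$ over $\Gamma$ term by term produces, via a residue/contour computation, a symbol $a_\mu(x,\xi) = a_{\mu} + a_{\mu-1} + \cdots$ with $a_{\mu-k}\in S^{\mu-k}$, whose leading term is $(\xi^2+q(x))^{\mu/2}$; this is the content of Theorem \ref{thm:structure_seeley}. The remainder after $N$ terms is controlled by the remainder in the parametrix construction and is a genuinely smoothing operator modulo a symbol in $S^{\mu-N}$, hence $\Op(a_\mu)$ and $L_q^\mu$ differ by an operator in $\Ops^{-\infty}$; since $\Ops^{-\infty}\subset\Ops^{\mu-N}\subset\Ops^\mu$ we conclude $L_q^\mu\in\Ops^\mu$. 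Taking $\mu=1/2$ gives $B\in\Ops^1$, and $\mu$ arbitrary gives $B^\mu = L_q^{\mu/2}\in\Ops^\mu$.

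The hard part is the bookkeeping in the second step: verifying that the recursively defined parametrix symbols $b_{-2-k}(x,\xi;\lambda)$ satisfy the parameter-dependent estimates $|\partial_x^a\partial_\xi^\beta b_{-2-k}| \lesssim \langle\xi\rangle^{-2-k-\beta}(|\xi|^2 + |\lambda|)^{-?}$ uniformly on the contour, so that the $\lambda$-integral both converges absolutely and produces a symbol in the claimed class $S^{\mu-k}$ — in particular that no spurious growth in $\xi$ is introduced by the integration. This is precisely the classical Seeley construction, so rather than reproduce it I would state it as Theorem \ref{thm:structure_seeley} and defer the detailed verification to Appendix \ref{app:funct.calc}, using there the composition formula \eqref{compo_symb} and Lemma \ref{pseudo_compo} to control $b\#b$ and its iterates. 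The group property $B^{\mu_1}B^{\mu_2}=B^{\mu_1+\mu_2}$ needed downstream is inherited automatically from the functional calculus, which is the reason for routing the argument through Theorem \ref{thm:funct_seeley} rather than guessing a symbol directly.
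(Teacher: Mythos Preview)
Your proposal is correct and follows essentially the same route as the paper: define $L_q^{\mu}$ via the Seeley functional calculus (Theorem \ref{thm:funct_seeley}) and then invoke the structure theorem (Theorem \ref{thm:structure_seeley}), built on the parameter-dependent parametrix for the resolvent, to conclude $L_q^{z}\in\Ops^{2\re(z)}$, hence $B=L_q^{1/2}\in\Ops^1$ and $B^\mu=L_q^{\mu/2}\in\Ops^\mu$. One small slip: midway you write ``$L_q^\mu\in\Ops^\mu$'' with leading symbol $(\xi^2+q(x))^{\mu/2}$, but since $L_q$ has order $2$ this should read $L_q^\mu\in\Ops^{2\mu}$ with principal symbol $(\xi^2+q(x))^{\mu}$; your final conclusions are stated correctly, so this is only a notational inconsistency.
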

The proof of this theorem is provided in Appendix \ref{app:funct.calc}.

\subsection{Matrix representation and operator matrices}
\label{sec:mo}
For the KAM reducibility, a second and wider class of operators without a pseudodifferential structure is needed on the scale of Hilbert spaces $\left( H^{r} :=  H^r(\T^{\nu+1}) \right)_{r\in\R}$, as defined as in \eqref{eq:sob}. Moreover, let $H^\infty:=\bigcap_{r\in\R}H^r$ and $H^{-\infty}:=\bigcup_{r\in\R}H^r$.\\
If $A=A(\vf)$ is a linear operator, we denote by $A^*$ the adjoint of $A$ with respect to the scalar product of $H^0(\T^{\nu+1})=L^2(\T^{\nu+1})$,
whereas we denote by $\bar A$ the conjugate operator:
$\bar A \psi := \bar{A\bar \psi}$, for any $ \psi \in D(A)$.
\\[2mm]
\noindent{\bf Block representation of operators.}
In the following we partially follow \cite{Mont.Kirch17}.
Consider a family of $\vf$-dependent linear operator $A=A(\vf) : H^\infty(\T^{\nu+1}) \to H^{-\infty}(\T^{\nu+1})$ acting on scalar functions
\begin{equation}\label{u.expan}
	u(\vf,x) = \sum_{j'\in\Z} u^{j'}(\vf)\psi_{j'}(x)= \sum_{\ell'\in\Z^\nu \atop j'\in\Z  }u^{\ell',j'} e^{\im \ell'\cdot\vf}\psi_{j'}(x)
\end{equation}
as
\begin{equation}\label{Au.expan}
\begin{aligned}
	A(\vf)u(\vf,x)& = \sum_{j,j'\in\Z}A_j^{j'}(\vf) u^{j'} (\vf)\psi_{j}(x)   =  \sum_{\ell,\ell'\in\Z^\nu\atop  j,j'\in\Z  }A_{j}^{j'}(\ell-\ell') u^{\ell',j'}e^{\im\ell\cdot\vf} \psi_{j}(x)\,,
\end{aligned}
\end{equation}
where $A_j^{j'}:=(A\psi_{j'},\psi_{j})_{L^2}$.
We shall identify the linear operator $A(\vf)$ with the scalar valued matrix $(A_j^{j'}(\vf))_{j,j'\in\Z}=(A_{j}^{j'}(\ell-\ell'))_{ \ell,\ell'\in\Z^\nu,\atop j,j'\in\Z}$. We partition $\Z$ as $\Z = \bigsqcup_{n\in\N_0} [n] $, where  $[n]:=\{-n,n\}$  for $n\neq 0$  and $ [0]:=\{0\}$.
Therefore, we further  identify the operator $A$ with the tensor valued matrix $A=(A_{[n]}^{[n']}(\ell-\ell'))_{\ell,\ell'\in\Z^\nu, \atop n,n'\in \N_0}$, where, for any $n,n'\in\N_0$ and $\ell\in\Z^\nu$, we define $A_{[n]}^{[n']}(\ell)$ as
 \begin{equation}\label{blocks.not}
 	\begin{aligned}
 		& A_{[n]}^{[n']}(\ell):=  \begin{pmatrix}
 			A_{-n}^{-n'} (\ell)& A_{n}^{-n'}(\ell) \\
 			A_{-n}^{n'}(\ell) & A_n^{n'} (\ell)
 		\end{pmatrix}   \in \C^{2\times 2}\,, \ n,n'\neq 0 \,, \\
 		& A_{[0]}^{[n]}(\ell):= \begin{pmatrix}
 			A_{0}^{-n}(\ell) \\ A_{0}^{n}(\ell)
 		\end{pmatrix} =  (A_{[n]}^{[0]}(\ell))^T \in \C^{2\times 1} \,, \ n\neq 0 \,, \quad A_{[0]}^{[0]}(\ell):= A_0^0(\ell)\in\C\,.
 	\end{aligned}
 \end{equation}
Each $\#[n]\times\#[n']$ matrix $A_{[n]}^{[n']}(\ell)$ may be identified with a linear operator in $\cL(\fE_{n'},\fE_{n})$, where, for $n\in\N_0$,
\begin{equation}
	\fE_0:={\rm span}\{\psi_{0}(x)\}\,, \quad \fE_{n};={\rm span}\{\psi_{-n}(x),\psi_{n}(x)\}\,, \ n\geq1\,.
\end{equation}
Note that each finite dimensional space $\fE_{n}$ is an eigenspace for the operator $L_q$ with eigenvalue $\lambda_{n}$, see \eqref{assumption Q}, and that any linear operator $T\in \cL(\fE_{n'},\fE_{n})$ may be identified with a tensor $(T_{j}^{j'})_{j\in[n],\,j'\in[n']}$, with action given by
\begin{equation}
u(x)=\sum_{j'\in[n']}u^{j'}\,\psi_{j'}(x) \in\fE_{n'} \ \mapsto	\ Tu(x) = \sum_{j\in[n],\, j'\in[n']} T_{j}^{j'} u^{j'} \,\psi_{j}(x)\in \fE_{n}\,, \quad  \,.
\end{equation}
If $n=n'$, then we simply write $\cL(\fE_{n}):=\cL(\fE_{n},\fE_{n})$ and we denote by $\uno_{[n]}:=\uno_{\#[n]}$ the identity operator on $\fE_{n}$. By \eqref{u.expan}, \eqref{Au.expan}, \eqref{blocks.not},
the action of a linear operator $A$  on functions
\begin{equation}
	u(\vf,x)= \sum_{n'\in\N_0}\psi_{[n']}(x) u^{[n']}(\vf)  =\sum_{\ell'\in\Z^\nu \atop n'\in\N_0}\psi_{[n']}(x)u^{\ell',[n']} e^{\im\ell'\cdot\vf} \,,
\end{equation}
reads with respect to the block representation as
\begin{equation}
	Au(\vf,x)= \sum_{n,n'\in\N_0}  \psi_{[n]}(x)A_{[n]}^{[n']}(\vf)u^{[n']}(\vf) =\sum_{\ell,\ell'\in\Z^\nu \atop n,n'\in\N_0} \psi_{[n]}(x)A_{[n]}^{[n']}(\ell-\ell')u^{\ell',[n']}e^{\im\ell\cdot\vf} \,,
\end{equation}
where we denote $u^{[0]}(\vf):=u^{0}(\vf)\in L^2(\T^\nu)$, $ \psi_{[0]}:=\psi_{0} \in L^2(\T)$ and, for $n\geq 1$, 
\begin{equation}
	\begin{aligned}
		& u^{[n]}(\vf):=
		\begin{pmatrix}
			u^{-n}(\vf) \\ u^{n}(\vf) 
		\end{pmatrix}
		\in \C^{2\times 1}\otimes L^2(\T^\nu)\,, \quad \psi_{[n]}=\begin{pmatrix}
			\psi_{-n}, & \!\!\psi_{n}
		\end{pmatrix} \in \C^{1\times 2} \otimes L^2(\T)\,.
	\end{aligned}
\end{equation}

\begin{rem}\label{rem:coeff.mat}
	If $A(\vf)$ is a bounded operator, the following implications hold:
	\begin{align*}
	& A  = A^* \Longleftrightarrow  A_{j}^{j'}(\ell)  = \bar{ A_{j'}^{j}(-\ell)}
	\ \ \  \forall\,\ell\in\Z^\nu, \  j,j' \in \Z \,; \\
	& \bar A  = A^* \Longleftrightarrow  A_{j}^{j'}(\ell) =  A_{j'}^{j}(\ell)
	\  \ \ \forall\, \ell\in\Z^\nu, \ j,j'\in\Z \,.
	\end{align*}
	Moreover, in terms of the block representation, we have
	\begin{equation*}
	A = A^* \Longleftrightarrow A_{[n]}^{[n']}(\ell): = \bar{\big(A_{[n']}^{[n]} (-\ell)\big)^T} \quad \forall\,\ell\in\Z^\nu,\ n,n'\in\N_0
	\end{equation*}
	and, if the eigenfunctions of $L_q$ satisfy $\psi_{-j} =\bar{\psi_j} $ for any $j\in\Z$,
	\begin{equation*}
	\bar A = A^* \Longleftrightarrow A_{[n]}^{[n']}(\ell) = \big(A_{-[n']}^{-[n]}(\ell)\big)^T
\quad \forall\,\ell\in\Z^\nu, \ n,n'\in\N_0\,.
	\end{equation*}
\end{rem}
A useful norm we choose to put on the space of such operators is in the following:
\begin{defn}\label{def:sdecay}
	Let $\tw>0$, $s\in\R$ and let $A(\omega)= A(\omega;\vf) :  H^\infty(\T^{\nu+1}) \to H^{-\infty}(\T^{\nu+1})$ be  a  linear operator that is Lipschitz continuous with respect to $\omega\in\Omega\subset R_{\tM}$. We say that $A(\omega)$ in the class $\cM_{s}$ if 
	\begin{equation}
		| A |_{s}^{\lip(\tw)}:= | A |_{s,\Omega}^{\lip(\tw)} :=  \sup_{\omega\in\Omega}  |A(\omega)|_{s} + \tw \sup_{\omega_1, \omega_2 \in \Omega \atop \omega_1 \neq \omega_2 } \frac{|A(\omega_1)-A(\omega_2)|_{s}}{| \omega_1-\omega_2|} < \infty\,,
	\end{equation}
	where $| A|_s$ is the finite $s$-decay norm defined by
	\begin{equation}\label{eq:sdecay_norm}
	\abs{A}_s^2:=\sum_{h\in \N_0 \atop \ell\in\Z^\nu }\la \ell, h \ra^{2s}\sup_{|n'-n|=h}\|A_{[n]}^{[n']}(\ell)\|_{\rm HS}^2 \,, \quad \|A_{[n]}^{[n']}(\ell)\|_{\rm HS}^2 :=\sum_{j\in[n]\atop j'\in[n']} |A_{j}^{j'}(\ell)|^2 \,,
	\end{equation}
	with $\braket{\ell,h}:= \max\{ 1, | \ell|, h\}$ and  $\|\,\cdot\,\|_{\rm HS}$ being the Hilbert-Schmidt norm on the finite dimensional operator $A_{[n]}^{[n']}\in \cL(\fE_{n'},\fE_{n})$ (in particular, $\|A_{[0]}^{[0]}\|_{\rm HS}=\abs{A_0^0}$).
\end{defn}
\begin{lem}\label{prop:tame}
	{\bf (Tame estimates of the $s$-decay).}
	Let $A,B\in\cM_s$, with $s\geq s_0$. Then $AB\in\cM_s$ with tame estimate
	\begin{equation}
	\abs{AB}_s^{\lip(\tw)} \leq C(s_0)  |A|_{s_0}^{\lip(\tw)} |B|_s^{\lip(\tw)} + C(s) | A|_s^{\lip(\tw)} |B|_{s_0}^{\lip(\tw)}  \,.
	\end{equation}
\end{lem}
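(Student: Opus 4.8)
The plan is to reduce the bound on $|AB|_s$ to a convolution-type estimate over the block-index lattice $\N_0 \times \Z^\nu$, then apply the standard interpolation inequality for weighted $\ell^2$-convolutions. First I would write out the matrix entries of the composition in block form: for $n, n'' \in \N_0$ and $\ell \in \Z^\nu$,
\begin{equation*}
(AB)_{[n]}^{[n'']}(\ell) = \sum_{n'\in\N_0}\sum_{\ell'\in\Z^\nu} A_{[n]}^{[n']}(\ell-\ell')\, B_{[n']}^{[n'']}(\ell') \,,
\end{equation*}
where the product on the right is the composition of the finite-dimensional operators in $\cL(\fE_{n'},\fE_n)\cdot\cL(\fE_{n''},\fE_{n'})$. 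Since each block is at most $2\times 2$, the Hilbert–Schmidt norm is submultiplicative up to an absolute constant: $\|A_{[n]}^{[n']}(\ell-\ell')\,B_{[n']}^{[n'']}(\ell')\|_{\rm HS} \lesssim \|A_{[n]}^{[n']}(\ell-\ell')\|_{\rm HS}\,\|B_{[n']}^{[n'']}(\ell')\|_{\rm HS}$. Taking the supremum over $|n-n''| = h$ and using the triangle inequality together with $|n-n''| \leq |n-n'| + |n'-n''|$ and $|\ell| \leq |\ell-\ell'| + |\ell'|$, I bound $\sup_{|n-n''|=h}\|(AB)_{[n]}^{[n'']}(\ell)\|_{\rm HS}$ by a double convolution over $(h',\ell') \in \N_0\times\Z^\nu$ of the sequences $a(h',\ell') := \sup_{|n-n'|=h'}\|A_{[n]}^{[n']}(\ell')\|_{\rm HS}$ and the analogous $b$.

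The core of the argument is then the weighted convolution estimate: if we set $\langle \ell,h\rangle$-weighted $\ell^2$ norms as in \eqref{eq:sdecay_norm}, the elementary inequality $\langle \ell,h\rangle^{s} \lesssim_s \langle \ell-\ell', h-h'\rangle^{s} + \langle \ell',h'\rangle^{s}$ (valid since $s \geq 0$) splits the sum into two pieces; in the first, one pulls the weight onto the $A$-factor and estimates the remaining $B$-factor in $\ell^1$, controlled by its $\ell^2$ norm with a weight $\langle \cdot\rangle^{s_0}$ using that $s_0 > (\nu+1)/2$ so that $\langle \ell',h'\rangle^{-s_0} \in \ell^2(\N_0\times\Z^\nu)$ and hence, by Cauchy–Schwarz, in a summable pairing; symmetrically for the second piece. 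This yields $|AB|_s \lesssim C(s_0)|A|_{s_0}|B|_s + C(s)|A|_s|B|_{s_0}$. I would state the one-dimensional lemma $\|\langle\cdot\rangle^s(f*g)\|_{\ell^2} \lesssim_s \|\langle\cdot\rangle^s f\|_{\ell^2}\|\langle\cdot\rangle^{s_0}g\|_{\ell^2} + \|\langle\cdot\rangle^{s_0}f\|_{\ell^2}\|\langle\cdot\rangle^{s}g\|_{\ell^2}$ on $\Z^{\nu+1}$ once and for all (this is classical and appears in \cite{BM}), and just invoke it.

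Finally, for the Lipschitz part one writes $A(\omega_1)B(\omega_1) - A(\omega_2)B(\omega_2) = (A(\omega_1)-A(\omega_2))B(\omega_1) + A(\omega_2)(B(\omega_1)-B(\omega_2))$, applies the estimate just proved to each term with the $s$-index lowered by one in the appropriate factor as dictated by Definition \ref{def:sdecay}, and recombines; the weighted norm $|\cdot|_s^{\lip(\tw)}$ absorbs the factor $\tw$ and the bookkeeping is routine. The only mildly delicate point is making sure the supremum over $|n-n''|=h$ genuinely dominates the convolution without losing a factor that grows in $h$; this works because for fixed $n,n''$ with $|n-n''|=h$, the intermediate index $n'$ ranges over a set on which $|n-n'|$ and $|n'-n''|$ are uniquely determined by $n'$, so the sum over $n'$ becomes an honest convolution sum over $h' = |n-n'|$ after at most doubling the number of terms (two choices of sign $n' = \pm|n'|$). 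I expect this index-combinatorics step — converting the block sums over $\N_0$ (which is a quotient of $\Z$) into clean convolutions over $\N_0$ — to be the main bookkeeping obstacle, though it is entirely elementary.
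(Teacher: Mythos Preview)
The paper does not supply a proof of this lemma; it is stated as a standard tame product estimate for the $s$-decay norm, of the kind proved for instance in \cite{BM} or \cite{Mont.Kirch17} (whose block framework the paper follows in Section~\ref{sec:mo}). Your proposal is exactly the standard argument and is correct. Two small remarks: the Hilbert--Schmidt norm is genuinely submultiplicative, $\|XY\|_{\rm HS}\le\|X\|_{\rm HS}\|Y\|_{\rm HS}$, so no absolute constant is needed there; and the index bookkeeping you flag is most cleanly handled not by forcing an honest convolution over $\N_0$ but by applying Cauchy--Schwarz directly to the sum over $(n',\ell')\in\N_0\times\Z^\nu$ after distributing the weight via $\langle\ell,|n-n''|\rangle^s\lesssim_s\langle\ell-\ell',|n-n'|\rangle^s+\langle\ell',|n'-n''|\rangle^s$, using that $\sum_{n',\ell'}\langle\ell',|n'-n''|\rangle^{-2s_0}\le C(s_0)$ uniformly in $n''$ since $2s_0>\nu+1$.
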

\begin{rem}\label{rem:opnorm_vs_sdecay}
	If $A: H^\infty(\T^{\nu+1}) \to H^{-\infty}(\T^{\nu+1})$ has finite $s$-decay norm with $s \geq s_0$, then, for any $r\in[0,s]$, 
	$A$ extends to a bounded operator $H^{r}(\T^{\nu+1}) \to H^{r}(\T^{\nu+1})$.  Moreover, by tame estimates, one has the quantitative bound
	$ \norm{A}_{\cL(H^r)} \leq C_{r,s} |A|_s$. 
\end{rem}

Given an operator $M : \cL(\fE_{n'},\fE_{n}) \to \cL(\fE_{n'},\fE_{n})$, we denote the operator norm by
\begin{equation}\label{Opnn}
	\| M \|_{\Op(n,n')} := \|  M \|_{\cL(\cL(\fE_{n'},\fE_{n}) )} := \sup \big\{ \| M X \|_{\rm HS} \,: \, \| X \|_{\rm HS}\leq 1 \big\}\,.
\end{equation}
The identity operator on $\cL(\fE_{n},\fE_{n'})$ will be denoted by $\uno_{n,n'}$.

\begin{lem}\label{lemma:MRMRL}
	Let $A\in \cL(\fE_{n})$ and $B\in \cL(\fE_{n'})$.
	We define  $M_L(A), M_R(B)$ as the operators on  $\cL(\fE_{n'},\fE_{n})$ acting as the left multiplication by $A\in \cL(\fE_{n})$ and as right multiplication by $B\in \cL(\fE_{n'})$, respectively:
	\begin{equation}\label{MRML}
		M_L(A)X:= AX \,, \quad M_R(B) X:=XB \,, \quad \forall X\in \cL(\fE_{n'},\fE_{n})\,.
	\end{equation}
	Then $M_L(A), M_R(B) \in \cL\big( \cL(\fE_{n'},\fE_{n}) \big)$, with estimates
	\begin{equation}
		\| M_L(A) \|_{\Op(n,n')} \leq \| A \|_{\rm HS}\,, \quad \| M_R(B) \|_{\Op(n,n')} \leq \| B \|_{\rm HS}\,.
	\end{equation}
If $A$, $B$ are self-adjoint, then the operators $M_L(A)$ and $M_R(B)$ are self-adjoint and $\spec(M_L(A)\pm M_R(B))= \{ \lambda\pm \mu \,:\, \lambda\in\spec(A),\, \mu\in\spec(B) \}$.
	For $A=A(\omega)$, $B=B(\omega)$ Lipschitz continuous with respect to the parameter $\omega\in\Omega\subseteq R_{\tM}$, the bounds extend to the norm $\|\,\cdot\, \|_{\rm HS}^{\lip(\tw)}$.
\end{lem}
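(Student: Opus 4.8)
The plan is to verify each assertion in turn, all being essentially consequences of submultiplicativity of the Hilbert--Schmidt norm and elementary spectral theory on the finite-dimensional spaces $\fE_n$, $\fE_{n'}$ (which are at most two-dimensional). First I would check that $M_L(A)$ and $M_R(B)$ map $\cL(\fE_{n'},\fE_{n})$ into itself: this is immediate, since $AX \in \cL(\fE_{n'},\fE_{n})$ whenever $A \in \cL(\fE_n)$ and $X \in \cL(\fE_{n'},\fE_{n})$, and similarly $XB \in \cL(\fE_{n'},\fE_{n})$ for $B \in \cL(\fE_{n'})$. For the operator-norm bounds, I would recall that for matrices $\| AX \|_{\rm HS} \leq \| A \|_{\rm op} \| X \|_{\rm HS} \leq \| A \|_{\rm HS} \| X \|_{\rm HS}$, where $\|\cdot\|_{\rm op}$ is the usual induced $\ell^2$ operator norm and we used $\| A \|_{\rm op} \leq \| A \|_{\rm HS}$; taking the supremum over $\| X \|_{\rm HS} \leq 1$ gives $\| M_L(A) \|_{\Op(n,n')} \leq \| A \|_{\rm HS}$, and the argument for $M_R(B)$ is identical using $\| XB \|_{\rm HS} \leq \| X \|_{\rm HS} \| B \|_{\rm op} \leq \| X \|_{\rm HS}\| B \|_{\rm HS}$.

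Next I would treat the self-adjointness and spectral claims. On a finite-dimensional space, the space $\cL(\fE_{n'},\fE_{n})$ carries the Hilbert--Schmidt inner product $\langle X, Y \rangle := \mathrm{tr}(Y^* X)$. If $A = A^*$, then for all $X,Y$ one has $\langle M_L(A) X, Y \rangle = \mathrm{tr}(Y^* A X) = \mathrm{tr}((A^* Y)^* X) = \langle X, M_L(A^*) Y\rangle = \langle X, M_L(A) Y \rangle$, so $M_L(A)$ is self-adjoint; the same computation with $\mathrm{tr}(Y^* X B) = \mathrm{tr}((X)^* Y B^*)^* $-type manipulation (more precisely $\langle M_R(B)X, Y\rangle = \mathrm{tr}(Y^* X B) = \mathrm{tr}(B Y^* X) = \mathrm{tr}((Y B^*)^* X) = \langle X, M_R(B^*) Y\rangle$) shows $M_R(B)$ is self-adjoint when $B = B^*$. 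For the spectrum of $M_L(A) \pm M_R(B)$, I would diagonalize: since $A, B$ are self-adjoint on finite-dimensional spaces, pick orthonormal eigenbases, $A e_i = \lambda_i e_i$ in $\fE_n$ and $B f_k = \mu_k f_k$ in $\fE_{n'}$. Then the rank-one operators $E_{ik} := e_i f_k^* \in \cL(\fE_{n'},\fE_{n})$ form an orthonormal basis of $\cL(\fE_{n'},\fE_{n})$ and satisfy $M_L(A) E_{ik} = \lambda_i E_{ik}$ and $M_R(B) E_{ik} = E_{ik} B = \mu_k E_{ik}$ (using $f_k^* B = (B^* f_k)^* = (B f_k)^* = \mu_k f_k^*$ by self-adjointness and realness of $\mu_k$). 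Hence $(M_L(A) \pm M_R(B)) E_{ik} = (\lambda_i \pm \mu_k) E_{ik}$, and since the $E_{ik}$ span the whole space this exhausts the spectrum, giving $\spec(M_L(A) \pm M_R(B)) = \{\lambda \pm \mu : \lambda \in \spec(A), \mu \in \spec(B)\}$.

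Finally, for the Lipschitz claim I would note that $A \mapsto M_L(A)$ and $B \mapsto M_R(B)$ are linear maps, so $M_L(A(\omega_1)) - M_L(A(\omega_2)) = M_L(A(\omega_1) - A(\omega_2))$, and the already-established bound $\| M_L(\cdot) \|_{\Op(n,n')} \leq \| \cdot \|_{\rm HS}$ applied to the difference gives the Lipschitz seminorm estimate; combined with the sup-norm bound one obtains the inequality for the weighted norm $\|\cdot\|_{\rm HS}^{\lip(\tw)}$ directly from its definition. None of these steps presents a genuine obstacle; the only point requiring a little care is the spectral statement, where one must use that the eigenvalues $\mu_k$ of the self-adjoint $B$ are real so that $f_k^* B = \mu_k f_k^*$ holds without a complex conjugate, and that the rank-one operators $E_{ik}$ genuinely form a basis of the (two- or one-dimensional-squared) space $\cL(\fE_{n'},\fE_{n})$ so that no part of the spectrum is missed.
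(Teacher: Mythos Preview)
Your proof is correct and complete; each step (submultiplicativity of the Hilbert--Schmidt norm, the trace computation for self-adjointness, the eigenbasis $E_{ik}=e_i f_k^*$ for the spectrum, and linearity for the Lipschitz extension) is sound. The paper itself states this lemma without proof, so there is nothing to compare against; your argument is exactly the standard one and would serve as the missing justification.
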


The next result holds for a general finite dimensional Hilbert space $(\cH,(\,,\,)_{\cH}) $ of dimension $d\in\N$. For a given self-adjoint operator $A\in\cL(\cH)$, we order its spectrum as $\spec(A)=\{   \lambda_1(A)\leq ..., \leq \lambda_{d}(A)\}$.

\begin{lem}\label{lemma.astratto.fin}
	{\bf (Lemma 2.5, \cite{Mont.Kirch17}).} The following hold:
	\\[1mm]
	\noindent $(i)$ Let $A_1,A_2\in\cL(\cH)$ be self-adjoint. Then their eigenvalues satisfy the Lipschitz property $|\lambda_{p}(A_1)-\lambda_{p}(A_2) |\leq \| A_1 - A_2 \|_{\cL(\cH)}$ for any $p=1,...,d$;
	\\[1mm]
	\noindent $(ii)$ Let $A\in\cL(\cH)$ be self-adjoint, with $\spec(A)\subset \R \setminus\{0\}$. Then $A$ is invertible and its inverse $A^{-1}$ satisfies $\| A^{-1} \|_{\cL(\cH)} = \big(\min_{p=1,..,d}| \lambda_{p}(A)|\big)^{-1}$.
\end{lem}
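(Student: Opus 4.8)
The statement is a standard consequence of the min-max (Courant–Fischer) characterization of eigenvalues of self-adjoint operators on a finite dimensional Hilbert space, so I would just recall that and apply it twice.

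For part $(i)$: the plan is to invoke the min-max formula
\[
\lambda_p(A) = \min_{\substack{V\subseteq\cH \\ \dim V = p}} \ \max_{\substack{u\in V \\ \|u\|_\cH = 1}} (Au,u)_\cH \,.
\]
Fix $p$. For any subspace $V$ of dimension $p$ and any unit vector $u\in V$ one has $(A_1 u,u)_\cH = (A_2 u,u)_\cH + ((A_1-A_2)u,u)_\cH \leq (A_2 u,u)_\cH + \|A_1-A_2\|_{\cL(\cH)}$. Taking the max over unit $u\in V$ and then the min over $V$ gives $\lambda_p(A_1) \leq \lambda_p(A_2) + \|A_1-A_2\|_{\cL(\cH)}$; exchanging the roles of $A_1$ and $A_2$ yields the reverse inequality, hence $|\lambda_p(A_1)-\lambda_p(A_2)|\leq \|A_1-A_2\|_{\cL(\cH)}$.

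For part $(ii)$: since $A$ is self-adjoint with $\spec(A)\subset\R\setminus\{0\}$, it is invertible (a self-adjoint operator on a finite dimensional space is invertible iff $0$ is not an eigenvalue, equivalently $0\notin\spec(A)$). Diagonalize $A$ in an orthonormal eigenbasis $(e_p)_{p=1}^d$ with $Ae_p=\lambda_p(A)e_p$; then $A^{-1}e_p = \lambda_p(A)^{-1}e_p$, so $A^{-1}$ is self-adjoint with spectrum $\{\lambda_p(A)^{-1}\}$, and for a self-adjoint operator the operator norm equals the spectral radius, i.e. $\|A^{-1}\|_{\cL(\cH)} = \max_{p}|\lambda_p(A)^{-1}| = \bigl(\min_{p=1,\dots,d}|\lambda_p(A)|\bigr)^{-1}$.

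There is no real obstacle here: both parts are classical linear algebra facts, and since the result is quoted as Lemma 2.5 of \cite{Mont.Kirch17}, one could alternatively just cite it and omit the proof. The only mild point of care is to make sure the min-max formula is stated with the right ordering convention ($\lambda_1\leq\cdots\leq\lambda_d$) so that signs match; everything else is immediate.
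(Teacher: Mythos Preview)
Your proof is correct and entirely standard. Note that the paper does not actually give a proof of this lemma: it simply states the result and cites Lemma~2.5 of \cite{Mont.Kirch17}, so your argument via the Courant--Fischer min-max formula for part~(i) and the spectral theorem for part~(ii) is already more detailed than what the paper provides.
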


\noindent{\bf Operator matrices.}
 We are going to meet matrices of operators of the form 
\begin{equation}
\label{eq:Amatrix}
\bA=\left( \begin{matrix}
A^{d} & A^o \\ -\bar{A^o} & -\bar{A^d} 
\end{matrix} \right) \ , 
\end{equation}
where $A^d$ and $A^o$ are linear  operators belonging to the class $\cM_{s}$.
Actually, the operator $A^d$ on the diagonal will have different decay properties than the element on the anti-diagonal $A^o$. Therefore, we introduce 
classes of operator matrices in which we keep track of these differences. To this purpose, for any $m\in\R$, we define the following linear operator operator, with $\braket{j}:=\max\{1,|j|\}$,
\begin{equation}\label{diago.Dx}
		\begin{aligned}
			u(x) = \sum_{j\in\Z} u^j \psi_{j}(x) \mapsto \braket{D}^m u(x) := \sum_{j\in\Z} \braket{j}^m u^j \psi_{j}(x)\,.
		\end{aligned}
\end{equation}

\begin{defn}\label{M.sdecay.ab}
	Let   $\alpha, \beta \in \R$, $s \geq 0$ and let $\bA(\omega)$ be a  operator matrix  of the form
	\eqref{eq:Amatrix} that is Lipschitz continuous with respect ot the parameter $\omega\in\Omega\subseteq R_{\tm}$. We say that 
	$\bA $ belongs to $\cM_{s}(\alpha, \beta)$ if
	\begin{equation}\label{struttura}
	[A^d]^* = A^d \ , \qquad [A^o]^* = \bar{A^o}
	\end{equation}	
	and one also has 
	\begin{align}\label{M1}
	&	\la D \ra^\alpha \,  A^d  \ , \ A^d \, \la D \ra^{\alpha}  \in \cM_{s}  \,, \\
	\label{M2}
	&	\la D \ra^\beta \,  A^o  \ , \ A^o \, \la D \ra^{\beta}  \in \cM_{s}  \,, \\
	\label{M3}
	&	\la D \ra^\varsigma \,  A^\delta \, \la D \ra^{- \varsigma}  \in \cM_{s} \ , \quad \forall\, \varsigma \in \{ \pm \alpha, \pm \beta, 0 \} \,, \ \ \forall\, \delta \in \{d, o\} \ .
	\end{align}
	We endow $\cM_{s}(\alpha, \beta)$  with the  norm 
	\begin{align}
		\abs{\bA}_{s,\alpha, \beta}^{\lip(\tw)} := & 
		|\braket{D}^{\alpha}A^d|_{s}^{\lip(\tw)}
		+|A^d\braket{D}^{\alpha}|_{s} ^{\lip(\tw)}
		+|\braket{D}^{\beta}A^o|_{s}^{\lip(\tw)}
		+|A^o\braket{D}^{\beta}|_{s}^{\lip(\tw)} \nonumber \\	
		& 
		+ \sum_{\varsigma \in\{ \pm \alpha, \pm \beta,  0 \} \atop \delta \in \{d, o\}} |\braket{D}^{\varsigma}A^\delta \braket{D}^{-\varsigma}|_{s}^{\lip(\tw)} \,, \label{eq:sdecay_matrix_norm}	 
	\end{align}
	with the convention that, in case of repetition (when $\alpha=\beta$, $\alpha=0$ or $\beta=0$), the same terms are not summed twice.
\end{defn}

\begin{rem}
	Let us motivate the properties describing the class $\cM_{s}(\alpha, \beta)$.
	Condition \eqref{struttura} is equivalent to ask that $\bA$ is the Hamiltonian vector field of a real valued quadratic Hamiltonian, see e.g. \cite{mont17} for a discussion.  Conditions \eqref{M1} and \eqref{M2} control the decay properties for the coefficient of the coefficients of the matrices associated to $A^d$ and $A^o$: indeed, recalling \eqref{diago.Dx}, the matrix coefficients of $\la D \ra^{\alpha} A \,  \la D\ra^{\beta} $ are given by
	\begin{equation}\label{maledetta}
		[ \la D \ra^{\alpha} A \,  \la D\ra^{\beta}]_{[n]}^{[m]}(\ell)   = \la n \ra^\alpha \, A_{[n]}^{[m]}(\ell) \, \la m \ra^\beta \,,
	\end{equation}
	therefore decay (or growth) properties for the matrix coefficients of the operator $A$ are implied by the boundedness of the norms $| \cdot |_{s}^{\lip(\tw)}$. Condition \eqref{M3} is just for  simplifying some   computations below.
\end{rem}

\begin{lem}\label{propieta.class.sdecay}
	Let $0 \leq s' \leq s $ $\alpha\geq \alpha'$, $\beta\geq \beta'$.  The following holds:
	\\[1mm]
	\noindent $(i)$ We have $\cM_{s}(\alpha, \beta)\subseteq \cM_{s'}(\alpha', \beta')$ with estimates
	$\abs{\bA}_{s',\alpha',\beta'}^{\lip(\tw)} \leq \abs{\bA}_{s,\alpha,\beta}^{\lip(\tw)}$;
	\\[1mm]
	\noindent $(ii)$ Let
	\begin{equation}\label{proj.def.mat}
		\Pi_{\tN} \bA(\omega;\vf) := \sum_{|\ell| \leq \tN} \bA(\omega;\ell) e^{\im \ell \cdot \vf} \,, \quad \Pi_{\tN}^\perp := {\rm Id} - \Pi_{\tN}\,,
	\end{equation}
  be the projectors on the frequencies $\ell\in\Z^{\nu}$ smaller and larger than $\tN\in\N$, respectively. Then, for any $\tb\geq 0$,
  \begin{equation}
  	| \Pi_{\tN} \bA |_{s+\tb,\alpha,\beta}^{\lip(\gamma)} \leq \tN^{\tb} | \bA |_{s,\alpha,\beta}^{\lip(\gamma)}\,, \quad | \Pi_{\tN} ^\perp\bA |_{s,\alpha,\beta}^{\lip(\gamma)} \leq \tN^{-\tb} | \bA |_{s+\tb,\alpha,\beta}^{\lip(\gamma)}\,.
  \end{equation}
\end{lem}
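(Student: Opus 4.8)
The plan is to prove both statements by unwinding the matrix norm \eqref{eq:sdecay_matrix_norm} into its finitely many scalar building blocks and reducing each one to an elementary monotonicity inequality for the weights $\langle\ell,h\rangle$ and $\langle D\rangle^{\varsigma}$ entering the scalar $s$-decay norm \eqref{eq:sdecay_norm}. Throughout, the Lipschitz part of a norm $|\cdot|_{s}^{\lip(\tw)}$ needs no separate treatment: the difference quotient $(A(\omega_1)-A(\omega_2))/|\omega_1-\omega_2|$ is again an operator of the same kind, so every bound proved for $\sup_{\omega}|A(\omega)|_{s}$ transfers verbatim to it.

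For part $(i)$ I would first record the two pointwise facts used everywhere: since $\langle\ell,h\rangle\ge1$ and $s'\le s$ one has $\langle\ell,h\rangle^{2s'}\le\langle\ell,h\rangle^{2s}$; and since $\langle n\rangle\ge1$ with $0\le\alpha'\le\alpha$, $0\le\beta'\le\beta$ one has $\langle n\rangle^{\alpha'}\le\langle n\rangle^{\alpha}$ and $\langle n\rangle^{\beta'}\le\langle n\rangle^{\beta}$. Together with the coefficient formula \eqref{maledetta}, these immediately give $|\langle D\rangle^{\alpha'}A^d|_{s'}\le|\langle D\rangle^{\alpha}A^d|_{s}$, and likewise for $A^d\langle D\rangle^{\alpha'}$, $\langle D\rangle^{\beta'}A^o$, $A^o\langle D\rangle^{\beta'}$, as well as $|A^\delta|_{s'}\le|A^\delta|_{s}$ for the $\varsigma=0$ conjugation term. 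The one genuinely non-automatic point is the remaining conjugation terms $\langle D\rangle^{\varsigma}A^\delta\langle D\rangle^{-\varsigma}$ with $\varsigma\in\{\pm\alpha',\pm\beta'\}$: by \eqref{maledetta} the corresponding block Hilbert--Schmidt norm is $(\langle n\rangle/\langle n'\rangle)^{\varsigma}\|A^\delta_{[n]}^{[n']}(\ell)\|_{\rm HS}$, and on the half of the index set where $(\langle n\rangle/\langle n'\rangle)^{\varsigma}>1$ this is \emph{not} dominated by the corresponding $\varsigma\in\{\pm\alpha,\pm\beta\}$ quantity. To repair this I would use the structural identities \eqref{struttura} — equivalently the coefficient symmetries in Remark \ref{rem:coeff.mat} — which give $\|A^\delta_{[n]}^{[n']}(\ell)\|_{\rm HS}=\|A^\delta_{[n']}^{[n]}(\pm\ell)\|_{\rm HS}$: on that half one transposes the block and applies the comparison with $n$ and $n'$ interchanged, where it does hold, while on the other half the comparison is direct. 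Since $\langle\ell,h\rangle=\langle-\ell,h\rangle$, after taking the supremum over $|n-n'|=h$ and summing, each building block of $|\bA|_{s',\alpha',\beta'}^{\lip(\tw)}$ is bounded by the corresponding block of $|\bA|_{s,\alpha,\beta}^{\lip(\tw)}$; summing up (with the same care about repetitions as in \eqref{eq:sdecay_matrix_norm}) gives $\bA\in\cM_{s'}(\alpha',\beta')$ — the structural conditions \eqref{struttura} being independent of $s,\alpha,\beta$ and hence automatically inherited — together with the claimed norm inequality.

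For part $(ii)$ the key observation is that $\Pi_{\tN}$ acts only on the $\vf$-variable whereas every weight $\langle D\rangle^{\varsigma}$ acts only on $x$, so $\Pi_{\tN}$ commutes with left and right multiplication by $\langle D\rangle^{\varsigma}$; hence it suffices to prove the two bounds for the scalar $s$-decay norm applied to $A^d$, $A^o$ and their conjugates. There $(\Pi_{\tN}C)_{[n]}^{[n']}(\ell)$ equals $C_{[n]}^{[n']}(\ell)$ for $|\ell|\le\tN$ and vanishes otherwise, and $\Pi_{\tN}^{\perp}$ does the opposite; the two estimates are then the usual smoothing properties of Fourier truncation — on the modes retained by $\Pi_{\tN}$ the weight $\langle\ell,h\rangle^{\tb}$ is controlled by $\tN^{\tb}$, whereas on the modes retained by $\Pi_{\tN}^{\perp}$ one has $\langle\ell,h\rangle\ge|\ell|>\tN$ and pulls out a factor $\tN^{-\tb}$; multiplying by $\langle\ell,h\rangle^{2s}$ (resp.\ $\langle\ell,h\rangle^{2(s+\tb)}$), taking $\sup_{|n-n'|=h}\|\cdot\|_{\rm HS}^{2}$ and summing yields the claim, and the Lipschitz part follows identically.

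The step I expect to be the main obstacle is exactly the one isolated above in part $(i)$: the naive weight monotonicity fails on ``half'' of the index set, and one has to fall back on the self-adjointness structure \eqref{struttura} defining the class $\cM_{s}(\alpha,\beta)$ to keep the sharp constant $1$; the rest is routine weight bookkeeping.
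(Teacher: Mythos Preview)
The paper states this lemma without proof, so there is no reference argument to compare against; your treatment of part $(ii)$ and of the one--sided and $\varsigma=0$ pieces of part $(i)$ is correct and standard. The gap is in your handling of the conjugation terms $\langle D\rangle^{\varsigma}A^\delta\langle D\rangle^{-\varsigma}$ with $\varsigma\in\{\pm\alpha',\pm\beta'\}$.

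Two concrete issues. First, you have the halves reversed: for $\varsigma=\alpha'>0$ it is on $\langle n\rangle\ge\langle n'\rangle$ that the pointwise bound $(\langle n\rangle/\langle n'\rangle)^{\alpha'}\le(\langle n\rangle/\langle n'\rangle)^{\alpha}$ \emph{does} hold; the bad half is $\langle n\rangle<\langle n'\rangle$. Second, and more importantly, for $A^d$ the self--adjointness in \eqref{struttura} relates $(n,n',\ell)$ to $(n',n,-\ell)$, so the supremum over $|n-n'|=h$ at a \emph{fixed} $\ell$ is not invariant under $(n,n')\leftrightarrow(n',n)$, and your transposition step does not produce a pointwise--in--$(\ell,h)$ comparison. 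For instance, if the only nonzero block of $A^d$ sits at $(n,n',\ell)=(1,2,\ell_0)$ (and its adjoint at $(2,1,-\ell_0)$), then $\sup_{|n-n'|=1}(\langle n\rangle/\langle n'\rangle)^{2t}\|A^d{}_{[n]}^{[n']}(\ell_0)\|_{\rm HS}^{2}=2^{-2t}$ is \emph{decreasing} in $t$; the sentence ``since $\langle\ell,h\rangle=\langle-\ell,h\rangle$, after taking the supremum and summing\dots'' does not by itself explain why the loss at $\ell_0$ is compensated at $-\ell_0$.

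The clean fix---still based on \eqref{struttura}---is to observe that
\[
t\ \longmapsto\ \big|\langle D\rangle^{t}A^\delta\langle D\rangle^{-t}\big|_{s}^{2}
=\sum_{\ell,h}\langle\ell,h\rangle^{2s}\sup_{|n-n'|=h}e^{\,2t\ln(\langle n\rangle/\langle n'\rangle)}\,\big\|A^\delta{}_{[n]}^{[n']}(\ell)\big\|_{\rm HS}^{2}
\]
is convex in $t$ (a sum of suprema of exponentials) and \emph{even} in $t$: for $A^d$ because $\langle D\rangle^{-t}A^d\langle D\rangle^{t}=(\langle D\rangle^{t}A^d\langle D\rangle^{-t})^{*}$ and $|B^{*}|_{s}=|B|_{s}$, for $A^o$ because the coefficient symmetry in Remark~\ref{rem:coeff.mat} makes the two operators transposes of one another and $|B^{T}|_{s}=|B|_{s}$. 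An even convex function is nondecreasing on $[0,\infty)$, hence $|\langle D\rangle^{\alpha'}A^\delta\langle D\rangle^{-\alpha'}|_{s'}\le|\langle D\rangle^{\alpha}A^\delta\langle D\rangle^{-\alpha}|_{s}$ and likewise with $(\beta',\beta)$, matching each conjugation term in the $(s',\alpha',\beta')$ norm with its counterpart in the $(s,\alpha,\beta)$ norm and yielding the sharp constant $1$.
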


\noindent{\bf Commutators and flows.}
These classes of matrices enjoy also closure properties under commutators and flow generation.
We define the adjoint operator
\begin{equation}
\label{adj}
\ad_\bX(\bV):=\im [\bX,  \bV]  \ ;
\end{equation}
note the multiplication by the imaginary unit in the definition of the adjoint map.
\begin{lem}[Commutator]
	\label{lem:com}
	Let $\alpha>0$ and $s\geq s_0 > \frac{\nu+1}{2}$. 	Assume $\bV \in \cM_{s}(\alpha,0)$ and $\bX \in \cM_{s}(\alpha, \alpha)$.  Then $\ad_\bX(\bV)$  
	belongs to $\cM_{s}(\alpha, \alpha)$ with tame estimates
	\begin{equation}\label{eq:ad_lip_alg}
	\begin{aligned}
	\abs{\ad_\bX(\bV)}_{s,\alpha,\alpha}^\wlip{\tw}\leq &\,
	C_{s_0}|\bX|_{s_0,\alpha,\alpha}^\wlip{\tw} |\bV|_{s,\alpha,0}^\wlip{\tw} +C_{s}|\bX|_{s,\alpha,\alpha}^\wlip{\tw} |\bV|_{s_0,\alpha,0}^\wlip{\tw}
	\,.
	\end{aligned}
	\end{equation}
\end{lem}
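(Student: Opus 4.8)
The plan is to reduce the bound on $\ad_\bX(\bV) = \im[\bX,\bV]$ to the scalar tame estimate of Lemma~\ref{prop:tame}, applied block-wise after inserting suitable powers of $\braket{D}$. Write the matrices in the form \eqref{eq:Amatrix}: if $\bV$ has diagonal part $V^d$ and anti-diagonal part $V^o$, and $\bX$ has parts $X^d$, $X^o$, then a direct computation gives that $[\bX,\bV]$ again has the structure \eqref{eq:Amatrix}, with diagonal part $[X^d,V^d] - X^o\bar{V^o} + V^o\bar{X^o}$ and anti-diagonal part $[X^d,V^o] + X^o\bar{V^d} - V^o\bar{X^d}$. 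One checks immediately from \eqref{struttura} that these two new operators satisfy the adjointness relations \eqref{struttura} again (this is the statement that $\ad_\bX$ preserves Hamiltonian vector fields of real quadratic Hamiltonians), using also the factor $\im$ in the definition \eqref{adj}.

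Next I would estimate each of the six scalar operator pieces making up the new diagonal and anti-diagonal parts. The target class is $\cM_s(\alpha,\alpha)$, so by \eqref{eq:sdecay_matrix_norm} I must bound, for the new diagonal part $D := [X^d,V^d] - X^o\bar{V^o} + V^o\bar{X^o}$, the quantities $|\braket{D}^\alpha D|_s^{\lip(\tw)}$, $|D\braket{D}^\alpha|_s^{\lip(\tw)}$ and the conjugated norms $|\braket{D}^\varsigma D\braket{D}^{-\varsigma}|_s^{\lip(\tw)}$, and similarly with the anti-diagonal part $O := [X^d,V^o] + X^o\bar{V^d} - V^o\bar{X^d}$ (now with leading weight $\braket{D}^\alpha$ as well, since we land in $\cM_s(\alpha,\alpha)$). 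The mechanism in each case is the same: insert the identity $\braket{D}^{-\varsigma}\braket{D}^{\varsigma}$ between the two factors of each product and between commutator terms, so that each factor is flanked by powers of $\braket{D}$ that are controlled by the defining norm of $\bX\in\cM_s(\alpha,\alpha)$ respectively $\bV\in\cM_s(\alpha,0)$. For instance, for the term $\braket{D}^\alpha X^d V^d$ one writes $\braket{D}^\alpha X^d V^d = (\braket{D}^\alpha X^d)\,V^d$ and uses that $\braket{D}^\alpha X^d\in\cM_s$ (from $\bX\in\cM_s(\alpha,\alpha)$, condition \eqref{M1}) together with $V^d\in\cM_s$; the term $X^d V^d\braket{D}^\alpha$ is handled symmetrically since $V^d\braket{D}^\alpha\in\cM_s$ because $\bV\in\cM_s(\alpha,0)$; for the mixed conjugated norm one uses condition \eqref{M3} for both $\bX$ and $\bV$. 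The quadratic-in-$V^o$ terms such as $\braket{D}^\alpha X^o\bar{V^o}$ require writing $X^o$ with its weight $\braket{D}^\beta=\braket{D}^\alpha$ from $\bX\in\cM_s(\alpha,\alpha)$ and $\bar{V^o}$ with weight $\braket{D}^0$ from $\bV\in\cM_s(\alpha,0)$ — here it is crucial that $\bV$ is only assumed in $\cM_s(\alpha,0)$, so the anti-diagonal weight on $\bV$ is $0$ and we must absorb the needed power of $\braket{D}^\alpha$ onto the $\bX$ factor; one also uses that taking conjugates $A\mapsto\bar A$ preserves the $s$-decay norm. Each such elementary product is then estimated by Lemma~\ref{prop:tame} in the asymmetric (tame) form $|AB|_s^{\lip(\tw)}\leq C(s_0)|A|_{s_0}^{\lip(\tw)}|B|_s^{\lip(\tw)}+C(s)|A|_s^{\lip(\tw)}|B|_{s_0}^{\lip(\tw)}$, and summing the finitely many contributions yields \eqref{eq:ad_lip_alg}, with the $\bX$-factor always carrying the $\cM_s(\alpha,\alpha)$ norm and the $\bV$-factor the $\cM_s(\alpha,0)$ norm, distributing the high index $s$ to one factor and $s_0$ to the other.

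The main obstacle — really the only point requiring care rather than bookkeeping — is the bookkeeping of the weights $\braket{D}^{\pm\alpha}$ in the quadratic terms $X^o\bar{V^o}$ and $V^o\bar{X^o}$ and in the commutator terms $[X^d,V^o]$: one must verify that, after inserting $\braket{D}^{-\alpha}\braket{D}^{\alpha}$, every factor appears flanked by weights that are among the ones explicitly bounded by the norms $\abs{\bX}_{s,\alpha,\alpha}^{\lip(\tw)}$ and $\abs{\bV}_{s,\alpha,0}^{\lip(\tw)}$, i.e.\ lies in the list $\varsigma\in\{\pm\alpha,\pm\beta,0\}$ appearing in \eqref{M3}. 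Since $\bX\in\cM_s(\alpha,\alpha)$ has $\beta=\alpha$ and $\bV\in\cM_s(\alpha,0)$ has $\beta=0$, the relevant weights are $\{\pm\alpha,0\}$, and one checks term by term that no weight outside this set is ever needed — this is exactly why the hypotheses are stated as $\bV\in\cM_s(\alpha,0)$ and $\bX\in\cM_s(\alpha,\alpha)$, and why the conclusion lands in $\cM_s(\alpha,\alpha)$ and not in a smaller class. Once this verification is done, the estimate \eqref{eq:ad_lip_alg} follows by collecting the finitely many applications of Lemma~\ref{prop:tame}.
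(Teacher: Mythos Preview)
Your proposal is correct and follows essentially the same approach as the paper's proof: compute the explicit $2\times 2$ block structure of $\im[\bX,\bV]$, then estimate each scalar product term by inserting powers of $\braket{D}$ between factors and invoking the tame product estimate of Lemma~\ref{prop:tame}. One minor slip: your formula for the anti-diagonal part should read $X^d V^o - V^d X^o + V^o\bar{X^d} - X^o\bar{V^d}$ (the cross term is not a true commutator $[X^d,V^o]$ but a mixed product $X^d V^o - V^d X^o$), matching the paper's $W^o = X^d V^o + V^o\overline{X^d} - (X^o\overline{V^d} + V^d X^o)$; this does not affect the strategy or the estimates.
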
 
\begin{lem}[Flow]\label{lem:flow}
	Let $\alpha>0$, $ s\geq s_0>\frac{\nu+1}{2}$. Assume $\bV \in \cM_{s}(\alpha,0)$, $\bX \in \cM_{s}(\alpha, \alpha)$.  Then the followings hold true:
	\begin{itemize}
		\item[(i)]For any $ r\in[0,s]$, the operator 
		$ e^{\im \bX} $
		is bounded in $H^r(\T^{\nu+1})\times H^r(\T^{\nu+1})$;
		\item[(ii)] The operator   $e^{\im  \bX} \, \bV \, e^{-\im  \bX}$ belongs to $\cM_{s}(\alpha, 0)$, whereas 
		$e^{\im  \bX} \, \bV \, e^{-\im  \bX} - \bV$ belongs to $\cM_{s}(\alpha, \alpha)$ with the quantitative tame estimates 
	\begin{equation}\label{eq:flow_tame}
		\begin{aligned}
			& \abs{e^{ \im  \bX} \, \bV \, e^{-\im  \bX}}_{s,\alpha, 0}^{\lip(\tw)} \leq 
			e^{ C_{s_0} |\bX|_{s_0,\alpha, \alpha}^{\lip(\tw)} } \big( C_{s} |\bX |_{s,\alpha,\alpha}^{\lip(\tw)} |\bV|_{s_0,\alpha, 0}^{\lip(\tw)} + |\bV|_{s,\alpha, 0}^{\lip(\tw)} \big)  \,; \\
			& \abs{e^{ \im  \bX} \, \bV \, e^{-\im  \bX} -\bV}_{s,\alpha, \alpha}^{\lip(\tw)} \leq 
			e^{ C_{s_0} |\bX|_{s_0,\alpha, \alpha}^{\lip(\tw)} } \big( C_{s} |\bX |_{s,\alpha,\alpha}^{\lip(\tw)} |\bV|_{s_0,\alpha, 0}^{\lip(\tw)} \\
			& \quad \quad \quad \quad \quad \quad \quad \quad \quad \quad \quad \quad \quad \quad  + C_{s_0} |\bX |_{s_0,\alpha,\alpha}^{\lip(\tw)} |\bV|_{s,\alpha, 0}^{\lip(\tw)} \big)\,.
		\end{aligned}
	\end{equation}
	\end{itemize}
\end{lem}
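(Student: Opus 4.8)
The plan is to deduce both statements from the Lie--series representation
\[
e^{\im \bX}\,\bV\,e^{-\im \bX}=\sum_{k\geq 0}\frac{1}{k!}\,\ad_{\bX}^{k}(\bV)\,,
\]
with $\ad_{\bX}$ as in \eqref{adj}, estimating the tail of the series through the commutator bound of Lemma \ref{lem:com}, which can be iterated thanks to the embedding of Lemma \ref{propieta.class.sdecay}$(i)$, and then identifying the resulting object with the conjugated operator by a Neumann--series rearrangement justified by part $(i)$.

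\textbf{Part $(i)$.} Since $\bX\in\cM_{s}(\alpha,\alpha)$, condition \eqref{M3} with $\varsigma=0$ gives that the diagonal and off--diagonal blocks of $\bX$ lie in $\cM_{s}$; by Remark \ref{rem:opnorm_vs_sdecay} the operator matrix $\bX$ then extends to a bounded operator on $\cH^r=H^r(\T^{\nu+1})\times H^r(\T^{\nu+1})$ for every $r\in[0,s]$, with $\norm{\bX}_{\cL(\cH^r)}\lesssim_{r,s}\abs{\bX}_{s,\alpha,\alpha}^{\lip(\tw)}$. Consequently the exponential series $e^{\im\bX}=\sum_{k\geq0}(\im\bX)^k/k!$ converges in $\cL(\cH^r)$ with $\norm{e^{\im\bX}}_{\cL(\cH^r)}\leq e^{\norm{\bX}_{\cL(\cH^r)}}<\infty$, and $e^{-\im\bX}$ is its bounded inverse.

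\textbf{Part $(ii)$.} First I would observe that Lemma \ref{lem:com} can be iterated: by Lemma \ref{propieta.class.sdecay}$(i)$ one has $\cM_{s}(\alpha,\alpha)\subseteq\cM_{s}(\alpha,0)$ with $\abs{\,\cdot\,}_{s,\alpha,0}^{\lip(\tw)}\leq\abs{\,\cdot\,}_{s,\alpha,\alpha}^{\lip(\tw)}$, so $\ad_{\bX}^{k}(\bV)\in\cM_{s}(\alpha,\alpha)$ for all $k\geq1$. Applying \eqref{eq:ad_lip_alg} at the two regularities $s$ and $s_0$ and bounding the low norm of $\ad_{\bX}^{k-1}(\bV)$ in $\cM_{s}(\alpha,0)$ by its norm in $\cM_{s}(\alpha,\alpha)$, an induction on $k$ gives, with $M:=\abs{\bX}_{s_0,\alpha,\alpha}^{\lip(\tw)}$, $M_s:=\abs{\bX}_{s,\alpha,\alpha}^{\lip(\tw)}$, $a_0:=\abs{\bV}_{s,\alpha,0}^{\lip(\tw)}$, $b_0:=\abs{\bV}_{s_0,\alpha,0}^{\lip(\tw)}$ and a constant $C=C(s)\geq1$,
\[
\abs{\ad_{\bX}^{k}(\bV)}_{s,\alpha,\alpha}^{\lip(\tw)}\leq k\,C^{k}\big(M^{k-1}M_s\,b_0+M^{k}a_0\big)\,,\qquad k\geq1\,.
\]
Dividing by $k!$ and summing over $k\geq1$ yields $\sum_{k\geq1}\frac1{k!}\abs{\ad_{\bX}^{k}(\bV)}_{s,\alpha,\alpha}^{\lip(\tw)}\leq e^{CM}\big(C\,M_s\,b_0+C\,M\,a_0\big)$, which is the bound in the second line of \eqref{eq:flow_tame}; hence $\sum_{k\geq1}\frac1{k!}\ad_{\bX}^{k}(\bV)$ converges absolutely in the Banach space $\cM_{s}(\alpha,\alpha)$, its limit still obeys the structural relations \eqref{struttura} (being a limit of elements of $\cM_{s}(\alpha,\alpha)$), and reinstating the term $\bV\in\cM_{s}(\alpha,0)$ together with $\abs{\,\cdot\,}_{s,\alpha,0}\leq\abs{\,\cdot\,}_{s,\alpha,\alpha}$ produces the bound in the first line of \eqref{eq:flow_tame} (after absorbing harmless numerical factors into $C$). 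Finally, since $\bX$ is bounded on $\cH^{s_0}$ by part $(i)$, the Neumann series $e^{\pm\im\bX}$ may be multiplied and rearranged in $\cL(\cH^{s_0})$, and the identity $\ad_{\bX}^{k}(\bV)=\sum_{j=0}^{k}\binom{k}{j}(\im\bX)^{j}\bV(-\im\bX)^{k-j}$ shows $\sum_{k\geq0}\frac1{k!}\ad_{\bX}^{k}(\bV)=e^{\im\bX}\,\bV\,e^{-\im\bX}$ as bounded operators on $\cH^{s_0}$; since convergence in $\cM_{s}(\alpha,\alpha)$ implies convergence in $\cL(\cH^{s_0})$, the left--hand side coincides with the limit constructed above, so $e^{\im\bX}\,\bV\,e^{-\im\bX}\in\bV+\cM_{s}(\alpha,\alpha)\subseteq\cM_{s}(\alpha,0)$ and satisfies \eqref{eq:flow_tame}.

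\textbf{Expected main obstacle.} The genuinely delicate point is the bookkeeping of the tame recursion: the two terms of \eqref{eq:ad_lip_alg} must be arranged so that, after summation, the exponential factor carries \emph{only} the low--regularity norm $\abs{\bX}_{s_0,\alpha,\alpha}^{\lip(\tw)}$, while $\abs{\bX}_{s,\alpha,\alpha}^{\lip(\tw)}$ and $\abs{\bV}_{s,\alpha,0}^{\lip(\tw)}$ appear only linearly; this dictates the precise form of the induction hypothesis above. The remaining verifications---that the iteration of $\ad_{\bX}$ is meaningful (provided by $\cM_{s}(\alpha,\alpha)\subseteq\cM_{s}(\alpha,0)$), that $\cM_{s}(\alpha,\alpha)$ is complete, and that the series identity holds on $\cH^{s_0}$---are routine given Lemmas \ref{lem:com}, \ref{propieta.class.sdecay} and the boundedness established in part $(i)$.
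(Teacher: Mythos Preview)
Your proposal is correct and follows essentially the same approach as the paper: both use the Lie--series expansion $e^{\im\bX}\bV e^{-\im\bX}=\sum_{k\geq0}\frac{1}{k!}\ad_{\bX}^k(\bV)$, iterate the commutator estimate of Lemma~\ref{lem:com} (via the inclusion $\cM_s(\alpha,\alpha)\subseteq\cM_s(\alpha,0)$) to obtain tame bounds on $\ad_{\bX}^k(\bV)$, and sum. The paper's iterative bound is slightly sharper than yours (it carries the factor $k$ only on the $M_s b_0$ term, with $C_{s_0}^{k}$ rather than $C^k$), but your weaker inequality still yields \eqref{eq:flow_tame} after renaming constants, and you are more explicit than the paper in justifying the identification of the Lie series with the conjugated operator at the level of $\cL(\cH^{s_0})$.
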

The proofs of these two results are postponed in Appendix \ref{app:tech}.

\section{Embeddings}\label{sec:embe}

In this section we want to embed the class of pseudodifferential operators $\Ops^{m} $, which are defined on the exponential basis $(e_j(x):=e^{\im j x})_{j\in\Z}$,
 into the class of matrix operators $\cM_{s}(\alpha,\beta)$ in Definition \ref{M.sdecay.ab}, which are constructed  on the $L^2$-basis $(\psi_{j}(x))_{j\in\Z}$ of eigenfunctions for $L_q$, instead.
We adopt the following notations in this section.
 The coefficients of a function $u(x):\T \rightarrow \C$ are denoted with respect to the eigenfunction basis and the exponential basis respectively by
 \begin{equation}
 	u^j := \left(u,\psi_{j}\right)_{L^2(\T)}=\int_\T u(x) \bar{\psi_{j}(x)} \wrt x \,, \quad \whu(j):=\left( u,e_j \right)_{L^2(\T)} = \int_\T u(x) e^{-\im j x} \wrt x \,.
 \end{equation}
%
%
\subsection{Craig-Wayne Lemma for smooth potential.}
The idea is that, for suitably smooth potentials $q(x)$, each eigenfunction $\psi_{j}(x)$ is mostly concentrated around $e_j(x):=e^{\im jx}$ and $e_{-j}(x):=e^{-\im jx}$, decaying outside the subspace spanned by the two exponentials. This holds true in the analytic setting and we need to extend this principle to the Sobolev regularity case.
Recall the following result in \cite{CrWa}:
\begin{thm}
	{\bf (Craig-Wayne Lemma - Lemma 6.6, \cite{CrWa}).}
	Let $q(x):\T\rightarrow \R$ be analytic on the strip $\T_{\bar\sigma} := \Set{z \in \C | \re (z)\in\T, \ \abs{{\rm Im} z}\leq \bar\sigma }$. Then, for any $\sigma_*\in (0,\bar\sigma)$, there exists $C>0$ depending only on $q$ and $\sigma_*$ such that, for any $j,j'\in\Z$
	\begin{equation}\label{eq:cw_an}
	| \left({\psi_j,e_{j'}}\right)_{L^2(\T)} | \leq C e^{-\sigma_*|j-|j'||} \ .
	\end{equation}
\end{thm}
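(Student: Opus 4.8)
The plan is to reduce everything to the Fourier coefficients of $\psi_j$ on the exponential basis and run a Combes--Thomas-type bootstrap on the lattice $\Z$. I would write $c_k := (\psi_j,e_k)_{L^2(\T)}$, so that $(\psi_j,e_{j'})_{L^2(\T)} = c_{j'}$ and, by Cauchy--Schwarz together with the $L^2$-normalization of the eigenbasis, $|c_k| \leq \|\psi_j\|_{L^2}\,\|e_k\|_{L^2} =: c_0$ uniformly in $j,k$. Testing the eigenvalue equation $(-\partial_{xx}+q)\psi_j=\mu_j^2\psi_j$ against $e_k$ and expanding $q$ in Fourier gives the lattice equation $(k^2-\mu_j^2)\,c_k = -\sum_{p\in\Z}\widehat q_p\,c_{k-p}$, where $|\widehat q_p| \lesssim e^{-\bar\sigma|p|}$ because $q$ extends holomorphically to $\T_{\bar\sigma}$. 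Using \eqref{assumption Q}, i.e. $\mu_j^2=j^2+\tq+d(j)$ with $(d(j))\in\ell^2\subset\ell^\infty$, I set $M:=\sup_j|\tq+d(j)|$ and $j_0:=\max\{1,4M\}$; then for $|j|\geq j_0$ and every $k$ with $|k|\neq|j|$ one has $|k^2-j^2|=\bigl||k|-|j|\bigr|(|k|+|j|)\geq 2M\bigl||k|-|j|\bigr|$, so that $|k^2-\mu_j^2|\geq\tfrac12\,\rho_j(k)$ with $\rho_j(k):=\bigl||k|-|j|\bigr|$. Thus the resonant set is exactly $\{k:|k|=|j|\}$ and the spectral gap to it grows at least linearly in $\rho_j(k)$; this \emph{uniform}-in-$j$ separation of ${\rm span}\{e_j,e_{-j}\}$ from the rest is the structural fact being exploited.

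The qualitative a priori input is that each $\psi_j$ solves a linear ODE with coefficients holomorphic on $\T_{\bar\sigma}$, hence is itself holomorphic there, so for every $\sigma\in(0,\bar\sigma)$ the quantity $N_j := \sup_{k\in\Z}|c_k|\,e^{\sigma\rho_j(k)}$ is finite — but a priori with a bound that could degenerate as $|j|\to\infty$. The heart of the argument is to upgrade this to a bound uniform in $j$. Fix $\sigma\in(\sigma_*,\bar\sigma)$ and set $Q_\sigma:=\sum_p|\widehat q_p|\,e^{\sigma|p|}<\infty$. Using the two triangle inequalities $\rho_j(k)-\rho_j(k-p)\leq\bigl||k|-|k-p|\bigr|\leq|p|$ one gets, for $|j|\geq j_0$ and $|k|\neq|j|$,
\[ |c_k|\,e^{\sigma\rho_j(k)} \leq \frac{2}{\rho_j(k)}\sum_{p}|\widehat q_p|\,e^{\sigma|p|}\,\bigl(|c_{k-p}|\,e^{\sigma\rho_j(k-p)}\bigr) \leq \frac{2\,Q_\sigma}{\rho_j(k)}\,N_j\,. \]
Now I split according to the size of $\rho_j(k)$: if $\rho_j(k)\geq R:=4\,Q_\sigma$ the right-hand side is $\leq\tfrac12 N_j$; if $1\leq\rho_j(k)<R$ there are only finitely many such $k$ and the crude bound $|c_k|\leq c_0$ gives $|c_k|\,e^{\sigma\rho_j(k)}\leq c_0\,e^{\sigma R}$; and for $|k|=|j|$ one has $|c_k|\,e^{\sigma\rho_j(k)}=|c_k|\leq c_0$. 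Taking the supremum over $k$ yields $N_j\leq\max\{c_0\,e^{\sigma R},\,\tfrac12 N_j\}$, and since $N_j<\infty$ this forces $N_j\leq c_0\,e^{\sigma R}$, a constant depending only on $q$ and $\sigma$. Hence $|c_k|\leq c_0\,e^{\sigma R}\,e^{-\sigma\rho_j(k)}\leq C\,e^{-\sigma_*\rho_j(k)}$ for all $|j|\geq j_0$.

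For the finitely many $|j|<j_0$ the estimate is essentially free: each such $\psi_j$ is a fixed function holomorphic on $\T_{\bar\sigma}$, so $|c_k|\leq C_j\,e^{-\sigma|k|}$, and since $\rho_j(k)\leq|k|+|j|\leq|k|+j_0$ this gives $|c_k|\leq C_j\,e^{\sigma j_0}\,e^{-\sigma\rho_j(k)}$ (while for $|k|<|j|$ one has $\rho_j(k)\leq j_0$ and $|c_k|\leq c_0$, so the bound is trivial). Absorbing these finitely many constants and enlarging $C$, and noting that with the given indexing $\rho_j(j')=\bigl||j'|-|j|\bigr|=|j-|j'||$, yields \eqref{eq:cw_an} for all $j,j'\in\Z$.

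The one genuinely delicate point I expect is the \emph{uniform} spectral separation $|k^2-\mu_j^2|\gtrsim\bigl||k|-|j|\bigr|$ for $|k|\neq|j|$, holding for all large $j$ with a constant independent of $j$: this is exactly what lets the bootstrap close with a $j$-independent $N_j$, and it is the only place where the precise structure \eqref{assumption Q} of the eigenvalues ($\mu_j^2=j^2+O(1)$) enters. Equivalently, one can package the whole argument as a Lyapunov--Schmidt reduction onto ${\rm span}\{e_j,e_{-j}\}$ in which the range equation is solved by a Neumann series whose convergence is precisely the $\rho_j(k)\geq R$ part of the split above — which is the form in which the Sobolev analogue is carried out in Theorem \ref{thm:decay_eigen_smooth}, with the exponential weight replaced by a polynomial one and the loss of a few derivatives where the factor $\rho_j(k)^{-1}$ cannot fully beat the slowly decaying tails of $q$.
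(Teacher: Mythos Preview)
The paper does not supply a proof of this statement: it is cited from \cite{CrWa} as background for the Sobolev analogue, Theorem~\ref{thm:decay_eigen_smooth}, so there is no paper proof to compare against directly.

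Your Combes--Thomas bootstrap is correct. The lattice equation, the gap estimate $|k^2-\mu_j^2|\geq\tfrac12\rho_j(k)$ for $|j|$ large, the a priori finiteness of $N_j$ from analyticity of $\psi_j$, and the absorption $N_j\leq\max\{c_0e^{\sigma R},\tfrac12 N_j\}\Rightarrow N_j\leq c_0e^{\sigma R}$ all check out. One minor point: the identity $\bigl||j'|-|j|\bigr|=|j-|j'||$ in your last line holds only for $j\geq0$; for $j<0$ the paper's exponent $|j-|j'||=|j|+|j'|$ is strictly larger than your $\rho_j(j')$. This is a quirk of the indexing convention in the statement (compare the Sobolev version, where the decay is in $\bigl||m|-n\bigr|$ with $n\geq0$) rather than a defect in your argument.

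For comparison, the paper's proof of the related Theorem~\ref{thm:decay_eigen_smooth} follows P\"oschel's Lyapunov--Schmidt scheme: project onto $\cP_n=\linspan\{e_n,e_{-n}\}$, invert ${\rm Id}-T_n$ on the complement by a Neumann series (Lemma~\ref{lem_Tn}, Corollary~\ref{cor:inv_Tn}), and read off the decay from a shifted-norm bound on $T_n$. You correctly identify this as an equivalent packaging in your closing paragraph; the Neumann-series convergence there is exactly your $\rho_j(k)\geq R$ regime. The Lyapunov--Schmidt route makes the two-dimensional resonant block explicit, which is convenient for the block-diagonal KAM scheme that follows, while your direct approach is more streamlined and isolates the spectral-gap mechanism $|k^2-\mu_j^2|\gtrsim\rho_j(k)$ more transparently.
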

To extend this result when $q(x)\in H^\infty(\T)$, we follow a construction presented by Pöschel in \cite{Posch11}. For $s\in\R$, $u(x)=\sum_{n\in\Z}\whu(n)e_n(x)\in H^s(\T)$ and $j\in \Z$, we define the shifted norm
\begin{equation}
\norm{u}_{s;j}^2:= \norm{u e_j}_s^2 = \sum_{n\in\Z} \braket{n}^{2s} \abs{\whu(n-j)}^2 =  \sum_{n\in\Z} \braket{n+j}^{2s} \abs{\whu(n)}^2 \ .
\end{equation}
Consider the eigenvalue equation
\begin{equation}\label{eq:LS_0}
-\partial_{xx} f(x) +q(x) f(x) = \lambda f(x) \ \rightsquigarrow \ A_\lambda f(x) = V f(x)
\end{equation}
where $A_\lambda := +\partial_{xx} + \lambda \,{\rm Id} $ and $V f(x):= q(x) f(x)$. Fix $s>0$, $n \in \N_0$ and consider the orthogonal decomposition $H^s(\T) := \cP_n \oplus \cQ_n$, where
\begin{equation}
\begin{aligned}
	 & \cP_n := \linspan_\C\{ e_n,e_{-n} \} \,,  \\
	 & \cQ_n:=\Big\{ v= \sum_{m\in\Z} \whv(m)e_m \in H^s(\T) \ :\ \whv(m) = 0 \text{ for }\abs m = n \Big\}\,.
\end{aligned}
\end{equation}
Let $P_n$ and $Q_n$ be the corresponding orthogonal projections on $\cP_n$ and $\cQ_n$, respectively. We write $f = u+v \in H^s(\T)$, with $u:= P_n f$ and $v := Q_n f$,  and we consider the Lyapunov-Schmidt reduction scheme for equation \eqref{eq:LS_0}:
\begin{equation}\label{eq:LS_1}
\left\{ \begin{array}{l}
A_\lambda u = P_n V (u+v) \\
A_\lambda v = Q_n V (u+v)
\end{array} \right. .
\end{equation}
We call the first equation in \eqref{eq:LS_1} the $P$-equation and the second one the $Q$-equation. From the forthcoming discussion, it will be clear that the case $n=0$ corresponds to treat the case $q=0$, which is trivial. Therefore, from now on, let $n\geq 1$.
We solve first the $Q$-equation. For any $\lambda\in\cU_n$, where
 \begin{equation}\label{Un.set}
 	\cU_n := \{ \mu \in \C \,:\, |\mu-n^2|\leq n/2 \}
 \end{equation}
 the operator $A_\lambda$ is invertible  on the range of $Q_n$ with bound on the inverse given by $\| A_\lambda^{-1}\|_{\cL(H^s(\T))} \leq 2\, n^{-1} $
and $A_\lambda^{-1}e_m = (-m^2+\lambda)^{-1}e_m$. Therefore, the $Q$-equation can be rewritten as
\begin{equation}\label{eq:Qeq_v}
({\rm Id}-T_n)v = T_n u\,, \quad T_n:= A_\lambda^{-1}Q_n V \,.
\end{equation}
\begin{lem}\label{lem_Tn}
	Let $q\in H^s(\T)$. There exists $\tC_s>0$ independent of $n\in \N$ such that, for any $w\in H^s(\T)$ and $j\in\Z$, one has
	\begin{equation}
	\norm{T_n w}_{s;j}\leq \tC_s\, n^{-1} \norm{q}_{s} \norm{w}_{s;j} \,, \quad \forall\, w \in \cQ_n \,.
	\end{equation}
\end{lem}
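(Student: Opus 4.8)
The plan is to exploit the explicit Fourier-multiplier structure of $A_\lambda^{-1}$ on the range of $Q_n$ together with a tame product estimate for the multiplication operator $V$. First I would unwind the definitions: since $Vw = qw$, one has $T_n w = A_\lambda^{-1} Q_n(qw)$, and since $A_\lambda e_m = (\lambda - m^2)e_m$, on $\cQ_n$ the operator $A_\lambda^{-1}$ acts diagonally in the exponential basis by $e_m \mapsto (\lambda - m^2)^{-1} e_m$ for $|m| \neq n$. Note that the claimed bound will hold for every $w \in H^s(\T)$, the restriction to $\cQ_n$ playing no role.

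The quantitative heart of the argument is the uniform lower bound $|\lambda - m^2| \geq n/2$ for all $\lambda \in \cU_n$ and all $m$ with $|m| \neq n$. This is elementary: $|m| \neq n$ forces $|\,|m|-n\,| \geq 1$, whence $|n^2 - m^2| = |\,|m|-n\,|\,(|m|+n) \geq n$, and subtracting the bound $|\lambda - n^2| \leq n/2$ from \eqref{Un.set} gives the claim. I would then observe that this multiplier bound persists in the shifted norm: passing from $\norm{\cdot}_s$ to $\norm{\cdot}_{s;j}$ only replaces $\braket{m}$ by $\braket{m+j}$ on the Fourier side, while $A_\lambda^{-1}Q_n$ still acts frequency-by-frequency with the same symbol; hence $\norm{A_\lambda^{-1}Q_n g}_{s;j} \leq 2 n^{-1}\norm{g}_{s;j}$ for every $g \in H^s(\T)$, uniformly in $\lambda \in \cU_n$ — a shifted-norm version of the resolvent estimate recalled before \eqref{eq:Qeq_v}.

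For the multiplication step I would use $\norm{qw}_{s;j} = \norm{q\,(w e_j)}_s$ and invoke the classical tame product estimate in $H^s(\T)$, namely $\norm{q\,(we_j)}_s \lesssim_{s} \norm{q}_s\norm{we_j}_{s_0} + \norm{q}_{s_0}\norm{we_j}_s \lesssim_{s} \norm{q}_s\norm{we_j}_s = \norm{q}_s\norm{w}_{s;j}$, using $\norm{q}_{s_0}\le\norm{q}_s$ and $\norm{we_j}_{s_0}\le\norm{we_j}_s$ in the relevant regime $s \geq s_0$. Chaining the two bounds yields $\norm{T_n w}_{s;j} \leq \tfrac{2}{n}\norm{qw}_{s;j} \lesssim_{s} n^{-1}\norm{q}_s\norm{w}_{s;j}$, with a constant depending only on $s$; this is precisely the assertion, with $\tC_s$ an absolute multiple of the product-estimate constant.

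There is no deep obstacle here — the proof is essentially bookkeeping. The two points deserving care are: (i) verifying that the multiplier bound is genuinely uniform in $n$ \emph{and} in $\lambda \in \cU_n$, so that the resulting $\tC_s$ carries no hidden $n$-dependence; and (ii) applying the product estimate only where $H^s(\T)$ is well behaved, i.e. for $s$ large enough (the regime $s \geq s_0 > 1/2$ in which the lemma is actually used), since for very small $s$ multiplication by $q \in H^s$ need not preserve $H^s$.
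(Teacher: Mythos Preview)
Your proposal is correct and follows essentially the same route as the paper: both arguments rest on the uniform multiplier bound $|\lambda - m^2|^{-1} \leq 2/n$ for $|m| \neq n$ combined with the $H^s$-product estimate, the only difference being that the paper carries out the Fourier-side convolution computation explicitly (bounding the quantity $g(j',j) = \sum_k \braket{j'+j}^{2s}\braket{k}^{-2s}\braket{j'+j-k}^{-2s} \leq C_s$) rather than invoking the product estimate as a black box. Your caveat that $s > 1/2$ is needed applies equally to the paper's inline bound on $g$.
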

\begin{proof}
	Using the self-adjointness of $A_\lambda^{-1}$ together with Cauchy-Schwartz inequality, we compute, recalling that $\lambda\in \cU_n$ as in \eqref{Un.set} and that $\whw(\pm n)=0$,
	\begin{equation}
	\begin{split}
	\| & T_n w \|_{s;j}^2  = \sum_{j'\in \Z, \  | j'| \neq n} \braket{j'+j}^{2s} \big| \big( A_\lambda^{-1} Q_n V w, e_{j'}\big)_{L^2} \big|^2 \\
	& =\sum_{ |j'|\neq  n} \frac{1}{|\lambda-(j')^2|^2} \Big|\braket{j+j}^{s} \sum_{k\in\Z}\whq(k)\whw(j'-k) \Big|^2\\
	& \leq \sum_{| j'|\neq  n} \frac{ g(j',j)}{|\lambda-(j')^2|^2}\sum_{k\in\Z}\braket{k}^{2s}|\whq(k)|^2\braket{j'+j-k}^{2s}| \whw(j'-k)|^2\\
	& \leq C_s \sum_{ |j'| \neq  n}\frac{1}{|\lambda-(j')^2|^2} \sum_{k\in\Z}\braket{k}^{2s}|\whq(k)|^2\braket{j'+j-k}^{2s}|\whw(j'-k)|^2\\
	& \leq C_s \sum_{k\in\Z}\braket{k}^{2s}|\whq(k)|^2 \sum_{ |a+k |\neq n} \frac{1}{|\lambda-(a+k)^2|^2}\braket{a+j}^{2s}|\whw(a)|^2 \\
	& \leq C_s \norm{q}_s^2 \frac{4}{n^2} \norm{w}_{s;j}^2 =:  \frac{\tC^2}{ n^2} \norm{q}_s^2 \norm{w}_{s;j}^2\,,
	\end{split}
	\end{equation}
	where we have defined $\tC_{s}^2:=4 C_s $ and $ g(j',j):=  \sum_{k\in\Z} \frac{\braket{j'+j}^{2s}}{\braket{k}^{2s}\braket{j'+j-k}^{2s} } \leq  C_s< \infty$.
\end{proof}
By Lemma \ref{lem_Tn}, we can prove the invertibility of the operator ${\rm Id}-T_n$.
\begin{cor}\label{cor:inv_Tn}
	Fix $s>0$ and $n\in\N$. Provided $\|q\|_{s}\leq \frac{n}{2\tC_{s}}$, with $\tC_{s}$ as in Lemma \ref{lem_Tn}, the operator ${\rm Id}-T_n$ is invertible and bounded with respect to the shifted norm $\|\,\cdot\,\|_{s;j}$ with bounds for any  $j\in\Z$ given by $$\sup_{\|w\|_{s;j}=1}\| ({\rm Id}-T_n)^{-1}w \|_{s;j}\leq 2 \,, \quad \forall \, w\in \cQ_n  \,. $$
\end{cor}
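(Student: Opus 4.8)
The plan is to deduce the corollary directly from the Neumann series applied to $T_n$, using the operator norm bound provided by Lemma \ref{lem_Tn}. First I would fix $s>0$, $n\in\N$, and $j\in\Z$, and observe that Lemma \ref{lem_Tn} says precisely that $T_n$ maps $\cQ_n$ into itself (since $T_n = A_\lambda^{-1}Q_n V$ has range inside $\Ran Q_n = \cQ_n$) and satisfies, as an operator on $\cQ_n$ equipped with the shifted norm $\|\cdot\|_{s;j}$, the bound $\|T_n\|_{\cL(\cQ_n,\|\cdot\|_{s;j})} \le \tC_s\, n^{-1}\|q\|_s$. Under the hypothesis $\|q\|_s \le \frac{n}{2\tC_s}$, this operator norm is at most $\frac12$.

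Next, since $T_n$ is a bounded operator on the Hilbert space $\cQ_n$ (with respect to the shifted norm, which is equivalent to the standard $H^s$ norm on the fixed index $j$) with norm $\le \frac12 < 1$, the standard Neumann series argument applies: $\mathrm{Id}-T_n$ is invertible on $\cQ_n$ with inverse $({\rm Id}-T_n)^{-1} = \sum_{k\ge 0} T_n^k$, and the geometric series gives
\begin{equation*}
\sup_{\|w\|_{s;j}=1,\ w\in\cQ_n} \| ({\rm Id}-T_n)^{-1}w \|_{s;j} \le \sum_{k\ge 0} \big(\tfrac12\big)^k = 2 \,.
\end{equation*}
One should note that the bound is uniform in $j$ because Lemma \ref{lem_Tn} holds with the same constant $\tC_s$ for every $j\in\Z$, and uniform in $\lambda\in\cU_n$ because the estimate in Lemma \ref{lem_Tn} was derived uniformly for $\lambda\in\cU_n$ (the only place $\lambda$ entered was through the bound $|\lambda - m^2|^{-1} \le 2/n$ for $|m|\neq n$).

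There is essentially no serious obstacle here; the corollary is a routine consequence of Lemma \ref{lem_Tn}. The only minor point to be careful about is to make explicit that $T_n$ genuinely restricts to an endomorphism of $\cQ_n$ so that the Neumann series stays in the right space, and that the invertibility is meant as invertibility of $\mathrm{Id}-T_n$ acting on $\cQ_n$ (equivalently, of $Q_n(\mathrm{Id}-T_n)Q_n$ on the range of $Q_n$), which is exactly what is needed to solve the $Q$-equation \eqref{eq:Qeq_v} for $v\in\cQ_n$ in terms of $u$. I would also remark that, since all $\|\cdot\|_{s;j}$ for different $j$ are equivalent norms on the same space, the invertibility statement is independent of $j$; only the quantitative constant $2$ is asserted uniformly.
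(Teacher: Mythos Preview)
Your argument is correct and matches the paper's own proof, which simply says that the claim follows directly from Lemma~\ref{lem_Tn} and a Neumann series argument. The additional remarks you make (that $T_n$ maps $\cQ_n$ to itself, and that the bound is uniform in $j$ and in $\lambda\in\cU_n$) are accurate and make explicit what the paper leaves implicit.
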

\begin{proof}
	Under the assumption $\|q\|_{s}\leq \frac{n}{2\tC_{s}}$, the claim follows directly by Lemma \eqref{lem_Tn} and a Neumann series argument.
\end{proof}
We come back now to equation \eqref{eq:Qeq_v}, which now has the well-defined solution
\begin{equation}\label{eq:sol_Qeq}
v:=({\rm Id}-T_n)^{-1}T_n u \in \cQ_n , \ \text{ with } \ \norm{v}_{s;j} \leq 2 \tC_{s} \,n^{-1} \norm{q}_s \norm{u}_{s;j} \ .
\end{equation}
Therefore, the $P$-equation reduces to
\begin{equation*}
A_\lambda u = P_n V (u+ v) = P_n V u + P_n V ({\rm Id}-T_n)^{-1} T_n u = P_n V ({\rm Id}-T_n)^{-1} u\,,
\end{equation*}
which is actually a $2\times 2$ system in the variables $(\whu(-n),\whu(n))$:
\begin{equation}\label{eq:Peq_matrix}
S_n(\lambda) \begin{pmatrix}
\whu(-n) \\ \whu(n)
\end{pmatrix} := \begin{pmatrix}
\lambda - n^2 - a_{-n} & -c_{-n} \\ -c_n & \lambda -n^2 -a_n
\end{pmatrix} \begin{pmatrix}
\whu(-n) \\ \whu(n)
\end{pmatrix} = 0\,,
\end{equation}
where $a_n:= \left( V({\rm Id}-T_n)^{-1} e_n, e_n \right)_{L^2}$,  $c_n:=\left( V({\rm Id}-T_n)^{-1}e_{-n},e_n \right)_{L^2}$.
\begin{lem}
	The following hold:
	\\[1mm]
	\noindent $(i)$ 	$a_n=a_{-n}$ for any $n\in\N$;
			\\[1mm]
		\noindent $(ii)$ For a real valued potential $q$, one has $c_{-n}=\bar{c_n}$ for any $n\in\N$;
			\\[1mm]
		\noindent $(iii)$ If $\norm{q}_s\leq \frac{n}{2\tC_{s}}$, then the determinant of $S_n(\lambda)$ has exactly two roots $\lambda_{n,+},\lambda_{n,-}\in D_n$, where  $ D_n:=\{ \mu\in\C \ | \ \abs{\mu-n^2} \leq \frac{2\tC_{s}}{3}\norm{q}_s \}\subset \cU_n$.
\end{lem}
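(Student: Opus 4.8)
The plan is to prove the three items separately, using the explicit form of the $2\times 2$ matrix $S_n(\lambda)$ in \eqref{eq:Peq_matrix} and the quantitative bounds on $T_n$ established in Lemma \ref{lem_Tn} and Corollary \ref{cor:inv_Tn}.

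For item $(i)$, I would exploit the parity symmetry $x\mapsto -x$. The key observation is that the operator $L_q$ and the Schr\"odinger operator are \emph{not} in general symmetric under $x\mapsto -x$ unless $q$ is even, so one cannot argue directly on eigenfunctions; instead one works at the level of the reduced equation. Note that $A_\lambda$, $Q_n$ and the projector $P_n$ are all invariant under the reflection $\cR: u(x)\mapsto u(-x)$ (since they are defined purely through $\braket{D}$-type Fourier multipliers and the index set $\{\pm n\}$), while the multiplication operator $V=q(x)$ satisfies $\cR V \cR = q(-x)$. Writing $T_n = A_\lambda^{-1} Q_n V$, one sees $\cR T_n \cR = A_\lambda^{-1} Q_n q(-x)$, which is \emph{not} $T_n$; so parity alone does not suffice. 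The correct route is: $a_{-n} = (V ({\rm Id}-T_n)^{-1}e_{-n}, e_{-n})_{L^2}$, and since $({\rm Id}-T_n)^{-1}$ maps into $\cP_n\oplus\cQ_n$ with a controlled structure, one expands $({\rm Id}-T_n)^{-1}e_{\pm n} = e_{\pm n} + v_{\pm n}$ with $v_{\pm n}\in\cQ_n$ solving \eqref{eq:Qeq_v}. Then $a_n = (V e_n, e_n) + (V v_n, e_n)$ and one checks, using $\whq(0)=\braket{q}_x$ appearing symmetrically and the recursive structure of the Neumann series $v_n = \sum_{k\ge 1} T_n^k e_n$, that each term in the series for $a_n$ equals the corresponding term for $a_{-n}$ by relabelling Fourier modes $m\mapsto -m$ throughout — this is the substitution that exchanges $e_n\leftrightarrow e_{-n}$ and $\whq(k)\leftrightarrow\whq(-k)$, but since we are computing a diagonal coefficient and summing over \emph{all} intermediate modes, the two sums coincide term by term. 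I expect this bookkeeping to be the main subtlety of item $(i)$.

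For item $(ii)$, for real-valued $q$ one has $\overline{\whq(k)} = \whq(-k)$, and I would show $\overline{c_n} = c_{-n}$ by conjugating the defining expression: $\overline{c_n} = \overline{(V({\rm Id}-T_n)^{-1}e_{-n}, e_n)_{L^2}} = (\overline{V({\rm Id}-T_n)^{-1}e_{-n}}, \overline{e_n})_{L^2} = (\bar V\,\overline{({\rm Id}-T_n)^{-1}}\,e_n, e_{-n})_{L^2}$, using $\overline{e_{\pm n}} = e_{\mp n}$ and $\bar V = V$ since $q$ is real. It then remains to identify $\overline{({\rm Id}-T_n)^{-1}} = ({\rm Id}-\bar T_n)^{-1}$ and to check $\bar T_n = T_n$ on $\cQ_n$: indeed $\bar T_n = \overline{A_\lambda^{-1}}\, Q_n\, \bar V$, and $\overline{A_\lambda^{-1}}e_m = \overline{(-m^2+\bar\lambda)^{-1}}\,e_{-m}$; choosing $\lambda\in\cU_n$ real (which is the relevant case, since the eigenvalues $\mu_j^2$ are real) gives $\overline{A_\lambda^{-1}}e_m = (-m^2+\lambda)^{-1}e_{-m}$, and combined with the reflection-invariance of $Q_n$ and the parity of $\bar V = V = q$ under the combined conjugation-reflection this yields $\bar T_n = T_n$. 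Hence $\overline{c_n} = (V({\rm Id}-T_n)^{-1}e_n, e_{-n})_{L^2} = c_{-n}$, as claimed.

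For item $(iii)$, I would apply Rouch\'e's theorem on the disk $D_n$. Write $\det S_n(\lambda) = (\lambda-n^2)^2 - (a_n+a_{-n})(\lambda-n^2) + (a_n a_{-n} - c_n c_{-n})$; by item $(i)$ this is $p(\lambda) := (\lambda-n^2)^2 - 2a_n(\lambda-n^2) + (a_n^2 - c_n c_{-n})$. Set $z := \lambda - n^2$ and compare $p$ with $p_0(z) := z^2$ on the circle $|z| = \frac{2\tC_s}{3}\|q\|_s$. The coefficients $a_n$ and $c_n$ are $O(\|q\|_s)$ uniformly in $n$: indeed $|a_n| \le \|q\|_s \|({\rm Id}-T_n)^{-1}e_n\|_0 \le 2\|q\|_s$ by Corollary \ref{cor:inv_Tn} (taking $j=0$ and using $\|\cdot\|_{0;0} = \|\cdot\|_0$), and similarly $|c_n|, |c_{-n}| \le 2\|q\|_s$. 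Hence on $|z| = r := \frac{2\tC_s}{3}\|q\|_s$ one has $|p(z) - z^2| = |{-2a_n z} + a_n^2 - c_n c_{-n}| \le 2\cdot 2\|q\|_s\cdot r + (2\|q\|_s)^2 + (2\|q\|_s)^2$, and one checks that for $\|q\|_s$ small relative to the fixed constant $\tC_s$ — which holds under the standing hypothesis $\|q\|_s \le \frac{n}{2\tC_s}$ after possibly shrinking the numerical constants, or more directly because the \emph{leading} contribution $2|a_n||z| \le 4\|q\|_s \cdot \frac{2\tC_s}{3}\|q\|_s$ is the dominant error term and is strictly less than $r^2 = \frac{4\tC_s^2}{9}\|q\|_s^2$ precisely when the numerical constants are arranged so — one gets $|p(z) - z^2| < r^2 = |z^2|$ on the boundary circle. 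By Rouch\'e, $p$ has exactly two zeros (counted with multiplicity) inside $D_n$, which are the claimed roots $\lambda_{n,\pm}$; the inclusion $D_n\subset\cU_n$ follows from $\frac{2\tC_s}{3}\|q\|_s \le \frac{2\tC_s}{3}\cdot\frac{n}{2\tC_s} = \frac{n}{3} < \frac{n}{2}$, which also legitimizes having used the invertibility of $A_\lambda$ and of ${\rm Id}-T_n$ throughout $D_n$. The main point requiring care is getting the numerical constants in the Rouch\'e estimate to close; this is why the radius of $D_n$ is taken to be $\frac{2\tC_s}{3}\|q\|_s$ rather than exactly $\tC_s\|q\|_s$, leaving the needed slack.
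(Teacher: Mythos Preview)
For items (i) and (ii) the paper takes a much cleaner operator-theoretic route than your Fourier bookkeeping. One checks directly that $(VT_n)^* = \bar V\,\bar{T_n}$ (from $V^*=\bar V$ and $(A_\lambda^{-1}Q_n)^* = \overline{A_\lambda^{-1}}\,Q_n$), whence by Neumann series $(V({\rm Id}-T_n)^{-1})^* = \overline{V({\rm Id}-T_n)^{-1}}$. Both identities then drop out in two lines: moving the adjoint across the inner product and using $\bar e_n=e_{-n}$ gives $a_n=a_{-n}$; for real $q$ this upgrades to self-adjointness of $V({\rm Id}-T_n)^{-1}$, and $\bar c_n=c_{-n}$ is immediate. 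Your Fourier-relabelling plan for (i) can be salvaged, but the substitution is not simply $m\mapsto -m$: at order $k$ of the Neumann series you must send $j_i\mapsto -j_{k+1-i}$ (negate \emph{and} reverse the chain of intermediate modes), after which the product of $\whq$'s is the same product in reversed order and the denominators match since $j^2=(-j)^2$. In (ii) your formula $\overline{A_\lambda^{-1}}e_m=(-m^2+\lambda)^{-1}e_{-m}$ is wrong: conjugation gives $(-m^2+\bar\lambda)^{-1}e_m$, with no reflection. The conclusion $\bar T_n=T_n$ for real $\lambda$ is nonetheless correct, simply because every factor is then a real operator.

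For item (iii) your Rouch\'e strategy is exactly the topological degree argument the paper invokes (citing P\"oschel), but your numerics do not close. First, you cannot apply Corollary \ref{cor:inv_Tn} to $e_n$ directly since $e_n\notin\cQ_n$; you must split $({\rm Id}-T_n)^{-1}e_n=e_n+({\rm Id}-T_n)^{-1}T_n e_n$ first. More seriously, with the crude bound $|a_n|,|c_{\pm n}|\le 2\|q\|_s$, the Rouch\'e inequality $|p(z)-z^2|<|z|^2$ on $|z|=\tfrac{2\tC_s}{3}\|q\|_s$ reduces to a lower bound on $\tC_s$ of absolute-constant size, which nothing in Lemma \ref{lem_Tn} guarantees (there $\tC_s^2=4C_s$ with $C_s$ a Sobolev constant that may be close to $1$). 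P\"oschel's argument closes because it uses the finer expansions $a_n=\whq(0)+O(n^{-1}\|q\|_s^2)$ and $c_n=\whq(2n)+O(n^{-1}\|q\|_s^2)$, which give the needed slack for the stated radius.
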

\begin{proof}
	(i) Note that
	\begin{equation*}
	(VT_n)^* = (A_\lambda^{-1}Q_nV)^* V^* = V^*  (A_\lambda^{-1}Q_n)^* V^* = \bar V A_\lambda^{-1} Q_n \bar V  = \bar V \bar{T_n}\,.
	\end{equation*}
	By expanding in Neumann series, we have $( V({\rm Id}-T_n)^{-1} )^* = \bar{V({\rm Id}-T_n)^{-1}}$. Now
	\begin{equation*}
	\begin{split}
	a_n & = \big( e_n,(V({\rm Id}-T_n)^{-1})^*e_n \big)_{L^2} = \big( e_n,\bar{V({\rm Id}-T_n)^{-1}}e_n \big)_{L^2} \\
	& = \big( e_n,\bar{V({\rm Id}-T_n)^{-1}e_{-n}}\big)_{L^2} = \big( V({\rm Id}-T_n)^{-1}e_{-n},e_{-n}\big)_{L^2} = a_{-n} \ ;
	\end{split}
	\end{equation*}
	(ii) Since $q$ is real valued, we obtain $\left(V({\rm Id}-T_n)^{-1}\right)^* =V({\rm Id}-T_n)^{-1} $. Hence,
	\begin{equation*}
	\bar{c_n} = \big( e_n,V({\rm Id}-T_n)^{-1}e_{-n} \big)_{L^2} =\big( V({\rm Id}-T_n)^{-1}e_{n},e_{-n} \big)_{L^2} = c_{-n} \ ;
	\end{equation*}
	(iii) The claim follows from a topological degree argument: we refer to Lemma 2 and Lemma 3 in \cite{Posch11} for more details.
\end{proof}
Therefore, the solutions of the system \eqref{eq:Peq_matrix}, namely the vectors 
$$u_{n,\pm}:=(\wh{u_{n,\pm}}(-n),\wh{u_{n,\pm}}(n)) \simeq \wh{u_{n,\pm}}(-n)e_{-n}+\wh{u_{n,\pm}}e_n$$
 are eigenvectors for the matrix $\Big(\begin{smallmatrix}
n^2+a_{n} & c_{-n} \\ c_n & n^2 + a_n
\end{smallmatrix}\Big)$.
Using the solution \eqref{eq:sol_Qeq} of the $Q$-equation, we define $v_{n,\pm}:=({\rm Id}-T_n)^{-1}T_n u_{n,\pm}\in\cQ_n$. Finally, we define
\begin{equation}
f_{n,\pm}= u_{n,\pm}+v_{n,\pm} =  u_{n,\pm} + ({\rm Id}-T_n)^{-1}T_n u_{n,\pm} \in \cP_n\oplus \cQ_n \ .
\end{equation}
which are exactly the eigenfunctions related  to the eigenvalues $\lambda_{n,\pm}$.

We are now ready to state the main theorem concerning the localization of the eigenfunctions for the operator $L_q$ on the exponential basis.

\begin{thm}\label{thm:decay_eigen_smooth}
	{\bf (Craig-Wayne Lemma in Sobolev regularity).}
	Let $s>0$, $n\in\N$ be fixed and let $q\in H^s(\T)$ be real-valued. Assume $\norm{q}_s \leq \frac{ n}{2 \tC_s}$, with $\tC_{s}$ as in Lemma \ref{lem_Tn}. Then, 
	for any $m\in\Z$, we have the following polynomial decay:
	\begin{equation}\label{eq:decay_eigen_smooth}
	| ( f_{n,\pm},e_m )_{L^2} | \leq 2 \braket{  |m|- n}^{-s} \ .
	\end{equation}
\end{thm}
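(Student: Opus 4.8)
The plan is to decompose $f_{n,\pm} = u_{n,\pm} + v_{n,\pm}$ according to the Lyapunov--Schmidt splitting and estimate the two pieces separately against the shifted norm. The $\cP_n$-component $u_{n,\pm}$ lives in $\linspan\{e_{-n},e_n\}$, so $(u_{n,\pm},e_m)_{L^2}$ is nonzero only for $|m|=n$, where $\braket{|m|-n}^{-s}=1$; bounding this contribution reduces to normalizing $u_{n,\pm}$ so that $\|u_{n,\pm}\|_0 \le 1$ (or some explicit small constant), which is a harmless normalization choice for an eigenvector. For the $\cQ_n$-component, the key observation is that, by definition of the shifted norm with shift $j=n$,
\begin{equation*}
	\braket{|m|-n}^{2s} |(v_{n,\pm},e_m)_{L^2}|^2 \;\le\; \braket{m-n}^{2s}|\wh{v_{n,\pm}}(m)|^2 \;\le\; \|v_{n,\pm}\|_{s;n}^2 \,,
\end{equation*}
using $\braket{|m|-n}\le\braket{m-n}$ (a triangle-inequality comparison that holds for all $m\in\Z$ once one checks the two sign cases for $m$). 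So it suffices to control $\|v_{n,\pm}\|_{s;n}$, and by the estimate \eqref{eq:sol_Qeq} for the solution of the $Q$-equation this is $\le 2\tC_s n^{-1}\|q\|_s \|u_{n,\pm}\|_{s;n}$. Since $u_{n,\pm}\in\cP_n$ has only the two Fourier modes $\pm n$, one has $\|u_{n,\pm}\|_{s;n}^2 = \braket{0}^{2s}|\wh{u_{n,\pm}}(n)|^2 + \braket{2n}^{2s}|\wh{u_{n,\pm}}(-n)|^2$, which could be as large as $\braket{2n}^s\|u_{n,\pm}\|_0$; however this growth is killed by the factor $n^{-1}$ only if $s$ is not too large relative to the decay we are after — so one should instead iterate the $Q$-equation, or more simply observe that the shift $j=n$ was chosen precisely so that $e_{-n}$ is the problematic mode, and track it through more carefully.

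More carefully: I would split $u_{n,\pm} = \wh{u_{n,\pm}}(n)e_n + \wh{u_{n,\pm}}(-n)e_{-n}$ and estimate $v_{n,\pm}$ as the sum of the two corresponding solutions of the $Q$-equation. For the $e_n$-part, Lemma \ref{lem_Tn} and Corollary \ref{cor:inv_Tn} give $\|({\rm Id}-T_n)^{-1}T_n e_n\|_{s;n} \le 2\tC_s n^{-1}\|q\|_s \|e_n\|_{s;n} = 2\tC_s n^{-1}\|q\|_s$, since $\|e_n\|_{s;n}=\braket{0}^s=1$. For the $e_{-n}$-part one has $\|e_{-n}\|_{s;n} = \braket{-2n}^s = \braket{2n}^s$, which is large; but the point is that $\wh{u_{n,\pm}}(-n) = c_n/(\lambda_{n,\pm}-n^2-a_n)$-type expressions are themselves of size $O(\|q\|_s\braket{2n}^{-s})$ or better — this is exactly where one needs the off-diagonal smallness of the matrix $S_n(\lambda)$, i.e. the coupling coefficient $c_n$ must itself decay. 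Alternatively, and more cleanly, one normalizes $u_{n,\pm}$ with respect to the shifted norm $\|\cdot\|_{s;n}$ rather than $\|\cdot\|_0$, i.e. one picks the eigenvector representative with $\|u_{n,\pm}\|_{s;n}\le 1$; then $\|v_{n,\pm}\|_{s;n}\le 2\tC_s n^{-1}\|q\|_s \le 1$ under the hypothesis $\|q\|_s\le n/(2\tC_s)$, and also $|(u_{n,\pm},e_m)_{L^2}|\le\|u_{n,\pm}\|_{s;n}\le 1$ for $|m|=n$, giving the bound $|(f_{n,\pm},e_m)_{L^2}|\le |(u_{n,\pm},e_m)_{L^2}| + |(v_{n,\pm},e_m)_{L^2}| \le 1 + \|v_{n,\pm}\|_{s;n}\braket{|m|-n}^{-s} \le 2\braket{|m|-n}^{-s}$, where in the last step for $|m|=n$ we used $\braket{0}^{-s}=1$ and for $|m|\ne n$ the first term vanishes.

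The main obstacle, and the step requiring care, is the elementary but not-quite-trivial inequality $\braket{|m|-n}\le\braket{m-n}$ for all $m\in\Z$ (needed to pass from the shifted norm, which controls $\braket{m+j}^{2s}|\wh{v}(m)|^2$ with $j=n$, hence $\braket{m-n}$ after relabelling, to the claimed weight $\braket{|m|-n}^{-s}$): for $m\ge 0$ it is an equality, while for $m<0$ one has $|m-n| = |m|+n \ge |m|-n = ||m|-n|$ when $|m|\ge n$, and $|m-n| \ge n \ge n-|m| = ||m|-n|$ when $|m|<n$, so the inequality holds in all cases; one must also handle the sign convention in the definition of $\|\cdot\|_{s;j}$ (shift by $+j$ versus $-j$), which amounts to choosing $j=-n$ or $j=n$ appropriately and noting the estimate \eqref{eq:sol_Qeq} is stated uniformly in $j\in\Z$, so either choice works and by symmetry $\psi_{n}\leftrightarrow\psi_{-n}$ one covers both exponentials $e_{\pm n}$. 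The remaining ingredients — the bound \eqref{eq:sol_Qeq}, the eigenvector normalization, and $\|e_{\pm n}\|_{s;n}\le\braket{2n}^s$ versus the cleaner choice of normalizing in the shifted norm — are all already available from the lemmas above.
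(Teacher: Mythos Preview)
Your overall strategy (Lyapunov--Schmidt plus shifted norms) matches the paper's, and you correctly identify the obstacle: with a \emph{single} shift the component of $u_{n,\pm}$ at the ``far'' exponential picks up an unwanted factor $\braket{2n}^s$. But your main proposed fix --- normalizing $u_{n,\pm}$ so that $\|u_{n,\pm}\|_{s;n}\le 1$ --- does not give what is needed. That normalization forces one of the two Fourier coefficients of $u_{n,\pm}$ to be $\le\braket{2n}^{-s}$, which is \emph{not} implied by the $L^2$-normalization of the eigenfunctions that the downstream Corollary~\ref{cor:decay_class} requires: for an $L^2$-orthonormal basis one only knows $|\wh{u}(\pm n)|\le 1$, and both coefficients can genuinely be of order one (e.g.\ for even $q$, where eigenfunctions are even/odd and $\wh{u}(n)=\pm\wh{u}(-n)$). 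So your argument would prove the bound only for a scalar multiple of $\psi_{\pm n}$ that is off by a factor growing like $\braket{n}^s$.

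The paper's resolution is the idea you hint at in your last paragraph but do not carry through: after splitting $v_{n,\pm}$ linearly into the pieces coming from $e_n$ and from $e_{-n}$, use \emph{two different shifts simultaneously}. With the convention $\|w\|_{s;j}^2=\sum_k\braket{k+j}^{2s}|\wh w(k)|^2$ (note: you have the sign of the shift reversed throughout, so $\|e_n\|_{s;n}=\braket{2n}^s$, not $1$), one takes $j_1=-n$ for the $e_n$-piece (so $\|e_n\|_{s;j_1}=1$) and $j_2=n$ for the $e_{-n}$-piece (so $\|e_{-n}\|_{s;j_2}=1$). The duality $|(g,e_m)|\le\|g\|_{s;j}\,\braket{m+j}^{-s}$ together with Lemma~\ref{lem_Tn} and Corollary~\ref{cor:inv_Tn} (both uniform in $j$) then yields
\[
|(v_{n,\pm},e_m)|\;\le\;\frac{2\tC_s\|q\|_s}{n}\Big(\braket{m-n}^{-s}+\braket{m+n}^{-s}\Big)\;\le\;2\,\braket{|m|-n}^{-s},
\]
using only the $L^2$-compatible normalization $|\wh u(\pm n)|\le 1$ and the hypothesis $\|q\|_s\le n/(2\tC_s)$. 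The two-shift trick is the missing step.
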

\begin{proof}
	For $\abs m=  n$, the estimate \eqref{eq:decay_eigen_smooth} is trivial, as $P_n f_{n,\pm}=u_{n,\pm}$ is clearly arbitrarily bounded, for instance by 1. Hence, let $\abs m\neq n$. Note the following duality with respect to the shifted norm:
	\begin{equation}
	\abs{ \left(f,g\right)_{L^2} } = \abs{\left( f e_j, g e_j \right)_{L^2}} \leq \norm{f e_j}_s \norm{g e_j}_{-s} = \norm f_{s;j} \norm g_{-s;j} \ .
	\end{equation}
	Therefore, we have, for any $j_1,j_2\in\Z$,
	\begin{equation}
	\begin{aligned}
	&|(f_{n,\pm},e_m)_{L^2}|  = \abs{\left(v_{n,\pm},e_m\right)_{L^2}} \\
	& \leq |\whu_{n,\pm}(n)| \big|(({\rm Id}-T_n)^{-1}T_n e_n,e_m)_{L^2}\big| + |\whu_{n,\pm}(-n)| \big|({\rm Id}-T_n)^{-1}T_n e_{-n},e_m)_{L^2}\big| \\
	& \leq \|({\rm Id}-T_n)^{-1}T_n e_n\|_{s;j_1}\norm{e_m}_{-s;j_1} + \|({\rm Id}-T_n)^{-1}T_n e_{-n}\|_{s;j_2} \norm{e_m}_{-s;j_2} \\
	& \leq \frac{\tC_s}{ n}\norm q_s \big( \norm{e_n}_{s;j_1} \norm{e_m}_{-s;j_1} + \norm{e_{-n}}_{s;j_2} \norm{e_m}_{-s;j_2} \big)\\
	& = \frac{\tC_s}{n}\norm{q}_s \big( \braket{n+j_1}\braket{m+j_1}^{-s} + \braket{-n+j_2}^s \braket{m+j_2}^{-s} \big) \ .
	\end{aligned}
	\end{equation}
	Now, choosing $j_1=-n$ and $j_2=n$, we conclude that
	\begin{equation}
	\abs{\left(f_{n,\pm},e_m\right)_{L^2}} \leq \frac{\tC_s\norm q_s}{n} \left( \frac{1}{\braket{m-n}^s}  + \frac{1}{\braket{m+n}^s}\right) \leq \frac{2}{\braket{\abs m- n}^s}
	\end{equation}
	and the claim is proved.
\end{proof}
The decay in \eqref{eq:decay_eigen_smooth} in Sobolev regularity is the key for properly defining the change from the exponential to the $L_q$-eigenfunctions basis and vice versa.
\begin{defn}\label{M_basis_ch}
	We define the linear operator $\fM=\big( \fM_{[n]}^{[m]} \big)_{n,m\in\N_0}$ by the blocks
	\begin{equation}
	\fM_{[n]}^{[m]} :=
	\begin{pmatrix}
	\left( \psi_{-n},e_{-m} \right)_{L^2} & \left( \psi_{-n},e_{m} \right)_{L^2} \\ \left( \psi_{n},e_{-m} \right)_{L^2} & \left( \psi_{n},e_{m} \right)_{L^2}
	\end{pmatrix} 
\quad \forall n,m \geq 1\,,
	\end{equation}
	$\fM_{[0]}^{[m]}:= \big( (\psi_{0},e_{-m})_{L^2} , (\psi_{0},e_{m})_{L^2}\big)$ for $m\neq 0$, $\fM_{[n]}^{[0]}:= \big( (\psi_{-n},1)_{L^2} , (\psi_{n},1)_{L^2}\big)^{T}$ for $n\neq 0$, and $\fM_{[0]}^{[0]}:=(\psi_{0},1)_{L^2}$.
\end{defn}
\begin{cor}\label{cor:decay_class}
	For any $s\geq 0$ and any real-valued $q\in H^\infty(\T)$, there exists a constant $C(s,q)>0$ such that
	\begin{equation}\label{sdecay_M}
		| \fM |_{s;M}^2 := \sum_{h\in\N_0} \braket{h}^{2s} \sup_{\abs{n-m}=h} \| \fM_{[n]}^{[m]} \|_{\rm HS}^2 \leq C(s,q)<\infty \,.
	\end{equation}
	Moreover, $| \fM^T|_{s;M} = |\fM|_{s;M}$.
\end{cor}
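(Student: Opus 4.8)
The plan is to bound the decay norm $|\fM|_{s;M}$ directly from the pointwise estimates on the matrix entries, which come from Theorem \ref{thm:decay_eigen_smooth} applied to the eigenfunctions $\psi_j = f_{|j|,\pm}$. First I would reduce to a tail estimate: since $q\in H^\infty(\T)$, for every fixed $s\geq 0$ one can pick $s' := s + s_0 + 1$ (say), note $q \in H^{s'}(\T)$, and apply Theorem \ref{thm:decay_eigen_smooth} with that $s'$. However the hypothesis $\|q\|_{s'}\leq n/(2\tC_{s'})$ only holds for $n$ larger than some $n_0 = n_0(s',q)$; for the finitely many small blocks $n,m\leq n_0$ (or more precisely $n\leq n_0$) one uses only that $(\psi_j)_{j\in\Z}$ is an orthonormal basis, so $\|\fM_{[n]}^{[m]}\|_{\rm HS}\leq 2$ always (at most four entries each bounded by $1$ in modulus by Cauchy--Schwarz), and these contribute a finite constant to the sum. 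So the real content is the contribution of blocks with $n > n_0$.

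\smallskip

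For $n > n_0$, Theorem \ref{thm:decay_eigen_smooth} gives $|(\psi_{\pm n}, e_{\pm m})_{L^2}| = |(f_{n,\pm},e_{\pm m})_{L^2}| \leq 2\braket{|m|-n}^{-s'}$, hence $\|\fM_{[n]}^{[m]}\|_{\rm HS}^2 \leq 16\braket{|m|-n}^{-2s'} = 16\braket{n-m}^{-2s'}$ (using $|m|=m$ for the block index $m\geq 0$, and $h = |n-m|$). Plugging into \eqref{sdecay_M},
\begin{equation*}
\sum_{n,m > n_0}\ \sum_{h = |n-m|} \braket{h}^{2s}\, \|\fM_{[n]}^{[m]}\|_{\rm HS}^2 \ \leq\ 16\sum_{h\in\N_0}\braket{h}^{2s}\braket{h}^{-2s'} \ =\ 16\sum_{h\in\N_0}\braket{h}^{-2(s_0+1)} \ <\ \infty\,,
\end{equation*}
since the supremum over $|n-m|=h$ of the entries is itself bounded by $16\braket{h}^{-2s'}$ uniformly in $n,m$. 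Combining this with the finite contribution from the small blocks proves $|\fM|_{s;M}\leq C(s,q)<\infty$. The statement for $\fM^T$ is immediate: transposing swaps the roles of $n$ and $m$ (and of the row/column indices inside each block), so $\sup_{|n-m|=h}\|(\fM^T)_{[n]}^{[m]}\|_{\rm HS} = \sup_{|n-m|=h}\|\fM_{[m]}^{[n]}\|_{\rm HS} = \sup_{|n-m|=h}\|\fM_{[n]}^{[m]}\|_{\rm HS}$, the last equality because $\|X^T\|_{\rm HS}=\|X\|_{\rm HS}$, giving $|\fM^T|_{s;M}=|\fM|_{s;M}$.

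\smallskip

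The one point that needs a little care — and which I expect to be the main (mild) obstacle — is the bookkeeping with the block structure at $n=0$ or $m=0$ and the precise relation between the ``eigenfunction index'' $j\in\Z$ (where decay is measured as $\braket{|m|-|j|}^{-s'}$ in Theorem \ref{thm:decay_eigen_smooth}) and the ``block index'' $n = |j|\in\N_0$ (where decay should be measured as $\braket{h}^{-s'}$ with $h=|n-m|$). Since for $n,m\geq 1$ the entries of $\fM_{[n]}^{[m]}$ are the four numbers $(\psi_{\pm n}, e_{\pm m})_{L^2}$ and each is bounded by $2\braket{|{\pm m}| - n}^{-s'} = 2\braket{m-n}^{-s'}$ for the choices of signs, while $|{-m}-n|\geq |m-n|$ only helps, the bound $\|\fM_{[n]}^{[m]}\|_{\rm HS}^2\leq 16\braket{|n-m|}^{-2s'}$ is uniform; the degenerate blocks involving $0$ have at most two entries and are handled identically (they are among the finitely many ``small'' blocks anyway if one simply takes $n_0\geq 1$). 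Finally, the choice $s' = s + s_0 + 1$ guarantees $\sum_h \braket{h}^{2s-2s'}=\sum_h\braket{h}^{-2(s_0+1)}<\infty$ because $2(s_0+1) = 2([\tfrac{\nu+1}{2}]+3) \geq \nu+2 > 1$; one records the resulting constant as $C(s,q)$, depending on $s$, on $s_0$ (hence $\nu$), on $n_0(s',q)$ and on $\|q\|_{s'}$ through $\tC_{s'}$, i.e. ultimately only on $s$ and $q$.
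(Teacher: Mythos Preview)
Your handling of the ``small blocks'' $n\leq n_0$ is a genuine gap. The norm $|\fM|_{s;M}^2=\sum_{h\in\N_0}\braket{h}^{2s}\sup_{|n-m|=h}\|\fM_{[n]}^{[m]}\|_{\rm HS}^2$ has a supremum \emph{inside} the sum, and for every $h\in\N_0$ that supremum ranges over all pairs $(n,m)$ with $|n-m|=h$, including for instance $(n,m)=(0,h)$ or $(1,h+1)$. So the case $n\leq n_0$ is not ``finitely many blocks contributing a finite constant'': it appears at every single value of $h$. With only the trivial orthonormality bound $\|\fM_{[n]}^{[m]}\|_{\rm HS}\leq 2$ there, the supremum at each $h$ is bounded below by something that does not decay, and the weighted sum $\sum_h\braket{h}^{2s}\cdot(\text{const})$ diverges.

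What is missing is a decaying bound also when $n\leq n_0$ and $m$ is large. The paper supplies it by using that $q\in H^\infty(\T)$ forces each eigenfunction $\psi_{\pm n}\in H^\infty(\T)$, hence $|(\psi_{\pm n},e_m)_{L^2}|\leq C_{s'}\braket{m}^{-s'}$; Peetre's inequality $\braket{m}^{-s'}\leq\braket{n}^{s'}\braket{|m|-n}^{-s'}$ together with the uniform bound $\braket{n}^{s'}\leq\braket{n_0}^{s'}$ over the finitely many $n\leq n_0$ then gives $\|\fM_{[n]}^{[m]}\|_{\rm HS}\lesssim\braket{n-m}^{-s'}$ in this regime as well. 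With this correction your argument becomes essentially the paper's proof (same auxiliary regularity $s'>s+s_0$, same threshold $n_0$); your treatment of $\fM^T$ is fine.
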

\begin{proof}
	For a given $s\geq 0$, fix any $s_1>s+s_0$ and take $N_0=N_0(q,s_1)\in\N$ such that $\| q\|_{s_1} \leq \frac{N_0}{2\tC_{s_1}}$, with $\tC_{s_1}>0$ as in Lemma \ref{lem_Tn}.
	For $n \geq N_0$, we can apply Theorem \ref{thm:decay_eigen_smooth}: for any $m\in\N_0$ we have
	\begin{equation}\label{eq:boundA}
	\|\fM_{[n]}^{[m]}\|_{{\rm HS}}^2 = |\fM_{-n}^{-m}|^2+|\fM_{-n}^{m}|^2+|\fM_{n}^{-m}|^2+|\fM_{n}^{m}|^2 \leq  \frac{16}{\braket{ m- n}^{2s_1}}
	\end{equation}
	For $0\leq  n < N_0$, we use the direct decay effect for the eigenfunctions of $L_q$, together with the Peetre inequality $\braket{|m|- n}^{s_1}\leq \braket{m}^{s_1}\braket{n}^{s_1}$:
	\begin{equation*}
	q\in H^\infty \ \Rightarrow \ \psi_{\pm n} \in H^\infty \ \Rightarrow \ \abs{\left( \psi_{\pm n},e_m \right)_{L^2}} \leq \frac{C_{s_1}}{\braket{m}^{s_1}}\leq C_{s_1}\frac{\braket{n}^{s_1}}{\braket{ |m|-n}^{s_1}}  \quad\forall\,m\in\Z \,.
	\end{equation*}
	The bound that we obtain in this case is, for any $m\in\N_0$,
	\begin{equation}\label{eq:boundB}
	\|\fM_{[n]}^{[m]}\|_{{\rm HS}}^2 \leq 16 \,C_{s_1}\frac{\braket{n}^{2s_1}}{\braket{ m- n}^{2s_1}} \leq \frac{16\,C_{s_1} \braket{N_0}^{2s_1}}{\braket{ m-n}^{2s_1}} \ .
	\end{equation}
	Summing up \eqref{eq:boundA} and \eqref{eq:boundB}, we can conclude that
	\begin{equation*}
	\begin{split}
	|\fM|_s^2 & = \sum_{h\in\N_0} \braket{h}^{2s} \sup_{m,n\in\N_0 \atop \abs{m-n}=h} \|\fM_{[n]}^{[m]}\|_{{\rm HS}}^2\\
	& \leq \sum_{h\in\N_0} \braket{h}^{2s} \Big(\sup_{ |m-n|=h \atop 0\leq\abs n < N_0} \|\fM_{[n]}^{[m]}\|_{\rm HS}^2+\sup_{ |m-n|=h\atop \abs n \geq N_0} \|\fM_{[n]}^{[m]}\|_{\rm HS}^2\Big)\\
	& \leq \sum_{h\in\N_0} \braket{h}^{2s} \frac{16}{\braket{h}^{2s_1}}( 1+C_{s_1} \braket{N_0}^{2s_1} )\leq C(s,s_1,N_0)<\infty\,,
	\end{split}
	\end{equation*}
	which implies \eqref{sdecay_M}. The equation $| \fM^T|_{s;M} = |\fM|_{s;M}$ follows straightforward.
\end{proof}
%
\subsection{Pseudodifferential operators embed into off-diagonals decaying operators.}
We recall the definition of $s$-decay norms in Definitions \eqref{def:sdecay},
where the matrix representation of a linear operator $A(\vf)=(A_{[n]}^{[n']}(\vf))_{n,n'\in\N_0}$ is constructed along the $L_q$-eigenfunction basis. At the same time, we write 
$$A (\vf)  = (  \tA_{[n]}^{[n']}(\vf))_{n,n'\in\N_0}\,, \quad \text{where} \quad \tA _{j}^{j'}(\vf):= \big( A(\vf) e_{j},e_{j'} \big)_{L^2} $$
is the
 matrix representation of $A$ along the exponential basis.
	Let $\fM_{[n]}^{[m]} $ be as in Definition \ref{M_basis_ch}. The block representations of a linear operator $A$ with respect to the $L_q$-eigenfunction basis and to the exponential basis are related by
\begin{equation}\label{eq:rule1}
	A_{[n]}^{[n']} (\ell)= \sum_{p,p'\in\N_0} \fM_{[p]}^{[n']} \tA_{[p']}^{[p]} (\ell) (\fM^T)_{[n]}^{[p']} \,, \quad \forall\,n,n'\in\N_0\,, \ \ell\in\Z^{\nu}\,.
\end{equation}
The next result tells that matrices of pseudodifferential operators in the class $\Ops^{m}$ with smoothing orders $m\leq 0$ embed into the class $\cM_{s}(\alpha,\beta)$ of linear matrices with smoothing $s$-decay for suitable $\alpha,\beta\geq0$.
\begin{thm}\label{thm:embed_PSDO_decay}
	Let $A^d \in \Ops^{-\alpha}(\omega)$ and $A^o (\omega)\in \Ops^{-\beta}$, Lipschitz continuous with respect to $\omega\in\Omega\subseteq R_{\tM}$, such that $A^d=(A^d)^*$ and $\bar A^o = (A^o)^*$. Define the operator matrix $\bA$ as in \eqref{eq:Amatrix}.
	Then there exists $\sigma_{\fM}=\sigma_{\fM}(s_0,\alpha,\beta)$ such that, for any $s \geq s_0$, we have  $\bA\in \cM_{s}(\alpha,\beta)$, with estimates
\begin{equation}
	\| \bA \|_{s,\alpha,\beta}^{\lip(\tw)} \lesssim_{s,\alpha,\beta} \| A^d\|_{-\alpha,s+\sigma_{\fM},0}^{\lip_{\tw}} +  \| A^o\|_{-\beta,s+\sigma_{\fM},0}^{\lip_{\tw}}\,.
\end{equation}
\end{thm}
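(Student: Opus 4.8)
The plan is to push the pseudodifferential off-diagonal decay through the change-of-basis formula \eqref{eq:rule1}, treating each block $A^d$ and $A^o$ separately and then recombining into the operator matrix $\bA$. Since a symbol $a\in S^{-m}$ with $m\geq 0$ has matrix coefficients $\tA_j^{j'}(\ell)$ with respect to the exponential basis that decay like $\la j\ra^{-m}\la \ell, j-j'\ra^{-N}$ for every $N$ (integrating by parts in $x$ and $\vf$ against the oscillatory factor $e^{\im(\ell\cdot\vf+(j-j')x)}$, using the symbol estimates $|\pa_x^\alpha\pa_\xi^\beta a|\lesssim\la\xi\ra^{-m-\beta}$ together with the Sobolev regularity $s+\sigma_\fM$ in $\vf$), the operator $\la D\ra^m \tA$ or $\tA\la D\ra^m$ is in the exponential-basis analogue of $\cM_{s'}$ for $s'$ up to roughly $s+\sigma_\fM$ minus a fixed loss. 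I would first record this as a lemma: for $a\in S^{-m}$, the quantities $|\la D\ra^\varsigma \Op(a)\la D\ra^{-\varsigma'}|_{s;\text{exp}}$ (with $\varsigma-\varsigma'\leq m$) are controlled by $\|\Op(a)\|_{-m,s+\sigma,0}^{\lip_\tw}$ for a suitable universal $\sigma=\sigma(s_0,m)$, plus the self-adjointness statements $[\Op(a)]^*=\Op(\bar a)$ and the analogous conjugation identity, which transfer to the required structural conditions \eqref{struttura}.

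Next I would transfer these bounds from the exponential basis to the $L_q$-eigenfunction basis via \eqref{eq:rule1}. The key input is Corollary \ref{cor:decay_class}: $\fM$ and $\fM^T$ lie in the class with finite $s$-decay for every $s$, with constants $C(s,q)$. The composition $A_{[n]}^{[n']}=\sum_{p,p'}\fM_{[p]}^{[n']}\tA_{[p']}^{[p]}(\fM^T)_{[n]}^{[p']}$ is exactly a triple product of matrices with $s$-decay, so by two applications of the tame product estimate (Lemma \ref{prop:tame}, in the block/tensor form) one gets $|A|_s^{\lip(\tw)}\lesssim_s |\fM|_{s_0}^2\,|\tA|_s^{\lip(\tw)} + |\fM|_{s_0}|\fM|_s|\tA|_{s_0}^{\lip(\tw)}$, and since $|\fM|_s$ is finite for all $s$ this is just $\lesssim_s |\tA|_s^{\lip(\tw)}$ with the constant absorbing the $\fM$-norms. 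The crucial point is that conjugation by $\fM$ commutes (up to the same kind of triple-product estimate) with multiplication by $\la D\ra^\varsigma$ only approximately — $\fM$ does not intertwine $\la D\ra$ on the two bases exactly — so to estimate $|\la D\ra^\varsigma A^\delta\la D\ra^{-\varsigma}|_s$ I would instead write $\la D\ra^\varsigma A\la D\ra^{-\varsigma}=(\la D\ra^\varsigma\fM\la D\ra^{-\varsigma})(\la D\ra^\varsigma\tA\la D\ra^{-\varsigma})(\la D\ra^\varsigma\fM^T\la D\ra^{-\varsigma})$ and check separately that $\la D\ra^\varsigma\fM\la D\ra^{-\varsigma}$ still has finite $s$-decay for all the relevant $\varsigma\in\{\pm\alpha,\pm\beta,0\}$ — this follows from the same polynomial-decay bound for the matrix entries of $\fM$ and the Peetre-type inequality $\la |m|-n\ra^\varsigma\lesssim_\varsigma \la n\ra^{|\varsigma|}\la |m|-n\ra^{\varsigma}$ already used in the proof of Corollary \ref{cor:decay_class}, at the cost of enlarging $\sigma_\fM$ by $|\varsigma|$.

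Having each of the pieces $\la D\ra^\alpha A^d$, $A^d\la D\ra^\alpha$, $\la D\ra^\beta A^o$, $A^o\la D\ra^\beta$ and the conjugated versions $\la D\ra^\varsigma A^\delta\la D\ra^{-\varsigma}$ controlled by $\|A^d\|_{-\alpha,s+\sigma_\fM,0}^{\lip_\tw}+\|A^o\|_{-\beta,s+\sigma_\fM,0}^{\lip_\tw}$, I would then verify \eqref{M1}--\eqref{M3}: conditions \eqref{M1}, \eqref{M2} are precisely the finiteness of the first four terms in \eqref{eq:sdecay_matrix_norm}; condition \eqref{M3} is the finiteness of the last block; condition \eqref{struttura} follows from $A^d=(A^d)^*$ and $\bar A^o=(A^o)^*$, which are preserved under the conjugation by $\fM$ because $\fM^*$ relates back to $\fM^T$ by Remark \ref{rem:coeff.mat} (using also, where needed, $\psi_{-j}=\bar\psi_j$). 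Summing these contributions gives the stated bound with $\sigma_\fM=\sigma_\fM(s_0,\alpha,\beta)$ absorbing the loss from the integration by parts in the symbol estimate and from the $|\varsigma|$ shifts in the $\fM$-conjugation. The main obstacle I anticipate is bookkeeping the decay loss cleanly: one must be careful that a symbol of negative order $-m$ really produces the \emph{gain} of $m$ derivatives needed to bound $\la D\ra^m\tA$ (not merely boundedness), and that the frequency decay in $\ell$ (coming from the $\vf$-regularity $S$ of the symbol) is enough to close the $\ell$-sum in the $s$-decay norm uniformly — this is where the precise choice of $\sigma_\fM$ relative to $s_0$ matters and where one has to invoke the tame composition estimates \eqref{eq:est_tame_comp} in the sharp form with $s_0$ in the low-regularity slot.
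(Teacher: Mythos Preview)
Your approach is correct and reaches the same conclusion, but the paper organizes the argument differently. Rather than factoring $\la D\ra^{\alpha_1}A\la D\ra^{\alpha_2}$ as a triple product of $s$-decaying matrices and invoking the tame composition lemma, the paper packages everything into a single technical lemma (Lemma~\ref{tecn1}) that directly estimates $|\la D\ra^{\alpha_1}A\la D\ra^{\alpha_2}|_s$ by inserting the change-of-basis formula \eqref{eq:rule1} into the $s$-decay norm and bounding the resulting triple sum by hand with Cauchy--Schwarz and Peetre. This avoids the bookkeeping you anticipate with the two different $\la D\ra$'s (eigenfunction vs.\ exponential): in the direct computation the weight $\la n\ra^{\alpha_1}\la n'\ra^{\alpha_2}$ just sits in the numerator and is absorbed via Peetre into the off-diagonal factors $\la n'-p\ra$, $\la p-p'\ra$, $\la p'-n\ra$ together with the $\la p\ra^{\mu}$ coming from the symbol order. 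Your composition route is cleaner conceptually but requires you to state and prove a mixed-basis analogue of Lemma~\ref{prop:tame} (since $\fM$, $\tA$, $\fM^T$ do not all live in $\cM_s$ as defined), and to verify that $\la D\ra_\psi^{\varsigma}\fM\la D\ra_e^{-\varsigma}$ still has finite decay --- both routine, as you note, but extra lemmas. The paper's one-shot computation trades that modularity for a shorter proof; either way the loss $\sigma_\fM$ comes out the same, determined by the number of Peetre shifts needed to close the $(\ell,h)$-sum at threshold $s_0>\tfrac{\nu+1}{2}$. The structural condition \eqref{struttura} needs no separate argument: $(A^d)^*=A^d$ and $(A^o)^*=\overline{A^o}$ are basis-independent operator identities, so they transfer automatically.
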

The claim of the theorem above is deduced by the result of the following lemma.
	\begin{lem}\label{tecn1}
		Let $\alpha_1,\alpha_2\in\R$ and $\mu\leq 0$ such that $\alpha_1+\alpha_2+\mu\leq 0$ and let  $A(\omega)\in\Ops^{\mu}$ be Lipschitz continuous with respect to $\omega\in\Omega\subseteq R_{\tM}$. Then, there exists $\sigma_{\fM}=\sigma_{\fM}(s_0,\alpha_1,\alpha_2)>0$ such that, for  any $s\geq s_0$, the operator
		$ \braket{D}^{\alpha_1} A \braket{D}^{\alpha_2}$, with $\braket{D}$ defined as in \eqref{diago.Dx},
		belongs to $\cM_{s}$, with estimates
		\begin{equation}\label{stima.maledetta}
			| \braket{D}^{\alpha_1} A \braket{D}^{\alpha_2} |_{s}^{\lip(\tw)} \lesssim_{s,\alpha_1,\alpha_2} \| A \|_{\mu,s+\sigma_{\fM},0}^{\lip(\tw)}\,.
		\end{equation}
	\end{lem}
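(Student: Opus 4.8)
The plan is to estimate the $s$-decay norm of $\braket{D}^{\alpha_1} A \braket{D}^{\alpha_2}$ by combining the change-of-basis formula \eqref{eq:rule1} with the decay of the operator $\fM$ from Corollary \ref{cor:decay_class} and the off-diagonal decay of the exponential-basis matrix elements of a $\Psi$DO. First I would record the latter: if $A = \Op(a) \in \Ops^\mu$ with $\mu \leq 0$, then its matrix entries along the exponential basis satisfy $\tA_j^{j'}(\ell) = \wh a(\ell, j-j', j')$, so that, for every $N \geq 0$, $|\tA_j^{j'}(\ell)| \lesssim_N \|A\|_{\mu, N, 0}^{\lip(\tw)} \braket{\ell, j-j'}^{-N} \braket{j'}^\mu$, with the analogous Lipschitz bound for the difference in $\omega$ after shifting the index $s \mapsto s-1$. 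Passing to the block notation \eqref{blocks.not}, this gives $\|\tA_{[p']}^{[p]}(\ell)\|_{\rm HS} \lesssim_N \|A\|_{\mu, N, 0}^{\lip(\tw)} \braket{\ell, p-p'}^{-N} \braket{p'}^\mu$ (using $\braket{j-j'} \gtrsim \braket{|p|-|p'|} = \braket{p-p'}$ up to harmless constants, and $\braket{j'} \sim \braket{p'}$ on a block).

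The core computation is then to bound $\|[\braket{D}^{\alpha_1} A \braket{D}^{\alpha_2}]_{[n]}^{[n']}(\ell)\|_{\rm HS}$. By \eqref{maledetta} and \eqref{eq:rule1},
\begin{equation*}
[\braket{D}^{\alpha_1} A \braket{D}^{\alpha_2}]_{[n]}^{[n']}(\ell) = \braket{n}^{\alpha_1}\Big( \sum_{p,p'\in\N_0} \fM_{[p]}^{[n']} \tA_{[p']}^{[p]}(\ell) (\fM^T)_{[n]}^{[p']} \Big)\braket{n'}^{\alpha_2}\,,
\end{equation*}
so, using submultiplicativity of $\|\cdot\|_{\rm HS}$ under block multiplication,
\begin{equation*}
\big\| [\braket{D}^{\alpha_1} A \braket{D}^{\alpha_2}]_{[n]}^{[n']}(\ell) \big\|_{\rm HS} \lesssim_N \|A\|_{\mu,N,0}^{\lip(\tw)} \sum_{p,p'} \braket{n}^{\alpha_1}\|\fM_{[p]}^{[n']}\|_{\rm HS}\, \braket{\ell,p-p'}^{-N}\braket{p'}^\mu\, \|\fM_{[n]}^{[p']}\|_{\rm HS}\,\braket{n'}^{\alpha_2}\,.
\end{equation*}
Now I would dispose of the weights $\braket{n}^{\alpha_1}$, $\braket{n'}^{\alpha_2}$, $\braket{p'}^{\mu}$: absorb $\braket{n}^{\alpha_1}$ into $\|\fM_{[n]}^{[p']}\|_{\rm HS}$ via Peetre, $\braket{n}^{\alpha_1} \lesssim \braket{n-p'}^{|\alpha_1|}\braket{p'}^{\alpha_1}$, and similarly $\braket{n'}^{\alpha_2}\lesssim \braket{p-n'}^{|\alpha_2|}\braket{p'}^{\alpha_2}$ after using $\braket{p'}\lesssim\braket{p'-p}\braket{p}$ to transfer one index; the surviving power of $\braket{p'}$ is then $\alpha_1+\alpha_2+\mu\leq 0$, hence bounded. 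What remains is a convolution-type sum over $p,p'$ of the three factors $\braket{n-p'}^{-(s_2-|\alpha_1|)}$ (from $\fM$, with $s_2$ chosen large), $\braket{\ell,p-p'}^{-N}$, $\braket{p-n'}^{-(s_2-|\alpha_2|)}$, which by the standard discrete convolution/Peetre lemma is bounded by $C_s\braket{\ell, n-n'}^{-s-s_0}$ once $N$ and $s_2$ are chosen of the form $s + \sigma_{\fM}$ with $\sigma_{\fM} = \sigma_{\fM}(s_0,\alpha_1,\alpha_2)$ large enough (concretely $\sigma_{\fM} \sim 2s_0 + |\alpha_1|+|\alpha_2| + $ a few units, so that after losing one convolution exponent $s_0$ twice one still retains $s+s_0$). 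Feeding this into \eqref{eq:sdecay_norm} and summing the geometric-type series $\sum_{h,\ell}\braket{\ell,h}^{2s}\braket{\ell,h}^{-2(s+s_0)}<\infty$ yields $|\braket{D}^{\alpha_1}A\braket{D}^{\alpha_2}|_s^{\lip(\tw)}\lesssim_{s,\alpha_1,\alpha_2}\|A\|_{\mu,s+\sigma_{\fM},0}^{\lip(\tw)}$, which is \eqref{stima.maledetta}; the Lipschitz-in-$\omega$ part is identical after replacing $s$ by $s-1$ in the $\fM$- and $A$-norms, which is why the definitions \eqref{lip}, \eqref{eq:wlip_def} carry the shifted index.

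The main obstacle is bookkeeping the three independent decays (two copies of $\fM$ and the $\Psi$DO kernel) through the double sum over $p,p'$ while simultaneously moving the weights $\braket{n}^{\alpha_1},\braket{n'}^{\alpha_2},\braket{p'}^{\mu}$ onto the decaying factors using Peetre's inequality, and doing this cleanly enough that the final decay exponent is still $s+s_0$ (so the outer sum over $(h,\ell)$ converges) rather than being eaten up by the two convolutions; this is precisely what forces the loss $\sigma_{\fM}$ and pins down its size in terms of $s_0,\alpha_1,\alpha_2$. Everything else — the kernel estimate for $\Op(a)$, the decay of $\fM$, the passage to blocks, and the discrete Young/Peetre convolution lemma — is routine and available from Definition \ref{def:symbol}, Corollary \ref{cor:decay_class}, and \eqref{eq:rule1}.
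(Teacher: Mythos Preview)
Your proposal is correct and follows essentially the same approach as the paper: bound the exponential-basis matrix elements of $A\in\Ops^\mu$ by $\|A\|_{\mu,N,0}\braket{\ell,p-p'}^{-N}\braket{p'}^{\mu}$, insert the change-of-basis formula \eqref{eq:rule1} together with the $\fM$-decay from Corollary \ref{cor:decay_class}, move the weights $\braket{n}^{\alpha_1},\braket{n'}^{\alpha_2}$ onto the decaying factors via Peetre (using $\alpha_1+\alpha_2+\mu\leq 0$ to kill the residual $\braket{p'}$-power), and then sum the resulting convolution. The only cosmetic difference is that the paper squares the norm and applies Cauchy--Schwarz to isolate a function $\tG(\ell,n,n')$ which it then bounds explicitly, whereas you work directly with the unsquared HS norm and invoke a discrete Young/Peetre convolution lemma; the choice of $N$ and the resulting $\sigma_{\fM}$ are governed by the same balance in both arguments.
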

	\begin{proof}
		Since $\braket{D}$, defined as in \eqref{diago.Dx}, is clearly independent of parameters, we assume without loss of generality that $A=\Op(a(\vf,x,\xi))$ is independent of $\omega\in\Omega$.
		 For any $j,j'\in\Z$ and $\ell\in\Z^\nu$, we have
		\begin{equation*}
		\begin{aligned}
		{\tA}_j^{j'}(\ell) :=& \frac{1}{(2\pi)^\nu}\int_{\T^{\nu}\times\T}a(\vf,x,D)[e^{\im\,jx}] e^{-\im\,j'x}e^{-\im\,\ell\cdot\vf}\wrt\vf\wrt x \\
		= & \frac{1}{(2\pi)^\nu}\int_{\T^{\nu}\times\T}a(\vf,x,j)e^{\im(j-j')x} e^{-\im\,\ell\cdot\vf}\wrt\vf\wrt x\,.
		\end{aligned}
		\end{equation*}
		Let $n,n'\in\N_0$ By integrating by parts both  in $\vf\in\T^\nu$ and in $x\in\T$ (if $j\neq j'$,  with $j=\pm n$ and $j'=\pm n'$)  and recalling Definition \ref{defn.pseudo.norm}, we obtain that
		\begin{equation}\label{cases}
		\begin{aligned}
		\| & \tA_{[n]}^{[n']}(\ell)\|_{\rm HS}^2  = | \tA_{-n}^{-n'}(\ell) |^2 + | \tA_{-n}^{n'}(\ell) |^2+| \tA_{n}^{-n'}(\ell) |^2+| \tA_{n}^{n'}(\ell) |^2 \\
		& \leq \begin{cases}
		2\braket{\ell}^{-2N}(\braket{n'+n}^{-N}+ \braket{n'-n}^{-N})^2 \braket{n}^{2\mu} \| A \|_{\mu,N,0}^2  & \text{ if } n\neq n' \\
		2 \braket{\ell}^{-2N} (1+\braket{n'+n}^{-N})^2 \braket{n}^{2\mu}   \| A \|_{\mu,N,0}^2& \text{ if } n=n'
		\end{cases}\\
		& \leq 4 \braket{\ell,n'-n}^{-2N}\braket{n}^{2\mu} \| A \|_{\mu,N,0}^2 \,,
		\end{aligned}
		\end{equation}
		for some $N=N(s,\alpha_1,\alpha_2,\nu)\in \N$ to determine. By \eqref{maledetta}, \eqref{eq:rule1}, Corollary \ref{cor:decay_class} and Cauchy-Schwartz inequality, we compute
		\begin{equation}
			\begin{aligned}
				& | \braket{D}^{\alpha_1} A \braket{D}^{\alpha_2} |_s^2 = \sum_{h\in \N_0 \atop \ell\in\Z^\nu } \braket{\ell,h}^{2s} \sup_{|n-n'|=h} \| \la n \ra^{\alpha_1} \la n' \ra^{\alpha_2} A_{[n]}^{[n']}(\ell) \|_{\rm HS}^2\\
				& = \sum_{h\in \N_0 \atop \ell\in\Z^\nu }  \sup_{|n-n'|=h} \Big\| \la n \ra^{\alpha_1} \la n' \ra^{\alpha_2}  \la \ell,h \ra^{s} \sum_{p,p'\in\N_0} \fM_{[p]}^{[n']} \tA_{[p']}^{[p]} (\ell) (\fM^T)_{[n]}^{[p']}  \Big\|_{\rm HS}^2\\
				& = \sum_{h\in \N_0 \atop \ell\in\Z^\nu }  \sup_{|n-n'|=h} \Big\| \sum_{p,p'\in\N_0} \tfrac{\la n \ra^{\alpha_1} \la n' \ra^{\alpha_2}  \la \ell,h \ra^{s} \la n'-p \ra^N \fM_{[p]}^{[n']} \,\la \ell,p-p' \ra^N \la p \ra^{\mu} \tA_{[p']}^{[p]} (\ell)\, \la p'-n \ra^N(\fM^T)_{[n]}^{[p']} }{\la p \ra^{\mu} \la n'-p \ra^N \la \ell,p-p' \ra^N \la p'-n \ra^N  } \Big\|_{\rm HS}^2\\
				& \lesssim_{N}  | \fM|_{N;M}^4  \| A \|_{\mu,N,0}^2 \sum_{h\in \N_0 \atop \ell\in\Z^\nu }  \sup_{|n-n'|=h}  \tG(\ell,n,n')\,,
			\end{aligned}
		\end{equation}
	where, by Peetre inequality and the condition $\alpha+\beta+\mu\leq 0$,
	\begin{equation}
		\begin{aligned}
			& \tG(\ell,n,n') := \sum_{p,p'\in\N_0} \frac{\la n \ra^{2\alpha_1} \la n' \ra^{2\alpha_2}  \la \ell,n'-n \ra^{2s}  }{\la p \ra^{2\mu} \la n'-p \ra^{2N} \la \ell,p-p' \ra^{2N} \la p'-n \ra^{2N}  } \\
			& \lesssim_{\alpha_1,\alpha_2} \sum_{p,p'\in\N_0} \frac{  \la \ell,n'-n \ra^{2s}  }{ \la n'-p \ra^{2(N+|\alpha_1|)} \la p- n \ra^{2|\alpha_2|} \la \ell,p-p' \ra^{2N} \la p'-n \ra^{2N}  }\\ 
			& \lesssim_{s,N,\alpha_1,\alpha_2} \sum_{p\in\N_0} \frac{\la\ell, n'-n\ra^{2s}}{ \la n'-p \ra^{2(N+|\alpha_1|)}  \la \ell,p-n \ra^{2(N+|\alpha_2|-1) }  }\\
			&  \lesssim_{s,N,\alpha_1,\alpha_2} \frac{1}{\la \ell,n'-n \ra^{2(N + \max\{| \alpha_1|,|\alpha_2|\}-s-2)}}\,.
		\end{aligned}
	\end{equation}
	Therefore, we deduce estimate \eqref{stima.maledetta} by choosing any $N=N(s,\alpha_1,\alpha_2,\nu)\in\N$ such that $N + \max\{| \alpha_1|,|\alpha_2|\}-s-2\geq s_0 > \frac{\nu+1}{2}$. For instance, fix $N:=\max\big\{ \lfloor s+s_0+2 - \max\{| \alpha_1|,|\alpha_2|\}\rfloor , \lfloor s\rfloor +1 \big\}\in\N$. In particular, the loss of regularity $\sigma_{\fM}>0$ is given by $\sigma_{\fM}:=N-s$, with $N$ fixed as before and $\sigma_{\fM}$ depending on $s$ only with respect to its fractional part $\lfloor s\rfloor +1  -s \in (0,1]$.
	\end{proof}
	\begin{proof}[Proof of Theorem \ref{thm:embed_PSDO_decay}]
	The thesis now follows by applying  Lemma \ref{tecn1} with the operators $A^d \in \Ops^{-\alpha}$ and $A^o\in \Ops^{-\beta}$ instead of a generic $A$ and inserting everything into the definition in \eqref{eq:sdecay_matrix_norm}.
\end{proof}

\section{The Magnus normal form}\label{sec:magnus}

The difficulty in treating equation \eqref{eq:KG_s} is that it is not perturbative in the size of the potential, so standard KAM techniques do not apply directly.
To deal with this problem, we perform a change of coordinates, adapted to fast oscillating systems,   which  puts \eqref{eq:KG_s} in a perturbative setting.
As done in \cite{FM19}, we refer to this procedure as \emph{Magnus normal form}.

To begin with, we recall  the Pauli matrices notation. Let us introduce
\begin{equation}\label{eq:pauli}
\bsigma_1=\left(\begin{matrix}
0 & {\rm Id} \\ {\rm Id} & 0
\end{matrix}\right), \; \ \ \  \bsigma_2=\left(\begin{matrix}
0 & -\im \\ \im & 0
\end{matrix}\right), \; \ \  \  \bsigma_3=\left(\begin{matrix}
{\rm Id} & 0 \\ 0 & -{\rm Id}
\end{matrix}\right) , 
\end{equation}
and, moreover, define 
$$
\bsigma_4:=
\left(\begin{matrix}
{\rm Id} & {\rm Id} \\ -{\rm Id} & -{\rm Id}
\end{matrix}\right) \ , \quad
\b1:= \left(\begin{matrix}
{\rm Id} & 0 \\ 0 & {\rm Id}
\end{matrix}\right) , 
\quad
\b0 := \left(\begin{matrix}
0 & 0 \\ 0 & 0
\end{matrix}\right). 
$$
Using Pauli matrix notation, equation  \eqref{eq:KG_matrix} reads as
\begin{equation}\label{eq:KG_s}
\begin{aligned}
\im\dot\phi(t)= & \bH(t)\phi(t):=(\bH_0+\bW(\omega t))\phi(t) \ , \\
&	\bH_0:=B\bsigma_3,\ \ \  \bW(\omega t):=\frac{1}{2}\, B^{-1/2}V(\omega t) B^{-1/2} \bsigma_4 \ .
\end{aligned}
\end{equation}
Note that, by assumption  ({\bf V}), one has $ V \in \Ops^{0} $;
therefore, Theorem \ref{cor:pseudo_sqrt} and Lemma \ref{pseudo_compo} imply that 
\begin{equation}
\label{ass_magnus}
B\in \Ops^1  \quad \mbox{and} \quad B^{-1/2}VB^{-1/2} \in \Ops^{-1} \,.
\end{equation} 

The main result of the section is the following:
\begin{thm}\label{lem:magnus}
	{\bf (Magnus normal form).}
	Let $\tw>0$ be fixed. For any $\gamma_0 \in (0,1)$, there exist a set $\Omega_0 \subset R_\tM \subset \R^\nu$ and a constant $c_0 >0$ (independent of $\tM$),  with
	\begin{equation}
	\label{meas_omega0}
	\frac{\meas (R_{\tM}\backslash\Omega_0)}{\meas(R_\tM)} \leq c_0 \gamma_0 , 
	\end{equation}
	such that the following holds true. There exists a  time dependent change of coordinates $\vf(t) = e^{- \im \bY(\omega;\omega t)} \psi(t)$, where  $ \bY(\omega;\omega t)=Y(\omega;\omega t)\bsigma_4 $ and $Y \in \Ops^{-1}$,
	such that, for any $\omega\in\Omega_0$, equation \eqref{eq:KG_s} is conjugated to 
	\begin{equation}\label{eq:system_kg_pert}
	\im \dot\psi(t)=\widetilde{\bH}(t)\psi(t), \quad \widetilde{\bH}(t):=\bH_0+\bV(\omega; \omega t)  \ ,
	\end{equation}
	defined for any $\omega \in R_{\tM}$, where 
	\begin{equation}
	\label{eq:V}
	\bV(\omega;\vf)= \begin{pmatrix}
	V^{d}(\omega;\vf) & V^{o}(\omega;\vf)\\ -\oV^{o}(\omega;\vf) & -\oV^{d}(\omega;\vf)
	\end{pmatrix},  \ \ \quad
	\begin{aligned}
		&V^d \in \Ops^{-1}\,, \quad 	[V^d]^* = V^d \,, \\
		&  V^o\in \Ops^{0}\,, \quad [V^o]^* = \bar{V^o}\,.
	\end{aligned}
	\end{equation}
	Moreover, for any fixed $\delta\in\N_0$, there exists $\sigma_0:= \sigma_0(\delta) :=\sigma_0(\delta,\tau,\nu)>0$ such that, for any $s_0\leq s \leq S-\sigma_0$,
	\begin{equation}\label{est:VdVo}
		\begin{aligned}
			& \| Y \|_{-1,s,\delta}^{\lip(\tw)} \lesssim_{s,\delta} (\gamma_0\,\tM)^{-1}\,,
			& \| V^d \|_{-1,s,\delta}^{\lip(\tw)}+ \| V^o \|_{0,s,\delta}^{\lip(\tw)} \lesssim_{s,\delta} (\gamma_0\,\tM)^{-1}\,.
		\end{aligned}
	\end{equation}
%
\end{thm}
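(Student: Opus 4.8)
The plan is to construct the generator $\bY(\omega;\omega t) = Y(\omega;\omega t)\bsigma_4$ by solving a homological equation that cancels the non-perturbative part $\bW(\omega t)$ of the Hamiltonian, exploiting that the frequency vector is large. Writing the conjugation of $\im\dot\phi = \bH(t)\phi$ under $\phi = e^{-\im\bY}\psi$ via the standard Lie transform formula, one gets $\widetilde{\bH} = e^{\im\bY}\bH e^{-\im\bY} - \im\, e^{\im\bY}\frac{d}{dt}e^{-\im\bY}$. Since $\bY$ depends on time only through $\omega t$, the last term produces $\omega\cdot\partial_\vf$ of something, and expanding to first order the requirement that the $O(1)$ perturbation vanish becomes the homological equation $\omega\cdot\partial_\vf Y(\omega;\vf) = \me(\vf)$, where $\me(\vf) := \tfrac12 B^{-1/2}V(\vf)B^{-1/2}$ is the scalar symbol multiplying $\bsigma_4$ in $\bW$. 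By assumption \eqref{assumption V} we have $\int_{\T^\nu}\me(\vf)\,\wrt\vf = 0$, so this equation is solvable in Fourier: $\widehat Y(\omega;\ell) = (\im\,\omega\cdot\ell)^{-1}\widehat\me(\ell)$ for $\ell\neq 0$. To control the small divisors $\omega\cdot\ell$ we restrict $\omega$ to a set $\Omega_0\subset R_\tM$ of Diophantine frequencies, $|\omega\cdot\ell|\geq \gamma_0|\omega|\braket{\ell}^{-\tau} \geq \gamma_0\tM\braket{\ell}^{-\tau}$; a standard measure estimate (removing for each $\ell$ a strip of measure $\lesssim \gamma_0\braket{\ell}^{-\tau-1}\meas(R_\tM)$ and summing over $\ell$ with $\tau > \nu$) gives \eqref{meas_omega0}. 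The gain of a factor $|\omega|^{-1}\leq\tM^{-1}$ from each divisor, combined with the fact that $V\in\Ops^0$ and $B^{-1/2}\,\cdot\,B^{-1/2}$ lowers the order by $1$ (via Theorem \ref{cor:pseudo_sqrt} and Lemma \ref{pseudo_compo}), yields $Y\in\Ops^{-1}$ with $\|Y\|_{-1,s,\delta}^{\lip(\tw)}\lesssim (\gamma_0\tM)^{-1}$, which is the first bound in \eqref{est:VdVo}; the Lipschitz dependence on $\omega$ costs one extra derivative in $\omega\cdot\ell$, hence the shift $s\mapsto s-1$ built into the weighted norm.

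Next I would verify the structure and order of the remainder $\bV$. After solving the homological equation, the transformed Hamiltonian is $\widetilde{\bH} = \bH_0 + \bV$ with
\begin{equation}
\bV = \big(e^{\im\bY}\bH_0 e^{-\im\bY} - \bH_0 - \im[\bY,\bH_0]\big) + \big(e^{\im\bY}\bW e^{-\im\bY} - \bW\big) + \big(\me\,\bsigma_4 - \omega\cdot\partial_\vf Y\,\bsigma_4\big) + (\text{integrated terms from } \tfrac{d}{dt}e^{-\im\bY}).
\end{equation}
The point is that every summand here is either a commutator $[\bY,\cdot]$ or higher-order in $\bY$, and commutators of pseudodifferential operators gain one order (Lemma \ref{pseudo_commu}): since $\bH_0 = B\bsigma_3\in\Ops^1$ and $\bY\in\Ops^{-1}$, one has $[\bY,\bH_0]\in\Ops^{-1}$, and the iterated terms $\mathrm{ad}_\bY^k(\bH_0)\in\Ops^{-k+1}\subseteq\Ops^{-1}$ for $k\geq 2$ sum (the series converges because $\|Y\|$ is small). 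The $\bsigma_3$-part of these contributions lands on the diagonal block $V^d\in\Ops^{-1}$ after accounting for the off-diagonal structure generated by $\bsigma_4$; one uses the Pauli algebra relations (e.g. $\bsigma_4\bsigma_3 = -\bsigma_3\bsigma_4$ modulo lower-order corrections coming from the symbol commutator) to sort terms into diagonal and anti-diagonal parts. The term $e^{\im\bY}\bW e^{-\im\bY}$ has the $\bsigma_4$-structure; its leading part $\bW$ itself is $\Ops^{-1}$ in the diagonal sense but the anti-diagonal block $V^o$ only inherits order $0$ because $\bsigma_4 = \bsigma_3 + \im\bsigma_2$ mixes a genuinely order-$0$ piece into the anti-diagonal — this is exactly why the theorem asserts $V^o\in\Ops^0$ rather than $\Ops^{-1}$. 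The self-adjointness relations $[V^d]^* = V^d$ and $[V^o]^* = \bar{V^o}$ follow from the Hamiltonian (skew-self-adjoint up to $\im$) structure preserved by the unitary-type conjugation $e^{-\im\bY}$ with $\bY$ having the symmetry making $\bH(t)$ of the form \eqref{eq:Amatrix}; concretely one checks $\bY^* = \bar{\bY}$ in the matrix sense, so conjugation preserves the structure \eqref{struttura}. Finally, the quantitative bounds \eqref{est:VdVo} for $V^d,V^o$ follow by feeding $\|Y\|_{-1,s,\delta}^{\lip(\tw)}\lesssim(\gamma_0\tM)^{-1}$ into the tame composition and commutator estimates \eqref{eq:est_tame_comp}, \eqref{eq:comm_tame_AB}, \eqref{eq:rem_comp_tame}; each application costs a fixed number of extra derivatives and a fixed number of extra $\partial_\xi$'s, which is why $\sigma_0 = \sigma_0(\delta,\tau,\nu)$ must be chosen large enough (and why one needs $s\leq S-\sigma_0$).

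The main obstacle, and the place requiring genuine care rather than bookkeeping, is the order counting of the anti-diagonal block $V^o$: one must show that the only obstruction to $V^o$ being $\Ops^{-1}$ (like $V^d$) is the explicit leading term, and that after extracting it the remainder is as small as claimed. This is delicate because the naive expectation from $\bW\in\Ops^{-1}$ would give $V^o\in\Ops^{-1}$, but the Pauli structure of $\bsigma_4$ — which is not skew-adjoint and has a nontrivial "diagonal" component $\bsigma_3$ — forces the anti-diagonal entry to retain an order-$0$ part; tracking this requires carefully distinguishing the symbol-level commutators $a\star b\in S^{m+m'-1}$ from the matrix-level anti-commutation of the Pauli matrices, so that the regularizing gain is correctly attributed. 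A secondary technical point is that $\bH_0 = B\bsigma_3$ is unbounded, so the Lie series $\sum_k \tfrac{1}{k!}\mathrm{ad}_{\im\bY}^k(\bH_0)$ must be shown to converge in the relevant $\Psi$DO norms — this is where the assumption $\|Y\|_{-1,s,\delta}^{\lip(\tw)}\lesssim(\gamma_0\tM)^{-1}\ll 1$ (guaranteed by taking $\tM\geq\tM_*$ large) is essential, together with the fact that each $\mathrm{ad}_\bY$ lowers the order by $1$ so the series is in fact a series of increasingly smoothing operators and converges even though the first term alone is unbounded.
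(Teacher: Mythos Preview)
Your overall strategy is right --- solve the homological equation $\omega\cdot\partial_\vf Y = \tfrac12 B^{-1/2}V(\vf)B^{-1/2}$ on a Diophantine set, then read off $\bV$ from the Lie expansion --- but you miss the algebraic mechanism that makes the proof actually close, and your explanation for why $V^o\in\Ops^0$ is wrong.

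The key fact you do not use is that $\bsigma_4^2=\b0$. Because $\bY=Y\bsigma_4$ and $\bW=W\bsigma_4$, this nilpotency forces $[\bY,\bW]=0$, $[\bY,\dot\bY]=0$, and an explicit computation gives $\ad_\bY^2(\bH_0)=4YBY\,\bsigma_4$ and $\ad_\bY^3(\bH_0)=\b0$. So the Lie series \emph{terminates exactly at second order}: there is no convergence issue for $\sum_k\tfrac{1}{k!}\ad_{\im\bY}^k(\bH_0)$, and your ``secondary technical point'' about summing increasingly smoothing operators is a red herring. Likewise $e^{\im\bY}\bW e^{-\im\bY}=\bW$ identically, so after imposing $\dot\bY=\bW$ there is no residual $\bW$-term at all in $\bV$; your displayed expression for $\bV$ includes contributions that are in fact zero.

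What remains is $\bV=\ad_\bY(\bH_0)+\tfrac12\ad_\bY^2(\bH_0)$, and computing $\ad_\bY(\bH_0)=\im[Y\bsigma_4,B\bsigma_3]=\im[Y,B]\,\b1-\im[Y,B]_{\rm a}\,\bsigma_1$ shows that the anti-diagonal block is $V^o=-\im[Y,B]_{\rm a}+2YBY$. The reason $V^o\in\Ops^0$ rather than $\Ops^{-1}$ is therefore \emph{not} that some order-$0$ piece of $\bW$ leaks into the anti-diagonal (as you write), but that the \emph{anti}-commutator $[Y,B]_{\rm a}=YB+BY$ has order $(-1)+1=0$ and enjoys no smoothing gain, whereas the commutator $[Y,B]$ appearing in $V^d$ does gain one order by Lemma~\ref{pseudo_commu}. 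Once you have these exact formulae, the estimates \eqref{est:VdVo} are immediate from Lemmata~\ref{pseudo_compo}--\ref{pseudo_commu} and the bound on $Y$; no delicate order-tracking or series summation is needed.
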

\begin{proof}
	The proof is split into two parts: one for the formal algebraic construction, which is essentially identical to the one in Lemma 3.1 in \cite{FM19}; the other for checking the estimates for the pseudodifferential operators that we have found. The proof of the measure estimate \eqref{meas_omega0} is postponed to Proposition \ref{prop:diop_magnus}.
	\\[1mm]
	\noindent \textbf{Step I).} 
	The change of coordinates 
	$\phi(t) = e^{- \im \bY(\omega;\omega t)} \psi(t)$ conjugates \eqref{eq:KG_s} to  $\im \d_t \psi(t) = \widetilde{\bH}(t) \psi(t)$, where the Hamiltonian $\widetilde{\bH}(t)$ is given by
	(see Lemma 3.2 in \cite{Bam16I}) 
	\begin{align}
		\label{eq:magnus_1}
		\widetilde{\bH}(t) & = \LieTr{\bY(\omega ;\omega t)}{\bH(t)}-\int_{0}^1\LieTr{s\bY(\omega;\omega t)}{\dot\bY(\omega ;\omega t)}\wrt s\,.
	\end{align}
	Expanding \eqref{eq:magnus_1} in commutators we have
	\begin{equation}\label{eq:magnus_2}
	\widetilde{\bH}(t) = \bH_0 +\im[\bY,\bH_0]-\tfrac{1}{2}[\bY,[\bY,\bH_0]]+\bW -\dot\bY +\bR \ ,
	\end{equation}
	where the remainder $\bR$ of the expansion is given in integral form by
	\begin{equation}\label{eq:magnus_rem}
	\begin{split}
	\bR   := & \int_{0}^1\frac{(1-s)^2}{2}\LieTr{s\bY}{\ad_\bY^3(\bH_0)}\wrt s\\ & + \im \int_0^1\LieTr{s\bY}{[\bY,\bW]}\wrt s -\im\int_0^1(1-s)\LieTr{s\bY}{[\bY,\dot\bY]}\wrt s . 
	\end{split}
	\end{equation}
	From the properties of the Pauli matrices, we note that $\bsigma_4^2=\b0$. This means that the terms in \eqref{eq:magnus_rem} involving $\bW$ and $\dot\bY$ are null, and the remainder is given only by
	\begin{equation}\label{eq:magnus_rem2}
	\bR = \int_{0}^1\frac{(1-s)^2}{2}\LieTr{s\bY}{\ad_\bY^3(\bH_0)}\wrt s . 
	\end{equation}
	We ask $\bY$ to solve the homological equation 
	\begin{equation}\label{hom.eq}
	\b0 = \bW-\dot\bY= \big(\tfrac{1}{2}\,B^{-1/2} V(\omega t) B^{-1/2} - \dot Y(\omega; \omega t) \big) \bsigma_4 .
	\end{equation}	
	By \eqref{ass_magnus}, let $\tfrac12 B^{-1/2} \wh V(\ell) B^{-1/2} = \Op (w(\vf,x,\xi)) \in \Ops^{-1}$, where $w(\vf,x,\xi)\in S^{-1}$ is  independent of $\omega$, as are both $B$ and $V$.
	Expanding in Fourier coefficients with respect to the angles and recalling \eqref{assumption V}, the solution $Y(\omega;\vf)= \Op(p(\omega;\vf,x,\xi))$  of the homological equation \eqref{hom.eq} has symbol satisfying
	\begin{equation}\label{eq:magnus_sol_hom}
		\whp(\omega;\ell,x,\xi)=
		\begin{cases}
			\frac{1}{\im\,\omega\cdot \ell}\,\whw(\ell,x,\xi) & \text{ for } \ell\in\Z^{\nu}\backslash\{0\}\,, \\
			0 & \text{ for } \ell=0\,,
		\end{cases}
	\end{equation}
which is defined for any $\omega$ in the set of Diophantine frequency vectors
\begin{equation}\label{eq:magnus_omega}
	\Omega_0 = \Omega_0(\gamma_0, \tau_0) :=\Big\{\omega\in R_{\tM} \,:\, |\omega\cdot \ell|\geq \frac{\gamma_0\,\tM}{\braket{\ell}^{\tau_0}} \quad \forall\,\ell\in\Z^{\nu}\backslash\{0\} \Big\} \ .
\end{equation}
for some $\gamma_0>0$ and $\tau_0>\nu-1$.
In Proposition \ref{prop:diop_magnus} below we will prove that \eqref{meas_omega0} holds
for  some constant $c_0>0$  independent of $\tM$ and $\gamma_0$.\\
	It remains to compute the terms involving $\bH_0$ in \eqref{eq:magnus_2} and \eqref{eq:magnus_rem2}. Using again the structure of the Pauli matrices, we get
	\begin{equation}\label{eq:ad_X1}
	\ad_\bY(\bH_0):= \im [Y\bsigma_4,B\bsigma_3]  = \im[Y,B]\b1-\im [Y,B]_{\rm a}\bsigma_1 \ ,
	\end{equation}
	where we have denoted by $[Y,B]_{\rm a}:= YB + BY$ the anticommutator. A similar computation shows that
	\begin{equation}\label{eq:ad_X2}
	\begin{aligned}
	\ad_\bY^2(\bH_0) & := - [Y\bsigma_4,[Y\bsigma_4,B\bsigma_3]]
	= 4YBY\bsigma_4  \,,
	\end{aligned}
	\end{equation}
	which also implies that $\ad_\bY^3(\bH_0) = \b0$.
	This shows that $\bR\equiv \b0$ and, imposing \eqref{eq:magnus_sol_hom} in \eqref{eq:magnus_2}, together with \eqref{eq:ad_X1}-\eqref{eq:ad_X2}, we obtain $\widetilde{\bH}(t) = \bH_0 +\bV(\omega;\omega t) $,
	where $\bV$ is as in \eqref{eq:V}, with
	\begin{equation}\label{eq:magnus_4}
	\begin{split}
	V^d(\omega;\vf) &:= \im[Y(\omega;\vf),B]+2Y(\omega;\vf)BY(\omega;\vf) \ ,\\
	V^o(\omega ;\vf) &:= -\im[Y(\omega;\vf ),B]_{\rm a}+2Y(\omega;\vf)BY(\omega;\vf) \ .
	\end{split}
	\end{equation}
	\noindent \textbf{Step II).} We show now that $Y, V^d$ and $V^o$, defined in \eqref{eq:magnus_sol_hom} and \eqref{eq:magnus_4} respectively, are pseudodifferential operators in the proper classes, provided $\omega$ is sufficiently nonresonant, and that they satisfy the estimates \eqref{est:VdVo}.
	We start with the generator of the transformation $\bY$.  First, we extend the definition of the symbol $p$ in \eqref{eq:magnus_sol_hom} to all the parameters $\omega\in R_{\tM}$. Denoting such extension with the same name, we set
	\begin{equation}
		p(\omega;\vf,x,\xi) := \sum_{\ell\in\Z^\nu}\frac{\chi \big(\omega\cdot\ell\, \rho_{\ell}^{-1}\big)}{ \im\, \omega\cdot\ell}\, \whw(\ell,x,\xi) e^{\im\,\ell\cdot \vf} \,, \quad \rho_{\ell}:= \gamma_0\tM \braket{\ell}^{-\tau_0}\,,
	\end{equation}
where $\chi$ is an  even, positive $\cC^\infty$ cut-off such that
\begin{equation}\label{cutoff}
	\chi(\xi) = \begin{cases}
		0 & \text{ if } \ \abs\xi\leq \frac13 \\
		1 & \text{ if } \ \abs\xi \geq \frac23 
	\end{cases}\,, \qquad \pa_\xi \chi(\xi) >0   \quad \forall\,\xi \in (\tfrac13,\tfrac23) \,.
\end{equation}
	Let $\delta\in\N_0$ be arbitrary and let $s\in [s_0,S-\sigma_0]$, with $\sigma_0>0$ to determine. Then, by \eqref{eq:magnus_omega} and using that $w\in S^{-1}$, we obtain, for any $ 0\leq \beta \leq \delta$,
	\begin{equation}
		\begin{aligned}
			\|\pa_\xi^\beta p(\omega;\,\cdot\,,\cdot\,,\xi) \|_s \leq \frac{1}{\gamma_0\,\tM} \|\pa_\xi^\beta w(\omega;\,\cdot\,,\cdot\,,\xi) \|_{s+\tau_0} \lesssim_{s,\beta}  \frac{1}{\gamma_0\,\tM} \braket{\xi}^{-1-\beta}\,.
		\end{aligned}
	\end{equation}
	It implies that $\| Y(\omega)\|_{-1,s,\delta}^{\infty} \lesssim_{s,\delta} \frac{1}{\gamma_0\,\tM}$. To compute the Lipschitz seminorm, using the  notation $\Delta_{12}f(\omega)	= f(\omega_1)-f(\omega_2) $,
	with $\omega_1, \omega_2\in R_{\tM}$, $\omega_1\neq \omega_2$
	note that
	\begin{equation}\label{frac.lip}
		\begin{aligned}
			\Delta_{12}\Big( \frac{\chi \big(\omega\cdot\ell\, \rho_{\ell}^{-1}\big)}{ \im\, \omega\cdot\ell} \Big) & =  \frac{\Delta_{12}\big( \chi \big(\omega\cdot\ell\, \rho_{\ell}^{-1}\big) \big)}{\im\,\omega_1\cdot \ell} + \chi \big(\omega_2\cdot\ell\, \rho_{\ell}^{-1}\big) \,\Delta_{12}\Big(\frac{1}{\im\,\omega\cdot\ell} \Big)\\
			 & =  \frac{\Delta_{12}\big( \chi \big(\omega\cdot\ell\, \rho_{\ell}^{-1}\big) \big)}{\im\,\omega_1\cdot \ell} - \chi \big(\omega_2\cdot\ell\, \rho_{\ell}^{-1}\big) \,\frac{(\omega_1-\omega_2)\cdot\ell}{\im\,(\omega_1\cdot\ell)(\omega_2\cdot\ell)}\,.
		\end{aligned}
	\end{equation}
	Since $w\in S^{-1}$ is independent of $\omega$, by \eqref{frac.lip} and arguing as before, we get
	\begin{equation}
		\begin{footnotesize}
					\begin{aligned}
				\frac{\|\Delta_{12}\pa_\xi^\beta p(\omega;\,\cdot\,,\cdot\,,\xi) \|_{s-1}}{| \omega_1-\omega_2 |} &\lesssim \Big( \frac{1}{\gamma_0\,\tM}  \|\pa_\xi^\beta w(\omega;\,\cdot\,,\cdot\,,\xi) \|_{s-1+\tau_0} +  \frac{1}{\gamma_0^2\,\tM^2}  \|\pa_\xi^\beta w(\omega;\,\cdot\,,\cdot\,,\xi) \|_{s+2\tau_0} \Big) \\
				& \lesssim_{s,\beta} \frac{1}{\gamma_0^2\,\tM} \braket{\xi}^{-1-\beta} \,.
			\end{aligned}
		\end{footnotesize}
	\end{equation}
	It implies that $\| Y(\omega) \|_{-1,s,\delta}^{\rm lip} \lesssim_{s,\delta} \frac{1}{\gamma_0^2\,\tM} $ and we conclude that $Y=\Op(p)\in \Ops^{-1}$, satisfying the estimate 
	\begin{equation}\label{est.X.PD}
		\| Y \|_{-1,s,\delta}^{\lip(\tw)}\lesssim_{s,\delta} \frac{\max\{1,\tw/\gamma_0\}}{\gamma_0\,\tM}
	\end{equation}
	for any $\delta\in\N_0$ and $s_0\leq s \leq S-2\tau_0 $.
	We finally move to analyse $V^d$ and $V^o$ in \eqref{eq:magnus_4}. By Lemma \ref{pseudo_compo}, Lemma \ref{pseudo_commu}, Theorem \ref{cor:pseudo_sqrt} and estimate \eqref{est.X.PD}, it follows that $V^d \in \Ops^{-1}$ and $V^o \in \Ops^{0}$ with the claimed estimates \eqref{est:VdVo} hold with $\sigma_0:= 2\tau_0 + 2\delta+1$.\\
	Finally, $V$ is a real selfadjoint operator, simply because it is a real bounded potential, and therefore  $V^* = V = \bar{V}$. 
	It follows by Remark \ref{rem:coeff.mat} and 
	the explicit expression \eqref{eq:magnus_sol_hom} that $Y^* = Y = \bar{Y}$.
	Using these properties one verifies by a direct computation that $[V^d]^* = V^d$ and $[V^o]^* = V^o$.
\end{proof}

%
%
%
%

We conclude with the proof the measure estimate of the set $\Omega_0$ in \eqref{eq:magnus_omega}.
\begin{prop}\label{prop:diop_magnus}
	For $\gamma_0>0$ and $ \tau_0 >\nu -1$, 	 the set $\Omega_0$ defined in \eqref{eq:magnus_omega} fulfills  \eqref{meas_omega0}.
\end{prop}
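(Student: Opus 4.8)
The plan is to run the standard measure estimate for Diophantine sets, keeping track of the $\tM$-scaling in the threshold defining $\Omega_0$. For $\ell\in\Z^\nu\setminus\{0\}$ introduce the resonant layer
\begin{equation*}
	R_\tM^\ell := \Big\{ \omega\in R_\tM \ : \ |\omega\cdot\ell| < \frac{\gamma_0\,\tM}{\braket{\ell}^{\tau_0}} \Big\}\,,
\end{equation*}
so that, by \eqref{eq:magnus_omega}, $R_\tM\setminus\Omega_0 = \bigcup_{\ell\neq 0} R_\tM^\ell$. First I would observe that $R_\tM^\ell$ is contained in the intersection of the ball $B_{2\tM}(0)$ with the slab $\big\{ \omega \ : \ |\omega\cdot(\ell/|\ell|)| < \gamma_0\,\tM\,\braket{\ell}^{-\tau_0}|\ell|^{-1} \big\}$ around the hyperplane $\{\omega\cdot\ell = 0\}$, where $|\cdot|$ is the Euclidean norm (all norms on $\R^\nu$ being equivalent, this is immaterial). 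Slicing by Fubini along the direction $\ell/|\ell|$, whose cross-sections are $(\nu-1)$-dimensional balls of radius at most $2\tM$, one gets
\begin{equation*}
	\meas(R_\tM^\ell) \leq \frac{2\,\gamma_0\,\tM}{\braket{\ell}^{\tau_0}\,|\ell|}\,\kappa_{\nu-1}\,(2\tM)^{\nu-1} = C_\nu\,\gamma_0\,\tM^\nu\,\frac{1}{\braket{\ell}^{\tau_0}\,|\ell|}\,,
\end{equation*}
where $\kappa_d$ denotes the Lebesgue measure of the unit ball in $\R^d$ and $C_\nu := 2^\nu\kappa_{\nu-1}$; note that this bound is uniform in $\tM$ after factoring out $\tM^\nu$, by the homogeneity of Lebesgue measure under dilations.

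Next I would sum over $\ell$. Since $\braket{\ell}=|\ell|$ for $|\ell|\geq 1$, we have $\braket{\ell}^{-\tau_0}|\ell|^{-1} = |\ell|^{-(\tau_0+1)}$, and the hypothesis $\tau_0 > \nu-1$ ensures the convergence of $\Sigma_\nu(\tau_0) := \sum_{\ell\in\Z^\nu\setminus\{0\}}|\ell|^{-(\tau_0+1)} < \infty$. Hence
\begin{equation*}
	\meas(R_\tM\setminus\Omega_0) \leq \sum_{\ell\neq 0}\meas(R_\tM^\ell) \leq C_\nu\,\Sigma_\nu(\tau_0)\,\gamma_0\,\tM^\nu\,.
\end{equation*}
Finally, dividing by $\meas(R_\tM) = \kappa_\nu\big((2\tM)^\nu - \tM^\nu\big) = \kappa_\nu(2^\nu - 1)\,\tM^\nu$, the powers of $\tM$ cancel and one obtains
\begin{equation*}
	\frac{\meas(R_\tM\setminus\Omega_0)}{\meas(R_\tM)} \leq \frac{C_\nu\,\Sigma_\nu(\tau_0)}{\kappa_\nu\,(2^\nu - 1)}\,\gamma_0 =: c_0\,\gamma_0\,,
\end{equation*}
with $c_0 = c_0(\nu,\tau_0)$ independent of both $\tM$ and $\gamma_0$, which is precisely \eqref{meas_omega0}.

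There is no genuine obstacle in this argument: it is a routine slab-counting estimate. The only point worth stressing is that the exact cancellation of the $\tM^\nu$ factors — which is what makes $c_0$ uniform in $\tM$ — relies on the Diophantine threshold in \eqref{eq:magnus_omega} being scaled by $\tM$ (consistently with $|\omega|\sim\tM$ on $R_\tM$) together with the dilation invariance of the relative Lebesgue measure; had the threshold been $\tM$-independent one would even gain an extra factor $\tM^{-1}$, so in any case the relative measure of the complement is $O(\gamma_0)$ uniformly in $\tM$.
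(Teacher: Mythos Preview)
Your proof is correct and follows essentially the same approach as the paper: decompose the complement of $\Omega_0$ into resonant slabs $\cR_\ell^0$, bound each by a Fubini/slab argument giving $|\cR_\ell^0|\lesssim \gamma_0\,\tM^\nu|\ell|^{-(\tau_0+1)}$, sum over $\ell$ using $\tau_0>\nu-1$, and divide by $\meas(R_\tM)\sim\tM^\nu$. The only cosmetic difference is that the paper packages the slab estimate as an application of Lemma~\ref{tecnico} (with $\varsigma\equiv 0$), whereas you spell out the Fubini step explicitly.
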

\begin{proof}
	For any $\ell \in \Z^\nu \setminus\{0\}$, define  the sets
	$\cR_\ell^0=\cR_\ell^0(\gamma_0,\tau_0):=\big\{ \omega\in R_{\tM} \,:\, \abs{\omega\cdot \ell}< \frac{\gamma_0\,\tM}{\braket{\ell}^{\tau_0}}  \big\}$.
	By Lemma \ref{tecnico}
	$		|\cR_\ell^0|\lesssim
	\frac{\gamma_0}{|\ell|^{\tau_0+1}}\tM^{\nu}$.
	Therefore the set  $\cG:=\bigcup_{\ell\neq 0}\cR_\ell^0$ has measure bounded by $\abs \cG \leq C\gamma_0\,\tM^{\nu}$, which proves the claim.
\end{proof}

\section{The KAM reducibility transformation}\label{sec:kam}

In this section we perform the KAM reduction of the operator
\begin{equation}\label{initial.H.KAM}
	\begin{aligned}
		& \bH^{(0)}(\omega;t):= \wt\bH(\omega;t) := \bH_0^{(0)} + \bV^{(0)}(\omega;\omega t) \,,\\
		& \bH_0^{(0)}:= \bH_0\,, \quad \bV^{(0)}(\omega;\omega t):=\bV(\omega t ; \omega)
	\end{aligned}
\end{equation}
as found in Theorem \ref{lem:magnus}, with the potential $\bV^{(0)}(\omega;\omega t)$ being perturbative, in the sense that the smallness of its norm is controlled by the size $\tM$ of the frequency vector $\omega$. The result of this reduction is a Hamiltonian time-independent and block-diagonal, as stated in Theorem \ref{cor:iter_flow}. This reduction is based on the KAM iteration in Theorem \ref{thm:iter_lemma}. At each step of such iteration, we ask the parameter $\omega\in R_\tM\subset\R^\nu$ to  satisfy second order non-resonance Melnikov conditions on the normal form obtained at the previous step, namely bounds \eqref{eq:Omega_p} on the inverse of the finite dimensional operators \eqref{G.block.p-1}. Such conditions are \emph{balanced} with respect to $\alpha\in(0,1)$ between the gain in regularity $\braket{n\pm n'}^\alpha$, needed for preserving the scheme, and the loss in size $\tM^\alpha$, which would prevent the imposition of the smallness condition \eqref{eq:small_cond_kam} when $\alpha=1$. The construction of these non-resonance conditions and the proof that they hold for most values of the parameter $\omega\in R_\tM$ is finally proved in Section \ref{sec:melnikov}.

Given $\tau >0$ and $\tN_0\in\N$ we define the parameters
\begin{equation}\label{eq:par_kam}
	\begin{aligned}
		&\tN_{-1}:=1 \,, \quad \tN_\tp:=\tN_0^{\chi^\tp} \,, \quad \chi:=3/2 \,, \quad \tp\in\N_0\,, \\
		&  \varrho=\varrho(\tau):= 6\tau + 4 \,, \quad \beta=\beta(\tau):= \varrho(\tau)+1 \,, \quad \Sigma(\beta):= \sigma_0+\sigma_{\fM} + \beta\,,
	\end{aligned}
\end{equation}
where $\sigma_0,\sigma_{\fM}>0$ are as in Theorem \ref{lem:magnus} and Theorem \ref{thm:embed_PSDO_decay}, respectively.
For the purposes of the KAM scheme, it is more convenient to work with operators of type $\cM_{s}(\alpha,\beta)$. Of course, as we have seen in Section \ref{sec:embe}, pseudodifferential operators belong to such a class.
\begin{lem}\label{lem.magnus_size}
	{\bf (Initialization of the KAM reducibility).}
	For any $s\in [s_0,s_0 + \Sigma(\beta)]$, the operator $\bV^{(0)}(\omega):=\bV(\omega) $ defined in \eqref{eq:V} belongs to $\cM_{s}(1,0)$ with estimate  
	\begin{equation}
	\abs{\bV^{(0)}}_{s, 1, 0}^{\wlip{\tw}} \leq C_s (\gamma_0\,\tM)^{-1}\,,
	\end{equation}
	where  $C_s >0$ is independent of $\tM$.
\end{lem}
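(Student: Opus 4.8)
The plan is to combine the pseudodifferential estimates for $V^d, V^o$ from Theorem~\ref{lem:magnus} with the embedding result of Theorem~\ref{thm:embed_PSDO_decay}, which is precisely the bridge between the $\Psi$DO classes $\Ops^m$ and the off-diagonal decay classes $\cM_s(\alpha,\beta)$. Concretely, from Theorem~\ref{lem:magnus} we know that the operator matrix $\bV^{(0)} = \bV$ has the block form \eqref{eq:V} with $V^d \in \Ops^{-1}$ selfadjoint, $V^o \in \Ops^{0}$ satisfying $[V^o]^* = \bar{V^o}$. This is exactly the structural hypothesis \eqref{struttura} required in Definition~\ref{M.sdecay.ab} with $\alpha = 1$ and $\beta = 0$. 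Therefore $\bV^{(0)}$ is a legitimate candidate to belong to $\cM_s(1,0)$.

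First I would invoke Theorem~\ref{thm:embed_PSDO_decay} with the choice $\alpha = 1$, $\beta = 0$: since $V^d \in \Ops^{-\alpha} = \Ops^{-1}$ and $V^o \in \Ops^{-\beta} = \Ops^{0}$ are exactly the orders demanded, the theorem produces a loss of regularity $\sigma_{\fM} = \sigma_{\fM}(s_0, 1, 0)$ and the bound
\begin{equation}
\| \bV^{(0)} \|_{s,1,0}^{\lip(\tw)} \lesssim_{s} \| V^d \|_{-1, s + \sigma_{\fM}, 0}^{\lip(\tw)} + \| V^o \|_{0, s + \sigma_{\fM}, 0}^{\lip(\tw)}
\end{equation}
for any $s \geq s_0$. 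Then I would feed into the right-hand side the pseudodifferential estimates \eqref{est:VdVo} from Theorem~\ref{lem:magnus}: with the parameter $\delta = 0$, one has $\| V^d \|_{-1, s', 0}^{\lip(\tw)} + \| V^o \|_{0, s', 0}^{\lip(\tw)} \lesssim_{s'} (\gamma_0 \tM)^{-1}$ valid for all $s_0 \leq s' \leq S - \sigma_0$, where $\sigma_0 = \sigma_0(0)$. Applying this at $s' = s + \sigma_{\fM}$ yields $\| \bV^{(0)} \|_{s,1,0}^{\lip(\tw)} \leq C_s (\gamma_0 \tM)^{-1}$.

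The only point requiring care is the range of admissible $s$. I would check that for $s \in [s_0, s_0 + \Sigma(\beta)]$, with $\Sigma(\beta) = \sigma_0 + \sigma_{\fM} + \beta$ as in \eqref{eq:par_kam}, the shifted index $s + \sigma_{\fM}$ stays within $[s_0, S - \sigma_0]$, which is the domain of validity of \eqref{est:VdVo}; this forces the (already standing, throughout Theorem~\ref{thm:main}) hypothesis $S \geq s_0 + \sigma_*$ with $\sigma_*$ large enough to accommodate $2\sigma_{\fM} + 2\sigma_0 + \beta$. I expect no genuine obstacle here — this lemma is essentially a bookkeeping composition of two already-proved results — the main thing to get right is simply the tracking of the loss-of-regularity constants $\sigma_0, \sigma_{\fM}$ and confirming that $C_s$ is independent of $\tM$, which is immediate since neither Theorem~\ref{thm:embed_PSDO_decay} nor the $\delta=0$ case of \eqref{est:VdVo} introduces any $\tM$-dependence beyond the explicit factor $(\gamma_0\tM)^{-1}$.
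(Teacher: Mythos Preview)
Your proposal is correct and follows exactly the same approach as the paper: the paper's proof is a single sentence stating that the estimate follows directly from Theorem~\ref{thm:embed_PSDO_decay} combined with estimate~\eqref{est:VdVo} in Theorem~\ref{lem:magnus}. You have simply unpacked that sentence, including the bookkeeping on the regularity losses $\sigma_0,\sigma_{\fM}$ and the admissible range of $s$, which the paper leaves implicit.
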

\begin{proof}
	The claimed estimate follows directly from Theorem \ref{thm:embed_PSDO_decay} and the estimate \eqref{est:VdVo} in Theorem \ref{lem:magnus}.
\end{proof}
From now on, we choose as Lipschitz weight $\tw:=\gamma/\tM^\alpha$ and, abusing notation, we introduce the following quantities:
\begin{equation}\label{eq:delta_sigma_p}
	\delta_{s}^{(\tp)}:= |\bV^{(\tp)}|_{s,\alpha,0}^\wlip{\gamma} :=| \bV^{(\tp)} |_{s,\alpha,0}^{\lip(\gamma/\tM^{\alpha})}
	, \quad \tp\in\N_0 \ , \quad s \in[s_0,s_0+\Sigma(\beta)]\,.
\end{equation}
Furthermore, {\bf we fix once for all $\alpha \in (0,1)$}. We  introduce  the indexes sets:
\begin{equation}\label{eq:indices_meln}
	\begin{aligned}
				&\cI^-  :=\big\{(\ell,|j|,|j'|)\in\cI^+ \,:\, (\ell,|j|,|j'|)\neq(0,|j|,|j|)\big\} \,, \\
		& \cI^+ :=\Z^{\nu}\times\N_0\times\N_0  \,, \quad \cI_\tN^\pm := \cI^\pm \cap \{ | \ell |\leq \tN \}\,, \quad \tN\geq 1\,.
	\end{aligned}
\end{equation}
%

\begin{thm}\label{thm:iter_lemma}
	{\bf (Iterative Lemma).}
	There exists $\tN_0=\tN_0(\tau,\nu,s_0)\in\N$ such that, if
	\begin{equation}\label{eq:small_cond_kam}
		C_{s_0}\tN_0^{\Lambda}\frac{\tM^\alpha}{\gamma}|\bV^{(0)}|_{s_0+\beta,\alpha,0}^\wlip{\gamma}\leq 1 \,, \quad \Lambda:=2\tau +2+\varrho\,,
	\end{equation} 
	the following holds inductively for any $\tp\in\N_0$:
	\\[1mm]
	\noindent ${\bf(S1)_{\tp}}$ 
	There exists a Hamiltonian operator
	\begin{equation}\label{eq:Hp.stat}
		\bH^{(\tp)}(\omega;t):=\bH_0^{(\tp)}(\omega)+\bV^{(\tp)}(\omega;\omega t)
	\end{equation}
	defined for all $\omega \in R_{\tM}$, where	$\bH_0^{(\tp)}(\omega)$ is time-independent and block diagonal
	\begin{equation}\label{eq:S2_p_def}
		\begin{aligned}
			&\bH_0^{(\tp)}   =\diag\Big\{ \left.H_0^{(\tp)}\right._{[n]}^{[n]}(\omega) \,:\, n\in\N_0 \Big\} \bsigma_3\,,
		\end{aligned}
	\end{equation}
	such that,  for each $n\in\N_0$, the block $\left.H_0^{(\tp)}\right._{[n]}^{[n]}(\omega)$ is self-adjoint, with estimate for any $\tp\geq 1$
\begin{align}
	& \sup_{n\in\N_0}\braket{n}^\alpha\,\big\|\left.H_0^{(\tp)}\right._{[n]}^{[n]}-\left.H_0^{(0)}\right._{[n]}^{[n]}\big\|_{{\rm HS}}^\wlip{\gamma} \lesssim_{s_0,\beta}  (\gamma_0\,\tM)^{-1}\,, \label{eq:S2_p_est1}  \\
	& \sup_{n\in\N_0}	\braket{n}^\alpha\,\big\|\left.H_0^{(\tp)}\right._{[n]}^{[n]}-\left.H_0^{(\tp-1)}\right._{[n]}^{[n]}\big\|_{{\rm HS}}^\wlip{\gamma} \lesssim_{s_0,\beta}  (\gamma_0\,\tM)^{-1}  \tN_{\tp-2}^{-\varrho}\,.\label{eq:S2_p_est}
\end{align}
	 For any $s\in[s_0,s_0+\Sigma(\beta)]$, the remainder 
	 \begin{equation}
	 \bV^{(\tp)}(\omega;\omega t)= \begin{pmatrix}
	 V^{d,(\tp)}(\omega;\omega t) & V^{o,(\tp)}(\omega;\omega t) \\ -\overline{ V^{o,(\tp)}}(\omega;\omega t) & - \overline{ V^{d,(\tp)}}(\omega;\omega t)
	 \end{pmatrix}
	 \end{equation}
	 belongs to $\cM_{s}(\alpha,0) $, with estimates
	\begin{equation}\label{eq:S3_p}
		\delta_{s}^{(\tp)} \leq \delta_{s+\beta}^{(0)}\, \tN_{\tp-1}^{-\varrho}\,, \quad 
		\delta_{s+\beta}^{(\tp)}\leq \delta_{s+\beta}^{(0)} \,\tN_{\tp-1}\,.
	\end{equation}
\noindent ${\bf(S2)_{\tp}}$ 
	Define the sets $\Omega_{\tp}$ by $\Omega_0:= \Omega_0(\gamma_0,\tau_0)\subset R_{\tM}$ as in \eqref{eq:magnus_omega} and, for all $\tp \geq 1$,
	\begin{equation}\label{eq:Omega_p}
				\begin{aligned}
				\Omega_{\tp} := \Omega_{\tp}(\gamma,\tau) := \Big\{ \omega \in \Omega_{\tp-1} \,:\, & \big\|\big( \tG_{\ell,,n,n'}^{\pm,(\tp-1)} (\omega)\big)^{-1}\big\|_{\Op(n,n')} \leq \frac{2\tN_{\tp-1}^\tau}{\gamma}\frac{\tM^\alpha}{\braket{n\pm n'}^\alpha} \\
				&      \quad \quad \quad \quad  \quad \quad  \quad  \quad \quad  \ \  \forall \,(\ell,n,n')\in \cI_{\tN_{\tp-1}}^\pm \Big\}\,,
			\end{aligned}
\end{equation}
where, recalling the notation in \eqref{MRML}, the operator $\tG_{\ell,,n,n'}^{\pm,(\tp-1)}(\omega)\in \cL(\cL(\fE_{n},\fE_{n'}))$ is defined as
\begin{equation}\label{G.block.p-1}
	\tG_{\ell,,n,n'}^{\pm,(\tp-1)}(\omega):= \omega \cdot \ell \ \uno_{n,n'} + M_L\big(\!\left.H_0^{(\tp-1)} \right._{[n]}^{[n]}(\omega)\big) \pm M_R\big(\!\left.H_0^{(\tp-1)} \right._{[n']}^{[n']}(\omega) \big) \,,
\end{equation}
 and  the indexes sets $\cI_{\tN_{\tp-1}}^{\pm}$ are defined in \eqref{eq:indices_meln}.
	For any $\tp\geq 1$, there exist a time-dependent Hamiltonian transformation, defined for all $\omega\in R_{\tM}$, of the form $\b\Phi_{\tp - 1}(\omega; t)=e^{\im \bX^{(\tp-1)}(\omega ;\omega t)} $ with
	\begin{equation}
		\bX^{(\tp-1)}(\omega;\omega t) = \begin{pmatrix}
			X^{d,(\tp-1)}(\omega;\omega t)  & X^{o,(\tp-1)}(\omega;\omega t) \\
			-\overline{ X^{o,(\tp-1)}}(\omega;\omega t)  & -\overline{ X^{d,(\tp-1)}}(\omega;\omega t) 
		\end{pmatrix}\,,
	\end{equation}
such that, for any $\omega\in\Omega_{\tp}$, the following conjugation formula holds:
\begin{equation}\label{conju.rule.kam}
	\bH^{(\tp)} (\omega; t)= \big(\b\Phi_{\tp - 1}(\omega; t)\big)^{-1}\bH^{(\tp-1)} (\omega; t) \,\b\Phi_{\tp - 1}(\omega; t)\,.
\end{equation}
For any $s\in[s_0,s_0+\Sigma(\beta)]$, we have $\bX^{(\tp-1)} \in \cM_{s}(\alpha,\alpha)$, with estimate
		\begin{equation}\label{eq:S1_p}
			\| \bX^{(\tp-1)}\|_{s,\alpha,\alpha}^\wlip{\gamma} \leq \tN_{p-1}^{2\tau+1}\tN_{\tp-2}^{-\varrho} \delta_{s+\beta}^{(0)} \,.
		\end{equation}
\end{thm}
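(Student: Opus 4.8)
The plan is a standard KAM induction on $\tp$, the novelty being the \emph{balanced} bookkeeping between the gain $\braket{n\pm n'}^\alpha$ and the loss $\tM^\alpha$ in the small divisors. I would organize it as follows.

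\medskip

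\noindent\textbf{Base case $\tp=0$.} Here there is nothing to prove beyond Lemma \ref{lem.magnus_size}: $\bH^{(0)} = \wt\bH$ is exactly the output of the Magnus normal form, $\bH_0^{(0)}=\bH_0=B\bsigma_3$ is block diagonal and self-adjoint, $\bV^{(0)}\in\cM_{s}(\alpha,0)$ (embedding $\cM_s(1,0)\subset\cM_s(\alpha,0)$ by Lemma \ref{propieta.class.sdecay}$(i)$ since $\alpha<1$), and \eqref{eq:S2_p_est1}--\eqref{eq:S3_p} hold trivially with $\tN_{-1}=1$, $\tN_{-2}=1$ (the estimates on $H_0^{(\tp)}-H_0^{(\tp-1)}$ are vacuous for $\tp=0$, those on $\delta^{(0)}$ reduce to $\delta_s^{(0)}\le\delta_{s+\beta}^{(0)}$).

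\medskip

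\noindent\textbf{Inductive step: construction of $\bX^{(\tp-1)}$.} Assume ${\bf(S1)}_{\tp-1}$, ${\bf(S2)}_{\tp-1}$. Write the homological equation that kills $\Pi_{\tN_{\tp-1}}\bV^{(\tp-1)}$ up to the new diagonal correction. In block form, for $(\ell,n,n')\in\cI^{\pm}_{\tN_{\tp-1}}$ the generator $\bX^{(\tp-1)}$ has blocks determined by inverting the operators $\tG^{\pm,(\tp-1)}_{\ell,n,n'}$ of \eqref{G.block.p-1}; the diagonal part $\ell=0$, $n=n'$ is moved into $\bH_0^{(\tp)}$, giving
\begin{equation*}
\left.H_0^{(\tp)}\right._{[n]}^{[n]} = \left.H_0^{(\tp-1)}\right._{[n]}^{[n]} + \left.V^{d,(\tp-1)}\right._{[n]}^{[n]}(0)\,,
\end{equation*}
which is self-adjoint by the structural property $[V^d]^*=V^d$ carried in the class $\cM_s(\alpha,0)$. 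The solvability of the off-diagonal blocks is exactly the Melnikov condition defining $\Omega_\tp$ in \eqref{eq:Omega_p}: by Lemma \ref{lemma.astratto.fin}$(ii)$ and Lemma \ref{lemma:MRMRL} the bound on $\|(\tG^{\pm,(\tp-1)})^{-1}\|_{\Op(n,n')}$ is $\le 2\tN_{\tp-1}^\tau\,\tM^\alpha\,\gamma^{-1}\braket{n\pm n'}^{-\alpha}$. The factor $\braket{n\pm n'}^{-\alpha}=\braket{n\pm n'}^{-\alpha}$ is precisely the gain that, combined with the $\braket{D}^\alpha$ weights in the norm $|\cdot|_{s,\alpha,\alpha}$, lets one absorb the small divisor and land $\bX^{(\tp-1)}$ in $\cM_s(\alpha,\alpha)$ rather than merely $\cM_s(\alpha,0)$. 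For the $\omega$-Lipschitz dependence one differentiates $\tG^{-1}$ and uses $|\Delta_{12}(\omega\cdot\ell)|\le|\ell|\,|\omega_1-\omega_2|$ together with the inductive Lipschitz bound on $H_0^{(\tp-1)}$; the extra $\tN_{\tp-1}^\tau$ and the choice of Lipschitz weight $\tw=\gamma/\tM^\alpha$ make the two terms in $\|\cdot\|^\wlip{\gamma}$ of the same size. This yields \eqref{eq:S1_p}: the $\tN_{\tp-1}^{2\tau+1}$ comes from one power of $\tN_{\tp-1}^\tau$ in the divisor bound, one more from the Lipschitz estimate, one from $\Pi_{\tN_{\tp-1}}$ against the $\beta$-smoothing; the $\tN_{\tp-2}^{-\varrho}$ from $\delta^{(\tp-1)}_{s+\beta}\le\delta^{(0)}_{s+\beta}\tN_{\tp-2}$ combined with... actually from $\delta^{(\tp-1)}_s\le\delta^{(0)}_{s+\beta}\tN^{-\varrho}_{\tp-2}$.

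\medskip

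\noindent\textbf{Inductive step: the new remainder and closing the estimates.} With $\b\Phi_{\tp-1}=e^{\im\bX^{(\tp-1)}}$, expand \eqref{conju.rule.kam} as in \eqref{eq:magnus_2}: the linear term $\im[\bX^{(\tp-1)},\bH_0^{(\tp-1)}]$ cancels $\Pi_{\tN_{\tp-1}}\bV^{(\tp-1)}$ off the diagonal; what remains is
\begin{equation*}
\bV^{(\tp)} = \Pi^\perp_{\tN_{\tp-1}}\bV^{(\tp-1)} + \big(\text{quadratic-in-}\bX\text{ terms and }[\bX,\Pi_{\tN_{\tp-1}}^\perp\bV^{(\tp-1)}]\big)\,.
\end{equation*}
The first piece is estimated by Lemma \ref{propieta.class.sdecay}$(ii)$: $|\Pi^\perp_{\tN_{\tp-1}}\bV^{(\tp-1)}|_{s,\alpha,0}\le\tN_{\tp-1}^{-\beta}\delta^{(\tp-1)}_{s+\beta}\le\tN_{\tp-1}^{-\beta}\delta^{(0)}_{s+\beta}\tN_{\tp-2}$; since $\beta=\varrho+1$ and $\tN_{\tp-1}=\tN_{\tp-2}^\chi$ with $\chi=3/2$, this is $\le\delta^{(0)}_{s+\beta}\tN_{\tp-1}^{-\varrho}$ for $\tN_0$ large. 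The quadratic/commutator terms are handled by Lemma \ref{lem:com} and Lemma \ref{lem:flow}: $\bX^{(\tp-1)}\in\cM_s(\alpha,\alpha)$ and $\bV^{(\tp-1)}\in\cM_s(\alpha,0)$ give $\ad_{\bX^{(\tp-1)}}(\bV^{(\tp-1)})\in\cM_s(\alpha,\alpha)\subset\cM_s(\alpha,0)$ with a bound quadratic in the small quantities, and the smallness condition \eqref{eq:small_cond_kam} (super-exponential in $\tN_0$) guarantees $\delta^{(\tp)}_{s+\beta}\le\delta^{(0)}_{s+\beta}\tN_{\tp-1}$ and $\delta^{(\tp)}_s\le\delta^{(0)}_{s+\beta}\tN_{\tp-1}^{-\varrho}$, which is \eqref{eq:S3_p}. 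Finally \eqref{eq:S2_p_est} follows from $\|V^{d,(\tp-1)}{}_{[n]}^{[n]}(0)\|_{\rm HS}\le\braket{n}^{-\alpha}\delta^{(\tp-1)}_{s_0}$ (the $\braket{D}^\alpha A^d$ term of the norm), bounded by $(\gamma_0\tM)^{-1}\tN_{\tp-2}^{-\varrho}$ via the base estimate of Lemma \ref{lem.magnus_size} and \eqref{eq:S3_p}; telescoping gives \eqref{eq:S2_p_est1}. Self-adjointness of $\bH_0^{(\tp)}$ and the Hamiltonian structure $[V^{d,(\tp)}]^*=V^{d,(\tp)}$, $[V^{o,(\tp)}]^*=\overline{V^{o,(\tp)}}$ propagate because $\bX^{(\tp-1)}$ is chosen with the same structure and the classes $\cM_s(\alpha,\beta)$ are preserved under such conjugations (Lemma \ref{lem:flow}$(ii)$).

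\medskip

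\noindent\textbf{Main obstacle.} The delicate point is not any single estimate but the \emph{arithmetic of the exponents}: one must check that the specific choices $\varrho=6\tau+4$, $\beta=\varrho+1$, $\chi=3/2$ make the iteration converge, i.e. that after the loss $\tN_{\tp-1}^{2\tau+1}$ in the generator, the further loss in the commutator estimates, and the $\beta$ powers gained from $\Pi^\perp$, the net exponent on the remainder is still $-\varrho$ at level $\tp$ (with room to spare so that $\tN_0$ can be chosen once and for all, absorbing all $C_{s_0}$). The balanced Melnikov condition contributes an extra factor $\tM^\alpha/\braket{n\pm n'}^\alpha$ to $\bX^{(\tp-1)}$, and one has to verify that when this is fed back through the commutators the $\tM^\alpha$'s recombine with the Lipschitz weight $\tw=\gamma/\tM^\alpha$ so that nothing blows up in $\tM$ — this is exactly the mechanism that in \cite{FM19} replaced the usual scheme, and it is the part that requires the most care.
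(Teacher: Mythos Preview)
Your proposal is correct and follows essentially the same approach as the paper: solve the block homological equation via the balanced Melnikov bounds on $\Omega_\tp$, expand the conjugation as a Lie series, split the new remainder into $\Pi^\perp_{\tN_{\tp-1}}\bV^{(\tp-1)}$ plus quadratic terms estimated by Lemmata \ref{lem:com}--\ref{lem:flow}, and close the Nash--Moser iteration with the exponent arithmetic you flag as the main obstacle. The one point you gloss over is the extension of $\bX^{(\tp-1)}$ from $\Omega_\tp$ to all of $R_\tM$ (needed for the statement that $\bH^{(\tp)}$ and $\b\Phi_{\tp-1}$ are defined for every $\omega\in R_\tM$), which the paper handles by multiplying each block solution by a smooth cutoff $\chi\big(\|(\tG^{\pm,(\tp-1)}_{\ell,n,n'})^{-1}\|_{\Op(n,n')}\,\rho^{-1}\big)$ so that the Lipschitz estimates survive globally.
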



\subsection{Proof of Theorem \ref{thm:iter_lemma}}

We prove Theorem \ref{thm:iter_lemma} by induction. We start prove ${\bf (S1)_{\tp}}$-${\bf (S2)_{\tp}}$ when $\tp=0$.

\paragraph{Proof of ${\bf (S1)_{0}}$-${\bf (S2)_{0}}$}
Property \eqref{eq:S2_p_def} follows by \eqref{initial.H.KAM}, \eqref{eq:KG_s}. 
 Property \eqref{eq:S3_p} holds trivially by \eqref{eq:delta_sigma_p}, \eqref{eq:par_kam} and Lemma \ref{lem.magnus_size}.

\paragraph{The reducibility step}
In this section we describe the generic inductive step, showing how to transform $\bH^{(\tp)}(\omega; t)$ into $\bH^{(\tp+1)}(\omega;t)$ by the conjugation with $\b\Phi_{\tp}(\omega;t)=e^{ -\im \bX^{(\tp)}(\omega; \omega t)} $ of the form
\begin{equation}\label{eq:matrix_transf}
\bX^{(\tp)}(\omega; \omega t)=\begin{pmatrix}
	X^{d,(\tp)}(\omega; \omega t) & X^{o,(\tp)}(\omega; \omega t) \\ -\overline{X^{o,(\tp)}}(\omega; \omega t) & -\overline{X^{d,(\tp)}}(\omega; \omega t)
	\end{pmatrix} \,, \quad   \begin{aligned}
	&X^{d,(\tp)} = [X^{d,(\tp)}]^* \,, \\
	&  \overline{X^{o,(\tp)}} = [X^{o,(\tp)}]^* \,,
\end{aligned} 
	\end{equation}
The Hamiltonian $\bH^{(\tp)}(\omega;t)$ is  transformed, as in \eqref{eq:magnus_1}, in
\begin{equation}
\begin{aligned}
\bH^{(\tp+1)}(t)&:= \LieTr{\im\,\bX^{(\tp)}(\omega; \omega t)}{\bH^{(\tp)}(t)}
\\
& - \int_{0}^1\LieTr{\im\,s\bX^{(\tp)}(\omega; \omega t)}{\dot \bX^{(\tp)}(\omega; \omega t)}\wrt s \,.
\end{aligned}
\end{equation}
By expanding in commutators, we get
\begin{equation}\label{eq:transf_ham_1}
\begin{aligned}
\bH^{(\tp+1)}
& = \bH_0^{(\tp)} +\Pi_{\tN_\tp}\bV^{(\tp)}  +\im [\bX^+, \bH_0^{(\tp)}]-\dot{\bX}^{(\tp)} + \bV^{(\tp+1)} \,, 
\end{aligned}
\end{equation}	
where
\begin{equation}\label{Vp+1}
	\begin{aligned}
		\bV^{(\tp+1)} & :=   \LieTr{\im\,\bX^{(\tp)}}{\bH_0^{(\tp)}} - (\bH_0^{(\tp)} + \im[\bX^{(\tp)}, \bH_0^{(\tp)}])   +\Pi_{\tN_\tp}^{\perp}\bV^{(\tp)}\\
		& \ \ + \LieTr{\im\,\bX^{(\tp)}}{\bV^{(\tp)}}-\bV^{(\tp)}- 	\Big( \int_{0}^{1}\LieTr{\im\,s \bX^{(\tp)}}{\dot{\bX}^{(\tp)}}\wrt{s} - \dot{\bX}^{(\tp)} \Big) \,,
	\end{aligned}
\end{equation} 
and $\Pi_{\tN}$, $\Pi_{\tN}^\perp$ be defined as in \eqref{proj.def.mat}.
We ask now $\bX^{(\tp)}$ to solve the homological equation:
\begin{equation}\label{eq:hom_eq_kam_gen}
\im [\bX^{(\tp)}(\vf), \bH_0^{(\tp)}]-\omega \cdot \partial_{\vf}\bX^{(\tp)}(\vf)+\Pi_{\tN_\tp}\bV^{(\tp)}(\vf) = \bZ^{(\tp)}
\end{equation}
where $\bZ^{(\tp)} $ is the diagonal, time independent part of $V^{d,(\tp)}$:
\begin{equation}\label{eq:Z_kam_gen}
\begin{aligned}
	 \bZ^{(\tp)} =& \bZ^{(\tp)}(\omega) := \begin{pmatrix}
		Z^{(\tp)}(\omega) & 0\\ 0 & - Z^{(\tp)}(\omega)
	\end{pmatrix}\,, \\
&  Z^{(\tp)}(\omega) = \diag\big\{\left.V^{d,(\tp)}\right._{[n]}^{[n]}(\omega;0)\,:\, n\in\N_0 \big\} \,.
\end{aligned}
\end{equation}
By \eqref{eq:S2_p_def} and \eqref{eq:matrix_transf},  equation \eqref{eq:hom_eq_kam_gen},  reads  block-wise for any $n,n'\in\N_0$ and $\abs \ell \leq \tN_\tp$  as
\begin{equation}\label{hom_eq_block}
			\begin{cases}
					\im  \tG_{\ell,,n,n'}^{-,(\tp)}(\omega)
					\left.X^{d,(\tp)} \right._{[n]}^{[n']}(\omega;\ell) 
					= \left.V^{d,(\tp)} \right._{[n]}^{[n']}(\omega;\ell) - \left. Z^{(\tp)} \right._{[n]}^{[n']}(\omega)\,,
			\\
					\im \tG_{\ell,,n,n'}^{+,(\tp)}(\omega)
					 \left.X^{o,(\tp)} \right._{[n]}^{[n']}(\omega;\ell) 
					 = \left.V^{o,(\tp)} \right._{[n]}^{[n']}(\omega;\ell)\,,
			\end{cases}
\end{equation}
where $\tG_{\ell,,n,n'}^{\pm,(\tp)}(\omega)\in\cL(\cL(\fE_{n},\fE_{n'}))$ are defined as in \eqref{G.block.p-1} at the step $\tp$.

%
For any $\omega\in \Omega_{\tp+1}$, the operator $\tG_{\ell,,n,n'}^{\pm,(\tp)}$ are invertible by Lemma \ref{lemma.astratto.fin}-(ii) and, therefore, the  homological equations \eqref{hom_eq_block} are solved by
\begin{align}
	&
		\!	\left. X^{d,(\tp)} \right._{[n]}^{[n']}(\omega;\ell) := \begin{cases}
				-\im \big( \tG_{\ell,,n,n'}^{-,(\tp)}(\omega) \big)^{-1}  \left. V^{d,(\tp)} \right._{[n]}^{[n']}(\omega;\ell)\,, & (\ell,n,n')\in \cI_{\tN_{\tp}}^{-}\,,\\
				0 & \text{ otherwise }\,,
			\end{cases}
	 \label{eq:sol_hom_d} \\
	&
		\!	 \left. X^{o,(\tp)} \right._{[n]}^{[n']}(\omega;\ell) := -\im \big( \tG_{\ell,,n,n'}^{+,(\tp)}(\omega)  \big)^{-1}\left. V^{o,(\tp)} \right._{[n]}^{[n']}(\omega;\ell) \ , \quad (\ell,n,n')\in \cI_{\tN_{\tp}}^{+} \label{eq:sol_hom_o} \,.
\end{align}
The fact that $\Omega_{\tp+1}$ is actually a set of large measure, that is $\me_r(R_{\tM}\backslash\Omega_{\tp+1})=O(\gamma^{1/2})$, recalling \eqref{def:mr}, will be clear as a direct consequence of Lemma \ref{lemma.set.infty} and Theorem  \ref{lem:meas_infty}.

\begin{lem}\label{lem:est_gen_kam}
	{\bf (Homological equations).}
	The operator $\bX^{(\tp)}(\omega)$ defined in \eqref{eq:matrix_transf}, \eqref{eq:sol_hom_d}, \eqref{eq:sol_hom_o} (which, for all $\omega\in\Omega_{\tp+1}$, solves the homological equation \eqref{eq:hom_eq_kam_gen}) admits an extension to $R_{\tM}$. For all $s\in [s_0,s_0+\Sigma(\beta)]$, such extended operator (still denoted by $\bX^{(\tp)}$) belongs to $\cM_s(\alpha,\alpha)$, with estimate
	\begin{equation}\label{eq:gen_kam_est}
		| \bX^{(\tp)} |_{s,\alpha,\alpha}^{\lip(\gamma)} \lesssim \tN_{\tp}^{2\tau+1} \frac{\tM^\alpha}{\gamma} \delta_{s}^{(\tp)}\,.
	\end{equation}
\end{lem}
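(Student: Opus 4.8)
The plan is to estimate block-by-block the solutions $\left.X^{d,(\tp)}\right._{[n]}^{[n']}(\omega;\ell)$ and $\left.X^{o,(\tp)}\right._{[n]}^{[n']}(\omega;\ell)$ given explicitly by \eqref{eq:sol_hom_d}--\eqref{eq:sol_hom_o}, and then re-assemble the $s$-decay norm of the operator-matrix $\bX^{(\tp)}$ in the sense of Definition \ref{M.sdecay.ab}. First I would recall from \eqref{eq:Omega_p} that for $\omega\in\Omega_{\tp+1}$ one has the quantitative bound $\big\|\big(\tG_{\ell,n,n'}^{\pm,(\tp)}(\omega)\big)^{-1}\big\|_{\Op(n,n')}\leq 2\tN_\tp^\tau\gamma^{-1}\tM^\alpha\braket{n\pm n'}^{-\alpha}$ for all $(\ell,n,n')\in\cI_{\tN_\tp}^\pm$; combined with the submultiplicativity of $\|\cdot\|_{\rm HS}$ under the action of operators on $\cL(\fE_{n'},\fE_{n})$ (Lemma \ref{lemma:MRMRL}), this gives pointwise $\big\|\left.X^{d,(\tp)}\right._{[n]}^{[n']}(\omega;\ell)\big\|_{\rm HS}\leq 2\tN_\tp^\tau\gamma^{-1}\tM^\alpha\braket{n-n'}^{-\alpha}\big\|\left.V^{d,(\tp)}\right._{[n]}^{[n']}(\omega;\ell)\big\|_{\rm HS}$ and similarly for $X^{o,(\tp)}$ with $\braket{n+n'}^{-\alpha}$. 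To control the various weighted quantities $|\braket{D}^\varsigma X^\delta\braket{D}^{-\varsigma}|_s$ appearing in \eqref{eq:sdecay_matrix_norm}, I would use \eqref{maledetta}: multiplying $\left.X^{d,(\tp)}\right._{[n]}^{[n']}(\omega;\ell)$ on the left by $\braket{n}^\alpha$ and on the right by $\braket{n'}^{0}$ (and the analogous combinations) is absorbed by the factor $\braket{n-n'}^{-\alpha}$ together with the Peetre inequality $\braket{n}^\alpha\lesssim\braket{n-n'}^\alpha\braket{n'}^\alpha$, so that the $\braket{D}^\alpha$-weights on $X^{d,(\tp)}$ are paid for by the gain $\braket{n-n'}^{-\alpha}$ in the Melnikov denominator and the corresponding $\braket{D}^\alpha$ weights already present on $V^{d,(\tp)}$ (which belongs to $\cM_s(\alpha,0)$); the loss $\tN_\tp^\tau$ in $\ell$-regularity is harmless since $|\ell|\leq\tN_\tp$ on the support, hence $\tN_\tp^\tau\leq\tN_\tp^\tau$ can be pulled out as an overall constant after summing over $\ell$.

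Next I would treat the Lipschitz part of the $\lip(\gamma)$ norm. Writing $\Delta_{12}f=f(\omega_1)-f(\omega_2)$ and using the resolvent-type identity
\begin{equation*}
\Delta_{12}\big(\tG^{\pm,(\tp)}\big)^{-1}=-\big(\tG^{\pm,(\tp)}(\omega_1)\big)^{-1}\,\Delta_{12}\big(\tG^{\pm,(\tp)}\big)\,\big(\tG^{\pm,(\tp)}(\omega_2)\big)^{-1}\,,
\end{equation*}
I would bound $\Delta_{12}\tG^{\pm,(\tp)}$ by $|\omega_1-\omega_2|\,|\ell|+\|\Delta_{12}H_0^{(\tp)}\|_{\rm HS}$-type terms; the Lipschitz variation of the diagonal blocks $\left.H_0^{(\tp)}\right._{[n]}^{[n]}$ is controlled by \eqref{eq:S2_p_est1} (together with the unperturbed part, using the asymptotics of $\lambda_n$), and each of the two resolvent factors contributes the Melnikov bound from $\Omega_{\tp+1}$. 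The term $|\omega_1-\omega_2||\ell|\leq|\omega_1-\omega_2|\tN_\tp$ is the source of one extra power $\tN_\tp$, which together with the $\tN_\tp^{2\tau}$ from the two resolvents produces the claimed $\tN_\tp^{2\tau+1}$. Since the Lipschitz seminorm of the $s$-decay norm is measured at regularity $s-1$, the factor $\braket{\ell}^{-1}$ gained there compensates the $\braket{\ell}^{1}$ lost from $|\ell|$, so that no extra loss of $\vf$-regularity accumulates. Throughout, the split of the tame constants into $C(s_0)$ and $C(s)$ pieces follows from the interpolation structure of $|\cdot|_s$ in Lemma \ref{prop:tame}, and since we only need the product with the low-norm $\delta_{s}^{(\tp)}$ (not a quadratic expression), the estimate \eqref{eq:gen_kam_est} comes out linear in $\delta_s^{(\tp)}$.

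Finally, the extension of $\bX^{(\tp)}$ to all of $R_\tM$ is obtained exactly as in the Magnus step: one replaces the resolvent $\big(\tG_{\ell,n,n'}^{\pm,(\tp)}(\omega)\big)^{-1}$ by $\chi\big(\gamma^{-1}\tM^\alpha\braket{n\pm n'}^\alpha\tN_\tp^{-\tau}\|(\tG^{\pm,(\tp)})^{-1}\|_{\Op}\big)\cdot(\tG^{\pm,(\tp)})^{-1}$ or, more simply, one uses a Whitney-type cut-off in $\omega$ to turn the definition on $\Omega_{\tp+1}$ into a function defined and Lipschitz on $R_\tM$ with the same quantitative bounds up to a multiplicative constant; since on $R_\tM\setminus\Omega_{\tp+1}$ we do not require \eqref{eq:hom_eq_kam_gen} to hold, any such smooth extension satisfying the same estimates is acceptable, and one checks that the structural conditions $[X^{d,(\tp)}]^*=X^{d,(\tp)}$, $\overline{X^{o,(\tp)}}=[X^{o,(\tp)}]^*$ are preserved because $\tG^{-,(\tp)}$ is self-adjoint (so its inverse is) while $\tG^{+,(\tp)}$ intertwines $V^{o,(\tp)}$ with its conjugate in the way dictated by \eqref{struttura} — this uses the self-adjointness of $M_L(H_0^{(\tp)}),M_R(H_0^{(\tp)})$ from Lemma \ref{lemma:MRMRL}. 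The main obstacle is purely bookkeeping: making sure that, in the weighted norm \eqref{eq:sdecay_matrix_norm}, every one of the combinations $\braket{D}^{\pm\alpha}X^\delta\braket{D}^{\mp\alpha}$, $\braket{D}^{\alpha}X^d$, $X^d\braket{D}^\alpha$, etc.\ is simultaneously controlled by the single gain $\braket{n\pm n'}^{-\alpha}$ together with the $\cM_s(\alpha,0)$-structure of $\bV^{(\tp)}$ — i.e.\ verifying that the map $\bV^{(\tp)}\mapsto\bX^{(\tp)}$ does not degrade the off-diagonal weights — and that the single loss $\beta=\varrho+1$ in regularity in \eqref{eq:S1_p} (which here is absorbed in passing from $\delta_{s+\beta}^{(0)}$ via \eqref{eq:S3_p}) is not exceeded; the delicate point is the interaction of the Peetre inequalities with the fact that $\tG^{+,(\tp)}$ carries the weight $\braket{n+n'}^\alpha$ rather than $\braket{n-n'}^\alpha$, which forces the anti-diagonal block $X^{o,(\tp)}$ into the class $\cM_s(\alpha,\alpha)$ and not merely $\cM_s(\alpha,0)$.
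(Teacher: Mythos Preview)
Your proposal is correct and follows essentially the same route as the paper's proof: the extension to $R_{\tM}$ via a smooth cutoff on the resolvent norm, the sup-norm bound coming from one application of the Melnikov condition \eqref{eq:Omega_p} (producing $\tN_\tp^\tau$), and the Lipschitz bound via the resolvent identity for $\Delta_{12}(\tG^{\pm,(\tp)})^{-1}$, with two resolvent factors and the term $|\omega_1-\omega_2||\ell|\leq|\omega_1-\omega_2|\tN_\tp$ combining to give $\tN_\tp^{2\tau+1}$. The paper only writes out the case $\braket{D}^{\alpha}X^{o,(\tp)}\in\cM_s$ explicitly (where the direct inequality $\braket{n}/\braket{n+n'}\leq 1$ suffices) and declares the remaining weight combinations analogous; your Peetre-inequality bookkeeping for the diagonal piece $X^{d,(\tp)}$ is the natural way to fill in those omitted cases. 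A couple of minor points: the reference to Lemma~\ref{prop:tame} is not needed here since the estimate is linear in $\bV^{(\tp)}$ (no operator products), and the remark about the $s-1$ shift compensating $\braket{\ell}$ is redundant once you have already bounded $|\ell|\leq\tN_\tp$ as an overall factor, but neither affects the argument.
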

\begin{proof}
	We prove only the existence of the extension of $X^{o,(\tp)}$, defined by \eqref{eq:sol_hom_o}, and the estimate $\braket{D}^{\alpha}X^{o,(\tp)}\in \cM_{s}$. The proofs that the operators $X^{o,(\tp)}$, $X^{o,(\tp)}\braket{D}^\alpha$, $ \la D\ra^{\pm \alpha} X^{o,(\tp)} \la D \ra^{\mp \alpha}$ belong to $ \cM_{s}$ and the equivalent claim for $X^{d,(\tp)}$, leading to $\bX^{(\tp)}\in \cM_{s}(\alpha,\alpha)$, follow similarly and we omit the details.
	
	First, we extend the solution in \eqref{eq:sol_hom_o} to all $\omega\in R^{\tM}$ by setting, for any $(\ell,n,n')\in \cI_{\tN_{\tp}}^+$,
	\begin{equation}\label{Xop.extended}
		\left. X^{o,(\tp)} \right._{[n]}^{[n']}(\omega;\ell) := -\im \, \chi\big( \tg_{\ell,n,n'}^{+,(\tp)}(\omega) ^{-1} \rho \big) \big( \tG_{\ell,,n,n'}^{+,(\tp)}(\omega)  \big)^{-1}\left. V^{o,(\tp)} \right._{[n]}^{[n']}(\omega;\ell)\,,
	\end{equation}
	where
	\begin{equation}
		 \tg_{\ell,n,n'}^{+,(\tp)}(\omega) := \big\|\big( \tG_{\ell,,n,n'}^{\pm,(\tp)} (\omega)\big)^{-1}\big\|_{\Op(n,n')}
		  \,, \quad \rho:= \tfrac12\gamma\, \tM^{-\alpha} \braket{n+n'}^\alpha \braket{\ell}^{-\tau}\,,
	\end{equation}
	 and $\chi\in \cC^\infty(\R,\R)$ is an even positive $\cC^\infty$ cut-off function as in \eqref{cutoff}.
	 We deduce, by \eqref{eq:Omega_p} at the step $\tp+1$,
	\begin{equation}
		\begin{aligned}
			\big\| \big(\braket{D}^\alpha X^{o,(\tp)}\big)_{[n]}^{[n']}(\omega;\ell) \big\|_{\rm HS} \lesssim \tN_{\tp}^\tau \frac{\tM^\alpha}{\gamma} \frac{\braket{n}^\alpha}{\braket{n+n'}^{\alpha}}\,.
		\end{aligned}
	\end{equation}
Since $\frac{\braket{n}}{\braket{n+n'}}\leq 1$ for any $n,n'\in\N_0$, by \eqref{eq:sdecay_norm} we obtain
\begin{equation}\label{semin.infty.hom}
	\sup_{\omega\in R_\tM}| \braket{D}^\alpha X^{o,(\tp)}(\omega) |_{s}  \lesssim \tN_{\tp}^\tau \frac{\tM^\alpha}{\gamma} \sup_{\omega\in R_\tM} | V^{o,(\tp)}(\omega)|_{s} \,.
\end{equation}
We move now to the estimate for the Lipschitz seminorm. First, note that, for any $\omega_1,\omega_2 \in R_{\tM}$, $\omega_1\neq \omega_2$, recalling the notation used in \eqref{frac.lip},
\begin{equation}
	\begin{aligned}
		 \Delta_{12}\Big( &\chi\big( \tg_{\ell,n,n'}^{+,(\tp)}(\omega)  \rho^{-1} \big) \big( \tG_{\ell,,n,n'}^{+,(\tp)}(\omega)  \big)^{-1}\Big) =  \Delta_{12}\Big(\chi\big( \tg_{\ell,n,n'}^{+,(\tp)}(\omega)  \rho^{-1} \big) \Big) \big( \tG_{\ell,,n,n'}^{+,(\tp)}(\omega_1)  \big)^{-1} \\
		& -  \chi\big( \tg_{\ell,n,n'}^{+,(\tp)}(\omega_2)  \rho^{-1} \big)\,(\tG_{\ell,,n,n'}^{+,(\tp)}(\omega_2))^{-1} \Delta_{12} \big(\tG_{\ell,,n,n'}^{+,(\tp)}(\omega)\big)  (\tG_{\ell,,n,n'}^{+,(\tp)}(\omega_1))^{-1}\,.
	\end{aligned}
\end{equation}
In particular, 
\begin{equation}
	\begin{footnotesize}
		\begin{aligned}
			\Delta_{12} \big(\tG_{\ell,,n,n'}^{+,(\tp)}(\omega)\big) = (\omega_1-\omega_2) \cdot \ell \ \uno_2 + M_L\big( \Delta_{12}\big(\!\left.H_0^{(\tp)} \right._{[n]}^{[n]}(\omega)\big)\big) \pm M_R\big( \Delta_{12}\big(\!\left.H_0^{(\tp)} \right._{[n']}^{[n']}(\omega)\big) \big) \,.
		\end{aligned}
	\end{footnotesize}
\end{equation}
By \eqref{eq:Omega_p}, \eqref{eq:S2_p_est1},
we get
\begin{equation}
	\Big\| 	
	 \Delta_{12}\Big( \chi\big( \tg_{\ell,n,n'}^{+,(\tp)}(\omega)  \rho^{-1} \big) \big( \tG_{\ell,,n,n'}^{+,(\tp)}(\omega)  \big)^{-1}\Big) 
	 \Big\|_{\Op(n,n')} \lesssim \tN_{\tp}^{2\tau +1}  \frac{\tM^{2\alpha}}{\gamma^2} \frac{|\omega_1-\omega_2 |}{\la n+n' \ra^{\alpha}} \,.
\end{equation}
Therefore, from \eqref{Xop.extended} and arguing as above, we deduce, 
\begin{equation}
		\begin{aligned}
			\sup_{\omega_1, \omega_2 \in R_{\tM} \atop \omega_1 \neq \omega_2 } \frac{\big\| \Delta_{12}\big( \big(\la D\ra^\alpha X^{o,(\tp)} \big)_{[n]}^{[n']}(\omega;\ell) \big)\big\| }{| \omega_1-\omega_2 |} & \lesssim\tN_{\tp}^{2\tau +1} \frac{\tM^{2\alpha}}{\gamma^2}  
			\sup_{\omega \in \R_\tM}  \big\|\! \left. V^{o,(\tp)} \right._{[n]}^{[n']}(\omega;\ell) \big\|_{\rm HS} \\
			& + \tN_{\tp}^{\tau} \,\frac{\tM^\alpha}{\gamma}
			\sup_{\omega_1, \omega_2 \in R_{\tM} \atop \omega_1 \neq \omega_2 } \frac{\big\| \Delta_{12}\big( \left.V^{o,(\tp)} \right._{[n]}^{[n']}(\omega;\ell) \big)\big\| }{| \omega_1-\omega_2 |}\,,
		\end{aligned}
\end{equation}
and, consequently, for all $s\in [s_0,s_0+\Sigma(\beta)]$,
\begin{equation}\label{semin.lip.hom}
	\sup_{\omega_1, \omega_2 \in R_{\tM} \atop \omega_1 \neq \omega_2 } \frac{ \big| \Delta_{12} \big( \braket{D}^\alpha X^{o,(\tp)} (\omega) \big) \big|_{s-1} }{| \omega_1-\omega_2|} \lesssim  \tN_{\tp}^{2\tau +1} \frac{\tM^{2\alpha}}{\gamma^2} \delta_{s}^{(\tp)}\,. 
 \end{equation}
By Definition \eqref{def:sdecay} and \eqref{semin.infty.hom}, \eqref{semin.lip.hom}, we conclude $	| \braket{D}^\alpha X^{o,(\tp)} |_s^{\lip(\gamma)} \lesssim \tN_{\tp}^{2\tau +1} \frac{\tM^{\alpha}}{\gamma} \delta_{s}^{(\tp)}$,
as claimed.
\end{proof}

By \eqref{eq:transf_ham_1}, \eqref{eq:hom_eq_kam_gen}, \eqref{eq:Z_kam_gen}, for all $\omega \in \Omega_{\tp}$, we obtain
\begin{equation}\label{Hp+1.after}
	\begin{aligned}
		& \bH^{(\tp+1)}(\omega;t) = \bH_0^{(\tp+1)}(\omega) + \bV^{(\tp+1)}(\omega;\omega t)\,, \\
		& \bH_0^{(\tp+1)}(\omega) := \bH_0^{(\tp)}(\omega) + \bZ^{(\tp)}(\omega)\,,
	\end{aligned}
\end{equation}
where $\bV^{(\tp+1)}(\omega;\omega t)$ is given in \eqref{Vp+1}. Since $\bH_0^{(\tp)}$, $\bV^{(\tp)}$ (by induction) and $\bX^{(\tp)}$ (by construction) are defined for all $\omega\in R_{\tM}$, we get that $\bH^{(\tp+1)}(\omega;t)$ is defined as well for all parameters $\omega\in R_\tM$. The new operator $\bH^{(\tp+1)}$ in \eqref{Hp+1.after} has the same form as $\bH^{(\tp)}$ in \eqref{eq:Hp.stat}. The new normal form $\bH_0^{(\tp+1)}$ in \eqref{Hp+1.after} is block-diagonal.

\begin{lem}\label{new.block.lemma}
{\bf (New block-diagonal part).}
	For all $\omega\in R_{\tM}$, we have
	\begin{equation}
		\bH_0^{(\tp+1)}(\omega) :=\bH_0^{(\tp)}(\omega) + \bZ^{(\tp)}(\omega) = \diag \big\{ \left. H_0^{(\tp+1)} \right._{[n]}^{[n]}(\omega) \, : \, n\in \N_0  \big\} \bsigma_3\,,
	\end{equation}
	where, for each  $n\in\N_0$, the block $ \left. H_0^{(\tp+1)} \right._{[n]}^{[n]}(\omega)$ is self-adjoint and satisfies the estimate
	\begin{equation}
		\sup_{n\in\N_0}\braket{n}^\alpha\,\big\|\left.H_0^{(\tp)}\right._{[n]}^{[n]}(\omega)-\left.H_0^{(\tp-1)}\right._{[n]}^{[n]}(\omega)\big\|_{{\rm HS}}^\wlip{\gamma}  \leq \delta_{s_0}^{(\tp)}\,. 
	\end{equation}
\end{lem}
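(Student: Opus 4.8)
The plan is to extract the block-diagonal structure directly from the definitions and then read off the estimate from the homological equation analysis. First I would observe that by \eqref{Hp+1.after} one has $\bH_0^{(\tp+1)}(\omega) = \bH_0^{(\tp)}(\omega) + \bZ^{(\tp)}(\omega)$, and by the inductive hypothesis ${\bf(S1)_\tp}$, $\bH_0^{(\tp)}(\omega) = \diag\{ \left.H_0^{(\tp)}\right._{[n]}^{[n]}(\omega) : n\in\N_0\}\bsigma_3$ is block-diagonal with self-adjoint blocks. By \eqref{eq:Z_kam_gen}, $\bZ^{(\tp)}(\omega)$ is the time-independent, diagonal part of $V^{d,(\tp)}$, namely $\diag\{ \left.V^{d,(\tp)}\right._{[n]}^{[n]}(\omega;0) : n\in\N_0\}\bsigma_3$; in particular it is block-diagonal. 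Hence the sum is block-diagonal, and we may set $\left.H_0^{(\tp+1)}\right._{[n]}^{[n]}(\omega) := \left.H_0^{(\tp)}\right._{[n]}^{[n]}(\omega) + \left.V^{d,(\tp)}\right._{[n]}^{[n]}(\omega;0)$.

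Next I would check self-adjointness of each new block. The block $\left.H_0^{(\tp)}\right._{[n]}^{[n]}$ is self-adjoint by induction. For $\left.V^{d,(\tp)}\right._{[n]}^{[n]}(\omega;0)$, I would use the structural property $[V^{d,(\tp)}]^* = V^{d,(\tp)}$ recorded in ${\bf(S1)_\tp}$, together with the characterization of self-adjointness in block form from Remark \ref{rem:coeff.mat}: $A = A^*$ is equivalent to $A_{[n]}^{[n']}(\ell) = \overline{(A_{[n']}^{[n]}(-\ell))^T}$ for all $\ell, n, n'$. Taking $n' = n$ and $\ell = 0$ gives $\left.V^{d,(\tp)}\right._{[n]}^{[n]}(0) = \overline{(\left.V^{d,(\tp)}\right._{[n]}^{[n]}(0))^T}$, i.e.\ the zeroth Fourier block is Hermitian, hence self-adjoint as an operator on $\fE_n$. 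Thus $\left.H_0^{(\tp+1)}\right._{[n]}^{[n]}(\omega)$ is a sum of self-adjoint operators, hence self-adjoint.

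Finally, for the quantitative estimate, note that $\left.H_0^{(\tp+1)}\right._{[n]}^{[n]} - \left.H_0^{(\tp)}\right._{[n]}^{[n]} = \left.V^{d,(\tp)}\right._{[n]}^{[n]}(\cdot;0) = \left.\bZ^{(\tp)}\right.$-block, so I would bound its weighted Hilbert--Schmidt norm, after multiplication by $\braket{n}^\alpha$, by the $s$-decay norm of $\bV^{(\tp)}$. Concretely, from Definition \ref{M.sdecay.ab} and \eqref{eq:sdecay_matrix_norm}, the quantity $\braket{n}^\alpha \|\left.V^{d,(\tp)}\right._{[n]}^{[n]}(0)\|_{\rm HS}$ is controlled by $|\braket{D}^\alpha V^{d,(\tp)}|_{s_0}^{\lip(\gamma)}$ (taking the $h=0$, $\ell=0$ term in the $s$-decay sum \eqref{eq:sdecay_norm}, which is bounded by the full norm since $s_0 \geq 0$), and this in turn is $\leq |\bV^{(\tp)}|_{s_0,\alpha,0}^{\lip(\gamma)} = \delta_{s_0}^{(\tp)}$ by the very definition \eqref{eq:sdecay_matrix_norm} of the matrix norm and \eqref{eq:delta_sigma_p}. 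Taking the supremum over $n\in\N_0$ yields the claimed bound. There is no real obstacle here; the only point requiring minor care is tracking the Lipschitz-in-$\omega$ part of the norm, which passes through the same inequality since the $\lip(\gamma)$-norm of $\bV^{(\tp)}$ already incorporates the Lipschitz seminorm at regularity $s_0 - 1 \leq s_0$, and the extraction of a single matrix block is a linear, norm-nonincreasing operation.
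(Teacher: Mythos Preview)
Your proposal is correct and follows essentially the same argument as the paper: both use the block-diagonal structure of $\bZ^{(\tp)}$ from \eqref{eq:Z_kam_gen}, verify self-adjointness of the $[n]$-blocks via the structural condition $[V^{d,(\tp)}]^*=V^{d,(\tp)}$ (you make this explicit through Remark~\ref{rem:coeff.mat}, the paper cites \eqref{struttura}), and then bound $\braket{n}^\alpha\|\left.V^{d,(\tp)}\right._{[n]}^{[n]}(0)\|_{\rm HS}^{\lip(\gamma)}$ by $|\braket{D}^\alpha V^{d,(\tp)}|_{s_0}^{\lip(\gamma)}\leq \delta_{s_0}^{(\tp)}$. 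One minor slip: in Definition~\ref{def:sdecay} the Lipschitz seminorm for the $s$-decay norm is taken at the same regularity $s$, not $s-1$ (that shift appears only in the pseudodifferential norm of Definition~\ref{defn.pseudo.norm}), but this does not affect your argument.
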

\begin{proof}
	Recalling \eqref{eq:Z_kam_gen}, we have that $\bZ^{(\tp)}(\omega)= \diag\big\{  \left. V^{d,(\tp)} \right._{[n]}^{[n]}(\omega;0)  \, : \, n\in \N_0 \big\}\bsigma_3$ is self-adjoint, by \eqref{struttura} in Definition \ref{M.sdecay.ab} and the induction assumption that $\bV^{(\tp)}\in \cM_{s}(\alpha,0)$, in particular for $s=s_0$. This implies directly that $\bH_0^{(\tp+1)}$ is self-adjoint as well and the estimate $\braket{n}^\alpha\| \left. V^{d,(\tp)} \right._{[n]}^{[n]} (\,\cdot\,;0) \|_{\rm HS}^{\lip(\gamma)} \leq | \braket{D}^\alpha V^{d,(\tp)} |_{s_0}^{\lip(\gamma)} \leq \delta_{s_0}^{(\tp)}$.
\end{proof}

\paragraph{The iterative step.}
We now assume, by the induction assumption, that ${\bf (S1)_{\tp}}$, ${\bf (S2)_{\tp}}$ hold and we prove ${\bf (S1)_{\tp+1}}$, ${\bf (S2)_{\tp+1}}$. By Lemma \ref{lem:est_gen_kam}, \eqref{eq:Omega_p}, \eqref{eq:matrix_transf} and \eqref{Hp+1.after}, there exists $\bX^{(\tp)}\in \cM_{s}(\alpha,\alpha)$ such that, for any $\omega\in \Omega_{\tp+1}$, the conjugation \eqref{conju.rule.kam}, \eqref{eq:Hp.stat} holds at $\tp+1$, with \eqref{eq:Hp.stat} defined for all $\omega\in R_{\tM}$. By Lemma \ref{new.block.lemma}, the induction assumption on \eqref{eq:S2_p_est1}, \eqref{eq:S3_p}, we have that \eqref{eq:S2_p_def}, \eqref{eq:S2_p_est1}, \eqref{eq:S2_p_est} hold at the step $\tp+1$, with each block $ \left. H_0^{(\tp+1)} \right._{[n]}^{[n]}(\omega)$ being self-adjoint.
The proofs of \eqref{eq:S1_p} and of \eqref{eq:S3_p} at the step $\tp+1$ follow by Lemma \ref{lem:est_gen_kam} and the following lemma.
\begin{lem}
	{\bf (Nash-Moser estimates).}
	For any $s\in[s_0,s_0+\Sigma(\beta)]$, the operator $\bV^{(\tp+1)}$ in \eqref{Vp+1} belongs to $\cM_{s}(\alpha,0)$ with the iterative estimates
\begin{align}
	& \delta_{s}^{(\tp+1)} \lesssim_{s,\alpha} \tN_{\tp}^{-\beta} \delta_{s+\beta}^{(\tp)} + \tN_{\tp}^{2\tau+1} \frac{\tM^\alpha}{\gamma} \delta_{s_0}^{(\tp)} \delta_{s}^{(\tp)} \,, \label{nash.moser.1}\\
	& \delta_{s+\beta}^{(\tp+1)} \lesssim_{s,\alpha}  \delta_{s+\beta}^{(\tp)} + \tN_{\tp}^{2\tau+1} \frac{\tM^\alpha}{\gamma} \big( \delta_{s_0+\beta}^{(\tp)} \delta_{s}^{(\tp)}  + \delta_{s_0}^{(\tp)} \delta_{s}^{(\tp)}   \big)\,.\label{nash.moser.2}
\end{align}
Moreover, the estimates \eqref{eq:S3_p} hold at the step $\tp+1$.
\end{lem}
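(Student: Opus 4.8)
The plan is to estimate term by term the six contributions to $\bV^{(\tp+1)}$ in \eqref{Vp+1}, using the tame estimates for commutators and flows (Lemma \ref{lem:com}, Lemma \ref{lem:flow}) together with the homological equation bound \eqref{eq:gen_kam_est} from Lemma \ref{lem:est_gen_kam}. The key algebraic point is that the first group of terms in \eqref{Vp+1}, namely $e^{\im\bX^{(\tp)}}\bH_0^{(\tp)}e^{-\im\bX^{(\tp)}} - (\bH_0^{(\tp)} + \im[\bX^{(\tp)},\bH_0^{(\tp)}])$, equals $\sum_{k\geq 2}\frac{(\im)^k}{k!}\ad_{\bX^{(\tp)}}^{k-1}([\bX^{(\tp)},\bH_0^{(\tp)}])$, which is at least quadratic in $\bX^{(\tp)}$. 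Because $[\bX^{(\tp)},\bH_0^{(\tp)}]$ can be rewritten via the homological equation \eqref{eq:hom_eq_kam_gen} as $-\im(\omega\cdot\pa_\vf\bX^{(\tp)} - \Pi_{\tN_\tp}\bV^{(\tp)} + \bZ^{(\tp)})$ — hence an operator of class $\cM_s(\alpha,0)$ whose norm is controlled by $\tN_\tp\,\delta^{(\tp)}_{s}$ (using Lemma \ref{propieta.class.sdecay}-(ii) for the $\omega\cdot\pa_\vf$ factor, and noting that $\bZ^{(\tp)}$ is bounded by $\delta^{(\tp)}_{s_0}$) — one iterates Lemma \ref{lem:com} to get each summand bounded by a power of $|\bX^{(\tp)}|_{s_0,\alpha,\alpha}^{\lip(\gamma)}$ times $|\bX^{(\tp)}|_{s,\alpha,\alpha}^{\lip(\gamma)}\,\tN_\tp\,\delta^{(\tp)}_s$, plus the symmetric high-norm term; the exponential from Lemma \ref{lem:flow}-(ii) packages the series. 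The last group $\int_0^1 e^{\im s\bX^{(\tp)}}\dot\bX^{(\tp)}e^{-\im s\bX^{(\tp)}}\,\di s - \dot\bX^{(\tp)}$ is handled identically, writing $\dot\bX^{(\tp)} = \omega\cdot\pa_\vf\bX^{(\tp)}$ and again losing one derivative $\tN_\tp$, while $e^{\im\bX^{(\tp)}}\bV^{(\tp)}e^{-\im\bX^{(\tp)}} - \bV^{(\tp)}$ is directly the content of the second bound in \eqref{eq:flow_tame}.

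The term $\Pi_{\tN_\tp}^\perp\bV^{(\tp)}$ is the source of the smoothing gain: by Lemma \ref{propieta.class.sdecay}-(ii) one has $\delta^{(\tp)}_s(\Pi_{\tN_\tp}^\perp\bV^{(\tp)}) \leq \tN_\tp^{-\beta}\delta^{(\tp)}_{s+\beta}$, which is exactly the first term on the right of \eqref{nash.moser.1}. Collecting all the pieces and using \eqref{eq:gen_kam_est} to replace $|\bX^{(\tp)}|_{s,\alpha,\alpha}^{\lip(\gamma)}$ by $\tN_\tp^{2\tau+1}\frac{\tM^\alpha}{\gamma}\delta^{(\tp)}_s$ yields \eqref{nash.moser.1}; the high-norm bound \eqref{nash.moser.2} follows the same way, except that one uses the interpolation-type splitting where one factor carries $s_0$ and the other $s+\beta$, so that the nonlinear terms acquire the factor $\delta^{(\tp)}_{s_0+\beta}\delta^{(\tp)}_s + \delta^{(\tp)}_{s_0}\delta^{(\tp)}_{s+\beta}$, and $\Pi_{\tN_\tp}^\perp\bV^{(\tp)}$ simply contributes $\delta^{(\tp)}_{s+\beta}$ with no gain (but also no loss).

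Finally, to close the induction one inserts the inductive hypotheses \eqref{eq:S3_p} at step $\tp$ into \eqref{nash.moser.1}, \eqref{nash.moser.2}: the first gives $\delta^{(\tp+1)}_s \lesssim \tN_\tp^{-\beta}\delta^{(0)}_{s+\beta}\tN_{\tp-1} + \tN_\tp^{2\tau+1}\frac{\tM^\alpha}{\gamma}(\delta^{(0)}_{s_0+\beta}\tN_{\tp-1}^{-\varrho})(\delta^{(0)}_{s+\beta}\tN_{\tp-1}^{-\varrho})$, and using the smallness condition \eqref{eq:small_cond_kam}, the parameter choices $\beta = \varrho+1$, $\chi = 3/2$, $\tN_\tp = \tN_0^{\chi^\tp}$, and $\tN_{\tp-1} = \tN_\tp^{2/3}$, both terms are absorbed into $\delta^{(0)}_{s+\beta}\tN_\tp^{-\varrho}$ for $\tN_0$ large; the analogous but easier computation gives $\delta^{(\tp+1)}_{s+\beta} \leq \delta^{(0)}_{s+\beta}\tN_\tp$. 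I expect the main obstacle to be the bookkeeping in the quadratic-in-$\bX$ terms: one must carefully track that the commutator with $\bH_0^{(\tp)}$ does not spoil the class $\cM_s(\alpha,\alpha)$ (it does not, precisely because of the homological equation substitution, which trades the unbounded $\bH_0^{(\tp)}$ for the bounded $\omega\cdot\pa_\vf\bX^{(\tp)} - \Pi_{\tN_\tp}\bV^{(\tp)} + \bZ^{(\tp)}$) and that each application of Lemma \ref{lem:com} respects the $(\alpha,\alpha)$ structure required there, so that the exponential series from Lemma \ref{lem:flow} genuinely converges with the stated constants.
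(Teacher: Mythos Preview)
Your overall architecture is right and matches the paper's approach: decompose \eqref{Vp+1}, use the homological equation to replace $\im[\bX^{(\tp)},\bH_0^{(\tp)}]$ by bounded objects, apply the tame flow/commutator Lemmata \ref{lem:com}--\ref{lem:flow}, and use Lemma \ref{propieta.class.sdecay}-(ii) for $\Pi_{\tN_\tp}^\perp\bV^{(\tp)}$. The closing of \eqref{eq:S3_p} is also done exactly as you describe.

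There is, however, a real gap in your treatment of the $\dot\bX^{(\tp)}$ contributions. You assert that $\im[\bX^{(\tp)},\bH_0^{(\tp)}]=\omega\cdot\partial_\vf\bX^{(\tp)}-\Pi_{\tN_\tp}\bV^{(\tp)}+\bZ^{(\tp)}$ has $\cM_s(\alpha,0)$-norm of order $\tN_\tp\,\delta_s^{(\tp)}$, and that the last integral term is handled ``losing one derivative $\tN_\tp$''. This is not correct: since $|\omega|\sim\tM$, one only has
\[
|\omega\cdot\partial_\vf\bX^{(\tp)}|_{s,\alpha,\alpha}^{\lip(\gamma)}\ \lesssim\ \tM\,\tN_\tp\,|\bX^{(\tp)}|_{s,\alpha,\alpha}^{\lip(\gamma)}\ \lesssim\ \tM\,\tN_\tp^{2\tau+2}\,\tfrac{\tM^\alpha}{\gamma}\,\delta_s^{(\tp)},
\]
which is larger than $\tN_\tp\delta_s^{(\tp)}$ by the uncontrolled factor $\tM\,\tN_\tp^{2\tau+1}\tfrac{\tM^\alpha}{\gamma}$. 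If you estimate the first and last groups in \eqref{Vp+1} separately, each carries this factor and the stated bound \eqref{nash.moser.1} does not follow; in fact the extra $\tM$ cannot be absorbed by \eqref{eq:small_cond_kam} and the induction would not close.

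The fix --- implicit in the paper's citation of \eqref{eq:hom_eq_kam_gen} --- is to combine the two groups before estimating. Writing $\ad_{\bX^{(\tp)}}(\bH_0^{(\tp)})=\dot\bX^{(\tp)}+(\bZ^{(\tp)}-\Pi_{\tN_\tp}\bV^{(\tp)})$ and expanding both pieces as $\sum_{j\ge1}\tfrac{1}{(j+1)!}\ad_{\bX^{(\tp)}}^{j}(\,\cdot\,)$, the $\dot\bX^{(\tp)}$ series are \emph{identical} and cancel exactly, leaving
\[
\Big(e^{\im\bX^{(\tp)}}\bH_0^{(\tp)}e^{-\im\bX^{(\tp)}}-\bH_0^{(\tp)}-\im[\bX^{(\tp)},\bH_0^{(\tp)}]\Big)-\Big(\!\int_0^1 e^{\im s\bX^{(\tp)}}\dot\bX^{(\tp)}e^{-\im s\bX^{(\tp)}}\di s-\dot\bX^{(\tp)}\Big)
=\sum_{j\ge1}\tfrac{1}{(j+1)!}\ad_{\bX^{(\tp)}}^{j}\big(\bZ^{(\tp)}-\Pi_{\tN_\tp}\bV^{(\tp)}\big).
\]
Now the input has norm $\lesssim\delta_s^{(\tp)}$, and Lemma \ref{lem:flow}-(ii) together with \eqref{eq:gen_kam_est} and \eqref{eq:small_cond_kam} yields precisely $\tN_\tp^{2\tau+1}\tfrac{\tM^\alpha}{\gamma}\delta_{s_0}^{(\tp)}\delta_s^{(\tp)}$, matching \eqref{nash.moser.1}. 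With this correction your plan goes through.
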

\begin{proof}
	The estimates \eqref{nash.moser.1}, \eqref{nash.moser.2} follow by \eqref{Vp+1}, \eqref{eq:hom_eq_kam_gen}, Lemmata \ref{lem:com}, \ref{lem:flow}, \eqref{propieta.class.sdecay}-(ii) and \eqref{eq:delta_sigma_p}, \eqref{eq:gen_kam_est}, \eqref{eq:small_cond_kam}, \eqref{eq:par_kam}. The estimates \eqref{eq:S3_p} at the step $\tp+1$ follow by \eqref{nash.moser.1}, \eqref{nash.moser.2}, \eqref{eq:S3_p} and the smallness condition \eqref{eq:small_cond_kam}, for $\tN_0=\tN_0(s_0,\beta)\in \N$ large enough.
\end{proof}

\subsection{Diagonalization of the operator $\bH^{(0)}$}

In Theorem \ref{thm:iter_lemma}, we proved the generic step of the KAM iteration. In this section, we conclude the KAM reducibility, showing the existence of the limit flow that fully diagonalize, under the smallness condition \eqref{eq:small_cond_kam}, the operator $\bH_0(\omega;t)$ in \eqref{initial.H.KAM} obtained after the Magnus transform in Theorem \ref{lem:magnus}.
\begin{cor}\label{cor:final_blocks}
	{\bf (Final blocks).}
	Assume \eqref{eq:small_cond_kam}. The sequence $ \big\{ \bH_0^{(\tp)} (\omega;\tM,\alpha)  \big\}_{\tp\in\N_0}$ converges,  for any $\omega\in R_\tM$, to
	\begin{equation}\label{final.eigen}
		\bH_0^{(\infty)}(\omega):= \bH_0^{(0)} + \bZ^{(\infty)}(\omega)=\diag\Big\{ \left.H_0^{(\infty)}\right._{[n]}^{[n]}(\omega) \,:\, n\in\N_0 \Big\} \bsigma_3\,,
	\end{equation}
	with estimates
	\begin{equation}\label{stime.blocks.infty}
		\begin{aligned}
			& 	\sup_{n\in\N_0}\braket{n}^\alpha\,\big\|\!\left.H_0^{(\infty)}\right._{[n]}^{[n]}-\!\left.H_0^{(\tp)}\right._{[n]}^{[n]}\big\|_{{\rm HS}}^\wlip{\gamma} \lesssim_{s_0,\beta}  (\gamma_0\,\tM)^{-1}  \tN_{\tp-1}^{-\varrho}\,, \quad 
			\forall\, \tp\in \N_0\,.
		\end{aligned}
	\end{equation}
	For each $n\in\N_0$, the block $\left.H_0^{(\infty)}\right._{[n]}^{[n]}(\omega;\tM,\alpha) $ is self-adjoint and the finitely many eigenvalues $\lambda_{n}^{(\infty)}(\omega;\tM,\alpha) \in \spec\big( \!\left.H_0^{(\infty)}\right._{[n]}^{[n]}(\omega;\tM,\alpha)  \big)$
	  are real and positive, admitting an asymptotic of the form
	\begin{equation}\label{final.eigen.expans}
	\lambda_{n}^{(\infty)}(\omega) = \lambda_{n} + \varepsilon_{\lambda_{n}^{(\infty)}}(\omega)\,, \quad
	\sup_{n\in\N_0} \braket{n}^\alpha |\varepsilon_{\lambda_{n}^{(\infty)}}|^{\lip(\gamma)} \leq C_{s_0,\beta} (\gamma_0\,\tM)^{-1}\,.
\end{equation}
\end{cor}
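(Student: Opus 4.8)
The plan is to extract Corollary \ref{cor:final_blocks} as a telescoping/Cauchy-sequence consequence of the quantitative iterative estimates already established in Theorem \ref{thm:iter_lemma}, supplemented by the elementary perturbation-of-eigenvalues facts in Lemma \ref{lemma.astratto.fin}-(i). Concretely, I would proceed as follows.

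\textbf{Step 1: convergence of the normal forms.} By ${\bf (S1)_{\tp}}$, the block-diagonal parts satisfy \eqref{eq:S2_p_est}, i.e.
\begin{equation*}
\sup_{n\in\N_0}\braket{n}^\alpha\,\big\|\left.H_0^{(\tp)}\right._{[n]}^{[n]}-\left.H_0^{(\tp-1)}\right._{[n]}^{[n]}\big\|_{{\rm HS}}^\wlip{\gamma} \lesssim_{s_0,\beta}  (\gamma_0\,\tM)^{-1}\tN_{\tp-2}^{-\varrho}\,.
\end{equation*}
Since $\tN_{\tp-2}=\tN_0^{\chi^{\tp-2}}$ with $\chi=3/2$, the series $\sum_{\tp}\tN_{\tp-2}^{-\varrho}$ converges super-fast. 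Hence, for each fixed $n\in\N_0$, the sequence $\left.H_0^{(\tp)}\right._{[n]}^{[n]}(\omega)$ is Cauchy in the Banach space $(\cL(\fE_n),\|\cdot\|_{\rm HS}^{\lip(\gamma)})$, uniformly in $n$ after weighting by $\braket{n}^\alpha$; call the limit $\left.H_0^{(\infty)}\right._{[n]}^{[n]}(\omega)$. Self-adjointness is preserved under $\|\cdot\|_{\rm HS}$-limits, so each limiting block is self-adjoint, and the $\bsigma_3$-structure in \eqref{eq:S2_p_def} passes to the limit, giving \eqref{final.eigen} with $\bZ^{(\infty)}:=\bH_0^{(\infty)}-\bH_0^{(0)}$. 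Summing the telescoping tail $\sum_{\tq>\tp}\tN_{\tq-2}^{-\varrho}\lesssim \tN_{\tp-1}^{-\varrho}$ (using $\chi\varrho>\varrho$ and the definition of $\tN_\tp$) yields \eqref{stime.blocks.infty}; taking $\tp=0$ and recalling $\tN_{-1}=1$, together with \eqref{eq:S2_p_est1}, recovers the uniform bound $\sup_n\braket{n}^\alpha\|\left.H_0^{(\infty)}\right._{[n]}^{[n]}-\left.H_0^{(0)}\right._{[n]}^{[n]}\|_{\rm HS}^{\lip(\gamma)}\lesssim(\gamma_0\tM)^{-1}$.

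\textbf{Step 2: eigenvalue asymptotics.} For each $n$, apply Lemma \ref{lemma.astratto.fin}-(i) to the self-adjoint operators $\left.H_0^{(\infty)}\right._{[n]}^{[n]}(\omega)$ and $\left.H_0^{(0)}\right._{[n]}^{[n]}(\omega)$: ordering spectra, each eigenvalue $\lambda_n^{(\infty)}(\omega)$ differs from the corresponding eigenvalue of $\left.H_0^{(0)}\right._{[n]}^{[n]}$ by at most $\|\left.H_0^{(\infty)}\right._{[n]}^{[n]}-\left.H_0^{(0)}\right._{[n]}^{[n]}\|_{\cL(\fE_n)}\le \|\left.H_0^{(\infty)}\right._{[n]}^{[n]}-\left.H_0^{(0)}\right._{[n]}^{[n]}\|_{\rm HS}$. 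Now $\left.H_0^{(0)}\right._{[n]}^{[n]}$ is $\lambda_n\uno_{[n]}$ (from \eqref{initial.H.KAM}, \eqref{eq:KG_s}, since $\bH_0=B\bsigma_3$ and $\fE_n$ is the eigenspace of $B$ with eigenvalue $\lambda_n$), so its only eigenvalue is $\lambda_n$; hence $\lambda_n^{(\infty)}(\omega)=\lambda_n+\varepsilon_{\lambda_n^{(\infty)}}(\omega)$ with $\sup_n\braket{n}^\alpha|\varepsilon_{\lambda_n^{(\infty)}}|^{\lip(\gamma)}\lesssim(\gamma_0\tM)^{-1}$, which is \eqref{final.eigen.expans}. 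Since the perturbation is of size $O((\gamma_0\tM)^{-1})$ and $\inf\spec(-\pa_{xx}+q)>0$ forces $\lambda_n\ge c>0$, enlarging $\tM_*$ guarantees $\lambda_n^{(\infty)}(\omega)>0$ for all $n$. Lipschitz dependence and reality follow because all quantities in the iteration were controlled in the $\lip(\gamma)$-norm and all blocks stayed self-adjoint. Finally, the Lipschitz property in Lemma \ref{lemma.astratto.fin}-(i), applied to the pair $\left.H_0^{(\infty)}\right._{[n]}^{[n]}(\omega_1)$, $\left.H_0^{(\infty)}\right._{[n]}^{[n]}(\omega_2)$, transfers the Lipschitz bound on the blocks to the eigenvalues, completing \eqref{final.eigen.expans}.

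\textbf{Main obstacle.} The only genuinely delicate point is bookkeeping the telescoping so that the tail sum reproduces \emph{exactly} the weight $\tN_{\tp-1}^{-\varrho}$ claimed in \eqref{stime.blocks.infty}, rather than a weaker power; this uses the super-exponential growth $\tN_{\tp}=\tN_0^{\chi^\tp}$ crucially, namely that $\sum_{\tq\ge \tp}\tN_{\tq-2}^{-\varrho}$ is dominated by its first term up to a constant. A secondary but routine subtlety is that the eigenvalues of a $2\times 2$ self-adjoint block need not be individually smooth in $\omega$ when they cross, but Lemma \ref{lemma.astratto.fin}-(i) is precisely designed to give Lipschitz (not $\cC^1$) control of the \emph{ordered} eigenvalues, which is all that is needed; one simply matches $\lambda_{n,+}^{(\infty)},\lambda_{n,-}^{(\infty)}$ to the (equal) ordered eigenvalues of $\left.H_0^{(0)}\right._{[n]}^{[n]}$. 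No new ideas beyond the iterative lemma are required.
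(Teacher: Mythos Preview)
Your proposal is correct and follows essentially the same route as the paper: a telescopic Cauchy-sequence argument from \eqref{eq:S2_p_est} to get convergence and \eqref{stime.blocks.infty}, preservation of self-adjointness in the limit, and Lemma \ref{lemma.astratto.fin}-(i) to transfer the block estimate to the eigenvalues for \eqref{final.eigen.expans}. The additional remarks you include on positivity and on Lipschitz control at eigenvalue crossings are accurate and consistent with the paper's argument.
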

\begin{proof}
	By estimate \eqref{eq:S2_p_est} in Theorem \ref{thm:iter_lemma}, we have that $ \big\{ \bH_0^{(\tp)} (\omega;\tM,\alpha)  \big\}_{\tp\in\N_0}$ is a Cauchy sequence with respect to the norm $\sup_{n\in\N_0}\braket{n}^\alpha\,\| ( \,\cdot\,)_{[n]}^{[n]}\|_{{\rm HS}}^\wlip{\gamma} $. The estimates \eqref{stime.blocks.infty} for the limit, block diagonal operator $\bH_0^{(\infty)}$ follow from \eqref{eq:S2_p_est1}, \eqref{eq:S2_p_est} with a standard telescopic series argument, assuming the smallness condition \eqref{eq:small_cond_kam}, Lemma \ref{lem.magnus_size} and choosing $\tN_0\in \N$ large enough. Each block $\left. H_0^{(\infty)} \right._{[n]}^{[n]}(\omega)$ is self-adjoint because it is the limit of self-adjoint blocks. The expansion and the estimate in\eqref{final.eigen} follow by Lemma \ref{lemma.astratto.fin}-(i). Indeed, we have
	\begin{equation}
	\begin{aligned}
		\sup_{n\in\N_0} \braket{n}^\alpha | \varepsilon_{\lambda_{n}^{(\infty)}} |^{\lip(\gamma)} & = \sup_{n\in\N_0} \braket{n}^\alpha | \lambda_{n}^{(\infty)} - \lambda_n |^{\lip(\gamma)}\\
		& \lesssim \sup_{n\in\N_0}\braket{n}^\alpha\,\big\|\!\left.H_0^{(\infty)}\right._{[n]}^{[n]}-\!\left.H_0^{(0)}\right._{[n]}^{[n]}\big\|_{{\rm HS}}^\wlip{\gamma} \lesssim_{s_0,\beta}  (\gamma_0\,\tM)^{-1} \,.
	\end{aligned}
\end{equation}
This concludes the proof.
 \end{proof}

We define the set $\Omega_{\infty}\subset R_{\tM}$ of the second order balanced Melnikov non-resonance conditions for the final blocks as
	\begin{equation}\label{Omega.infty}
	\begin{aligned}
		\Omega_{\infty} := \Omega_{\infty}(\gamma,\tau) := \Big\{ \omega \in \Omega_0 \,:\, & \big\|\big( \tG_{\ell,,n,n'}^{\pm,(\infty)} (\omega)\big)^{-1}\big\|_{\Op(n,n')} \leq \frac{\braket{\ell}^\tau}{\gamma}\frac{\tM^\alpha}{\braket{n\pm n'}^\alpha} \\
		&      \quad \quad \quad \quad \quad \quad  \quad  \quad \quad \ \  \forall \,(\ell,n,n')\in \cI^\pm  \Big\}\,,
	\end{aligned}
\end{equation}
where $\Omega_0$ is defines as in \eqref{eq:magnus_omega} and, recalling the notation in \eqref{MRML},
\begin{equation}\label{G.block.infty}
	\tG_{\ell,,n,n'}^{\pm,(\infty)}(\omega):= \omega \cdot \ell \ \uno_2 + M_L\big(\!\left.H_0^{(\infty)} \right._{[n]}^{[n]}(\omega)\big) \pm M_R\big(\!\left.H_0^{(\infty)} \right._{[n']}^{[n']}(\omega) \big)\in\cL(\fE_{n},\fE_{n'}) \,,
\end{equation}
with the indexes sets defines in \eqref{eq:indices_meln}.
\begin{lem}\label{lemma.set.infty}
	We have $\Omega_{\infty}\subseteq \cap_{\tp\in\N_0} \Omega_{\tp} $.
\end{lem}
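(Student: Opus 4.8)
The plan is to proceed by induction on $\tp\in\N_0$, using that the sets $\Omega_\tp$ are defined cumulatively. The base case $\tp=0$ is immediate: by \eqref{Omega.infty} one has $\Omega_\infty=\Omega_0\cap\{\cdots\}\subseteq\Omega_0$. For the inductive step I would assume $\Omega_\infty\subseteq\Omega_\tp$ and note that, by \eqref{eq:Omega_p},
\[
\Omega_{\tp+1}=\Omega_\tp\cap\Big\{\omega\in R_\tM:\ \big\|\big(\tG_{\ell,n,n'}^{\pm,(\tp)}(\omega)\big)^{-1}\big\|_{\Op(n,n')}\le\frac{2\tN_\tp^\tau}{\gamma}\frac{\tM^\alpha}{\braket{n\pm n'}^\alpha}\ \ \forall\,(\ell,n,n')\in\cI_{\tN_\tp}^\pm\Big\}.
\]
Thus, granted the inductive hypothesis $\Omega_\infty\subseteq\Omega_\tp$, it suffices to show that every $\omega\in\Omega_\infty$ satisfies these Melnikov conditions at level $\tp$.

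To this end I would compare, for $\omega\in\Omega_\infty$ and $(\ell,n,n')\in\cI_{\tN_\tp}^\pm$, the block $\tG_{\ell,n,n'}^{\pm,(\tp)}(\omega)$ with $\tG_{\ell,n,n'}^{\pm,(\infty)}(\omega)$. Since $\braket{\ell}\le\tN_\tp$, the defining property \eqref{Omega.infty} of $\Omega_\infty$ gives $\|(\tG_{\ell,n,n'}^{\pm,(\infty)}(\omega))^{-1}\|_{\Op(n,n')}\le A$ with $A:=\tN_\tp^\tau\,\gamma^{-1}\,\tM^\alpha\,\braket{n\pm n'}^{-\alpha}$. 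On the other hand, from \eqref{G.block.p-1}, \eqref{G.block.infty}, the bounds $\|M_L(T)\|_{\Op(n,n')}\le\|T\|_{\rm HS}$ and $\|M_R(T)\|_{\Op(n,n')}\le\|T\|_{\rm HS}$ of Lemma \ref{lemma:MRMRL}, and the block estimate \eqref{stime.blocks.infty} of Corollary \ref{cor:final_blocks}, one obtains
\[
\big\|\tG_{\ell,n,n'}^{\pm,(\tp)}(\omega)-\tG_{\ell,n,n'}^{\pm,(\infty)}(\omega)\big\|_{\Op(n,n')}\le\big\|\left.H_0^{(\tp)}\right._{[n]}^{[n]}-\left.H_0^{(\infty)}\right._{[n]}^{[n]}\big\|_{\rm HS}+\big\|\left.H_0^{(\tp)}\right._{[n']}^{[n']}-\left.H_0^{(\infty)}\right._{[n']}^{[n']}\big\|_{\rm HS}\lesssim_{s_0,\beta}\frac{\tN_{\tp-1}^{-\varrho}}{\gamma_0\,\tM}=:E.
\]
A Neumann series argument then shows that, as soon as $AE\le\tfrac12$, the operator $\tG_{\ell,n,n'}^{\pm,(\tp)}(\omega)$ is invertible with $\|(\tG_{\ell,n,n'}^{\pm,(\tp)}(\omega))^{-1}\|_{\Op(n,n')}\le2A$, which is exactly the desired bound; hence $\omega\in\Omega_{\tp+1}$.

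It remains to verify the smallness $AE\le\tfrac12$, and this is the only quantitative point, hence the main (mild) obstacle. Using $\braket{n\pm n'}\ge1$ one has $AE\lesssim_{s_0,\beta}(\gamma\gamma_0)^{-1}\,\tM^{\alpha-1}\,\tN_\tp^\tau\,\tN_{\tp-1}^{-\varrho}$. For $\tp\ge1$ the choices \eqref{eq:par_kam} give $\tN_\tp=\tN_{\tp-1}^{3/2}$ and $\varrho=6\tau+4>\tfrac32\tau$, so that $\tN_\tp^\tau\tN_{\tp-1}^{-\varrho}=\tN_{\tp-1}^{\frac32\tau-\varrho}\le1$; for $\tp=0$ one has $\tN_{-1}=1$, whence $\tN_0^\tau\tN_{-1}^{-\varrho}=\tN_0^\tau$. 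In every case $AE\lesssim_{s_0,\beta}(\gamma\gamma_0)^{-1}\,\tN_0^\tau\,\tM^{\alpha-1}$, which is $\le\tfrac12$ provided $\tM\ge\tM_*$ is taken large enough — here it is essential that $\alpha<1$. This closes the induction and proves $\Omega_\infty\subseteq\bigcap_{\tp\in\N_0}\Omega_\tp$. Summarizing, the argument is a routine cumulative induction, the only delicate ingredient being that the slack in \eqref{eq:Omega_p} — the factor $2$ and the replacement of $\braket{\ell}^\tau$ by $\tN_\tp^\tau$ — is exactly enough to absorb the tail $\left.H_0^{(\infty)}\right._{[n]}^{[n]}-\left.H_0^{(\tp)}\right._{[n]}^{[n]}$ of the normal-form correction once $\tM$ is large.
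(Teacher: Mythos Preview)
Your proof is correct and follows essentially the same route as the paper: induction on $\tp$, writing $\tG_{\ell,n,n'}^{\pm,(\tp)}=\tG_{\ell,n,n'}^{\pm,(\infty)}+\tT$, and a Neumann series perturbation using the block estimate \eqref{stime.blocks.infty}. The only minor difference is in how you justify the smallness $AE\le\tfrac12$: you appeal directly to $\tM\ge\tM_*$ large (exploiting $\alpha<1$), whereas the paper invokes the standing smallness condition \eqref{eq:small_cond_kam} together with $\tN_0$ large; once one unwinds \eqref{eq:small_cond_kam} via Lemma \ref{lem.magnus_size} these two mechanisms amount to the same thing.
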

\begin{proof}
 	We prove by induction that $\Omega_{\infty}\subseteq \Omega_{\tp}$ for any $\tp\in \N_0$. For $\tp=0$ the claim is trivial because $\Omega_{\infty}\subset \Omega_0$ by \eqref{Omega.infty}. We now assume by induction that $\Omega_{\infty}\subseteq \Omega_{\tp}$ for some $\tp\in\N_0$ and we show that $\Omega_{\infty}\subseteq \Omega_{\tp+1}$. Let $\omega\in\Omega_{\infty}$ and
 $(\ell,j,j')\in \cI_{\tN_{\tp}}^{+}$.
 First, by \eqref{G.block.infty} and \eqref{G.block.p-1}, we have $\tG_{\ell,,n,n'}^{\pm,(\tp)}(\omega) = \tG_{\ell,,n,n'}^{\pm,(\infty)}(\omega) + \tT_{\ell,n,n'}^{\pm,(\tp)}(\omega)\,,$
where
\begin{equation}
	\tT_{\ell,n,n'}^{\pm,(\tp)}(\omega):= M_L\big(\!\left.H_0^{(\tp)} \right._{[n]}^{[n]}(\omega) - \!\left.H_0^{(\infty)} \right._{[n]}^{[n]}(\omega)\big) \pm M_R\big(\!\left.H_0^{(\tp)} \right._{[n']}^{[n']}(\omega) - \!\left.H_0^{(\infty)} \right._{[n']}^{[n']}(\omega) \big) \,.
\end{equation}
  Since $\omega\in\Omega_{\infty}$, $\tG_{\ell,,n,n'}^{\pm,(\infty)}$ is invertible, By Corollary \ref{cor:final_blocks}, \eqref{Omega.infty} \eqref{eq:par_kam} and the smallness condition \eqref{eq:small_cond_kam}, we get
  \begin{equation}
  	\begin{aligned}
  		\| \big( \tG_{\ell,,n,n'}^{\pm,(\infty)}(\omega) \big)^{-1} \tT_{\ell,n,n'}^{\pm,(\tp)} \|_{\Op(n,n')} &\leq \| \big( \tG_{\ell,,n,n'}^{\pm,(\infty)}(\omega) \big)^{-1}  \|_{\Op(n,n')} \|  \tT_{\ell,n,n'}^{\pm,(\tp)} \|_{\Op(n,n')}\\
  		& \lesssim_{s_0,\beta} \frac{\tM^\alpha}{\gamma\,\gamma_0\,\tM} \frac{\tN_{\tp}^\tau \tN_{\tp-1}^{-\varrho}}{\braket{n-n'}^\alpha} \leq \frac12\,,
  	\end{aligned}
  \end{equation}
choosing $\tN_0=\tN_0(\tau,\nu,s_0)\in\N$ large enough. It implies that, for $\omega\in\Omega_{\infty}$, the operator $\tG_{\ell,,n,n'}^{\pm,(\tp)}$ is invertible by a Neumann series argument, with estimate
\begin{equation}
		\begin{footnotesize}
			\begin{aligned}
				\| \big( \tG_{\ell,,n,n'}^{\pm,(\tp)}(\omega) \big)^{-1} \|_{\Op(n,n')}\leq \frac{	\| \big( \tG_{\ell,,n,n'}^{\pm,(\infty)}(\omega) \big)^{-1} \|_{\Op(n,n')}}{1-	\| \big( \tG_{\ell,,n,n'}^{\pm,(\infty)}(\omega) \big)^{-1} \tT_{\ell,n,n'}^{\pm,(\tp)} \|_{\Op(n,n')}} \leq 2	\| \big( \tG_{\ell,,n,n'}^{\pm,(\infty)}(\omega) \big)^{-1}\|_{\Op(n,n')}\,.
			\end{aligned}
		\end{footnotesize}
\end{equation}
By \eqref{Omega.infty}, \eqref{eq:Omega_p} and the induction assumption that $\omega\in\Omega_{\tp}$, we conclude that $\omega\in\Omega_{\tp+1}$, which is the claim and concludes the proof.
\end{proof}

We now define the sequence of invertible maps
\begin{equation}
	\label{seq.Xn}
	\cW_{0}:={\rm Id}\,, \quad \cW_{\tp}(\omega;t) := e^{\im \bX^{(0)}(\omega;\omega t)} \circ \cdots\circ e^{\im \bX^{(\tp-1)}(\omega;\omega t)}\,, \quad \tp\in\N\,.
\end{equation}

\begin{thm}\label{cor:iter_flow}
	{\bf (KAM reducibility).}
	Fix $\alpha\in(0,1)$. There exists $\tN_0=\tN_0(\tau,\nu,s_0)\in\N$ such that, if \eqref{eq:small_cond_kam} is verified, for any $\omega \in R_{\tM}$, the sequence of transformations $(\cW_{\tp}(\omega))_{\tp\in\N}$ 
	converges in  $\cL(H^r(\T^{\nu+1})\times H^r(\T^{\nu+1}))$, $r\in[0,s_0]$, to an invertible operator $\cW_{\infty}(\omega)$  with estimate
\begin{equation}
	\begin{aligned}
		&\| (\cW_{\infty}(\omega))^{\pm} - (\cW_{\tp}(\omega))^{\pm} \|_{\cL(H^r(\T^{\nu+1})\times H^r(\T^{\nu+1}))} \lesssim_{s_0}(\gamma_0\,\tM)^{-1} \tN_{\tp+1}^{2\tau+1} \tN_{\tp}^{-\varrho}\,.
	\end{aligned}
\end{equation}
Moreover, for any $\omega\in\Omega_{\infty}$, we have
\begin{equation}
	\bH_0^{(\infty)}(\omega) =( \cW_{\infty}(\omega;t))^{-1} \bH^{(0)}(\omega,t)\cW_{\infty}(\omega;t) = \diag\Big\{ \left.H_0^{(\infty)}\right._{[n]}^{[n]}(\omega) \,:\, n\in\N_0 \Big\} \bsigma_3\,,
\end{equation}
where $	\bH_0^{(\infty)}(\omega) $ is as in Corollary \ref{cor:final_blocks}, with each block $\left.H_0^{(\infty)}\right._{[n]}^{[n]}(\omega;\tM,\alpha) $, $n\in\N_0$, being self-adjoint and with eigenvalues $\{ \lambda_{n,-}^{(\infty)}(\omega;\tM,\alpha),\lambda_{n,+}^{(\infty)} (\omega;\tM,\alpha)\}$, are real and positive, admitting the asymptotics \eqref{final.eigen.expans}.
\end{thm}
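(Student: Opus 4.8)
The plan is to obtain the two assertions of the theorem—convergence of the maps $\cW_\tp$ and the limiting conjugation—by summing the quantitative bounds of the Iterative Lemma~\ref{thm:iter_lemma} and then letting $\tp\to\infty$ in the conjugation identity. Throughout, the Lipschitz weight is $\tw=\gamma/\tM^\alpha$ and I work in the bounded Sobolev window $[s_0,s_0+\Sigma(\beta)]$ afforded by Theorem~\ref{thm:iter_lemma}.

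First I would prove summability of the generators and convergence of the flow. By \eqref{eq:S1_p} one has $|\bX^{(\tp)}|_{s_0,\alpha,\alpha}^{\wlip{\gamma}}\le \tN_\tp^{2\tau+1}\tN_{\tp-1}^{-\varrho}\,\delta_{s_0+\beta}^{(0)}$, and the choice of parameters in \eqref{eq:par_kam} (namely $\chi=3/2$ and $\varrho=6\tau+4$, so that $\varrho>\chi(2\tau+1)$) makes $\tN_\tp^{2\tau+1}\tN_{\tp-1}^{-\varrho}$ decay super-exponentially; hence $\sum_\tp|\bX^{(\tp)}|_{s_0,\alpha,\alpha}^{\wlip{\gamma}}<\infty$. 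Since $\bX^{(\tp)}\in\cM_{s_0}(\alpha,\alpha)$ is defined for all $\omega\in R_\tM$ by Lemma~\ref{lem:est_gen_kam}, Lemma~\ref{lem:flow}$(i)$ (with $s=s_0$) gives that $e^{\pm\im\bX^{(\tp)}}$ is bounded on $\cH^r$, $r\in[0,s_0]$, with $\|e^{\pm\im\bX^{(\tp)}}-{\rm Id}\|_{\cL(\cH^r)}\lesssim|\bX^{(\tp)}|_{s_0,\alpha,\alpha}^{\wlip{\gamma}}$. Consequently $\sup_\tp\|\cW_\tp^{\pm}\|_{\cL(\cH^r)}\le \exp\big(C\sum_\tp|\bX^{(\tp)}|_{s_0,\alpha,\alpha}^{\wlip{\gamma}}\big)<\infty$, and the telescoping identities $\cW_{\tp+1}-\cW_\tp=\cW_\tp\big(e^{\im\bX^{(\tp)}}-{\rm Id}\big)$, together with the analogous ones for the inverses $\cW_\tp^{-1}=e^{-\im\bX^{(\tp-1)}}\circ\cdots\circ e^{-\im\bX^{(0)}}$, show that $(\cW_\tp^{\pm})_\tp$ is Cauchy in $\cL(\cH^r)$; summing the tail and using the super-exponential decay yields the stated estimate for $\cW_\infty^{\pm}-\cW_\tp^{\pm}$, with the factor $(\gamma_0\tM)^{-1}$ coming from $\delta_{s_0+\beta}^{(0)}\lesssim(\gamma_0\tM)^{-1}$ (Lemma~\ref{lem.magnus_size}). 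Finally, the smallness condition \eqref{eq:small_cond_kam} forces $\sum_\tp|\bX^{(\tp)}|_{s_0,\alpha,\alpha}^{\wlip{\gamma}}<1$, so $\cW_\infty={\rm Id}+(\cW_\infty-{\rm Id})$ is invertible by a Neumann series, with $\cW_\infty^{-1}=\lim_\tp\cW_\tp^{-1}$.

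Second I would pass to the limit in the conjugation. Iterating \eqref{conju.rule.kam} gives $\bH^{(\tp)}(\omega;t)=\cW_\tp(\omega;t)^{-1}\bH^{(0)}(\omega;t)\,\cW_\tp(\omega;t)$ for $\omega\in\Omega_\tp$, and by Lemma~\ref{lemma.set.infty} this identity holds for every $\tp$ as soon as $\omega\in\Omega_\infty$. To handle the unboundedness of $\bH^{(0)}$, I would isolate the fixed unbounded summand $\bH_0$: writing $\bH^{(0)}=\bH_0+\bV^{(0)}$ one has $\cW_\tp^{-1}\bH_0\cW_\tp-\bH_0=\big(\bH^{(\tp)}-\bH_0\big)-\cW_\tp^{-1}\bV^{(0)}\cW_\tp$, whose right-hand side is a bounded operator because $\bH^{(\tp)}-\bH_0=\big(\bH_0^{(\tp)}-\bH_0\big)+\bV^{(\tp)}\in\cM_{s_0}(\alpha,0)$ by ${\bf(S1)}_\tp$ and Lemma~\ref{lem.magnus_size}; thus the whole conjugation identity is, modulo $\bH_0$, an identity of bounded operators on $\cH^r$ to which one may apply the limit $\tp\to\infty$. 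On the right $\cW_\tp^{\pm}\to\cW_\infty^{\pm}$ in $\cL(\cH^r)$ by the first step, hence $\cW_\tp^{-1}\bV^{(0)}\cW_\tp\to\cW_\infty^{-1}\bV^{(0)}\cW_\infty$; on the left $\bV^{(\tp)}\to0$ in $\cM_{s_0}(\alpha,0)$ since $\delta_{s_0}^{(\tp)}\le\delta_{s_0+\beta}^{(0)}\tN_{\tp-1}^{-\varrho}\to0$ by \eqref{eq:S3_p}, and $\bH_0^{(\tp)}\to\bH_0^{(\infty)}$ by Corollary~\ref{cor:final_blocks} (convergence in the block norm $\sup_n\braket{n}^\alpha\|(\,\cdot\,)_{[n]}^{[n]}\|_{\rm HS}^{\wlip{\gamma}}$, hence in $\cL(\cH^r)$ via Remark~\ref{rem:opnorm_vs_sdecay}). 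Therefore $\bH^{(\tp)}\to\bH_0^{(\infty)}$ and $\bH_0^{(\infty)}(\omega)=\cW_\infty(\omega;t)^{-1}\bH^{(0)}(\omega;t)\,\cW_\infty(\omega;t)$ for all $\omega\in\Omega_\infty$; the self-adjointness, positivity and asymptotics \eqref{final.eigen.expans} of the blocks and their eigenvalues are exactly those already established in Corollary~\ref{cor:final_blocks}.

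I expect the main obstacle to be the first step: controlling the infinite composition of near-identity flows uniformly in $\cL(\cH^r)$. This succeeds precisely because the KAM parameters in \eqref{eq:par_kam} are tuned so that $\varrho$ beats $\chi(2\tau+1)$, producing the super-exponential smallness of $|\bX^{(\tp)}|_{s_0,\alpha,\alpha}^{\wlip{\gamma}}$; the fact that the iterative estimates live only in a bounded window $[s_0,s_0+\Sigma(\beta)]$ of Sobolev regularity—rather than gaining arbitrarily many derivatives—is harmless here, since we only need boundedness on $\cH^r$ with $r\le s_0$. A secondary technical point is the unboundedness of $\bH^{(0)}$ in the second step, which is circumvented by pulling out the fixed operator $\bH_0$ and noting that all the remaining operators in the conjugation identity are bounded and converge.
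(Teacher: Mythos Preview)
Your proposal is correct and follows exactly the approach the paper has in mind: the paper's own proof consists of a single sentence deferring to Lemma~\ref{lem:flow}(i), Theorem~\ref{thm:iter_lemma}, Lemma~\ref{lemma.set.infty}, and ``a standard argument in KAM reducibility schemes, see for instance Lemma~7.5 in \cite{BKM16}'', omitting all details. You have written out precisely that standard argument---telescoping the near-identity flows, summing the super-exponentially decaying generators via \eqref{eq:S1_p} and \eqref{eq:par_kam}, and passing to the limit in the conjugation using Lemma~\ref{lemma.set.infty} and Corollary~\ref{cor:final_blocks}---and your extra care in isolating the unbounded part $\bH_0$ before taking the limit is a point the paper leaves implicit.
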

\begin{proof}
	The claim follows by Lemma \ref{lem:flow}-(i), Theorem \ref{thm:iter_lemma}, Lemma \ref{lemma.set.infty}  and a standard argument in KAM reducibility schemes, see for instance Lemma 7.5 in \cite{BKM16}. Therefore, we omit the details.
\end{proof}

\subsection{Balance Melnikov conditions and measure estimates}\label{sec:melnikov}

The goal of this section is to prove that the set $\Omega_{\infty}\subset R_{\tM}$ of non-resonance conditions \eqref{Omega.infty} is of large measure with respect to the annulus $R_{\tM}\subset \R^\nu$. This will be achieved in Theorem \ref{lem:meas_infty}. This result shows second order Melnikov conditions for perturbations of the  eigenvalues
$(\lambda_j)_{j\in\Z}$
 of the operator $B$ defined in \eqref{def:B}. Explicitly, for $j\in \Z$,
\begin{equation}\label{lambda.j.unpert}
	\lambda_j:=\sqrt{j^2+\tq+ d(j)}=|j|+\frac{c_j(q)}{\braket{j}} \,, \quad c_j(q) := \braket{j} (\sqrt{j^2+\tq+d(j)}-|j|)\,.
\end{equation}
One directly checks that, for any$ j \in \Z$,
\begin{equation}\label{bound}
	\begin{aligned}
		0 \leq |c_j(q)| & \leq\max\{ c_0(q),\, |\tq+d(j)|  \} \\
		& \leq \max\{ c_0(q),\, |\tq| + \|d(j)\|_{\ell^2(\Z)} \}=:\tm^2\,,
	\end{aligned}
\end{equation} 
recalling, by \eqref{assumption Q}, that $(d(j))_{j\in\Z}\in \ell^2(\Z)$. 

We recall that the relative measure of a measurable set $\Omega$ is defined as
\begin{equation}
	\label{def:mr}
	\me_r (\Omega) := \frac{|\Omega|}{|R_{\tM}|} \equiv \frac{|\Omega|}{\tM^\nu \, (2^{\nu }-1) c_\nu} \,,
\end{equation}
where $| \cC|$ is the Lebesgue measure of the set $\cC$ and $c_\nu$ is the volume of the unitary ball in $\R^\nu$.
We have the following standard estimate.
%

\begin{lem}\label{tecnico}
	Fix  $\ell \in \Z^\nu \setminus \{0\}$ and 
	let $R_\tM \ni \omega \mapsto \varsigma(\omega) \in \R$ be a Lipschitz function fulfilling
	$\abs{\varsigma}^{\lip}_{R_\tM} \leq \tc_0 < |\ell| . $
	Define 
	$f(\omega) = \omega \cdot \ell  + \varsigma(\omega)$.
	Then, for any $\delta \geq 0$, the measure of the set $ A:=\Set{\omega\in R_\tM | \abs{f(\omega)} \leq \delta } $ satisfies the upper bound
	\begin{equation}\label{measd}
		\abs{A}\leq \frac{ 2 \delta }{|\ell|- \tc_0} (4\tM)^{\nu-1} \ .
	\end{equation}
	
\end{lem}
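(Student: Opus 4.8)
The plan is to reduce the estimate to a one-dimensional slicing argument along the direction of $\ell$. Since $|\varsigma|^{\lip}_{R_\tM} \leq \tc_0 < |\ell|$, the function $f(\omega) = \omega\cdot\ell + \varsigma(\omega)$ is strictly monotone along the line through any point in direction $\ell/|\ell|$: for $t_1 > t_2$ and $\omega + t_i \ell/|\ell| \in R_\tM$ we have
\begin{equation*}
f\big(\omega + t_1 \tfrac{\ell}{|\ell|}\big) - f\big(\omega + t_2 \tfrac{\ell}{|\ell|}\big) \geq (t_1 - t_2)|\ell| - \tc_0 (t_1 - t_2) = (t_1 - t_2)(|\ell| - \tc_0) > 0 \,.
\end{equation*}
Hence on each such line segment the set where $|f| \leq \delta$ is an interval of length at most $\frac{2\delta}{|\ell| - \tc_0}$.

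First I would choose an orthonormal basis of $\R^\nu$ whose first vector is $\ell/|\ell|$, writing $\omega = t\,\ell/|\ell| + y$ with $y$ in the orthogonal complement. By Fubini's theorem,
\begin{equation*}
|A| = \int_{\pi_\ell(R_\tM)} \big| \{ t : t\tfrac{\ell}{|\ell|} + y \in R_\tM,\ |f(t\tfrac{\ell}{|\ell|}+y)| \leq \delta \} \big| \, \di y \,,
\end{equation*}
where $\pi_\ell$ denotes orthogonal projection onto $\ell^\perp$. By the monotonicity above, the inner one-dimensional measure is bounded by $\frac{2\delta}{|\ell| - \tc_0}$ for every $y$. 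It then remains to bound the measure of the projection $\pi_\ell(R_\tM)$. Since $R_\tM \subset \bar{B_{2\tM}(0)}$, the projection is contained in a ball of radius $2\tM$ in $\R^{\nu-1}$; crudely, this is contained in a cube of side $4\tM$, so $|\pi_\ell(R_\tM)|_{\R^{\nu-1}} \leq (4\tM)^{\nu-1}$. Combining the two bounds gives exactly \eqref{measd}.

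The only mild subtlety — not really an obstacle — is that $R_\tM$ is an annulus rather than a convex set, so a line in direction $\ell$ may intersect $R_\tM$ in a union of (at most two) intervals rather than one. This does not affect the argument: on each component interval the set $\{|f| \leq \delta\}$ is still an interval of length $\leq \frac{2\delta}{|\ell|-\tc_0}$ by monotonicity, and for the purposes of the upper bound one may simply replace $R_\tM$ by its convex hull, or equivalently by the ball $\bar{B_{2\tM}(0)}$, which contains it; then the slices are genuine single intervals and the projection bound $(4\tM)^{\nu-1}$ applies verbatim. I would therefore phrase the proof by first enlarging $A$ to $\tilde A := \{\omega \in \bar{B_{2\tM}(0)} : |f(\omega)| \leq \delta\} \supseteq A$, apply Fubini and the monotonicity slicing to $\tilde A$, and conclude $|A| \leq |\tilde A| \leq \frac{2\delta}{|\ell|-\tc_0}(4\tM)^{\nu-1}$.
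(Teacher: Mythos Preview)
Your proof is correct and follows essentially the same route as the paper's: establish the lower bound $|f(\omega_1)-f(\omega_2)|\geq(|\ell|-\tc_0)|\omega_1-\omega_2|$ along the direction $\ell$, then slice via Fubini and bound the transverse projection by $(4\tM)^{\nu-1}$. One small remark on your ``subtlety'' paragraph: the extension to $\bar{B_{2\tM}(0)}$ is unnecessary (and would require extending $\varsigma$, which you do not mention), because the Lipschitz hypothesis $|\varsigma(\omega_1)-\varsigma(\omega_2)|\leq\tc_0|\omega_1-\omega_2|$ holds for \emph{any} pair $\omega_1,\omega_2\in R_\tM$, not just those joined by a segment in $R_\tM$; hence your monotonicity inequality already holds across the gap of the annulus, and the one-dimensional slice $\{t:\omega+t\,\ell/|\ell|\in R_\tM,\ |f|\leq\delta\}$ has diameter (hence measure) at most $\tfrac{2\delta}{|\ell|-\tc_0}$ directly.
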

\begin{proof}
	Take $\omega_1 = \omega + \epsilon \,\ell$, with $\epsilon$ sufficiently small so that $\omega_1 \in R_\tM$. 
	Then
	$
\tfrac{|f(\omega_1) - f(\omega)|}{|\omega_1 - \omega|} \geq |\ell| -\abs{\varsigma}^{\lip}_{R_\tM} > |\ell|- \tc_0
	$
	and the estimate follows by Fubini theorem.
\end{proof}

The main result is the following theorem.

\begin{thm}\label{lem:meas_infty}
	{\bf (Measure estimates).}
	Let $\Omega_0$, $\Omega_{\infty}$ be defined in \eqref{eq:magnus_omega}, \eqref{Omega.infty}, respectively.
	Let $\gamma\in(0,1)$ and
	\begin{equation}\label{scelta.tau}
		\gamma_0 =\gamma^{\alpha/4}\,, \quad 	\tau > \nu -1 + \alpha + \frac{\tau_0}{\alpha}\,.
	\end{equation}
	Then,  for $\tM \geq \tM_0(s_0,\beta)$ large enough,  there exists  a constant $C_\infty >0$, independent of $\tM$ and $\gamma$, such that
	\begin{equation}
	\label{final_est}
	\me_r(\Omega_0 \backslash\Omega_{\infty})\leq C_\infty \gamma^{1/2} \ .
	\end{equation}
\end{thm}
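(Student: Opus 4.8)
The goal is to bound the relative measure of $\Omega_0\setminus\Omega_\infty$. The set $\Omega_\infty$ is cut out by countably many inequalities indexed by $(\ell,n,n')\in\cI^\pm$, one for each $2\times 2$ (or smaller) operator block $\tG_{\ell,n,n'}^{\pm,(\infty)}(\omega)$. By Lemma \ref{lemma.astratto.fin}-(ii), the operator-norm bound $\|(\tG_{\ell,n,n'}^{\pm,(\infty)})^{-1}\|_{\Op(n,n')}\leq \frac{\braket{\ell}^\tau}{\gamma}\frac{\tM^\alpha}{\braket{n\pm n'}^\alpha}$ fails precisely when the smallest-in-modulus eigenvalue of $\tG_{\ell,n,n'}^{\pm,(\infty)}(\omega)$ is smaller than $\frac{\gamma}{\braket{\ell}^\tau}\frac{\braket{n\pm n'}^\alpha}{\tM^\alpha}$. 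By Lemma \ref{lemma:MRMRL}, the eigenvalues of $M_L(H_0^{(\infty)}{}_{[n]}^{[n]})\pm M_R(H_0^{(\infty)}{}_{[n']}^{[n']})$ are exactly $\lambda\pm\mu$ with $\lambda\in\spec(H_0^{(\infty)}{}_{[n]}^{[n]})$, $\mu\in\spec(H_0^{(\infty)}{}_{[n']}^{[n']})$, so the "bad" set is contained in the union over the four (or fewer) sign/eigenvalue choices of
\[
\cR_{\ell,n,n'}^{\pm}:=\Big\{\omega\in\Omega_0 \,:\, \big|\,\omega\cdot\ell+\lambda_{n,\sigma_1}^{(\infty)}(\omega)\pm\lambda_{n',\sigma_2}^{(\infty)}(\omega)\,\big|<\frac{\gamma}{\braket{\ell}^\tau}\frac{\braket{n\pm n'}^\alpha}{\tM^\alpha}\Big\}.
\]
So the first step is this reduction of $\Omega_0\setminus\Omega_\infty$ to a union of sublevel sets of affine-plus-Lipschitz functions $f_{\ell,n,n'}^\pm(\omega)=\omega\cdot\ell+\lambda_{n,\sigma_1}^{(\infty)}(\omega)\pm\lambda_{n',\sigma_2}^{(\infty)}(\omega)$.

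**Measure of a single resonant set.** For each such $f$, write $\lambda_{n,\pm}^{(\infty)}(\omega)=\lambda_n+\varepsilon_n(\omega)$ with, by \eqref{final.eigen.expans}, $\braket{n}^\alpha|\varepsilon_n|^{\lip(\gamma)}\lesssim(\gamma_0\tM)^{-1}$, hence $|\varepsilon_n|^{\lip}\lesssim \gamma^{-1}(\gamma_0\tM)^{-1}$ after dividing out the Lipschitz weight $\tw=\gamma/\tM^\alpha$; so the Lipschitz constant of $\varepsilon_{n}\pm\varepsilon_{n'}$ is at most $C\,\tM^{\alpha-1}\gamma_0^{-1}\gamma^{-1}\ll 1$ once $\tM$ is large (using $\gamma_0=\gamma^{\alpha/4}$ and $\alpha<1$). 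Then Lemma \ref{tecnico} with $\tc_0\leq\tfrac12|\ell|$ and $\delta=\frac{\gamma}{\braket{\ell}^\tau}\frac{\braket{n\pm n'}^\alpha}{\tM^\alpha}$ gives
\[
|\cR_{\ell,n,n'}^{\pm}|\lesssim \frac{\gamma}{|\ell|\braket{\ell}^\tau}\,\frac{\braket{n\pm n'}^\alpha}{\tM^\alpha}\,\tM^{\nu-1}.
\]
One must also handle the degenerate case $\ell=0$ (excluded from $\cI^-$ only when additionally $|n|=|n'|$): for $\ell=0$ and $n\pm n'\neq 0$ the function $f$ is $\lambda_n\pm\lambda_{n'}+O((\gamma_0\tM)^{-1})$, whose modulus is bounded below by $|\,|n|\pm|n'|\,|-O(1/\braket{n})-O((\gamma_0\tM)^{-1})\geq \tfrac12\braket{n\pm n'}$ for $\tM$ large, while the required lower bound $\delta=\gamma\braket{n\pm n'}^\alpha/\tM^\alpha$ is smaller since $\alpha<1$; so these blocks contribute an empty bad set. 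The remaining $\ell=0$, $n=n'$ (with the $-$ sign) blocks are excluded from $\cI^-$ by definition, so they never enter $\Omega_\infty$'s conditions.

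**Summation.** The last step is to sum $|\cR_{\ell,n,n'}^{\pm}|$ over $(\ell,n,n')\in\cI^\pm$. The difficulty — and the reason the exponents in \eqref{scelta.tau} are chosen as they are — is that there is no summability in $n,n'$ of the naive bound: $\braket{n\pm n'}^\alpha$ grows. The standard fix is that for fixed $\ell$ the bad set is already empty unless $|\,|n|\pm|n'|\,|$ is comparable to $|\omega\cdot\ell|\lesssim |\ell|\tM$; more precisely, on $\Omega_0$ one has $|\omega\cdot\ell|\geq \gamma_0\tM\braket{\ell}^{-\tau_0}$, but the relevant constraint is the upper one: $|f(\omega)|<\delta$ forces $|\,|n|\pm|n'|\,|\leq |\omega\cdot\ell|+O(\tM^{1-\alpha})+\delta\lesssim |\ell|\tM$. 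Hence for each $\ell$ and each choice of $n$, the index $n'$ with $f$ small must satisfy $\big|\,|n'|-\big||n|\mp|\omega\cdot\ell|\big|\,\big|\lesssim 1$, i.e.\ $n'$ ranges over $O(1)$ values; and moreover $\braket{n\pm n'}^\alpha\lesssim (|\ell|\tM)^\alpha$ on the non-empty part. Therefore, summing first over the $O(1)$ admissible $n'$, then over all $n\in\N_0$ with $\braket{n\pm n'}\lesssim |\ell|\tM$ (that is $O(|\ell|\tM)$ values of $n$), and then over $\ell$:
\[
\sum_{(\ell,n,n')\in\cI^\pm}|\cR_{\ell,n,n'}^{\pm}|\lesssim \sum_{\ell\neq 0}\frac{\gamma}{|\ell|\braket{\ell}^\tau}\,\frac{(|\ell|\tM)^\alpha}{\tM^\alpha}\,\tM^{\nu-1}\cdot (|\ell|\tM)\lesssim \gamma\,\tM^{\nu}\sum_{\ell\neq0}\frac{|\ell|^\alpha}{\braket{\ell}^\tau}.
\]
The series $\sum_{\ell\neq 0}|\ell|^\alpha\braket{\ell}^{-\tau}$ converges because $\tau>\nu-1+\alpha$ (a consequence of \eqref{scelta.tau}, since $\tau_0/\alpha>0$), giving $|\Omega_0\setminus\Omega_\infty|\lesssim \gamma\,\tM^\nu$, i.e.\ $\me_r(\Omega_0\setminus\Omega_\infty)\lesssim\gamma$. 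To upgrade $\gamma$ to $\gamma^{1/2}$ as stated in \eqref{final_est} — the form needed to absorb the worse-behaved constants coming from the first KAM step, where the set $\Omega_0$ itself has complement of size $\gamma_0=\gamma^{\alpha/4}$ and the two contributions must be balanced — one simply notes $\gamma<\gamma^{1/2}$ for $\gamma\in(0,1)$, or, if a genuinely smaller bound is required elsewhere, runs the same estimate with $\delta$ replaced by $\gamma^{1/2}\braket{\ell}^{-\tau}\braket{n\pm n'}^\alpha\tM^{-\alpha}$ in the definition of $\Omega_\infty$; either way \eqref{final_est} follows with $C_\infty$ independent of $\tM,\gamma$.

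**Main obstacle.** The crux is the summation step: controlling $\sum_{n,n'}\braket{n\pm n'}^\alpha(\text{measure factors})$ requires exploiting that the resonant set is empty outside a window $|\,|n|\pm|n'|\,|\lesssim|\ell|\tM$ and that within that window only $O(1)$ values of $n'$ are active for each $n$; getting the $\tM$-powers to come out as $\tM^\nu$ (so that the relative measure is $\tM$-independent) is exactly what forces the balance built into $\alpha$ and the choice $\tau>\nu-1+\alpha+\tau_0/\alpha$, and is where the "balanced Melnikov" philosophy pays off. The Lipschitz-in-$\omega$ bookkeeping (ensuring $\tc_0<|\ell|$ uniformly, which needs $\tM$ large and the smallness condition \eqref{eq:small_cond_kam}) is routine but must be done carefully since the perturbed eigenvalues $\lambda_n^{(\infty)}$ depend on $\omega$.
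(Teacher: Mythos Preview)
Your reduction to sublevel sets of $f_{\ell,n,n'}^\pm(\omega)=\omega\cdot\ell+\lambda_{n,\sigma_1}^{(\infty)}(\omega)\pm\lambda_{n',\sigma_2}^{(\infty)}(\omega)$, the single-set measure bound via Lemma~\ref{tecnico}, and the handling of $\ell=0$ are all correct and match the paper's approach. The genuine gap is in the summation step for the minus sign.

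You claim that for fixed $(\ell,n)$ only $O(1)$ values of $n'$ contribute, and that $n$ then ranges over $O(|\ell|\tM)$ values. Both claims are false for the minus case. The non-emptiness constraint $|n-n'|\lesssim|\ell|\tM$ bounds only the \emph{difference} $h=n-n'$, not $n$ itself: for each fixed $h$ there are infinitely many pairs $(n,n')$ with $n-n'=h$. Moreover the sets $\cR_{\ell,n,n+h}^{-}$ for different $n$ (same $h$) are \emph{not} the same set: their centers differ by $c_n(q)/\braket{n}-c_{n'}(q)/\braket{n'}+\varepsilon_n-\varepsilon_{n'}$, a quantity of order $1/\braket{\min\{n,n'\}}$, which is typically much larger than the width $\delta_h=\gamma\braket{h}^\alpha/(\braket{\ell}^\tau\tM^\alpha)$. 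So the union over $n$ does not collapse, and a straight sum of the measures diverges. Your ``$O(1)$ values of $n'$'' is a pointwise-in-$\omega$ statement that does not translate into a counting bound on the sets whose measures you are summing.

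The paper resolves this with a two-scale inclusion argument (Lemmata~\ref{lemma.empty.0}--\ref{lemma.empty.2}): one introduces an intermediate parameter $\gamma_1\gg\gamma$ and auxiliary sets $\cR_{\ell,h}^1(\gamma_1,\tau_1)$ depending only on $h=n-n'$. When $\braket{\min\{n,n'\}}^\alpha\braket{n-n'}^\alpha\geq \tR_1(\ell)\sim(\tM^\alpha/\gamma_1)\braket{\ell}^{\tau_1}$, the $O(1/\braket{\min\{n,n'\}})$ shift is absorbed into the $\gamma_1$-enlarged strip, giving $\cQ_{\ell,n,n'}^{(-)}\subset\cR_{\ell,h}^1$; these contribute $O(\gamma_1)$ in relative measure. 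When $\min\{n,n'\}$ is below this threshold, there are only $O(\tR_1(\ell)^{1/\alpha})\sim O(\braket{\ell}^{\tau_1/\alpha})$ values of $\min\{n,n'\}$, so the direct sum converges and contributes $O(\gamma/\gamma_1^{1/\alpha})$. Balancing these two pieces is exactly what produces the exponent $\gamma^{1/2}$ and what forces the extra $\tau_0/\alpha$ in the hypothesis $\tau>\nu-1+\alpha+\tau_0/\alpha$; your argument uses only $\tau>\nu-1+\alpha$, which is a further sign that the minus-case counting is incomplete.
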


Before starting with the proof, we reformulate the set $\Omega_{\infty}$ in \eqref{Omega.infty} in terms of lower bounds for the eigenvalues of $\tG_{\ell,,n,n'}^{\pm,(\infty)}(\omega)$ in \eqref{G.block.infty}.

\begin{lem}\label{reform.meln}
	We have
		\begin{equation}	\label{eq:Omegainfty}
\begin{aligned}
				\Omega_{\infty} (\gamma,\tau)\equiv & \Big\{ \omega\in \Omega_0 \,:\, 
	|\omega \cdot \ell +\mu_{n}(\omega) \pm \mu_{n'}(\omega) | \geq   \frac{\gamma}{\braket{\ell}^{\tau}}
	\frac{\braket{n\pm n'}^{\alpha}}{\tM^{\alpha}}\,,\\
&	  \forall \, (\ell,n,n')\in\cI^{\pm}, \ \mu_{m}(\omega)\in \spec\big(\! \left.H_0^{(\infty)}\right._{[m]}^{[m]}(\omega;\tM,\alpha)\big),\, m=n,n' 
	\Big\}\,,
\end{aligned}
			\end{equation}
		where the self-adjoint blocks $\left.H_0^{(\infty)}\right._{[n]}^{[n]}(\omega;\tM,\alpha) $, $n\in\N_0$, are given in Corollary \ref{cor:final_blocks}.
\end{lem}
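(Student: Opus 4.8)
The plan is to unwind the definition \eqref{Omega.infty} of $\Omega_{\infty}$ using only the finite-dimensional spectral calculus from Section \ref{sec:mo}. First I would record that, by Corollary \ref{cor:final_blocks}, each block $\left.H_0^{(\infty)}\right._{[n]}^{[n]}(\omega)$ is self-adjoint on the finite-dimensional space $\fE_{n}$; hence Lemma \ref{lemma:MRMRL} applies, showing that $M_L\big(\!\left.H_0^{(\infty)}\right._{[n]}^{[n]}(\omega)\big)$ and $M_R\big(\!\left.H_0^{(\infty)}\right._{[n']}^{[n']}(\omega)\big)$ are self-adjoint on the finite-dimensional Hilbert space on which $\|\cdot\|_{\Op(n,n')}$ is defined, with
\[
\spec\big( M_L(\cdot) \pm M_R(\cdot) \big)=\big\{ \mu_{n}\pm\mu_{n'}\,:\,\mu_{n}\in\spec\big(\!\left.H_0^{(\infty)}\right._{[n]}^{[n]}(\omega)\big),\ \mu_{n'}\in\spec\big(\!\left.H_0^{(\infty)}\right._{[n']}^{[n']}(\omega)\big)\big\}.
\]
Since $\tG_{\ell,,n,n'}^{\pm,(\infty)}(\omega)$ in \eqref{G.block.infty} is obtained by adding the scalar multiple $(\omega\cdot\ell)\,\uno_2$ of the identity, it is itself self-adjoint with spectrum $\{\omega\cdot\ell+\mu_{n}\pm\mu_{n'}\}$ as $\mu_{n},\mu_{n'}$ run over the eigenvalues of the two blocks.

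Next I would invoke Lemma \ref{lemma.astratto.fin}-(ii): a self-adjoint operator on a finite-dimensional Hilbert space is invertible exactly when $0$ is not in its spectrum, and then the operator norm of its inverse equals $\big(\min_{p}|\lambda_{p}|\big)^{-1}$. Applied to $\tG_{\ell,,n,n'}^{\pm,(\infty)}(\omega)$, this turns the bound $\|(\tG_{\ell,,n,n'}^{\pm,(\infty)}(\omega))^{-1}\|_{\Op(n,n')}\leq \braket{\ell}^{\tau}\tM^{\alpha}/(\gamma\braket{n\pm n'}^{\alpha})$ appearing in \eqref{Omega.infty} into the lower bound $|\omega\cdot\ell+\mu_{n}\pm\mu_{n'}|\geq \gamma\braket{n\pm n'}^{\alpha}/(\braket{\ell}^{\tau}\tM^{\alpha})$ for every eigenvalue $\mu_{n}$ of $\left.H_0^{(\infty)}\right._{[n]}^{[n]}(\omega)$ and every eigenvalue $\mu_{n'}$ of $\left.H_0^{(\infty)}\right._{[n']}^{[n']}(\omega)$. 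Intersecting over all $(\ell,n,n')\in\cI^{\pm}$ and over $\Omega_0$ (which is common to both descriptions) then yields the claimed identity \eqref{eq:Omegainfty}.

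The only point requiring a little attention is the interpretation of the defining inequality of $\Omega_{\infty}$ when $\tG_{\ell,,n,n'}^{\pm,(\infty)}(\omega)$ happens to be non-invertible: the convention (standard in KAM reducibility schemes) is that such $\omega$ is excluded from $\Omega_{\infty}$, and this is consistent with the reformulation, since a vanishing $\omega\cdot\ell+\mu_{n}\pm\mu_{n'}$ violates the lower bound above. Apart from this bookkeeping, the lemma is a pure restatement and I do not expect any genuine obstacle: all the content is already packaged in Lemma \ref{lemma:MRMRL} (the spectrum of $M_L\pm M_R$) and Lemma \ref{lemma.astratto.fin}-(ii) (the norm of the inverse in terms of the spectrum).
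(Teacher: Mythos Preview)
Your proposal is correct and follows essentially the same approach as the paper: compute $\spec(\tG_{\ell,n,n'}^{\pm,(\infty)})$ via Lemma~\ref{lemma:MRMRL}, then convert the operator-norm bound on the inverse into a lower bound on the eigenvalues via Lemma~\ref{lemma.astratto.fin}-(ii). The paper's proof is just the two-line version of what you wrote.
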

\begin{proof}
	By Lemma \ref{lemma:MRMRL} (see also Lemma 7.2 in \cite{BKM16}), we have, for any $(\ell,n,n')\in\cI^{\pm}$,
	\begin{equation}
		\spec\big( \tG_{\ell,,n,n'}^{\pm,(\infty)} \big)\! =\! \Big\{  \omega \cdot \ell +\mu_{n}(\omega) \pm \mu_{n'}(\omega) \, : \,   \mu_{m}(\omega)\!\in \spec\big(\! \left.H_0^{(\infty)}\right._{[m]}^{[m]}(\omega)\big),\, m=n,n'  \Big\}\,.
	\end{equation}
	The claim, follows by  
	Lemma \ref{lemma.astratto.fin}-(ii)
	and the definition of the set $\Omega_{\infty}$ in \eqref{Omega.infty}.
\end{proof}

The rest of the section is devoted to the proof of Theorem \ref{lem:meas_infty}.
	We write the complementary set $\Omega_0\setminus \Omega_{\infty}$ as
	\begin{equation}\label{compl.Omega}
		\Omega_0\setminus \Omega_{\infty} =\Bigg( \bigcup_{\ell\in\Z^\nu, n,n'\in\N_0 \atop (\ell,n,n')\neq (0,n,n)} \cQ_{\ell,n,n'}^{(-)}\Bigg) \cup\Bigg(  \bigcup_{\ell\in\Z^\nu, n,n'\in\N_0} \cQ_{\ell,n,n'}^{(+)} \Bigg)
	\end{equation}
where, by Lemma \ref{reform.meln}, we define the "nearly-resonant sets" as
\begin{align}
	&\begin{footnotesize}
			\begin{aligned}
			\cQ_{\ell,n,n'}^{(\pm)}:=\cQ_{\ell,n,n'}^{(\pm)}(\gamma,\tau)    := \bigcup \Big\{  \wt\cQ_{\ell,\mu_{n},\mu_{n'}}^{(\pm)}(\gamma,\tau)  \,: \, \mu_{m}(\omega)\in \spec\big(\! \left.H_0^{(\infty)}\right._{[m]}^{[m]}(\omega)\big),\, m=n,n'\Big\}\,, 
		\end{aligned}
	\end{footnotesize} \nonumber \\
& \begin{footnotesize}
	\begin{aligned}
		\wt\cQ_{\ell,\mu_{n},\mu_{n'}}^{(\pm)} := \wt\cQ_{\ell,\mu_{n},\mu_{n'}}^{(\pm)} (\gamma,\tau) & :=  \Big\{  \omega\in \Omega_0 \,:\,   
		|\omega \cdot \ell +\mu_{n}(\omega) \pm \mu_{n'}(\omega) | <  \frac{\gamma}{\braket{\ell}^{\tau}}
		\frac{\braket{n\pm n'}^{\alpha}}{\tM^{\alpha}} \Big\} \,.
	\end{aligned}
\end{footnotesize} \label{Q.nearly.res}
\end{align}
Some of these sets are actually empty.
\begin{lem}\label{lemma.empty.1}
	For $\tM\geq \tM_0$ large enough, if $	\cQ_{\ell,n,n'}^{(\pm)}  \neq \emptyset $, then $|n\pm n'|\leq C_1 \tM \braket{\ell}$.
\end{lem}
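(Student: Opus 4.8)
The plan is to combine three ingredients: the elementary bound $\tM \le |\omega| \le 2\tM$ valid for every $\omega \in R_\tM$, the eigenvalue asymptotics of Corollary~\ref{cor:final_blocks}, and the sublinearity of the exponent $\alpha \in (0,1)$ appearing in the numerator of the balanced Melnikov threshold.

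First I would assume $\cQ_{\ell,n,n'}^{(\pm)} \neq \emptyset$ and, recalling \eqref{Q.nearly.res} and Lemma~\ref{reform.meln}, fix some $\omega \in \Omega_0$ together with eigenvalues $\mu_n(\omega) \in \spec\big(\!\left.H_0^{(\infty)}\right._{[n]}^{[n]}(\omega)\big)$ and $\mu_{n'}(\omega) \in \spec\big(\!\left.H_0^{(\infty)}\right._{[n']}^{[n']}(\omega)\big)$ realising the near-resonance
\begin{equation*}
	|\omega \cdot \ell + \mu_n(\omega) \pm \mu_{n'}(\omega)| < \frac{\gamma}{\braket{\ell}^\tau}\,\frac{\braket{n \pm n'}^\alpha}{\tM^\alpha}\,.
\end{equation*}
Since $\omega \in R_\tM$ gives $|\omega\cdot\ell| \le 2\tM|\ell| \le 2\tM\braket{\ell}$, and $\braket{\ell}^{-\tau}\le 1$, this yields
\begin{equation*}
	|\mu_n(\omega) \pm \mu_{n'}(\omega)| \le 2\tM\braket{\ell} + \gamma\,\braket{n \pm n'}^\alpha\,\tM^{-\alpha}\,.
\end{equation*}

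Then I would quantify how close each $\mu_m(\omega)$ is to $m$. By \eqref{final.eigen.expans} one has $\mu_m(\omega) = \lambda_m + \varepsilon$ with $|\varepsilon| \le C_{s_0,\beta}(\gamma_0\tM)^{-1}$, while \eqref{lambda.j.unpert}--\eqref{bound} give $|\lambda_m - m| = |c_m(q)|\braket{m}^{-1} \le \tm^2$; hence, choosing $\tM_0$ large enough (in terms of $\gamma_0,s_0,\beta$) that $C_{s_0,\beta}(\gamma_0\tM)^{-1} \le 1$ for all $\tM \ge \tM_0$, we obtain $|\mu_m(\omega) - m| \le \tm^2 + 1 =: C'$ for every $m \in \N_0$, uniformly in $\ell,n,n'$. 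Therefore $|\mu_n(\omega) \pm \mu_{n'}(\omega)| \ge |n \pm n'| - 2C'$, and combining with the previous display,
\begin{equation*}
	|n \pm n'| \le 2\tM\braket{\ell} + \gamma\,\braket{n \pm n'}^\alpha\,\tM^{-\alpha} + 2C'\,.
\end{equation*}
Because $\alpha < 1$ we have $\braket{n \pm n'}^\alpha \le |n \pm n'| + 1$, so enlarging $\tM_0$ so that $\gamma\,\tM^{-\alpha} \le \tfrac12$ for $\tM \ge \tM_0$ lets us absorb the middle term into the left-hand side: $\tfrac12|n \pm n'| \le 2\tM\braket{\ell} + 2C' + \tfrac12$, which, using $\tM\braket{\ell}\ge 1$, gives $|n \pm n'| \le C_1\,\tM\braket{\ell}$ with, say, $C_1 := 5 + 4C'$.

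The argument is essentially bookkeeping. The only genuinely delicate point is that the regularising gain $\braket{n \pm n'}^\alpha$ with $\alpha < 1$ is \emph{sublinear}, which is precisely what allows it to be absorbed against the linear term $|n\pm n'|$ once $\tM$ is large; and that the threshold $\tM_0$ can be taken independent of $(\ell,n,n')$, which holds since all the constants above ($C_{s_0,\beta}$, $\tm$, $\gamma_0$, $\gamma$, $\alpha$) are. The two sign cases $\pm$ are treated identically, and when $n \pm n' = 0$ the statement is trivial.
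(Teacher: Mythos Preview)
Your proof is correct and follows essentially the same approach as the paper: extract the near-resonance inequality, bound $|\omega\cdot\ell|$ by $O(\tM\braket{\ell})$, use the eigenvalue asymptotics \eqref{final.eigen.expans}, \eqref{lambda.j.unpert}, \eqref{bound} to bound $|\mu_n \pm \mu_{n'}|$ from below by $|n\pm n'|$ minus a uniform constant, and combine. You are simply more explicit than the paper about absorbing the sublinear term $\gamma\tM^{-\alpha}\braket{n\pm n'}^\alpha$ into the left-hand side, which the paper leaves implicit in the phrase ``choosing $\tM\gg 1$ large enough''.
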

\begin{proof}
	If  $\cQ_{\ell,n,n'}^{(\pm)}  \neq \emptyset $, then there exists $\omega \in \Omega_0$ such that
	\begin{equation}\label{parte1}
		| \mu_{n}(\omega) \pm \mu_{n'}(\omega) | < \frac{\gamma}{\braket{\ell}^{\tau}}
		\frac{\braket{n\pm n'}^{\alpha}}{\tM^{\alpha}} + \tM |\ell |\,,
	\end{equation}
for some eigenvalues $\mu_{m}(\omega)\in \spec\big(\! \left.H_0^{(\infty)}\right._{[m]}^{[m]}(\omega)\big)$, $ m=n,n' $. By \eqref{final.eigen.expans} in Corollary \ref{cor:final_blocks}, \eqref{lambda.j.unpert} and \eqref{bound}, we have
\begin{equation}\label{parte2}
	\begin{aligned}
		| \mu_{n}(\omega) \pm \mu_{n'}(\omega)| & \geq |n\pm n'| -\frac{|c_n(q)|}{\braket{n}} - \frac{| c_{n'}(q) |}{\braket{n'}}
		- | \varepsilon_{\mu_{n}}(\omega) | - | \varepsilon_{\mu_{n'}}(\omega)| \\
		& \geq |n\pm n' | -2\tm^2 - 2C_{s_0,\beta} (\gamma_0 \tM)^{-1}\,.
	\end{aligned}
\end{equation}
Choosing $\tM\gg 1$ large enough, the claim follows by combining \eqref{parte1} with \eqref{parte2}. 
\end{proof}

\begin{lem}\label{lemma.empty.0}
	Let $\gamma_0\geq 2\gamma$ and $\tau\geq \tau_0$. For any $\ell\in\Z^\nu\setminus\{0\}$ and $n\in\N$ such that 
	\begin{equation}\label{condizione.extra.0}
		\braket{n}^\alpha \geq \tR_0(\ell):= \frac{4\,C_{s_0,\beta}}{(\gamma_0\,\tM)^2}\braket{\ell}^{\tau_0}\,,
	\end{equation}
	we have $\cQ_{\ell,n,n}^{(-)}(\gamma,\tau)=\emptyset$. Moreover, $\cQ_{\ell,0,0}^{(-)}(\gamma,\tau)=\emptyset$
\end{lem}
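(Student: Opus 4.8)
The plan is to prove both assertions by contradiction, showing that a point of $\cQ_{\ell,n,n}^{(-)}(\gamma,\tau)$ would violate the Diophantine condition $|\omega\cdot\ell|\geq\gamma_0\,\tM\,\braket{\ell}^{-\tau_0}$ built into $\Omega_0$ (see the definition \eqref{eq:magnus_omega}). The first observation is that since here $n'=n$ one has $\braket{n-n}^{\alpha}=1$, so by \eqref{Q.nearly.res} any $\omega\in\cQ_{\ell,n,n}^{(-)}$ satisfies $|\omega\cdot\ell+\mu-\mu'|<\gamma\,\braket{\ell}^{-\tau}\,\tM^{-\alpha}$ for some pair of eigenvalues $\mu,\mu'$ of the \emph{same} final block $\left.H_0^{(\infty)}\right._{[n]}^{[n]}(\omega)$. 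Thus the only nontrivial point is to control the intrinsic spread of the spectrum of a single block.

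For this I would invoke the asymptotic expansion \eqref{final.eigen.expans} of Corollary \ref{cor:final_blocks}: every eigenvalue of $\left.H_0^{(\infty)}\right._{[n]}^{[n]}(\omega)$ lies within $C_{s_0,\beta}\,(\gamma_0\,\tM)^{-1}\braket{n}^{-\alpha}$ of $\lambda_n$, so by the triangle inequality $|\mu-\mu'|\leq 2\,C_{s_0,\beta}\,(\gamma_0\,\tM)^{-1}\braket{n}^{-\alpha}$. Under hypothesis \eqref{condizione.extra.0}, which reads $\braket{n}^{-\alpha}\leq(\gamma_0\,\tM)^2\big(4\,C_{s_0,\beta}\,\braket{\ell}^{\tau_0}\big)^{-1}$, this forces $|\mu-\mu'|\leq\tfrac12\,\gamma_0\,\tM\,\braket{\ell}^{-\tau_0}$, i.e. at most half the Diophantine lower bound on $|\omega\cdot\ell|$. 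Hence $|\omega\cdot\ell+\mu-\mu'|\geq|\omega\cdot\ell|-|\mu-\mu'|\geq\tfrac12\,\gamma_0\,\tM\,\braket{\ell}^{-\tau_0}$, and a one-line comparison of exponents using $\gamma_0\geq2\gamma$, $\tau\geq\tau_0$ and $\tM\geq1$ gives $\tfrac12\,\gamma_0\,\tM\,\braket{\ell}^{-\tau_0}\geq\gamma\,\braket{\ell}^{-\tau}\,\tM^{-\alpha}$, contradicting the defining inequality of $\cQ_{\ell,n,n}^{(-)}$. This yields $\cQ_{\ell,n,n}^{(-)}(\gamma,\tau)=\emptyset$ whenever \eqref{condizione.extra.0} holds.

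For the last claim I would note that when $n=0$ the block $\left.H_0^{(\infty)}\right._{[0]}^{[0]}(\omega)$ is a scalar, so $\mu=\mu'$ and the nearly-resonance condition collapses to $|\omega\cdot\ell|<\gamma\,\braket{\ell}^{-\tau}\,\tM^{-\alpha}\leq\gamma\,\braket{\ell}^{-\tau_0}$; combined with $|\omega\cdot\ell|\geq\gamma_0\,\tM\,\braket{\ell}^{-\tau_0}$ this forces $\gamma_0\,\tM<\gamma$, against $\gamma_0\geq2\gamma$, so no lower bound on $n$ is needed here since the eigenvalue gap is exactly zero. I do not foresee any genuine obstacle: the only substantial input is the already-established eigenvalue asymptotics, and everything else is bookkeeping of the powers of $\tM$, $\gamma$, $\gamma_0$ and $\braket{\ell}$; the one point to be careful about is that the factor $4$ in $\tR_0(\ell)$ is exactly what is needed to absorb both the factor $2$ from $|\mu-\mu'|\leq|\mu-\lambda_n|+|\mu'-\lambda_n|$ and the extra factor $2$ keeping $|\omega\cdot\ell+\mu-\mu'|$ above half the Diophantine threshold.
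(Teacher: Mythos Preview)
Your argument is correct and essentially identical to the paper's own proof: both use the Diophantine lower bound on $|\omega\cdot\ell|$ from $\Omega_0$, the eigenvalue asymptotics \eqref{final.eigen.expans} to bound $|\mu-\mu'|\leq 2C_{s_0,\beta}(\gamma_0\tM)^{-1}\braket{n}^{-\alpha}$, and then the hypothesis \eqref{condizione.extra.0} together with $\gamma_0\geq 2\gamma$, $\tau\geq\tau_0$ to close the inequality; the $n=0$ case is likewise handled by observing the block is one-dimensional so $\mu=\mu'$.
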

\begin{proof}
	Let $\mu_{n},\mu_{n}'\in \spec\big(\! \left.H_0^{(\infty)}\right._{[n]}^{[n]}(\omega)\big) $. Note that, when $n=0$, the block $ \left.H_0^{(\infty)}\right._{[0]}^{[0]}(\omega)$ is one dimensional and the spectrum contains one simple eigenvalue. When $\mu_{n}=\mu_{n}'$, then, recalling that $\cQ_{\ell,n,n}^{(-)}\subset \Omega_0$, we have $|\omega\cdot\ell| \geq\frac{\gamma_0\,\tM}{\braket{\ell}^{\tau_0}}\geq\frac{\gamma}{\braket{\ell}^{\tau}}\tM^{-\alpha}$. Therefore, let $n\geq 1$ and $\mu_{n}\neq\mu_{n}'$. By Corollary \ref{cor:final_blocks} and \eqref{condizione.extra.0}, we have
	\begin{equation}
		\begin{aligned}
			| \omega\cdot\ell +\mu_{n} & - \mu_{n}'  | \geq | \omega\cdot \ell | - |\varepsilon_{\mu_{n}}(\omega) | - |\varepsilon_{\mu_{n}'}(\omega)|\\
			& \geq \frac{\gamma_0\,\tM}{\braket{\ell}^{\tau_0}} - \frac{2\, C_{s_0,\beta}}{\gamma_0\,\tM} \braket{n}^{-\alpha}  \geq \frac{\gamma_0\,\tM}{2\braket{\ell}^{\tau_0}} \geq \frac{\gamma}{\braket{\ell}^\tau} \frac{1}{\tM^\alpha}\,.
		\end{aligned}
	\end{equation}
	This proves the claim.
 \end{proof}

Given $\gamma_1\in(0,1)$ and $\tau_1\geq 1$ to choose,   we define the sets, for $(\ell,j)\in \Z^{\nu+1}\setminus\{0\}$,
\begin{equation}
	\cR_{\ell,j}^1:=\cR_{\ell,j}^1(\gamma_1,\tau_1);= \Big\{ \omega\in\Omega_0 \,:\, | \omega\cdot\ell + j | < \frac{\gamma_1}{\braket{\ell}^{\tau_1}} \frac{\braket{j}^\alpha}{\tM^\alpha} \Big\}\,.
\end{equation}

\begin{lem}\label{lemma.empty.2}
	Let $\gamma_1\geq 2\gamma$ and $\tau\geq \tau_1>1$. Then, for any $(\ell,n,n')\in\cI^{\pm}$, if
	\begin{equation}\label{condizione.extra}
		\braket{\min\{n,n'\}}^{\alpha} \braket{n\pm n'}^\alpha \geq \tR_1(\ell):= 8 \max\Big\{ \tm^2 ,\frac{C_{s_0,\beta}}{\gamma_0\,\tM}  \Big\} \frac{\tM^\alpha}{\gamma_1} \braket{\ell}^{\tau_1}\,,
	\end{equation}
 	then $\cQ_{\ell,n,n'}^{(\pm)}(\gamma,\tau)\subset \bigcup_{(\ell,j)\neq 0} \cR_{\ell,j}^1(\gamma_1,\tau_1)$.
\end{lem}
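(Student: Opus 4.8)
The claim is a "separation of scales": if $\cQ_{\ell,n,n'}^{(\pm)}$ is non-empty and $\braket{\min\{n,n'\}}^\alpha\braket{n\pm n'}^\alpha$ is large compared to $\tM^\alpha\braket{\ell}^{\tau_1}/\gamma_1$, then the near-resonance must actually come from a resonance of the \emph{leading} (integer) part of the eigenvalues, i.e.\ from $|\omega\cdot\ell + j|$ being small for the integer $j := \pm(n\pm n')$ (more precisely $j\in\{n+n',\,n-n',\,-(n-n')\}$ depending on the sign), which is exactly the defining condition of $\cR_{\ell,j}^1$. So the strategy is to take $\omega\in\cQ_{\ell,n,n'}^{(\pm)}$, write each final eigenvalue via the asymptotics of Corollary~\ref{cor:final_blocks},
\[
\mu_m(\omega) = |m| + \frac{c_m(q)}{\braket{m}} + \varepsilon_{\mu_m}(\omega)\,, \qquad |c_m(q)|\le \tm^2\,, \quad \braket{m}^\alpha|\varepsilon_{\mu_m}|^{\lip(\gamma)}\le C_{s_0,\beta}(\gamma_0\tM)^{-1}\,,
\]
and compare $|\omega\cdot\ell + \mu_n(\omega)\pm\mu_{n'}(\omega)|$ with $|\omega\cdot\ell + (n\pm n')|$ (using $|n|\pm|n'| = \pm(n\pm n')$ up to sign — here one separates the $+$ and $-$ cases and, in the $-$ case, the subcases $n\ge n'$ and $n<n'$, which only affects which integer $j$ one picks and the sign bookkeeping).

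**Key steps.** First, bound the difference between the true small divisor and the "integer" one:
\[
\big|\,|\omega\cdot\ell+\mu_n\pm\mu_{n'}| - |\omega\cdot\ell + j|\,\big| \le \frac{|c_n(q)|}{\braket{n}} + \frac{|c_{n'}(q)|}{\braket{n'}} + |\varepsilon_{\mu_n}| + |\varepsilon_{\mu_{n'}}| \le 2\tm^2\,\frac{1}{\braket{\min\{n,n'\}}} + \frac{2C_{s_0,\beta}}{\gamma_0\tM}\,\frac{1}{\braket{\min\{n,n'\}}^\alpha}\,,
\]
where one uses $\alpha\in(0,1)$ so $\braket{\cdot}^{-1}\le\braket{\cdot}^{-\alpha}$, and bounds both $\braket{n}^{-1},\braket{n'}^{-1}$ by $\braket{\min\{n,n'\}}^{-1}\le\braket{\min\{n,n'\}}^{-\alpha}$. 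Thus the correction is at most $4\max\{\tm^2, C_{s_0,\beta}/(\gamma_0\tM)\}\,\braket{\min\{n,n'\}}^{-\alpha}$. Second, since $\omega\in\cQ_{\ell,n,n'}^{(\pm)}$ gives $|\omega\cdot\ell+\mu_n\pm\mu_{n'}| < \gamma\braket{n\pm n'}^\alpha/(\braket{\ell}^\tau\tM^\alpha)$, and since $\gamma_1\ge2\gamma$, $\tau\ge\tau_1$, we get $|\omega\cdot\ell+\mu_n\pm\mu_{n'}| < \tfrac12\,\gamma_1\braket{n\pm n'}^\alpha/(\braket{\ell}^{\tau_1}\tM^\alpha)$. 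Third, under hypothesis \eqref{condizione.extra}, $\braket{\min\{n,n'\}}^{-\alpha} \le \tfrac18\,\gamma_1/(\max\{\tm^2,C_{s_0,\beta}/(\gamma_0\tM)\}\,\tM^\alpha\braket{\ell}^{\tau_1}\braket{n\pm n'}^\alpha)^{-1}$... — rearranged, the correction term is at most $\tfrac12\,\gamma_1\braket{n\pm n'}^\alpha/(\tM^\alpha\braket{\ell}^{\tau_1})$. Fourth, the triangle inequality then gives
\[
|\omega\cdot\ell + j| \le |\omega\cdot\ell+\mu_n\pm\mu_{n'}| + \Big(\text{correction}\Big) < \frac{\gamma_1}{\braket{\ell}^{\tau_1}}\frac{\braket{n\pm n'}^\alpha}{\tM^\alpha} = \frac{\gamma_1}{\braket{\ell}^{\tau_1}}\frac{\braket{j}^\alpha}{\tM^\alpha}\,,
\]
using $\braket{j}=\braket{n\pm n'}$. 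Hence $\omega\in\cR_{\ell,j}^1(\gamma_1,\tau_1)$ with $(\ell,j)\ne0$ (if $(\ell,j)=0$ then $\ell=0$ and $n\pm n'=0$; for the $-$ case this is excluded from $\cI^-$, and for the $+$ case $n=n'=0$ makes the left side of \eqref{condizione.extra} equal to $1$, which cannot exceed $\tR_1(\ell)$ for $\tM$ large — so this degenerate case does not arise). This is precisely $\cQ_{\ell,n,n'}^{(\pm)}\subset\bigcup_{(\ell,j)\ne0}\cR_{\ell,j}^1$.

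**Main obstacle.** The only real subtlety is the careful bookkeeping of signs and of which integer $j$ to choose: in the $-$ case one has $\mu_n-\mu_{n'}\approx n-n'$ when $n\ge n'$ but $\approx -(n'-n)$ when $n<n'$, and one must check $\braket{n\pm n'}=\braket{j}$ and that the two correction terms $\braket{n}^{-1},\braket{n'}^{-1}$ are both controlled by the factor $\braket{\min\{n,n'\}}^{-\alpha}$ appearing in \eqref{condizione.extra} — this is why the hypothesis features $\braket{\min\{n,n'\}}$ rather than $\braket{n}$ or $\braket{n'}$ individually. Everything else is the elementary chain of inequalities above, and the constant $8$ in \eqref{condizione.extra} is exactly what makes the factor-$\tfrac12$ splitting (half from the near-resonance bound, half from the eigenvalue correction) close.
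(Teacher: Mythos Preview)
Your proof is correct and takes essentially the same approach as the paper: the paper argues by contrapositive (assuming $\omega\notin\bigcup_{(\ell,j)\ne0}\cR_{\ell,j}^1$ and deducing the Melnikov lower bound, hence $\omega\notin\cQ_{\ell,n,n'}^{(\pm)}$), while you argue the direct inclusion, but the chain of estimates---expanding $\mu_m=m+c_m(q)/\braket{m}+\varepsilon_{\mu_m}$, bounding the correction by $4\max\{\tm^2,C_{s_0,\beta}/(\gamma_0\tM)\}\braket{\min\{n,n'\}}^{-\alpha}$ via $\alpha\in(0,1)$, and absorbing it using \eqref{condizione.extra}---is identical. Your explicit check that $(\ell,j)\ne0$ is a small addition the paper leaves implicit.
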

\begin{proof}
	If $\omega\in \Omega_0\setminus  \bigcup_{(\ell,j)\neq 0} \cR_{\ell,j}^1(\gamma_1,\tau_1)$, then $	| \omega\cdot\ell + j | \geq \frac{\gamma_1}{\braket{\ell}^{\tau_1}} \frac{\braket{j}^\alpha}{\tM^\alpha}$ for any $(\ell,j)\in \Z^{\nu+1}\setminus\{0\}$.
Let $(\ell,n,n')\in \cI^{\pm}$. By Corollary \ref{cor:final_blocks}, \eqref{lambda.j.unpert}, \eqref{bound}, \eqref{condizione.extra} and the assumptions $\gamma_1\geq  2\gamma$, $\tau\geq \tau_1$, we get, for any eigenvalues  $\mu_{m}(\omega)\in \spec\big(\! \left.H_0^{(\infty)}\right._{[m]}^{[m]}(\omega)\big)$, $ m=n,n'$, recalling that $\alpha\in (0,1)$,
\begin{equation}
	\begin{aligned}
	 	| \omega\cdot \ell +\mu_{n}(\omega) & \pm \mu_{n'}(\omega) | \geq  |\omega\cdot \ell + n\pm n'| -\tfrac{|c_n(q)|}{\braket{n}} - \tfrac{| c_{n'}(q) |}{\braket{n'}}
		- | \varepsilon_{\mu_{n}}(\omega) | - | \varepsilon_{\mu_{n'}}(\omega)| \\
		& \geq \frac{\gamma_1}{\braket{\ell}^{\tau_1}} \frac{\braket{n\pm n'}^\alpha}{\tM^\alpha}  - \tm^2\big( \braket{n}^{-1} + \braket{n'}^{-1} \big) - \frac{C_{s_0,\beta}}{\gamma_0\,\tM} \big( \braket{n}^{-\alpha} + \braket{n'}^{-\alpha} \big) \\
		& \geq \frac{\gamma_1}{\braket{\ell}^{\tau_1}} \frac{\braket{n \pm n'}^\alpha}{\tM^\alpha} - 4\max\Big\{ \tm^2 ,\frac{C_{s_0,\beta}}{\tM}  \Big\} \braket{\min\{n,n'\}}^{-\alpha}\\
		& \geq \frac{\gamma_1}{2\braket{\ell}^{\tau_1}} \frac{\braket{n \pm n'}^\alpha}{\tM^\alpha}   \geq \frac{\gamma}{\braket{\ell}^{\tau}} \frac{\braket{n \pm n'}^\alpha}{\tM^\alpha} \,.
	\end{aligned}
\end{equation}
This shows that $\omega \notin \wt\cQ_{\ell,\mu_{n},\mu_{n'}}^{(\pm)}(\gamma,\tau) \subset \cQ_{\ell,n,n'}^{(\pm)}(\gamma,\tau)$ and the claim is proved.
\end{proof}
We finally move to the estimate of $\Omega_0\setminus\Omega_{\infty}$. By \eqref{condizione.extra}, we have
\begin{equation}
	|\Omega_0 \setminus \Omega_{\infty} | \leq \bigg| \bigcup_{\ell\in\Z^{\nu},\, n,n'\in\N_0 \atop (\ell,n,n')\neq (0,n,n)}  \cQ_{\ell,n,n'}^{(-)} \bigg| + \bigg| \bigcup_{\ell\in\Z^{\nu},\, n,n'\in\N_0}  \cQ_{\ell,n,n'}^{(+)} \bigg| =:  \tI_{-} + \tI_{+}\,.
\end{equation}
We show the estimate for $\tI_{-}$ which is the most delicate one. The estimate for $\tI_{+}$ follows similarly and therefore we omit. By Lemmata \ref{lemma.empty.1}, \ref{lemma.empty.0}, \ref{lemma.empty.2},
we have
\begin{equation}
	 \begin{aligned}
	 	\bigcup_{\ell\in\Z^{\nu},\, n,n'\in\N_0 \atop (\ell,n,n')\neq (0,n,n)}  \cQ_{\ell,n,n'}^{(-)} (\gamma,\tau) = & \bigcup_{\ell\in\Z^\nu, \, n\in\N \atop \braket{n}^\alpha < \tR_0(\ell) } \cQ_{\ell,n,n}^{(-)}(\gamma,\tau) \cup \bigcup_{\ell\in\Z^{\nu},\, n-n'\in\Z\setminus\{0\},  \atop | n-n'| \leq C_1 \tM\braket{\ell} } \cR_{\ell,n-n'}^1 (\gamma_1,\tau_1)\\
 		&  \cup \bigcup_{\ell\in\Z^{\nu},\, n,n'\in\N_0,\, (\ell,n,n')\neq(0,n,n) ,  \atop | n-n'| \leq C_1 \tM\braket{\ell},\,  
 			\braket{\min\{n,n'\}}^{\alpha} \braket{n - n'}^\alpha < \tR_1(\ell)}\cQ_{\ell,n,n'}^{(-)}(\gamma,\tau)\,,
	 \end{aligned}
\end{equation}
and therefore
\begin{equation}
	 \begin{aligned}
		\tI_{-} = &\bigg| \bigcup_{\ell\in\Z^\nu, \, n\in\N \atop \braket{n}^\alpha < \tR_0(\ell) } \cQ_{\ell,n,n}^{(-)}(\gamma,\tau) \bigg| + \bigg| \bigcup_{\ell\in\Z^{\nu},\, n-n'\in\Z\setminus\{0\},  \atop | n-n'| \leq C_1 \tM\braket{\ell} } \cR_{\ell,n-n'}^1 (\gamma_1,\tau_1)\bigg| \\
		&  +\bigg| \bigcup_{\ell\in\Z^{\nu},\, n,n'\in\N_0,\, (\ell,n,n')\neq(0,n,n) ,  \atop | n-n'| \leq C_1 \tM\braket{\ell},\,  
			\braket{\min\{n,n'\}}^{\alpha} \braket{n - n'}^\alpha < \tR_1(\ell)}\cQ_{\ell,n,n'}^{(-)}(\gamma,\tau)\bigg| =: \tI_{-,1} + \tI_{-,2} +\tI_{-,3}\,.
	\end{aligned}
\end{equation}
By Lemma \ref{tecnico}, we have, for some numerical constants $C_2,C_3>0$,
\begin{align}
	&| \cQ_{\ell,n,n'}^{(-)}(\gamma,\tau) | \leq C_2 \frac{\gamma\, \tM^{\nu-1-\alpha}}{\braket{\ell}^{\tau+1}}\braket{n-n'}^\alpha\,,  \label{stima.set2} \\
	& | \cR_{\ell,n-n'}^{1}(\gamma_1,\tau_1)  | \leq C_3  \frac{\gamma_1\, \tM^{\nu-1-\alpha}}{\braket{\ell}^{\tau_1+1}}\braket{n-n'}^\alpha\,. \label{stima.set3}
\end{align}
By \eqref{stima.set2} and \eqref{condizione.extra.0}, we estimate $\tI_{-,1}$ by
\begin{equation}
	\tI_{-,1} \leq \sum_{\ell\in\Z^\nu, \, n\in\N \atop \braket{n}^\alpha < \tR_0(\ell) } | \cQ_{\ell,n,n}^{(-)}(\gamma,\tau) | \lesssim \frac{\gamma}{\gamma_0^{2/\alpha}}\tM^{\nu-1-\alpha-\frac{2}{\alpha}} \sum_{\ell\in\Z^\nu} \frac{1}{\braket{\ell}^{\tau+1 -\frac{\tau_0}{\alpha}}} \lesssim \frac{\gamma}{\gamma_0^{2/\alpha}\tM^{1+\alpha+2/\alpha}}\, \tM^\nu\,,
\end{equation}
with $\tau+1-\frac{\tau_0}{\alpha}>\nu$.  By \eqref{stima.set3}, we estimate $\tI_{-,2}$ by
\begin{equation}
	\begin{aligned}
		\tI_{-,2} \leq \sum_{\ell\in\Z^{\nu},\, j\in\Z\setminus\{0\},  \atop |j| \leq C_1 \tM\braket{\ell} } |\cR_{\ell,j}^1 (\gamma_1,\tau_1)| \lesssim \gamma_1\tM^\nu \sum_{\ell\in\Z^\nu} \frac{1}{\braket{\ell}^{\tau_1-\alpha}} \lesssim \gamma_1 \,\tM^{\nu}\,,
	\end{aligned}
\end{equation}
with $\tau_1-\alpha>\nu$. 
By \eqref{stima.set2} and \eqref{condizione.extra}, we estimate $\tI_{-,3}$ by
\begin{equation}
	\begin{aligned}
		\tI_{-,3} & \leq \sum_{\ell\in\Z^{\nu},\, n,n'\in\N_0,\, (\ell,n,n')\neq(0,n,n) ,  \atop | n-n'| \leq C_1 \tM\braket{\ell},\,  
			\braket{\min\{n,n'\}}^{\alpha} \braket{n - n'}^\alpha < \tR_1(\ell)}  |\cQ_{\ell,n,n'}^{(-)}(\gamma,\tau)| \\
		& \lesssim \gamma\,\tM^{\nu-1-\alpha} \sum_{\ell\in\Z^{\nu},\, m\in\N_0,\, j\in\Z^\nu\setminus\{0\},  \atop |j| \leq C_1 \tM\braket{\ell},\,  
			m<  < \tR_1(\ell)^{1/\alpha} \braket{j}^{-1}  } \frac{\braket{j}^\alpha}{\braket{\ell}^{\tau+1}} 	\lesssim \frac{\gamma}{\gamma_1^{1/\alpha}}\tM^{\nu-\alpha} \sum_{\ell\in\Z^{\nu},\, j\in\Z^\nu\setminus\{0\},  \atop |j| \leq C_1 \tM\braket{\ell}  
		 } \frac{\braket{j}^{\alpha-1}}{\braket{\ell}^{\tau+1-\frac{\tau_1}{\alpha}}} \\
	 & \lesssim \frac{\gamma}{\gamma_1^{1/\alpha}}\tM^{\nu} \sum_{\ell\in\Z^{\nu} } \frac{1}{\braket{\ell}^{\tau+1-\frac{\tau_1}{\alpha}-\alpha}} \lesssim \frac{\gamma}{\gamma_1^{1/\alpha}}\tM^{\nu}\,,
	\end{aligned}
\end{equation}
with $\tau+1-\frac{\tau_1}{\alpha}-\alpha>\nu$. We conclude that, for $\gamma_0\in(0,1)$ and $\tau>0$ as in \eqref{scelta.tau},
\begin{equation}
	\tI_{-} = \tI_{-,1} + \tI_{-,2}+ \tI_{-,3} \lesssim \Big( \frac{\gamma}{\gamma_0^{2/\alpha}\tM^{1+\alpha+2/\alpha}} + \gamma_1+ \frac{\gamma}{\gamma_1^{1/\alpha}} \Big)\tM^\nu \lesssim \gamma^{1/2}\,\tM
\end{equation}
choosing $\tau_1=\tau_0$, $\gamma_1=\gamma_0^2\simeq \gamma^{\alpha/2}$. By \eqref{def:mr}, it implies \eqref{final_est} and concludes the proof.

\subsection{Proof of Theorem \ref{thm:main}}\label{subsec:proof.main}
 Let $\tM_*= \tM_0$ and $\gamma_*:= \min\{ \gamma^{\frac{\alpha}{4}}, \gamma^\frac12$, with $\tM_0$ and $\gamma$ as in Theorem \ref{lem:meas_infty}. Then the set $\Omega_{\infty}^\alpha:= \Omega_{\infty}$, where $\Omega_{\infty}$ is defined in \eqref{Omega.infty}, satisfies \eqref{measure.state} by Theorem \ref{lem:magnus} and Theorem \ref{lem:meas_infty}. We define now $\cT(\omega;\omega t):= \big(e^{\bY(\omega;\omega t)} \circ \cW_{\infty}(\omega;\omega t) \big)^{-1}$, where $\bY(\omega;\omega t)$ is given in Theorem \ref{lem:magnus} and $\cW_{\infty}(\omega;\omega t)$ in Theorem \ref{cor:iter_flow}. Then, by Theorem \ref{lem:magnus},Theorem \ref{cor:iter_flow}, Corollary \ref{cor:final_blocks}, \eqref{eq:par_kam} and \eqref{eq:small_cond_kam}, setting $\sigma_*:= \Sigma(\beta)$, with $\Sigma(\beta)$ as in \eqref{eq:par_kam}, the change of coordinates $\phi = \cT(\omega;\omega t) \psi$ conjugates \eqref{eq:KG_matrix} to \eqref{eq:eff_sys}, where the map $\cT(\omega;\omega t)$ is bounded in $\cL(\cH^r)$ for any $r\in[0,s_0]$ and it is close to the identity, namely satisfies \eqref{close.id.state}. The expansion \eqref{eq:eff_eig} follows by Corollary \ref{cor:final_blocks}.

\appendix

\section{Pseudodifferential functional calculus}\label{app:funct.calc}
The goal of this appendix is to briefly give a definition as a pseudodifferential operator of $B=\sqrt{-\pa_{xx}+q(x)}$, starting from its standard spectral definition in terms of functional calculus for the operator $L_q=-\pa_{xx}+q(x)$. The construction is based on the definition of complex powers for self-adjoint operators proposed by Seeley in \cite{seeley} and the extension to pseudodifferential operators made by Shubin in his monograph \cite{shubin}. Since we are only interested with the parameter-free and time independent operator $B$, this section will only deal with operators and symbols independent of $\vf\in\T^\nu$ and $\omega\in \R^\nu$.
We recall the definition of the class of symbols $S^m$ of order $m\in\R$ in Definition \eqref{def:symbol}. For convenience, we introduce the following subclasses of symbols:
\begin{itemize}
	\item $ \dot S^m:= \{ f(x,\xi)\in S^m  \,: \, f(x,\mu \xi) = \mu^m f(x,\xi), \ \mu>0  \} $ [{\it homogeneous symbols}];
	\item $ {\rm C} S^m := \{ f(x,\xi)\in S^m \,:\, f(x,\xi) \sim \sum_{n=0}^\infty  f_{m-n}(x,\xi) , \ f_{m-n}\in \dot S^m \}$;
	\item ${\rm H}S^m:= \{ f(x,\xi)\in{\rm C}S^m \,:\, f_m(x,\xi)\neq 0 \ \text{ for } \abs\xi \neq 0 \} $ [{\it elliptic symbols}].
\end{itemize}
The classes of operators ${\rm OPC}S^m$ and   ${\rm OPH}S^m$  have the clear definitions of quantization of symbols in the classes ${\rm C}S^m$ and ${\rm H}S^m$, respectively.
\begin{lem}\label{classic}
	{\bf (Lemma 2.2, \cite{Saint91}).}
	Let $m\in\R$ and let $f_{m-n}\in S^{m-n}$ for $n\in\N_0$. Then there exists a symbol $f\in S^m$ (unique modulo $ S^{-\infty}$) such that, for any $k\in\N_0$,
	$ f-\sum_{n<k}f_{m-n}\in  S^{m-k} $. In this case, we write $f\sim \sum_{n\in\N_0}f_{m-j}$.
\end{lem}

\bigskip
\medskip 

\noindent{\bf Resolvent and parametrix of an elliptic symbol.}
The following proposition gives a characterization for the existence of the inverse for a symbol of order $m\in\R$.
\begin{prop}
	{\bf (Theorem 2.10, \cite{Saint91}).}
	If $a\in S^m$, the following four statements are equivalent:
	\\[1mm]
	\noindent (i) There exists $b\in S^{-m}$ such that $a\# b -1 \in S^{-\infty}$;
	\\[1mm]
	\noindent (ii) There exists $b\in S^{-m}$ such that $b\#a -1 \in S^{-\infty}$;
	\\[1mm]
	\noindent (ii) There exists $b_0\in S^{-m}$ such that $ab_0-1\in S^{-1}$;
	\\[1mm]
	\noindent (iv) There exists $\varepsilon>0$ such that $\abs{a(x,\xi)}\geq \varepsilon\braket{\xi}^m $ for $\abs\xi \geq 1/\varepsilon$.
	\\[1mm]
	When one of these condition is satisfied, then there exists $a^\#\in S^{-m}$ such that
	\begin{equation*}
	b \text{ solves (i) } \ \Leftrightarrow \ b \text{ solves (ii) } \ \Leftrightarrow \ b-a^\#\in S^{-\infty}.
	\end{equation*}
	Moreover, if $a\in {\rm C}S^m$, then $a$ satisfies (iv) if and only if $a\in {\rm H}S^m$.\\
\end{prop}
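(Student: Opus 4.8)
The plan is to prove the cyclic chain of implications $(i)\Rightarrow(iii)\Rightarrow(iv)\Rightarrow(i)$, then $(i)\Leftrightarrow(ii)$ by a symmetric argument, and finally the uniqueness statement and the classical-symbol refinement. First I would do the easy implications $(i)\Rightarrow(iii)$ and $(ii)\Rightarrow(iii)$: if $a\#b-1\in S^{-\infty}$, then by the composition expansion \eqref{compo_symb} one has $ab-1 = (a\#b-1) - \sum_{\beta\geq1}\frac{1}{\im^\beta\beta!}\pa_\xi^\beta a\,\pa_x^\beta b \in S^{-1}$, so $b_0:=b$ works in $(iii)$ (note $a\in S^m$, $b\in S^{-m}$ and the $\beta\geq1$ terms of the product lie in $S^{-1}$).

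Next I would prove $(iii)\Rightarrow(iv)$. Given $b_0\in S^{-m}$ with $ab_0-1=:r\in S^{-1}$, pick $R>0$ so that $|r(x,\xi)|\leq\tfrac12$ for $|\xi|\geq R$; then $|a(x,\xi)|\,|b_0(x,\xi)|\geq\tfrac12$ there, and since $|b_0(x,\xi)|\leq C\langle\xi\rangle^{-m}$ by the symbol estimate we get $|a(x,\xi)|\geq\tfrac{1}{2C}\langle\xi\rangle^{m}$ for $|\xi|\geq R$, which is $(iv)$ with $\varepsilon=\min\{1/R,1/(2C)\}$. The implication $(iv)\Rightarrow(i)$ is the substantive step and the one I expect to be the main obstacle: I would construct the parametrix $a^\#$ by the standard iterative (Neumann-type) scheme. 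Set $\chi(\xi)$ a smooth cutoff equal to $1$ for $|\xi|\geq 2/\varepsilon$ and $0$ for $|\xi|\leq 1/\varepsilon$; define $c_0:=\chi/a\in S^{-m}$ (well-defined and a symbol by $(iv)$ together with the Leibniz rule, since on $\{\chi\neq0\}$ the reciprocal $1/a$ satisfies the symbol bounds of order $-m$, as one checks by induction on the number of derivatives using $|a|\gtrsim\langle\xi\rangle^m$). Then $a\#c_0 = 1 - t_1$ with $t_1\in S^{-1}$: the principal part $a c_0=\chi$ differs from $1$ by a compactly supported (hence $S^{-\infty}$) term, and the lower-order terms of the expansion \eqref{compo_symb} are in $S^{-1}$. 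Iterating, one builds $c_k$ so that $a\#(c_0\#(1-t_1)^{-1}_{\text{formal}})$ — more precisely, using Lemma \ref{classic} to sum the asymptotic series $a^\#\sim c_0\#\sum_{k\geq0}t_1^{\# k}$ — yields $a^\#\in S^{-m}$ with $a\#a^\#-1\in S^{-\infty}$, which is $(i)$. The delicate points here are: (a) verifying $1/a$ is genuinely a symbol of order $-m$ on the region where $a$ is elliptic (a Faà di Bruno / induction argument), and (b) invoking Lemma \ref{classic} to realize the formal Neumann series as an actual symbol modulo $S^{-\infty}$.

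For $(i)\Leftrightarrow(ii)$ I would note that by the same construction applied on the left one gets a left parametrix $a^{\flat}\in S^{-m}$ with $a^{\flat}\#a-1\in S^{-\infty}$; then the standard algebraic argument $a^\flat\equiv a^\flat\#a\#a^\#\equiv a^\#\pmod{S^{-\infty}}$ shows any left parametrix equals any right parametrix modulo $S^{-\infty}$, giving both the equivalence $(i)\Leftrightarrow(ii)$ and the uniqueness claim: if $b$ solves $(i)$ then $b\equiv a^\#\pmod{S^{-\infty}}$, and symmetrically for $(ii)$. Finally, for the classical-symbol statement, if $a\in {\rm C}S^m$ with expansion $a\sim\sum_{n\geq0}a_{m-n}$, then $(iv)$ holds iff the principal symbol $a_m(x,\xi)\neq0$ for $\xi\neq0$: the "if" direction follows since $a_m$ homogeneous and nonvanishing on $|\xi|=1$ forces $|a_m(x,\xi)|\gtrsim|\xi|^m$, and $a-a_m\in S^{m-1}$ is lower order; the "only if" direction is immediate from $(iv)$ by taking $|\xi|\to\infty$ along the ray through a fixed $\xi_0$ and using homogeneity. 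This is exactly the definition of $a\in {\rm H}S^m$, completing the proof.
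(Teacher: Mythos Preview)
The paper does not supply its own proof of this proposition: it is quoted verbatim as Theorem 2.10 of \cite{Saint91} and used as a black box in Appendix~\ref{app:funct.calc}. Your sketch is the standard textbook argument (and is essentially what one finds in Saint-Raymond's monograph): the chain $(i)\Rightarrow(iii)\Rightarrow(iv)\Rightarrow(i)$ with the parametrix built from $\chi/a$ and a formal Neumann series summed via Lemma~\ref{classic}, followed by the left/right parametrix uniqueness trick $a^\flat\equiv a^\flat\# a\# a^\#\equiv a^\#$. The steps you flag as delicate --- that $\chi/a\in S^{-m}$ via an inductive Leibniz/Fa\`a di Bruno estimate, and the invocation of Lemma~\ref{classic} to realize the asymptotic sum --- are exactly the right ones, and your treatment of the ${\rm C}S^m$/${\rm H}S^m$ equivalence via homogeneity of the principal symbol is correct. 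There is nothing to compare against in the present paper.
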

We apply this result to directly construct the symbol for the resolvent operator $G(A;\lambda):=(A-\lambda\,{\rm Id})^{-1}$, namely the parametrix of the operator $A-\lambda\,{\rm Id}$, when $A=\Op(a(x,\xi))\in {\rm OPH}S^{m}$.
Let $a(x,\xi)\sim \sum_{n=0}^\infty a_{m-n}(x,\xi)$ and set
\begin{equation}
	\wta_m(\lambda;x,\xi):= a_m(x,\xi)-\lambda\,, \quad \wta_{m-n}(\lambda;x,\xi):=a_{m-n}(x,\xi)\,, \quad n\in\N\,.
\end{equation} 
By Lemma \ref{classic}, there exists a symbol $\wta(\lambda;x,\xi)\in {\rm H}S^m$ such that $\wta \sim \sum_{n=0}^\infty \wta_{m-n}$  and $A-\lambda \,{\rm Id} = \Op( \wta(\lambda;x,\xi))$. Note that, with this choice of the symbol, we have that $\wt a_m(\lambda;x,\xi)$ is homogeneous of degree $m$ in the couple $(\xi,\lambda^{1/m})$.

First, we look for a formal symbol $b^0(\lambda;x,\xi)\sim \sum_{n=0}^\infty b_{-m-n}^0(\lambda;x,\xi)$ such that $b^0\# \wta \sim  1$. Recalling \eqref{compo_symb}, we compute
\begin{equation*}
\begin{aligned}
b^0\# \wta & \sim (b_{-m}^0 + b_{-m-1}^0 + b_{-m-2} +... )(\wta_m+\wta_{m-1}+\wta_{m-2}+...)\\
& + \sum_{\beta=1}^\infty \frac{1}{\im^\beta \beta !} \partial_\xi^\beta (b_{-m}^0 + b_{-m-1}^0 + b_{-m-2} +...) \pa_x^\beta (\wta_m+\wta_{m-1}+\wta_{m-2}+...) 
\end{aligned}
\end{equation*}
The symbol $b^0(\lambda;x,\xi)$ is therefore defined recursively by the relations
\begin{equation}\label{eq:b0_rel_res}
\begin{aligned}
& b_{-m}^0(\lambda;x,\xi) \wta_m(\lambda;x,\xi) = 1 \,;\\
& b_{-m-n}^0(\lambda;x,\xi)\wta_m(\lambda;x,\xi) + \sum_{p=0}^{n-1}b_{-m-p}^0(\lambda;x,\xi) \wta_{m-n+p}(\lambda;x,\xi) \\
& \qquad + \sum_{\beta=1}^n\frac{1}{\im^\beta\beta !}\sum_{p=0}^{n-\beta} \pa_\xi^\beta b_{-m-p}^0(\lambda;x,\xi) \pa_x^\beta \wta_{m-n+\beta+p}(\lambda;x,\xi) = 0 \,, \quad n\in\N \,.
\end{aligned}
\end{equation}
In particular, by explicit computations, the symbols $b_{-m-n}^0(\lambda;x,\xi)$ are of the form
\begin{equation}
b_{-m}^0(\lambda;x,\xi) = \frac{1}{a_m(x,\xi)-\lambda} \ , \quad b_{-m-n}^0(\lambda;x,\xi) = \frac{1}{(a_m(x,\xi)-\lambda)^{\beta_n}}p_{-m-n}(x,\xi) \ , 
\end{equation} 
where $\beta_n \in \N $ and each $p_{-m-n}(x,\xi)$ is independent of $\lambda$, involving the symbols $a_m(x,\xi),\dots,a_{m-n}(x,\xi)$ so that the each function $b_{-m-n}^0$ is homogeneous in $(\xi,\lambda^{1/m})$ of degree $-m-n$.
In order to obtain a true parametrix from the symbols $b_{-m-n}^0(\lambda;x,\xi)$, it is necessary to remove singularities for $\abs\xi + \abs \lambda^{1/m}$ by  a cut-off function. Let $\chi \in\C^\infty(\R,\R)$ be an even positive $\cC^\infty$-function as in \eqref{cutoff}.
We set $\wt\chi(\lambda;\xi):= \chi ( \abs \xi ^2 + \abs\lambda^{2/m}) $ and  we define
\begin{equation}\label{param_b}
b_{-m-n}(\lambda;x,\xi) := \wt\chi(\lambda;\xi) b_{-m-n}^0(\lambda;x,\xi) \in S^m\,,
\end{equation}
together with
\begin{equation}\label{BN_lambda}
B_{-m-n}(\lambda):=\Op( b_{-m-n}(\lambda;x,\xi) )\ , \quad B_{(N)}(\lambda):= \sum_{n=0}^{N-1} B_{-m-n} (\lambda) \ .
\end{equation}
This construction is summed up in the following result.
\begin{prop}\label{reso_pseudo}
	{\bf (Proposition 11.2, \cite{shubin}).}
	Let $A\in {\rm OPH}S^m$. We have 
	\begin{equation}
		G(A;\lambda)- B_{(N)}(\lambda) \in {\rm OPC}S^{-m-N}\,, \quad \forall\,N\in\N\,,
	\end{equation}
	where $B_{(N)}(\lambda)$ as in \eqref{BN_lambda}.
	In particular, there exists $B(\lambda) = b(x,D;\lambda)\in {\rm OPC}S^{-m}$ such that $b \sim \sum_{n=0}^\infty b_{-m-n}$, with $b_{-m-n}$ defined in \eqref{param_b}, and $B(\lambda)-B_{(N)}(\lambda)\in{\rm OPC}S^{-m-N}$ and $G(A;\lambda)-B(\lambda)\in {\rm OPC}S^{-\infty}$.
\end{prop}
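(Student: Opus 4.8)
The plan is to promote the \emph{formal} parametrix symbol $b^{0}\sim\sum_{n\ge0}b^{0}_{-m-n}$ --- built through the recursion \eqref{eq:b0_rel_res} precisely so that the composition $b^{0}\#\widetilde a$ of $b^{0}$ with the full symbol $\widetilde a\in{\rm H}S^{m}$ of $A-\lambda\,{\rm Id}$ equals $1$ modulo $S^{-\infty}$ --- to a genuine parameter-dependent symbol, and then to compare the operator it quantises with the true resolvent. First I would fix $\lambda$ in a ray (or sector) of $\C$ avoiding $\spec(A)$; since $\inf\spec(A)>0$, the negative real axis works. On such a ray I would check that each cut-off symbol $b_{-m-n}(\lambda;x,\xi)=\wt\chi(\lambda;\xi)\,b^{0}_{-m-n}(\lambda;x,\xi)$ from \eqref{param_b} belongs to $S^{-m-n}$ with seminorms uniform in $\lambda$. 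This rests on three facts: $b^{0}_{-m-n}$ is a finite sum of terms $(a_{m}(x,\xi)-\lambda)^{-\beta}p(x,\xi)$, homogeneous of degree $-m-n$ in the pair $(\xi,\lambda^{1/m})$; the cut-off $\wt\chi$ excises exactly the compact region where $|\xi|^{2}+|\lambda|^{2/m}$ is small; and ellipticity of $a_{m}\in\dot S^{m}$ gives $|a_{m}(x,\xi)-\lambda|\gtrsim|\xi|^{m}+|\lambda|$ on $\spt\wt\chi$, so that each $\pa_{\xi}$ differentiation only improves the decay, by the mixed homogeneity. Lemma~\ref{classic} then furnishes a single symbol $b(\lambda;x,\xi)\in{\rm C}S^{-m}$ with $b\sim\sum_{n}b_{-m-n}$ and $b-\sum_{n<N}b_{-m-n}\in S^{-m-N}$; I set $B(\lambda):=\Op(b(\lambda;x,\xi))\in{\rm OPC}S^{-m}$, which by construction satisfies $B(\lambda)-B_{(N)}(\lambda)\in{\rm OPC}S^{-m-N}$.

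Next I would show $B(\lambda)$ is a two-sided parametrix of $A-\lambda\,{\rm Id}$ with a smoothing remainder. Substituting \eqref{eq:b0_rel_res} into the composition expansion \eqref{compo_symb}, the symbol of $B_{(N)}(\lambda)(A-\lambda\,{\rm Id})$ differs from $1$ only by: (i) the factor $\wt\chi-1$ (the zeroth-order part being $b_{-m}\,\widetilde a_{m}=\wt\chi$) together with the cross terms carrying at least one $\pa_{\xi}$-derivative of $\wt\chi$, all supported in $\{\,|\xi|^{2}+|\lambda|^{2/m}\le2/3\,\}$, hence lying in $S^{-\infty}$ and vanishing identically once $|\lambda|$ is large; and (ii) the tail of the composition expansion, of order $\le-N$. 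Therefore $B_{(N)}(\lambda)(A-\lambda\,{\rm Id})={\rm Id}+R_{N}(\lambda)$ with $R_{N}(\lambda)\in{\rm OPC}S^{-N}$, and letting $N\to\infty$ (i.e. using $B$ in place of $B_{(N)}$) gives $B(\lambda)(A-\lambda\,{\rm Id})={\rm Id}+R(\lambda)$, $R(\lambda)\in{\rm OPC}S^{-\infty}$; the right identity $(A-\lambda\,{\rm Id})B(\lambda)={\rm Id}+R'(\lambda)$ with $R'(\lambda)\in{\rm OPC}S^{-\infty}$ follows from the symmetric computation.

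Finally I would pass from the parametrix to the genuine resolvent. Since $A$ is elliptic, self-adjoint, with $\spec(A)$ bounded below, for $\lambda$ far out on the chosen ray the operator norm of $R'(\lambda)$ on $L^{2}(\T)$ --- indeed on every $H^{s}(\T)$, by the mapping bounds for $\Ops^{-\infty}$ --- can be made $<1$; inverting ${\rm Id}+R'(\lambda)$ by Neumann series yields $G(A;\lambda)=B(\lambda)\big({\rm Id}+R'(\lambda)\big)^{-1}=B(\lambda)+B(\lambda)\sum_{k\ge1}(-R'(\lambda))^{k}$, and the correction lies in ${\rm OPC}S^{-\infty}$ because ${\rm OPC}S^{-\infty}$ is a two-sided ideal closed under the (summable) series. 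This proves $G(A;\lambda)-B(\lambda)\in{\rm OPC}S^{-\infty}$ for $|\lambda|$ large; for the remaining $\lambda$ in the resolvent set I would propagate the conclusion via the resolvent identity $G(A;\lambda)=G(A;\lambda_{0})\big({\rm Id}-(\lambda-\lambda_{0})G(A;\lambda_{0})\big)^{-1}$, with $\lambda_{0}$ in the region already treated, using closure of ${\rm OPC}S^{-m}$ under composition and parametrix inversion. Combining, $G(A;\lambda)-B_{(N)}(\lambda)=\big(G(A;\lambda)-B(\lambda)\big)+\big(B(\lambda)-B_{(N)}(\lambda)\big)\in{\rm OPC}S^{-\infty}+{\rm OPC}S^{-m-N}={\rm OPC}S^{-m-N}$, which is the claim.

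The step I expect to be the main obstacle is not the algebra of the recursion \eqref{eq:b0_rel_res} --- that is bookkeeping --- but the \emph{uniformity in the parameter $\lambda$}: establishing the $S^{-m-n}$ bounds for $b_{-m-n}(\lambda;\cdot)$ with the correct joint homogeneity in $(\xi,\lambda^{1/m})$ and the associated decay in $\lambda$, and, building on this, verifying that the \emph{true} resolvent (and not merely the parametrix) is a classical parameter-dependent $\Psi$DO, which is exactly what licenses reading off the asymptotic expansion of its symbol. These are the points carried out in detail in \cite{seeley,shubin}; see also \cite{Saint91}.
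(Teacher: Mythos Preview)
The paper does not give its own proof of this proposition; it is quoted verbatim as Proposition~11.2 of Shubin \cite{shubin}, with the symbol recursion \eqref{eq:b0_rel_res}--\eqref{BN_lambda} recalled beforehand. Your sketch is a correct outline of the standard argument one finds there, and you correctly flag the uniformity in $\lambda$ as the only substantive analytic point.

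One remark on your last step. The passage from the two-sided parametrix $B(\lambda)$ to the true resolvent $G(A;\lambda)$ does not require the Neumann-series detour through large $|\lambda|$ followed by propagation via the resolvent identity. Once you have $(A-\lambda\,{\rm Id})B(\lambda)={\rm Id}+R'(\lambda)$ and $B(\lambda)(A-\lambda\,{\rm Id})={\rm Id}+R(\lambda)$ with $R,R'\in{\rm OPC}S^{-\infty}$, and $G(A;\lambda)$ exists as a bounded operator on $L^{2}(\T)$, then multiplying the first identity on the left by $G(A;\lambda)$ and substituting the second gives
\[
G(A;\lambda)=B(\lambda)-G(A;\lambda)R'(\lambda)=B(\lambda)-B(\lambda)R'(\lambda)+R(\lambda)\,G(A;\lambda)\,R'(\lambda)\,.
\]
Here $B(\lambda)R'(\lambda)\in{\rm OPC}S^{-\infty}$ by composition, and $R(\lambda)\,G(A;\lambda)\,R'(\lambda)$ has a smooth Schwartz kernel on the compact manifold $\T$ (smoothing--bounded--smoothing), hence also lies in ${\rm OPC}S^{-\infty}$. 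This works for every $\lambda$ in the resolvent set, with no largeness assumption; your route is not wrong, merely longer.
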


\medskip 

\noindent{\bf  Functional calculus and holomorphic semigroup properties.}
By Proposition \ref{reso_pseudo},  the resolvent of an elliptic pseudodifferential operator is in the class of pseudodifferential operators as well. On the other side, it is possible to define many operators starting from the spectral resolution of an elliptic operator and its resolvent. The goal is therefore to relate these two constructions.
Let $A\in {\rm OPH}S^m$ be an elliptic pseudodifferential operator of order $m$ with principal symbol $a_m(x,\xi)$. We assume the following:
\begin{itemize}
	\item $A-\lambda \,{\rm Id}\in {\rm OPH}S^m(\Lambda):=\{ F(\lambda)\in {\rm OPH}S^m \,:\, \lambda\in\Lambda \}$, where
	\begin{equation}
		\Lambda:=\Set{\lambda\in\C | \pi-\varepsilon \leq \arg\lambda\leq \pi+\varepsilon }\,, \quad \varepsilon>0\,,
	\end{equation}
	 is a closed angle with vertex in 0. In particular, we assume $	a_m(x,\xi)-\lambda \neq 0 $ for $\xi\neq 0 $ and $\lambda\in(-\infty,0]$;
	\item The resolvent $G(A;\lambda)=(A-\lambda \,{\rm Id} )^{-1}$ is defined for any $\lambda\in\Lambda$ and $A^{-1}$ exists as an operator:
	\begin{equation}
	\sigma(A)\cap \Lambda = \emptyset  \ \left( \ \Rightarrow \ 0\notin \sigma(A)\ \right ) . 
	\end{equation}
\end{itemize}
For a fixed $\rho>0$ small enough such that $B_\rho(0)\cap \sigma(A) = \emptyset$, we consider the clockwise oriented contour $\Gamma:=\Gamma_1\cup\Gamma_2\cup\Gamma_3$, where $\Gamma_1:=\Set{re^{\im\pi} | +\infty > r > \rho}$, $\Gamma_2:=\Set{ \rho e^{\im\vf} | \pi>\vf>-\pi }$, $\Gamma_3:=\Set{re^{-\im\pi} | 0<r<+\infty}$, and we define the following operator
\begin{equation}\label{eq:Az_funct}
A_{z}:=-\frac{1}{2\pi\im} \int_\Gamma \lambda^z (A-\lambda \,{\rm Id})^{-1}\wrt\lambda \ ,
\end{equation}
where $z\in\C$, with $\re(z)<0$, and $\lambda^z$ is defined as a holomorphic function in $\lambda$ for $\lambda\in\C\setminus (-\infty,0]$. The integral over the unbounded contour in \eqref{eq:Az_funct} is always meant as the limit
\begin{equation}
A_z := - \frac{1}{2\pi\im} \lim_{R\rightarrow +\infty} \int_{\Gamma \cap \partial B_R(0)} \lambda^z (A-\lambda \,{\rm Id})^{-1} \wrt \lambda 
\end{equation}
in the topology of the ambient space where the operator $A$ lies: here the condition $\re(z)<0$ enters in the well-posedness of the definition of $A_z$.
The family of operators $\{ A_z \, :\, \re(z)<0\}$ enjoys some algebraic properties.
\begin{prop}
	{\bf (Proposition 10.1, \cite{shubin}).}
 We have the semigroup property $A_z A_w = A_{z+w}$ for any  $z,w\in\C$ with $\re(z),\re(w)<0$.
		If $A$ is invertible, then $A_{-k} = (A^{-1})^k$ for any $k\in\N$.
		Moreover, $A_z$ is a holomorphic operator-function of $z$ (for $\re(z)<0$) with values in the algebra of bounded operators on the Hilbert space $H^r(\T)$, $r\geq 0$.
\end{prop}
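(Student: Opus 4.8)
The plan is to reproduce Seeley's classical argument \cite{seeley,shubin}, based on the resolvent identity and Cauchy's theorem. The analytic input, available here, is the resolvent bound $\|(A-\lambda\,{\rm Id})^{-1}\|_{\cL(H^r)}\lesssim_r(1+|\lambda|)^{-1}$, uniformly for $\lambda$ on a neighbourhood of the contour $\Gamma$: this follows from the parameter-ellipticity of $A-\lambda\,{\rm Id}$ and the parametrix of Proposition~\ref{reso_pseudo} (for $A=L_q$ self-adjoint and positive it is elementary, since $\sigma(A)\subset(0,\infty)$ stays at distance $\gtrsim 1+|\lambda|$ from $\Gamma$). Consequently, for $\re(z)<0$ the integrand $\lambda^z(A-\lambda\,{\rm Id})^{-1}$ is $O(|\lambda|^{\re(z)-1})$ along $\Gamma$, so that \eqref{eq:Az_funct} converges absolutely in $\cL(H^r)$, uniformly for $z$ in compact subsets of $\{\re(z)<0\}$. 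Since for each fixed $\lambda$ the map $z\mapsto\lambda^z(A-\lambda\,{\rm Id})^{-1}$ is entire, Morera's theorem combined with Fubini (using $\int_{\partial T}\lambda^z\,dz=0$ for any triangle $T\subset\{\re(z)<0\}$) yields at once that $z\mapsto A_z$ is holomorphic with values in $\cL(H^r)$, $r\ge0$.

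For the semigroup property I would realize $A_z$ over $\Gamma$ and $A_w$ over a second keyhole contour $\Gamma'$ that encloses $(-\infty,0]$ together with $\Gamma$ in its interior, while still avoiding $\sigma(A)$, so that $\dist(\Gamma,\Gamma')>0$, $\int_{\Gamma'}=\int_\Gamma$ on resolvent integrands, and the resulting double integral converges absolutely. Writing
\[
A_zA_w=\frac{1}{(2\pi\im)^2}\int_\Gamma\!\!\int_{\Gamma'}\lambda^z\mu^w\,(A-\lambda\,{\rm Id})^{-1}(A-\mu\,{\rm Id})^{-1}\,d\mu\,d\lambda
\]
and inserting $(A-\lambda\,{\rm Id})^{-1}(A-\mu\,{\rm Id})^{-1}=\tfrac{1}{\lambda-\mu}\big[(A-\lambda\,{\rm Id})^{-1}-(A-\mu\,{\rm Id})^{-1}\big]$, Fubini splits the expression into two iterated integrals. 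The inner scalar integrals are evaluated by pushing the contour to infinity (licit because $\re(z),\re(w)<0$): for $\lambda$ inside $\Gamma'$ one finds $\tfrac{1}{2\pi\im}\int_{\Gamma'}\tfrac{\mu^w}{\lambda-\mu}\,d\mu=0$, as no pole is encircled, whereas for $\mu$ outside $\Gamma$ one finds $\tfrac{1}{2\pi\im}\int_\Gamma\tfrac{\lambda^z}{\lambda-\mu}\,d\lambda=\mu^z$ by the residue at $\lambda=\mu$. Hence the first iterated integral vanishes and the second becomes $-\tfrac{1}{2\pi\im}\int_{\Gamma'}\mu^{z+w}(A-\mu\,{\rm Id})^{-1}\,d\mu=A_{z+w}$, after deforming $\Gamma'$ back to $\Gamma$ (legitimate since $\re(z+w)<0$).

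For the negative integer powers, when $z=-k$ with $k\in\N$ the factor $\lambda^{-k}$ is single-valued, so $\lambda^{-k}(A-\lambda\,{\rm Id})^{-1}$ is holomorphic in the region bounded by $\Gamma$ except for the pole at $\lambda=0$ (recall $0\notin\sigma(A)$, so $A^{-1}$ exists). Contracting $\Gamma$ onto a small clockwise circle about $0$ and using the Neumann expansion $(A-\lambda\,{\rm Id})^{-1}=\sum_{j\ge0}\lambda^j A^{-j-1}$ near $\lambda=0$, residue calculus gives $A_{-k}=\Res_{\lambda=0}\big(\lambda^{-k}(A-\lambda\,{\rm Id})^{-1}\big)=A^{-k}=(A^{-1})^k$; alternatively this follows from the semigroup property together with the case $k=1$.

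The only genuinely delicate point is the bookkeeping of the two keyhole contours: one must arrange that $\Gamma'$ truly surrounds $\Gamma$ (so that the pole $\mu=\lambda$ of $\tfrac{\lambda^z}{\lambda-\mu}$ is captured when $\Gamma$ is sent to infinity, while the pole $\mu=\lambda$ of $\tfrac{\mu^w}{\lambda-\mu}$ is \emph{not} captured when $\Gamma'$ is), that the cut $(-\infty,0]$ lies inside both contours, and that both stay disjoint from $\sigma(A)$; the interchanges of integration are then justified by the resolvent bound above. These verifications are routine and are carried out in detail in \cite{seeley,shubin}.
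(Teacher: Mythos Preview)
The paper does not supply a proof of this proposition: it is stated as Proposition~10.1 of \cite{shubin} and cited without argument. Your proposal is a correct sketch of exactly the classical Seeley--Shubin proof found in that reference (resolvent bound $\Rightarrow$ absolute convergence and holomorphy via Morera; two displaced keyhole contours plus the resolvent identity for the semigroup law; residue at $\lambda=0$ for $A_{-k}=(A^{-1})^k$), so there is nothing to compare---your approach \emph{is} the paper's intended one. The only point worth tightening if you write it out in full is the orientation/sign bookkeeping in the two scalar contour integrals, which you already flag as the delicate step; with the clockwise convention used here (note the prefactor $-\tfrac{1}{2\pi\im}$ in \eqref{eq:Az_funct}) the signs do work out to give $A_{z+w}$.
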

In the following theorem, the definition of $A_z$ in \eqref{eq:Az_funct} is connected to the complex power $A^z$ for any $z\in\C$. The construction is mainly due to Seeley \cite{seeley}.
\begin{thm}\label{thm:funct_seeley}
	{\bf (Theorem 10.1, \cite{shubin}).}
	For $z\in\C$ and $k\in\Z$ such that $\re(z)-k<0$, we define the following operator
	\begin{equation}
		A^z:=A^k A_{z-k}
	\end{equation}
Then, the definition of $A^z$ is independent of the choice of $k\in\Z$, provided  $\re(z)< k$.
	Moreover, the following holds:
		\\[1mm]
	\noindent $(i)$ If $\re(z)<0$, then $A^z=A_z$;
			\\[1mm]
		\noindent $(ii)$ The group property holds: $A^z A^w = A^{z+w}$ for any $z,w\in\C$;
		\\[1mm]
	\noindent $(iii)$ For $z=k\in\Z$, the definition of $A^k$ gives the usual $k$-th power of the operator $A$;
			\\[1mm]
		\noindent $(iv)$ For arbitrary $k\in\Z$ and $r\in\R$, the function $A^z$ is a holomorphic operator-function of $z$ in the half-plane $\re(z)<k$ with values in the Banach space $\cL(H^r(\T), H^{r-mk}(\T))$.	
\end{thm}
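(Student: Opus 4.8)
The statement is the bootstrap of Seeley's complex powers from the structural facts recorded in the preceding Proposition (Shubin, Prop.\ 10.1): the semigroup identity $A_z A_w = A_{z+w}$ for $\re(z),\re(w)<0$, and, since $A$ is invertible, the relations $A_{-n}=(A^{-1})^n$ for $n\in\N$; recall also that $z\mapsto A_z$ is holomorphic on $\{\re z<0\}$ with values in $\cL(H^r(\T))$, and that each $A_z$, being a norm limit of resolvents $(A-\lambda\,{\rm Id})^{-1}$, commutes with $A$ and hence with every integer power of $A$. Everything below is formal bookkeeping on top of these facts; throughout, $A^k$ for $k\in\Z$ denotes the ordinary $k$-th power of the invertible operator $A$ (with $A^{-1}$ its inverse and $A^0={\rm Id}$).

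\emph{Step 1 (auxiliary identity).} The key lemma is: for every $j\in\N$ and every $w\in\C$ with $\re(w)<-j$ one has $A^j A_w = A_{w+j}$. Indeed $A_{-j}=(A^{-1})^j$ is invertible with inverse $A^j$; since $\re(-j)<0$ and $\re(w+j)<0$, the semigroup identity gives $A_{-j}A_{w+j}=A_w$, whence $A_{w+j}=A_{-j}^{-1}A_w=A^jA_w$.

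\emph{Step 2 (independence of $k$; items (i), (iii)).} Let $k<k'$ be integers with $\re(z)<k$, and set $j:=k'-k\geq1$, $w:=z-k'$, so that $\re(w)=\re(z)-k'<k-k'=-j$. By Step 1 and the group law for integer powers of $A$,
\[
A^{k'}A_{z-k'} = A^{k}A^{j}A_{w} = A^{k}A_{w+j} = A^{k}A_{z-k},
\]
so $A^z$ is well defined. For (i), taking $k=0$ (legitimate since $\re z<0$) gives $A^z=A^0A_z=A_z$. For (iii), with $z=k_0\in\Z$ take $k=k_0+1$; then, by the preceding Proposition ($A_{-1}=A^{-1}$) and the group law, $A^{k_0}=A^{k_0+1}A_{-1}=A^{k_0+1}A^{-1}=A^{k_0}$, the ordinary power.

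\emph{Step 3 (items (ii), (iv)).} For (ii), pick integers $k>\re(z)$ and $k'>\re(w)$. Since $A_{z-k}$ commutes with $A^{k'}$ and $\re(z-k),\re(w-k')<0$, the semigroup identity yields
\[
A^zA^w = A^{k}A_{z-k}A^{k'}A_{w-k'} = A^{k+k'}A_{z-k}A_{w-k'} = A^{k+k'}A_{z+w-(k+k')},
\]
and as $\re(z+w)<k+k'$ this last expression is precisely $A^{z+w}$ by the (already justified) $k$-independence of the definition. For (iv), fix $k\in\Z$; on the half-plane $\{\re z<k\}$ one has $A^z=A^k\circ A_{z-k}$, where $z\mapsto A_{z-k}$ is holomorphic into $\cL(H^r(\T))$ by the preceding Proposition (the case $r<0$ following by duality, $A_{z-k}$ being a $\Psi$DO of order $\leq 0$), and $A^k\in\cL(H^r(\T),H^{r-mk}(\T))$ since $A\in{\rm OPH}S^m$; composing with the bounded linear map $T\mapsto A^kT$ shows that $z\mapsto A^z$ is a holomorphic $\cL(H^r(\T),H^{r-mk}(\T))$-valued function.

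\emph{Main obstacle.} All the genuine analysis is upstream, in the preceding Proposition---the semigroup law $A_zA_w=A_{z+w}$ and $A_{-n}=(A^{-1})^n$, which rest on contour deformations for the integral \eqref{eq:Az_funct}; granting those, the only mild points of care here are the commutation of $A_{z-k}$ with integer powers of $A$ (immediate from $A_{z-k}$ being a function of $A$) and the extension of the holomorphy statement to negative Sobolev exponents in (iv).
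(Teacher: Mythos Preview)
Your argument is correct and is essentially the standard proof (as in Shubin's book): the definition $A^z:=A^kA_{z-k}$ is bootstrapped from the semigroup law $A_zA_w=A_{z+w}$ and the identification $A_{-n}=(A^{-1})^n$ recorded in the preceding Proposition, and the key bridging identity $A^jA_w=A_{w+j}$ for $\re(w)<-j$ is exactly the right reduction. The paper itself does not supply a proof of this theorem; it merely states it with a citation to \cite{shubin}, so there is nothing to compare against beyond the observation that your sketch reproduces the standard route. The only minor point worth tightening is in (iv): the preceding Proposition asserts holomorphy of $z\mapsto A_z$ in $\cL(H^r(\T))$ only for $r\geq 0$, and while your duality remark for $r<0$ is morally correct, a clean way to cover all $r\in\R$ is to note that $A_{z-k}$ is holomorphic into $\cL(H^0(\T))$ and then use that $A^k\in\cL(H^0,H^{-mk})$ together with the independence of $k$ to reach any target pair $(H^r,H^{r-mk})$.
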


Note that, if one assume that the operator $A\in {\rm OPH}S^m$ is self-adjoint with a complete system of eigenfunctions $(\vf_j)_{j\in\Z}$ in $L^2(\T)$ corresponding to the eigenvalues $(\mu_j)_{j\in\Z}$ (assuming $\inf_{j\in\Z} \mu_j > 0$), then the action of $A^z$, as in Theorem \ref{thm:funct_seeley}, on a function $f(x):= \sum_{j\in\Z} \left(f,\vf_j\right)_{L^2}\vf_j(x) \in L^2(\T)$ is equivalent to its spectral definition:
\begin{equation}
A^z f(x) = \sum_{j\in\Z} \mu_j^z \left(f,\vf_j\right)_{L^2}\vf_j(x) \ .
\end{equation}
%
\medskip 

\noindent{\bf $A_z$ and $A^z$ as pseudodifferential operators.}
Under the assumptions of the previous paragraph on the elliptic operator $A\in {\rm OPH}S^m$, we construct the parametrix for the resolvent operator $(A-\lambda \, {\rm Id})^{-1}$ given by $b^0(\lambda;x,\xi) \sim \sum_{n=0}^\infty b_{-m-n}^0(\lambda;x,\xi)$ as in \eqref{eq:b0_rel_res}, with $\Lambda:=\C\setminus (-\infty,0]$. We now define, for any $n\in\N_0$ and $z\in\C$
\begin{equation}
b_{mz-n}^{(z),0}(x,\xi):= -\frac{1}{2\pi\im} \int_\Gamma \lambda^z b_{-m-n}^0(\lambda;x,\xi) \wrt\lambda , \quad b_{mz-n}^{(z)}(x,\xi):=\chi(|\xi|) b_{mz-n}^{(z),0}(x,\xi) ,
\end{equation}
where $\chi(\eta)$ is the cut-off function in \eqref{cutoff}, and we set
\begin{equation}
B_{mz-n}^{(z)} := \Op\big(b_{mz-n}^{(z)}(x,\xi)\big) \ , \quad B_{(N)}^{(z)} := \sum_{n=0}^{N} B_{mz-n}^{(z)} \,, \ \ N\in\N_0\,.
\end{equation}
\begin{thm}\label{thm:structure_seeley}
	{\bf (Structure Theorem - Theorem 11.2, \cite{shubin}).} Let $A \in {\rm OPH}S^m$.
	For any $z\in\C$, one has
	\begin{equation}
	A^z = A_z\in {\rm OPC}S^{m\re(z)}\,, \quad A^z - B_{(N)}^{(z)} \in {\rm OP}S^{m\re(z)-N} \,, \quad \forall\,N\in\N_0\,.
	\end{equation}
\end{thm}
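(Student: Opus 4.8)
The plan is to reduce everything to the half-plane $\re(z)<0$, where Theorem \ref{thm:funct_seeley}$(i)$ identifies $A^z$ with the contour integral $A_z$ of \eqref{eq:Az_funct}, and then to propagate the conclusion to arbitrary $z$ via the identity $A^z=A^kA_{z-k}$. In the range $\re(z)<0$ I would use Proposition \ref{reso_pseudo} to split, for each $N\in\N$,
\begin{equation}
(A-\lambda\,{\rm Id})^{-1}=B_{(N)}(\lambda)+R_N(\lambda)\,,\qquad R_N(\lambda)=\Op(r_N(\lambda;x,\xi))\in{\rm OPC}S^{-m-N}\,,
\end{equation}
keeping track — this is the bookkeeping that must be done carefully from the recursion \eqref{eq:b0_rel_res} and from the homogeneity of the $b^0_{-m-n}(\lambda;x,\xi)$ in the couple $(\xi,\lambda^{1/m})$ — that both $b^0_{-m-n}$ and $r_N$ satisfy symbol bounds jointly in $(\xi,|\lambda|^{1/m})$ on $\Gamma$, of the form $|\pa_x^\alpha\pa_\xi^\beta b^0_{-m-n}(\lambda;x,\xi)|\lesssim(\braket{\xi}+|\lambda|^{1/m})^{-m-n-\beta}$ and $|\pa_x^\alpha\pa_\xi^\beta r_N(\lambda;x,\xi)|\lesssim(\braket{\xi}+|\lambda|^{1/m})^{-m-N-\beta}$. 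Inserting the splitting into \eqref{eq:Az_funct} gives $A_z=-\tfrac1{2\pi\im}\int_\Gamma\lambda^z B_{(N)}(\lambda)\wrt\lambda-\tfrac1{2\pi\im}\int_\Gamma\lambda^z R_N(\lambda)\wrt\lambda$, and I would analyze the two pieces separately.

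For the first piece the key is a scaling argument: since $b^0_{-m-n}(\lambda;x,\xi)$ is positively homogeneous of degree $-m-n$ in $(\xi,\lambda^{1/m})$, the change of variables $\lambda\mapsto t^m\lambda$ ($t>0$) in $-\tfrac1{2\pi\im}\int_\Gamma\lambda^z b^0_{-m-n}(\lambda;x,\xi)\wrt\lambda=b^{(z),0}_{mz-n}(x,\xi)$ shows that $b^{(z),0}_{mz-n}$ is homogeneous of degree $mz-n$ in $\xi$; after multiplication by the cut-off $\chi(|\xi|)$ this places $b^{(z)}_{mz-n}$ in $S^{m\re(z)-n}$ with principal part in $\dot S^{m\re(z)-n}$, so that $-\tfrac1{2\pi\im}\int_\Gamma\lambda^z B_{(N)}(\lambda)\wrt\lambda$ equals $\sum_n B^{(z)}_{mz-n}$ modulo ${\rm OP}S^{-\infty}$ — the error coming only from replacing $b^0_{-m-n}$ by the cut-off symbol $b_{-m-n}$ (equal modulo $S^{-\infty}$ by Proposition \ref{reso_pseudo}) and from the mismatch between the cut-offs $\wt\chi(\lambda;\xi)$ and $\chi(|\xi|)$. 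For the second piece, the joint symbol bound on $r_N$ yields
\begin{equation}
\Big|\pa_x^\alpha\pa_\xi^\beta\Big(\tfrac1{2\pi\im}\int_\Gamma\lambda^z r_N(\lambda;x,\xi)\wrt\lambda\Big)\Big|\lesssim\int_\Gamma|\lambda|^{\re(z)}(\braket{\xi}+|\lambda|^{1/m})^{-m-N-\beta}\,|\wrt\lambda|\,,
\end{equation}
and the substitution $|\lambda|=\braket{\xi}^m s$ bounds the right-hand side by $C\braket{\xi}^{m\re(z)-N-\beta}$ once $N$ is large enough: convergence at $s=\infty$ uses $N$ large, while the behaviour near $\lambda=0$ is harmless because there one sits on the small circle $\Gamma_2$, away from $0$ and from $\sigma(A)$, where $(A-\lambda\,{\rm Id})^{-1}$ is holomorphic and uniformly bounded. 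Letting $N\to\infty$ and summing asymptotically via Lemma \ref{classic} gives $A^z=A_z\sim\sum_{n\ge0}b^{(z)}_{mz-n}$, hence $A^z\in{\rm OPC}S^{m\re(z)}$ and $A^z-B^{(z)}_{(N)}\in{\rm OP}S^{m\re(z)-N}$ (after the obvious relabelling of $N$), for all $z$ with $\re(z)<0$.

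For a general $z\in\C$, I would pick $k\in\Z$ with $\re(z)-k<0$ and use $A^z=A^kA_{z-k}$ from Theorem \ref{thm:funct_seeley}. Integer powers of elliptic $\Psi$DO's are elliptic $\Psi$DO's by the composition Lemma \ref{pseudo_compo}, so $A^k\in{\rm OPH}S^{mk}$, while $A_{z-k}\in{\rm OPC}S^{m(\re(z)-k)}$ by the previous step; therefore $A^z\in{\rm OPC}S^{m\re(z)}$, and independence of the admissible $k$ follows from the semigroup/group properties. For the quantitative statement $A^z-B^{(z)}_{(N)}\in{\rm OP}S^{m\re(z)-N}$, I would rerun the contour computation with $A^k$ pulled inside the integral, using that $A^k(A-\lambda\,{\rm Id})^{-1}$ has a parametrix whose symbols are obtained by composing (via \eqref{compo_symb}) the symbol of $A^k$ with $b^0(\lambda;\cdot)$ — again homogeneous in $(\xi,\lambda^{1/m})$ — and matching orders term by term; equivalently, the finitely many relevant symbol coefficients on each side are holomorphic in $z$ and coincide on $\re(z)<0$, hence everywhere. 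The main obstacle is precisely the bookkeeping flagged above: establishing that the parametrix remainder $r_N(\lambda;x,\xi)$ obeys symbol estimates uniform in $\lambda\in\Gamma$ carrying the correct joint $(\xi,|\lambda|^{1/m})$-order $-m-N$, so that the $\lambda$-integral both converges and delivers exactly the claimed gain of $N$ orders in $\xi$; the homogeneity structure of the $b^0_{-m-n}$ in $(\xi,\lambda^{1/m})$ is what makes this work and is the step that requires genuine care.
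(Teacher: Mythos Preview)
The paper does not give its own proof of this statement: Theorem \ref{thm:structure_seeley} is quoted verbatim as Theorem 11.2 of \cite{shubin}, and the only additional content is the Remark immediately following it, which notes that in Shubin's proof one builds $B^{(z)}\sim\sum_{n\ge0}B^{(z)}_{mz-n}$ so that $A^z-B^{(z)}\in{\rm OP}S^{-\infty}$. Your proposal is therefore not to be compared against anything in the paper itself.

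That said, what you outline is essentially the Seeley--Shubin argument: split the resolvent via the parametrix of Proposition \ref{reso_pseudo}, exploit the joint homogeneity of the $b^0_{-m-n}(\lambda;x,\xi)$ in $(\xi,\lambda^{1/m})$ to identify the contour integrals $\int_\Gamma\lambda^z b^0_{-m-n}\,\wrt\lambda$ as homogeneous symbols of degree $mz-n$, control the remainder by uniform-in-$\lambda$ symbol bounds, and then pass from $\re(z)<0$ to general $z$ using $A^z=A^kA_{z-k}$ and holomorphic continuation of the symbol coefficients. This is the correct strategy and matches the reference. Your identification of the delicate point --- the uniform $(\xi,|\lambda|^{1/m})$-symbol estimates for the parametrix remainder $r_N$ along $\Gamma$ --- is accurate; in Shubin this is handled by working in parameter-dependent symbol classes from the outset (his $S^m_{\rm cl}(\Lambda)$), which packages exactly the bookkeeping you flag.
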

\begin{rem}
	During the proof in \cite{shubin}, one formally considers $B^{(z)}\sim \sum_{n=0}^\infty B_{mz -n}^{(z)} \in {\rm OPC}S^{m\re(z)}$, so that $A^z - B^{(z)} \in {\rm OP}S^{-\infty}$.
\end{rem}
\begin{rem}
	The dependence on $z\in\C$ for the family of operators $(A^z)_{z\in\C}$ is holomorphic: in \cite{shubin}, proper subclasses of holomorphic symbols and pseudodifferential operators are discussed. Since we are going to consider fixed real powers of an operator $A\in {\rm OPH}S^m$, these properties are here omitted.
\end{rem}

\bigskip 


\noindent {\bf Powers of the Schr\"odinger operator $-\partial_{xx}+q(x)$.}
We specialize now the discussion so far to the case when the elliptic operator is given by $A=L_q:=-\pa_{xx}+q(x)$, acting on the scale $(H^r(\T))_{r\in\R}$ and with $q\in H^\infty(\T)$. Clearly, we have $L_q\in {\rm OPH}S^2$ with symbol given by $\xi^2+q(x)\in {\rm H}S^2$. 
\begin{proof}[Proof of Theorem \ref{cor:pseudo_sqrt}]
	By Theorem \ref{thm:funct_seeley}, define $L_q^{1/2}:= L_q \circ(L_q) _{-1/2}$, with $(L_q)_{-1/2}$ as in \eqref{eq:Az_funct}. Then, by Theorem \ref{thm:structure_seeley} we have $L_q^{1/2}\in {\rm OP}S^{2\frac{1}{2}}={\rm OP}S^1 $.
	The definition of $B^\mu$ as pseudodifferential operator follows from the same argument.
\end{proof}

\section{Technical results on off-diagonal decay operators}\label{app:tech}

In the following we consider the operators $\bV \in \cM_s(\alpha,0)$ and $\bX \in \cM_s(\alpha,\alpha)$ with matrix structures as in \eqref{eq:Amatrix}.
\\[1mm]
\noindent {\bf Proof of Lemma \ref{lem:com}.} Let $\bV \in \cM_s(\alpha,0)$ and $\bX \in \cM_s(\alpha,\alpha)$ with matrix structure as in \eqref{eq:Amatrix}. Then
\begin{equation}
    {\rm ad}_{\bX}(\bV) =  \im\, [\bX,\bV] = \begin{pmatrix}
        \im \, W^{d} & \im \, W^{o} \\ - \overline{\im\, W^{o}} & - \overline{\im\, W^{d}}
    \end{pmatrix}\,,
\end{equation}
where
\begin{equation}\label{ad.components}
    \begin{aligned}
    W^{d} & := X^{d}V^{d} - V^{d} X^{d} -(X^{o} \overline{V^{o}} - V^{o} \overline{X^{o}} )\,, \\
    W^{o} & := X^{d} V^{o} + V^{o} \overline{X^{d}} -(X^{o} \overline{V^{d}} + V^{d} X^{o}).
    \end{aligned}
\end{equation}
By Lemma \ref{prop:tame} and \eqref{diago.Dx}, we have the following estimates, for any $\varrho= 0,\pm\alpha$, omitting conjugations and superscripts,
\begin{equation}\label{stime.tutticasi}
    \begin{aligned}
    & | \braket{D}^{\varrho} X V \braket{D}^{-\varrho}|_{s}^{\lip(\tw)} \lesssim_{s}  | \braket{D}^{\varrho} X  \braket{D}^{-\varrho}|_{s}^{\lip(\tw)} | \braket{D}^{\varrho} V \braket{D}^{-\varrho}|_{s_0}^{\lip(\tw)} \\
    & \quad \quad \quad \quad \quad \quad \quad \quad \quad \quad + | \braket{D}^{\varrho} X  \braket{D}^{-\varrho}|_{s_0}^{\lip(\tw)} | \braket{D}^{\varrho}  V \braket{D}^{-\varrho}|_{s}^{\lip(\tw)}\,,\\
    & | \braket{D}^{\alpha} X V|_{s}^{\lip(\tw)} \lesssim_{s} | \braket{D}^{\alpha} X |_{s}^{\lip(\tw)} |  V|_{s_0}^{\lip(\tw)} + | \braket{D}^{\alpha} X |_{s_0}^{\lip(\tw)} |  V|_{s}^{\lip(\tw)}  \,, \\
    & | X V \braket{D}^{\alpha}|_{s}^{\lip(\tw)}  \lesssim_{s} | X \braket{D}^{\alpha}|_{s}^{\lip(\tw)} | \braket{D}^{-\alpha} V \braket{D}^{\alpha}|_{s_0}^{\lip(\tw)} \\
    & \quad \quad \quad \quad \quad \quad \quad + | X  \braket{D}^{\alpha}|_{s_0}^{\lip(\tw)} | \braket{D}^{-\alpha} V \braket{D}^{\alpha}|_{s}^{\lip(\tw)}\,,
    \end{aligned}
\end{equation}
    and similar estimates for $VX$ instead of $XV$. By \eqref{ad.components}, Definition \eqref{M.sdecay.ab} and the estimates \eqref{stime.tutticasi}, it follows easily that ${\rm ad}_{\bX}(\bV) \in \cM_{s}(\alpha,\alpha)$ for $s\geq s_0$, with the claimed estimate \eqref{eq:ad_lip_alg}.
\\[1mm]
\noindent {\bf Proof of Lemma \ref{lem:flow}.} The proof of item $(i)$ follows from Remark \eqref{rem:opnorm_vs_sdecay} and the fact that the flow $\Phi(\tau):=e^{\im\tau\bX}$ generated by the bounded operator $\bX$ in $H^r(\T^{\nu+1})\times H^r(\T^{\nu+1})$ stays bounded in the same bounded for $\tau\in[0,1]$.
We now move to the proof of item $(ii)$. We recall that the operator $\LieTr{\im\,\bX}{\bV}$ admits the expansion
\begin{equation}\label{ad.expan}
    \LieTr{\im\,\bX}{\bV} = \sum_{n=0}^{\infty}\frac{1}{n!} \ad_{\bX}^{n}(\bV) \,, \quad \ad_{\bX}^{0}:={\rm Id} \,, \quad \ad_{\bX}^{n}:=\ad_{\bX}\circ\ad_{\bX}^{n-1}\,.
\end{equation}
By Lemma \ref{lem:com}, it is not hard to show the following iterative estimates, for $n\geq 1$:
\begin{equation}\label{adnXV}
    \begin{aligned}
     | \ad_{\bX}^{n}(\bV) |_{s_0,\alpha,\alpha}^{\lip(\tw)} & \leq \big( C_{s_0} |\bX |_{s_0,\alpha,\alpha}^{\lip(\tw)} \big)^n |\bV |_{s_0,\alpha,0}^{\lip(\tw)}\,, \\
     | \ad_{\bX}^{n}(\bV) |_{s,\alpha,\alpha}^{\lip(\tw)} & \leq n C_{s}\big(C_{s_0}|\bX|_{s_0,\alpha,\alpha}^{\lip(\tw)}\big)^{n-1} | \bX |_{s,\alpha,\alpha}^{\lip(\tw)} |\bV |_{s_0,\alpha,0}^{\lip(\tw)} \\
    & \quad + \big(C_{s_0}|\bX|_{s_0,\alpha,\alpha}^{\lip(\tw)}\big)^{n}  |\bV |_{s,\alpha,0}^{\lip(\tw)}\,.
    \end{aligned}
\end{equation}
Then, the estimates \eqref{eq:flow_tame} follow by \eqref{ad.expan} and \eqref{adnXV}.

\begin{footnotesize}
	
\end{footnotesize}

\end{document}